\newif\ifpersonal
\theoremstyle{plain}
\newtheorem{thm-intro}{Theorem}
\newtheorem{thm}{Theorem}[section]
\newtheorem*{thm*}{Theorem}
\newtheorem{lem}[thm]{Lemma}
\newtheorem{prop}[thm]{Proposition}
\newtheorem{cor}[thm]{Corollary}
\theoremstyle{definition}
\newtheorem{defin}[thm]{Definition}
\newtheorem{notation}[thm]{Notation}
\newtheorem{warning}[thm]{Warning}
\theoremstyle{remark}
\newtheorem{rem}[thm]{Remark}
\numberwithin{equation}{section}
\newcommand*{\personal}[1]{\textcolor[rgb]{0.6,0.6,1}{(Personal: #1)}}
\newcommand*{\todo}[1]{\textcolor{red}{(Todo: #1)}}
\newcommand*{\personal}[1]{\ignorespaces}
\newcommand*{\todo}[1]{\ignorespaces}
\newcommand{\rB}{\mathrm B}
\newcommand{\rD}{\mathrm D}
\newcommand{\rH}{\mathrm H}
\newcommand{\rI}{\mathrm I}
\newcommand{\rL}{\mathrm L}
\newcommand{\rR}{\mathrm R}
\newcommand{\rb}{\mathrm b}
\newcommand{\rh}{\mathrm h}
\newcommand{\fU}{\mathfrak U}
\newcommand{\fV}{\mathfrak V}
\newcommand{\fW}{\mathfrak W}
\newcommand{\fX}{\mathfrak X}
\newcommand{\cA}{\mathcal A}
\newcommand{\cC}{\mathcal C}
\newcommand{\cD}{\mathcal D}
\newcommand{\cE}{\mathcal E}
\newcommand{\cF}{\mathcal F}
\newcommand{\cH}{\mathcal H}
\newcommand{\cG}{\mathcal G}
\newcommand{\cJ}{\mathcal J}
\newcommand{\cK}{\mathcal K}
\newcommand{\cM}{\mathcal M}
\newcommand{\cO}{\mathcal O}
\newcommand{\cS}{\mathcal S}
\newcommand{\cT}{\mathcal T}
\newcommand{\cU}{\mathcal U}
\newcommand{\cV}{\mathcal V}
\newcommand{\cX}{\mathcal X}
\newcommand{\cZ}{\mathcal Z}
\DeclareFontFamily{U}{BOONDOX-calo}{\skewchar\font=45 }
\DeclareFontShape{U}{BOONDOX-calo}{m}{n}{<-> s*[1.05] BOONDOX-r-calo}{}
\DeclareFontShape{U}{BOONDOX-calo}{b}{n}{<-> s*[1.05] BOONDOX-b-calo}{}
\DeclareMathAlphabet{\mathcalboondox}{U}{BOONDOX-calo}{m}{n}
\let\save@mathaccent\mathaccent
\newcommand*\if@single[3]{%
	\setbox0\hbox{${\mathaccent"0362{#1}}^H$}%
	\setbox2\hbox{${\mathaccent"0362{\kern0pt#1}}^H$}%
	\ifdim\ht0=\ht2 #3\else #2\fi
}
\newcommand*\rel@kern[1]{\kern#1\dimexpr\macc@kerna}
\newcommand*\widebar[1]{\@ifnextchar^{{\wide@bar{#1}{0}}}{\wide@bar{#1}{1}}}
\newcommand*\wide@bar[2]{\if@single{#1}{\wide@bar@{#1}{#2}{1}}{\wide@bar@{#1}{#2}{2}}}
\newcommand*\wide@bar@[3]{%
	\begingroup
	\def\mathaccent##1##2{%
		\let\mathaccent\save@mathaccent
		\if#32 \let\macc@nucleus\first@char \fi
		\setbox\z@\hbox{$\macc@style{\macc@nucleus}_{}$}%
		\setbox\tw@\hbox{$\macc@style{\macc@nucleus}{}_{}$}%
		\dimen@\wd\tw@
		\advance\dimen@-\wd\z@
		\divide\dimen@ 3
		\@tempdima\wd\tw@
		\advance\@tempdima-\scriptspace
		\divide\@tempdima 10
		\advance\dimen@-\@tempdima
		\ifdim\dimen@>\z@ \dimen@0pt\fi
		\rel@kern{0.6}\kern-\dimen@
		\if#31
		\overline{\rel@kern{-0.6}\kern\dimen@\macc@nucleus\rel@kern{0.4}\kern\dimen@}%
		\advance\dimen@0.4\dimexpr\macc@kerna
		\let\final@kern#2%
		\ifdim\dimen@<\z@ \let\final@kern1\fi
		\if\final@kern1 \kern-\dimen@\fi
		\else
		\overline{\rel@kern{-0.6}\kern\dimen@#1}%
		\fi
	}%
	\macc@depth\@ne
	\let\math@bgroup\@empty \let\math@egroup\macc@set@skewchar
	\mathsurround\z@ \frozen@everymath{\mathgroup\macc@group\relax}%
	\macc@set@skewchar\relax
	\let\mathaccentV\macc@nested@a
	\if#31
	\macc@nested@a\relax111{#1}%
	\else
	\def\gobble@till@marker##1\endmarker{}%
	\futurelet\first@char\gobble@till@marker#1\endmarker
	\ifcat\noexpand\first@char A\else
	\def\first@char{}%
	\fi
	\macc@nested@a\relax111{\first@char}%
	\fi
	\endgroup
}
\newcommand{\oR}{\widebar R}
\newcommand{\hf}{\widehat f}
\newcommand{\hA}{\widehat A}
\newcommand{\hC}{\widehat C}
\newcommand{\hZ}{\hat Z}
\newcommand{\hPhi}{\widehat{\mathfrak{Coh}}}
\newcommand{\hPsi}{\widehat{\mathfrak W}}
\newcommand{\hrP}{\widehat{\mathfrak{Perf}}}
\newcommand{\tR}{\widetilde R}
\newcommand{\oF}{\overline{F}}
\newcommand{\PSh}{\mathrm{PSh}}
\newcommand{\Sh}{\mathrm{Sh}}
\newcommand{\Geom}{\mathrm{Geom}}
\newcommand{\LPr}{\mathcal{P}\mathrm{r}^\rL}
\newcommand{\RPr}{\mathcal{P}\mathrm{r}^\rR}
\newcommand{\LPromega}{\mathcal{P}\mathrm{r}^{\rL, \omega}}
\newcommand{\LPromegast}{\mathcal{P}\mathrm{r}^{\rL, \omega}_{\mathrm{Ex}}}
\newcommand{\rSet}{\mathrm{Set}}
\newcommand{\tauet}{\tau_\mathrm{\acute{e}t}}
\newcommand{\Modh}{\textrm{-}\mathrm{Mod}^\heartsuit}
\newcommand{\Coh}{\mathrm{Coh}}
\newcommand{\Cohb}{\mathrm{Coh}^\mathrm{b}}
\newcommand{\Cohh}{\mathrm{Coh}^\heartsuit}
\newcommand{\QCohh}{\mathrm{QCoh}^\heartsuit}
\newcommand{\Sch}{\mathrm{Sch}}
\newcommand{\Aff}{\mathrm{Aff}}
\newcommand{\bfMap}{\mathbf{Map}}
\newcommand{\Ank}{\mathrm{An}_k}
\newcommand{\trunc}{\mathrm{t}_0}
\newcommand{\CAlg}{\mathrm{CAlg}}
\newcommand{\Ind}{\mathrm{Ind}}
\newcommand{\St}{\mathbf{St}}
\newcommand{\dSt}{\mathbf{dSt}}
\newcommand{\IndCoh}{\mathrm{IndCoh}}
\newcommand{\QCoh}{\mathrm{QCoh}}
\newcommand{\Perf}{\mathrm{Perf}}
\newcommand{\fib}{\mathrm{fib}}
\newcommand{\cofib}{\mathrm{cofib}}
\newcommand{\stMap}{\mathrm{Map}^{\mathrm{st}}}
\newcommand{\Cat}{\mathrm{Cat}}
\newcommand{\AbCat}{\mathrm{AbCat}}
\newcommand{\bfCoh}{\mathbf{Coh}}
\newcommand{\bfPerf}{\mathbf{Perf}}
\newcommand{\bfQCoh}{\mathbf{QCoh}}
\newcommand{\Catst}{\Cat_\infty^{\mathrm{Ex}}}
\newcommand{\Catstmon}{\Cat_\infty^{\mathrm{Ex},\otimes}}
\newcommand{\Catstidem}{\Cat_\infty^{\mathrm{Ex}, \mathrm{idem}}}
\newcommand{\Catstlc}{\Cat_\infty^{\mathrm{Ex}, \mathrm{l.c.}}}
\newcommand{\Catstlb}{\Cat_\infty^{\mathrm{Ex}, \mathrm{l.b.}}}
\newcommand{\bfBun}{\mathbf{Bun}}
\newcommand{\Bun}{\mathrm{Bun}}
\newcommand{\Bunhat}{\mathbf{B\widehat{un}}}
\newcommand{\fCoh}{\mathfrak{Coh}}
\newcommand{\fPerf}{\mathfrak{Perf}}
\newcommand{\oPhi}{\overline{\fCoh}}
\newcommand{\llb}{[\![}
\newcommand{\rrb}{]\!]}
\newcommand{\an}{^\mathrm{an}}
\newcommand{\et}{_\mathrm{\acute{e}t}}
\newcommand{\Et}{_\mathrm{\acute{E}t}}
\newcommand{\inv}{^{-1}}
\newcommand{\canal}{$\mathbb C$-analytic\xspace}
\newcommand{\op}{^\mathrm{op}}
\newcommand{\Cech}{\check{\mathcal C}}
\tikzset{
  closed/.style = {decoration = {markings, mark = at position 0.5 with { \node[transform shape, xscale = .8, yscale=.4] {/}; } }, postaction = {decorate} },
  open/.style = {decoration = {markings, mark = at position 0.5 with { \node[transform shape, scale = .7] {$\circ$}; } }, postaction = {decorate} }
}
\DeclareMathOperator{\Fun}{Fun}
\DeclareMathOperator{\Hom}{Hom}
\DeclareMathOperator{\Map}{Map}
\DeclareMathOperator{\Sp}{Sp}
\DeclareMathOperator{\SpB}{Sp_\mathrm{B}}
\DeclareMathOperator{\Spec}{Spec}
\DeclareMathOperator{\Spf}{Spf}
\DeclareMathOperator{\Sym}{Sym}
\DeclareMathOperator*{\colim}{colim}
\begin{document}

\title{Formal glueing along non-linear flags}

\author{Benjamin HENNION}
\address{Benjamin HENNION, Max Planck Institute für Mathematik, Bonn}
\email{hennion@mpim-bonn.mpg.de }

\author{Mauro PORTA}
\address{Mauro PORTA, University of Pennsylvania, David Rittenhouse Laboratory, Pennsylvania, PA, USA}
\email{mauro.porta@imj-prg.fr}

\author{Gabriele VEZZOSI}
\address{Gabriele VEZZOSI, Dipartimento di Matematica ed Informatica ``Ulisse Dini'', Firenze, Italy}
\email{gabriele.vezzosi@unifi.it}
\date{\today}

\begin{abstract}
	In this paper we prove formal glueing along an arbitrary closed substack $Z$ of an arbitrary Artin stack $X$ (locally of finite type over a field $k$), for the stacks of (almost) perfect complexes , and of $G$-bundles on $X$ (for $G$ a smooth affine algebraic $k$-group scheme). By iterating this result, we get a decomposition of these stacks along an arbitrary nonlinear flag of closed substacks in $X$. By taking points over the base field, we deduce from this both a formal glueing, and a flag-related decomposition formula for the corresponding symmetric monoidal derived $\infty$-categories of (almost) perfect modules.
	When $X$ is a quasi-compact and quasi-separated scheme, we also prove a localization theorem for almost perfect complexes on $X$, which parallels Thomason's localization results for perfect complexes. This is one of the main ingredients we need to provide a global characterization of the category of almost perfect complexes on the punctured formal neighbourhood.
	We then extend all of the previous results - i.e. the formal glueing and flag-decomposition formulas - to the case when $X$ is a derived Artin stack (locally almost of finite type over a field $k$), for the derived versions of the stacks of (almost) perfect modules, and of $G$-bundles on $X$. We close the paper by highlighting some expected progress in the subject matter of this paper, related to a Geometric Langlands  program for higher dimensional varieties. In an Appendix (for $X$ a variety), we give a precise comparison between our formal glueing results  and the rigid-analytic approach of Ben-Bassat and Temkin. 
\end{abstract}

\maketitle

\tableofcontents

\section{Introduction}

Let $k$ be field. For a $k$-scheme $X$ and a closed subscheme $Z$, with open complement $U$, roughly speaking, \emph{formal glueing} (whenever it exists) is a way of describing geometric objects of some kind defined on $X$, as geometric objects of the same kind defined on $U$ and on the formal completion $\hZ$ of $Z$ in $X$, that are suitably \emph{compatible}. The notion of compatibility here is a delicate one. Morally, guided by a purely topological intuition, one would like to say that a geometric object $\cF_U$ and a geometric object $\cF_{\hZ}$ are compatible if they restrict to the same object (or to an equivalent one) on the intersection $U \cap \hZ= \hZ \smallsetminus Z$.
However, strictly speaking, in algebraic geometry this intersection is empty, so the notion of compatibility cannot be the naive one, at least if we want to remain within the realm of algebraic geometry, i.e. we want to keep viewing $X$, $U$ and $\hZ$ as algebro-geometric objects. 
Many formal glueing results have been proved, and we give here only a probably non exhaustive list: \cite{Weil_Adeles_1982}, \cite{ArtinII}, \cite{Ferrand-Raynaud}, \cite{Beauville-Laszlo}, \cite{MB}, \cite{Ben-Bassat_Temkin_Tubular_2013}, \cite{Bhatt_algebraization_2014}, \cite{Schappi_descent_2015}, \cite{Hall_Rydh_2016}. \medbreak

In this paper we address the general formal glueing problem when the geometric objects above are
\begin{itemize}
	\item almost perfect modules (also known as pseudo-coherent complexes) on $X$ ,
	\item perfect modules on $X$,
	\item $G$-bundles on $X$, for $G$ a smooth affine $k$-group scheme,
\end{itemize}
and $X$ is an arbitrary Artin stack locally of finite presentation over $k$.\\
By carefully studying descent (cf.\ Sections \ref{subsec:etale_descent}, \ref{subsec:perfect_complexes}, and \ref{section:gbundles}), we are able to prove formal glueing for the \emph{stacks} $\bfCoh_{X}^{\otimes,-}$, $\bfPerf^\otimes_X$, $\mathbf{Bun}_G(X)$ of these geometric objects (cf.\ \cref{formalglueing} and \cref{section:gbundles}), understood as stacks with values in symmetric monoidal $k$-dg-categories or, equivalently, in symmetric monoidal stable $k$-linear $\infty$-categories, for $\bfCoh_{X}^{\otimes,-}$, $\bfPerf^\otimes_X$, and as a stack in $\infty$-groupoids for $\mathbf{Bun}_G(X)$.
To our knowledge, this is the first result where the formal reconstruction for \emph{stacks} of geometric objects like $\bfCoh_{X}^{\otimes,-}$, and $\bfPerf^\otimes_X$ on $X$, is considered. As a consequence, by taking $k$-points of these stacks, we get more classical versions of formal glueing for the derived $\infty$-categories of almost perfect, and perfect modules on $X$.  None of this results seem to be direct consequences of existing formal glueing results in the literature. Notice also, that while the formal glueing problem for $\mathbf{Bun}_G(X)$ has been considered before (e.g. by Beauville-Laszlo in \cite{Beauville-Laszlo}), the generality of the results proved in this paper seems to be new. \medbreak

Our main idea is to interpret the compatibility between geometric objects over $U$ and over $\hZ$ in non-commutative algebraic geometry. Even though the (formal) scheme-theoretic intersection $U \cap \hZ = \hZ \smallsetminus Z$ does not exist, we can construct a functor that plays the role of this punctured formal neighborhood (see  Section \ref{subsec:punctured_formal_neighbourhood}, where it is denoted by $\hPsi^\circ_{Z/X}$ and it is a functor from the big affine \'etale site of $X$ to stacks).
It is important to remark that even though this functor does not have descent, (almost) perfect modules, and $G$-bundles on this functor (simply defined by post-composition with $\bfCoh^{\otimes,-}, \bfPerf^\otimes \colon \St_k\op \to \Catstmon$, and with $\mathbf{Bun}_G \colon \St_k\op \to \cS$\footnote{This is our notation for the $\infty$-category of $\infty$-groupoids.}) \emph{do} satisfy descent.
This is indeed the content of \cref{thm:etale_descent} and \cref{prop:bunGpunctdescent}.
This brings us to the first main results of this paper, that we gather in the following:

\begin{thm-intro}[{see\ \cref{def:coherentPFN} and Theorems \ref{prop:one_step_flag_decomposition_coh} \& \ref{thm:one_step_flag_decomposition_perf} \& \ref{thm:gbundles}}] \label{thm-intro:one_step_decomposition}
	Let $X$ be an Artin stack, locally of finite type over $k$, $Z \hookrightarrow X$ be a closed substack and $U = X \smallsetminus Z$ be the open complement.
	Then there exist stacks $\bfCoh^{\otimes,-}_{\hZ \smallsetminus Z}$, $\bfPerf^\otimes_{\hZ \smallsetminus Z}$ (over $(\Aff_k, \tauet)$ and with values in $\Catstmon$), and $\mathbf{Bun}_G(\hZ \smallsetminus Z)$ (over $(\Aff_k, \tauet)$ and with values in $\infty$-groupoids),  equipped with canonical maps
	\begin{gather*}
		\bfCoh^{\otimes,-}_U \to \bfCoh^{\otimes,-}_{\hZ \smallsetminus Z}, \quad \bfCoh^{\otimes,-}_{\hZ} \to \bfCoh^{\otimes,-}_{\hZ \smallsetminus Z} , \\
		\bfPerf^\otimes_U \to \bfPerf^\otimes_{\hZ \smallsetminus Z}, \quad \bfPerf^\otimes_{\hZ} \to \bfPerf^\otimes_{\hZ \smallsetminus Z} , \\
		\mathbf{Bun}_G(U) \to \mathbf{Bun}_G(\hZ \smallsetminus Z), \quad \mathbf{Bun}_G(\hZ) \to \mathbf{Bun}_G(\hZ \smallsetminus Z)
	\end{gather*}
	that yield equivalences
	\begin{gather*}
		\bfCoh^{\otimes,-}_X \simeq \bfCoh^{\otimes,-}_U \times_{\bfCoh^{\otimes,-}_{\hZ \smallsetminus Z}} \bfCoh^{\otimes,-}_{\hZ}, \\
		\bfPerf^\otimes_X \simeq \bfPerf^\otimes_U \times_{\bfPerf^\otimes_{\hZ \smallsetminus Z}} \bfPerf^\otimes_{\hZ} ,
	\end{gather*}
	and 
	\[ \mathbf{Bun}_G(X) \simeq \mathbf{Bun}_G(U) \times_{\mathbf{Bun}_G(\hZ \smallsetminus Z)} \mathbf{Bun}_G(\hZ) . \]
\end{thm-intro}

We then use these stacky formal glueing results for $\bfCoh_{X}^{\otimes,-}$, $\bfPerf^\otimes_X$ in \cref{sec:formal_glueing} to prove the result that actually was at the origin of this paper. Given a  flag of substacks in $X$, we can, with some care, iterate our formal glueing \cref{thm-intro:one_step_decomposition}, to get a \emph{flag decomposition} of the stacks of symmetric monoidal derived $\infty$-categories of (almost) perfect modules on $X$, and for the stack of $G$-bundles on $X$:

\begin{thm-intro}[{see Corollaries \ref{cor:full_flag_decomposition} \& \ref{flagbung}}] \label{thm-intro:full_flag_decomposition}
	Let $X$ be an Artin stack, locally of finite type over $k$, and let $\cZ \coloneqq (Z_0, Z_1, \ldots, Z_n)$ be a flag on $X$.
	Let $\fV_{i+1}$ be the formal open substack which is complementary to the closed immersion $Z_{i+1} \hookrightarrow \widehat{Z_i}$.
	Then there are canonical equivalences (of appropriate stacks)
	\[ \bfCoh^{\otimes,-}_X \simeq \bfCoh^{\otimes,-}_{\fV_1} \times_{\bfCoh^{\otimes,-}_{\widehat{Z_1} \smallsetminus Z_1}} \left( \bfCoh^{\otimes,-}_{\fV_2} \times_{\bfCoh^{\otimes,-}_{\widehat{Z_2} \smallsetminus Z_2}} \left( \cdots \left( \bfCoh^{\otimes,-}_{\fV_n} \times_{\bfCoh^{\otimes,-}_{\widehat{Z_n} \smallsetminus Z_n}} \bfCoh^{\otimes,-}_{\widehat{Z_n}} \right) \cdots \right) \right) ,\]
	\[ \bfPerf^\otimes_X \simeq \bfPerf^\otimes_{\fV_1} \times_{\bfPerf^\otimes_{\widehat{Z_1} \smallsetminus Z_1}} \left( \bfPerf^\otimes_{\fV_2} \times_{\bfPerf^\otimes_{\widehat{Z_2} \smallsetminus Z_2}} \left( \cdots \left( \bfPerf^\otimes_{\fV_n} \times_{\bfPerf^\otimes_{\widehat{Z_n} \smallsetminus Z_n}} \bfPerf^\otimes_{\widehat{Z_n}} \right) \cdots \right) \right) , \]
	and 
	\[ \bfBun_G(X) \simeq \bfBun_G(\fV_1) \times_{\bfBun_G(\widehat{Z_1} \smallsetminus Z_1)} \left( \bfBun_G(\fV_2) \times_{\bfBun_G(\widehat{Z_2} \smallsetminus Z_2)} \left( \cdots \left( \bfBun_G(\fV_n) \times_{\bfBun_G(\widehat{Z_n} \smallsetminus Z_n)} \bfBun_G(\widehat{Z_n}) \right) \cdots \right) \right) . \]
\end{thm-intro}

These decomposition equivalences depend on the fixed flag, so that different flags will a priori give different decompositions.
This is of particular interest in our program, of which this paper can be considered a first step, of understanding what the Geometric Langlands program for higher dimensional varieties should really be. We will say a few more words about this at the end of this Introduction.\medbreak

\begin{rem} Let us mention that, when $X$ is a noetherian scheme, a reconstruction result in the spirit of our \cref{thm-intro:full_flag_decomposition}, has been recently established by M.\ Groechenig in the very interesting paper \cite{Groechenig_Adelic_2015}. Groechenig proves an \emph{all-flags} reconstruction, i.e. an adelic decomposition result, where one considers \emph{all} flags in $X$ at once (instead of just a fixed, non-necessarily full one). The two results are related, and we suspect that the main results in \cite{Groechenig_Adelic_2015} should follow from \cref{thm-intro:full_flag_decomposition} (by applying it to each simplicial level), but the exact relation between the two results certainly requires some further investigation.
\end{rem}

At the initial stage of this project, we drew some inspiration from the paper of O.\ Ben-Bassat and M.\ Temkin \cite{Ben-Bassat_Temkin_Tubular_2013}.
In \cite{Ben-Bassat_Temkin_Tubular_2013}, for $X$ a $k$-scheme of finite type, the punctured formal neighbourhood is constructed as a \emph{non-archimedean} space, while in our setting it is constructed as an object in non-commutative geometry.
However, putting aside the choice of the language to be used, our paper introduces several novelties to the subject.
It seems relevant to emphasise them here:
\begin{enumerate}
	\item In \cite{Ben-Bassat_Temkin_Tubular_2013}, only a weaker version of \cref{thm-intro:full_flag_decomposition} is obtained: on one side, only flags of length one are considered, and, on the other side, only the \emph{abelian category} of (discrete) coherent sheaves is taken into account. Our result implies the main result of \cite{Ben-Bassat_Temkin_Tubular_2013} by considering flags of length one and taking $k$-points in the equivalences of \cref{thm-intro:full_flag_decomposition} (see \cref{cor:comparison} for a precise comparison between our construction and the one of \cite{Ben-Bassat_Temkin_Tubular_2013}).
	\item In \cite{Ben-Bassat_Temkin_Tubular_2013} it is always assumed that $X$ and $Z$ are $k$-varieties. Here we show that this assumption is not necessary. This is an important point for us: the proof of our decomposition theorem for flags of length bigger that one relies heavily on the fact that our one-step decomposition holds for non-reduced schemes (and, actually, Artin stacks).
\end{enumerate}
There is a third important point that deserves to be underlined, which is tightly related to the third main result of this paper.
In order to better explain this point, let us remark that the stable $\infty$-category of almost perfect modules on the formal punctured neighbourhood
\[ \Coh^-(\hZ \smallsetminus Z) \coloneqq \bfCoh^{\otimes,-}_{\hZ \smallsetminus Z}(\Spec(k)) \]
is an important \emph{global} invariant of the embedding $Z \hookrightarrow X$: its construction is the very heart of the proof of \cref{thm-intro:full_flag_decomposition}.
However, despite being a global invariant, the easiest way to define it is by gluing local data.
In fact, if $X = \Spec(A)$ and $Z = \Spec(A/J)$, then it is certainly not surprising that there is a natural equivalence
\[ \Coh^-(\hZ \smallsetminus Z) \simeq \Coh^-(\Spec(\widehat{A}) \smallsetminus Z) , \]
where $\widehat{A}$ is the formal completion of $A$ at $J$, and $Z$ is viewed as a closed subscheme of $\Spec(\widehat{A})$ via the isomorphism $\widehat{A} / J \simeq A / J$.
Now, \cref{thm:etale_descent} guarantees that the assignment sending an affine $U = \Spec(A_U)$ mapping to $X$ to the stable $\infty$-category $\Coh^-(\widehat{Z_U} \smallsetminus Z_U)$ (where $Z_U = U \times_X Z$ and $\widehat{Z_U}$ denotes the formal completion of $U$ along $Z_U$) is actually a sheaf.
This enables us to define $\Coh^-(\hZ \smallsetminus Z)$ even in the case where $X$ is not affine, via a \emph{sheafification} procedure.
However, an explicit description of the associated sheaf is, as usual, not easy to get. When $X$ is a quasi-compact and quasi-separated \emph{scheme}, we manage to improve this situation (see \cref{subsec:case_schemes}). Our main result in this respect is the following:

\begin{thm-intro}[{see \cref{prop:gluing_right_adjoints} and \cref{thm:Zariski_computational_tool}}] \label{thm-intro:case_schemes}
	Let $X$ be a \emph{quasi-compact} and \emph{quasi-separated} scheme locally of finite type over $k$, and let $Z \hookrightarrow X$ be a closed subscheme.
	Then the canonical functor
	\[ j^* \colon \Coh^-(\hZ) \to \Coh^-(Z) \]
	admits a right adjoint $j_* \colon \Coh^-(Z) \to \Coh^-(\hZ)$.
	Furthermore, if $\cK(X)$ denotes the smallest full stable subcategory of $\Coh^-(\hZ)$ which is left complete and contains the essential image of $j_*$, then
	\[ \cK(X) \hookrightarrow \Coh^-(\hZ) \to \Coh^-(\hZ \smallsetminus Z) \]
	is a cofiber sequence in $\Catst$.
\end{thm-intro}

It seems worth spending a few more words about the proof of \cref{thm-intro:case_schemes}.
This is a rather technical proof with two main ingredients. The first one is a very delicate base-change property for coherent modules on the punctured formal neighborhood (see \cref{cor:PFN_right_adjointable}). The second one is a \emph{localization theorem} for almost perfect modules, in the same vein of Thomason's localization theorem for perfect modules (cf.\ \cite{Thomason_Trobaugh_Algebraic_K_theory_1990,Neeman_Connection_K_theory_localization}).
More precisely, in \cref{sec:localization_almost_perfect} we prove:

\begin{thm-intro}[{see \cref{thm:localization_for_coh_minus}}] \label{thm-intro:localization_almost_perfect}
	Let $X$ be a quasi-compact quasi-separated scheme which is locally Noetherian over $k$.
	Let $i \colon U \to X$ be a quasi-compact open immersion.
	Let $\cK$ be the kernel of the pullback $i^* \colon \Coh^-(X) \to \Coh^-(U)$.
	Then
	\[ \cK \hookrightarrow \Coh^-(X) \to \Coh^-(U) \]
	is a cofiber sequence in $\Catst$.
\end{thm-intro}

We could not locate this result in the literature.
Since this seems a result of independent interest, that may have other interesting applications, we decided to include a full proof.
The closest results existing in the literature we are aware of, are the already mentioned localization theorem of Thomason, as well as a localization theorem for $\Cohb$ proved by D.\ Gaitsgory in \cite[\S 4.1]{Gaitsgory_IndCoh}.
The proof of the latter result passes through a similar localization theorem for $\IndCoh$, which in turn can be deduced from the analogous property of $\QCoh$.
On the other hand, our proof of \cref{thm-intro:localization_almost_perfect} reduces to the already known statement for $\Cohb$.
This reduction is achieved via a very careful analysis of the behaviour of $t$-structures with respect to the operation of forming Verdier quotients in $\Catst$.
As a byproduct of our proof, we obtain that, keeping the same notations as in \cref{thm-intro:localization_almost_perfect}, and letting $\cK^b$ be the kernel of the pullback $i^* \colon \Cohb(X) \to \Cohb(U)$, the sequence
\[ \cK^b \hookrightarrow \Cohb(X) \to \Cohb(U) \]
is a cofiber sequence not only in $\Catstidem$ but also in $\Catst$.
This is, in itself, an improvement on the result obtained in \cite{Gaitsgory_IndCoh}.
\medbreak

We now come to the last main result in this paper. The main motivation that brought us to writing this paper has been the attempt to understanding the current status of the Geometric Langlands program for surfaces (see below for a more detailed account of our motivations).
From this point of view, Theorems \ref{thm-intro:one_step_decomposition} and \ref{thm-intro:full_flag_decomposition} are not quite sufficient: what is actually needed is a version of the same results stated there, but for the \emph{derived versions} of the stacks involved.
It is not particularly hard to obtain such derived versions: this is precisely the content of \cref{sec:DAG}, where we show how the derived counterparts of our main flag decomposition results can be deduced from the underived statements.
We can therefore summarize the last couple of main results of this paper as follows:

\begin{thm-intro}[{see \cref{def:derived_coherent_PFN} and Theorems \ref{prop:one_step_flag_decomposition_coh_derived} \& \ref{thm:flag_bung_derived}}]
	Let $X$ be a \emph{derived} Artin stack locally of finite presentation over $k$, $Z \hookrightarrow X$ be a \emph{derived} closed substack, and $U = X \smallsetminus Z$ be the open complement.
	Then there exist \emph{derived} stacks $\rR \bfCoh^{\otimes,-}_{\hZ \smallsetminus Z}$, $\rR \bfPerf^\otimes_{\hZ \smallsetminus Z}$ (over $(\mathrm{dAff}_k, \tauet)$ and with values in $\Catstmon$), and $\rR \mathbf{Bun}_G(\hZ \smallsetminus Z)$ (over $(\mathrm{dAff}_k, \tauet)$ and with values in $\infty$-groupoids),  equipped with canonical maps
	\begin{gather*}
		\rR \bfCoh^{\otimes,-}_U \to \rR \bfCoh^{\otimes,-}_{\hZ \smallsetminus Z}, \quad \rR \bfCoh^{\otimes,-}_{\hZ} \to \rR \bfCoh^{\otimes,-}_{\hZ \smallsetminus Z} , \\
		\rR \bfPerf^\otimes_U \to \rR \bfPerf^\otimes_{\hZ \smallsetminus Z}, \quad \rR \bfPerf^\otimes_{\hZ} \to \rR \bfPerf^\otimes_{\hZ \smallsetminus Z} , \\
		\rR \mathbf{Bun}_G(U) \to \rR \mathbf{Bun}_G(\hZ \smallsetminus Z), \quad \rR \mathbf{Bun}_G(\hZ) \to \rR \mathbf{Bun}_G(\hZ \smallsetminus Z)
	\end{gather*}
	that yield equivalences
	\begin{gather*}
		\rR \bfCoh^{\otimes,-}_X \simeq \rR \bfCoh^{\otimes,-}_U \times_{\rR \bfCoh^{\otimes,-}_{\hZ \smallsetminus Z}} \rR \bfCoh^{\otimes,-}_{\hZ}, \\
		\rR \bfPerf^\otimes_X \simeq \rR \bfPerf^\otimes_U \times_{\rR \bfPerf^\otimes_{\hZ \smallsetminus Z}} \rR \bfPerf^\otimes_{\hZ} ,
	\end{gather*}
	and 
	\[ \rR \mathbf{Bun}_G(X) \simeq \rR \mathbf{Bun}_G(U) \times_{\rR \mathbf{Bun}_G(\hZ \smallsetminus Z)} \rR \mathbf{Bun}_G(\hZ) . \]
\end{thm-intro}
It is important to notice that this theorem is stronger than \cref{thm-intro:one_step_decomposition} even when $X$ is a variety. It indeed extends the formal glueing to the derived structures of the involved stacks.

From Theorem 5, we can easily deduce, as in the underived setting, the following derived flag decomposition theorem:

\begin{thm-intro}[{see Corollaries \ref{cor:full_flag_decomposition_derived} \& \ref{cor:flag_bung_derived}}]
	Let $X$ a \emph{derived} Artin locally of finite presentation over $k$ and let $\cZ \coloneqq (Z_0, Z_1, \ldots, Z_n)$ be a flag on $X$.
	Let $\fV_{i+1}$ be the formal open derived substack which is complementary to the closed immersion $Z_{i+1} \hookrightarrow \widehat{Z_i}$.
	Then there are canonical equivalences (of appropriate \emph{derived} stacks)
	\begin{gather*}
		\rR \bfCoh^{\otimes,-}_X \simeq \rR \bfCoh^{\otimes,-}_{\fV_1} \times_{\rR \bfCoh^{\otimes,-}_{\widehat{Z_1} \smallsetminus Z_1}} \left( \rR \bfCoh^{\otimes,-}_{\fV_2} \times_{\rR \bfCoh^{\otimes,-}_{\widehat{Z_2} \smallsetminus Z_2}} \left( \cdots \left( \rR \bfCoh^{\otimes,-}_{\fV_n} \times_{\rR \bfCoh^{\otimes,-}_{\widehat{Z_n} \smallsetminus Z_n}} \rR \bfCoh^{\otimes,-}_{\widehat{Z_n}} \right) \cdots \right) \right) , \\
		\rR \bfPerf^\otimes_X \simeq \rR \bfPerf^\otimes_{\fV_1} \times_{\rR \bfPerf^\otimes_{\widehat{Z_1} \smallsetminus Z_1}} \left( \rR \bfPerf^\otimes_{\fV_2} \times_{\rR \bfPerf^\otimes_{\widehat{Z_2} \smallsetminus Z_2}} \left( \cdots \left( \rR \bfPerf^\otimes_{\fV_n} \times_{\rR \bfPerf^\otimes_{\widehat{Z_n} \smallsetminus Z_n}} \rR \bfPerf^\otimes_{\widehat{Z_n}} \right) \cdots \right) \right) ,
	\end{gather*}
	and 
	\[ \begin{split}
	\rR \bfBun_G(X) \simeq \rR \bfBun_G(\fV_1) \times_{\rR \bfBun_G(\widehat{Z_1} \smallsetminus Z_1)} \bigg( \rR & \bfBun_G (\fV_2) \times_{\rR \bfBun_G(\widehat{Z_2} \smallsetminus Z_2)} \bigg( \cdots \\
	& \cdots \left( \rR \bfBun_G(\fV_n) \times_{\rR \bfBun_G(\widehat{Z_n} \smallsetminus Z_n)} \rR \bfBun_G(\widehat{Z_n}) \right) \cdots \bigg) \bigg) .
	\end{split} \]
\end{thm-intro}

\medbreak

\paragraph{\textbf{Motivations and vistas.}}

This paper grew out of an attempt to understand the current status and difficulties of a Geometric Langlands program for surfaces (and more generally, for varieties of dimension $>1$), and, though still a far cry from that, it might be regarded as our first step in this direction.
Despite some very interesting attempts (see \cite{Ginzburg_Kapranov_Langlands_reciprocity_1995, Kapranov_Analogies_Langlands_1995}), such a program has no precise and definitive formulation at the moment, and the main purpose of our long term program is to uncover at least some of the geometric structures that should be involved in such a formulation. \medbreak

Our basic idea is the following (we spell it out for a surface $X=S$, for simplicity of exposition), and the reader will find some more details on this in \cref{vistas}.
Let $\mathbf{D}_S$ be either one of the stacks $\bfCoh_S^{-}$, $\bfPerf_S$ and $\mathbf{Bun}_G(S)$, or any of their corresponding derived versions,  considered above.
To any flag $u = (S, C, x)$ in $S$, we may associate the stack appearing in the right hand side of our decomposition theorem \ref{thm-intro:full_flag_decomposition}:
\[ \mathbf{D}^{\mathrm{fl}}_{u,S} \coloneqq \mathbf{D}_{S \smallsetminus C} \times_{\mathbf{D}_{\hC \smallsetminus C}} \left( \mathbf{D}_{\hC \smallsetminus x} \times_{\mathbf{D}_{\hat{x} \smallsetminus x}} \mathbf{D}_{\hat{x}} \right) . \]
To $u$, we can also associate the category of $k$-points of $\mathbf{D}^{\mathrm{fl}}_{u,S}$, which we denote $\rD^{\mathrm{fl}}_{u,S}$.
Then, \cref{thm-intro:full_flag_decomposition} itself allows us to informally view $\mathbf{D}^{\mathrm{fl}}_{S} \colon u \mapsto \rD^{\mathrm{fl}}_{u, S}$ as a locally constant stacks of stable $\infty$-categories over the \emph{flag Hilbert scheme} $\cH_S^{\mathrm{fl}}$ of $S$.
In particular, this suggests the existence of an action of the \'etale fundamental group of $\cH_S^{\mathrm{fl}}$ on the stack $\mathbf{D}_S$, and on the derived $\infty$-category $\rD(S)$.
This action is interesting per se, but we would like to think that this is probably just a small part of a more general ``geometric structure'' acting on $\mathbf{D}_S$.
One can in fact guess that various geometric operations on flags in $S$ (like taking disjoint unions, intersections, curve contractions when allowed, etc.) might organize themselves into some kind of operadic-like structure that acts on $\mathbf{D}_S$.
Alternatively, following the analogy with what happens in the Geometric Langlands program for curves, one might expect that $\mathbf{D}^{\mathrm{fl}}_{S}$ (or an appropriate modification thereof) carries the structure of a kind of higher factorizable sheaf on a \emph{Ran version} of the flag Hilbert scheme  $\cH_S^{\mathrm{fl}}$.
Either of these intuitions, if correct, would provide very interesting geometrical actions on $\mathbf{D}_S$, and in particular on $D(S)$.
And there is a hope to recover from this action, the Grojnowski-Nakajima's Heisenberg Lie algebra action. \medbreak

This investigation looks extremely interesting to us, especially in the case $\mathbf{D}_S = \rR \mathbf{Bun}_{G}(S)$, the derived stack of $G$-bundles on $S$, for its possible relation with the Geometric Langlands program for surfaces. We are currently investigating what the geometric action alluded above (coming from geometric operations on flags in $S$) does on $\rR \mathbf{Bun}_{G}(S)$ or, possibly, on various kind of ``sheaves'' on $\rR \mathbf{Bun}_{G}(S)$, and if it is possible to use it in order to define a Hecke-like action on $\rR \mathbf{Bun}_{G}(S)$, as it happens in the case when $S$ is a curve. More about this topic will appear in a forthcoming, separate paper. \medbreak

\paragraph{\textbf{Acknowledgments}}

GV would like to thank Bhargav Bhatt for very interesting e-mail exchanges at the beginning of our investigation on the subject matter of this paper, Michael Groechenig, Ian Grojnowski, and Nick Rozenblyum for useful discussions. BH wishes to thank Benjamin Antieau for very useful comments on a preliminary version of the paper. 
MP would like to thank Andrew Macpherson, Marco Robalo and Tony Yue Yu for several useful discussions related to the subject of this paper. BH and MP would like to thank the Perimeter Institute for providing an excellent scientific environment in which part of this project has been carried out.
We are also grateful Mikhail Kapranov, Tony Pantev, and Carlos Simpson for some inspiring conversations, suggesting that our long-term program, might be not completely out of reach.
We also thank Michael Temkin for giving us his opinion on some of the rigid-analytic proofs contained in an earlier version of this paper.

Finally, during the preparation of this paper, MP was partially supported by the Simons Collaboration on Homological Mirror Symmetry.

\begin{center} \Huge{$\sim$} \end{center}

\bigskip

\paragraph{\textbf{Notations and conventions.}}

All the categories are $\infty$-categories unless explicitly stated.
We made an effort to keep statements and proof model independent, but, whenever a model is required, we choose quasi-categories and we refer to \cite{HTT} for notations and terminology.
The symbol $\cS$ is reserved for the $\infty$-category of spaces.
We let $\LPr$ (resp.\ $\RPr$) the $\infty$-categories of presentable $\infty$-categories and functors that are left adjoints (resp.\ right adjoints) between them (cf.\ \cite[\S 5.5.3]{HTT}).
We denote by $\Catst$ the $\infty$-category of stable $\infty$-categories and exact functors between them (cf.\ \cite[\S 1.1]{Lurie_Higher_algebra}).
We let $\Catstlc$ (resp.\ $\Catstlb$) denote the $\infty$-category of stable $\infty$-categories equipped with left complete (resp.\ left bounded) $t$-structures and functors that are $t$-exact.
Furthermore, we denote by $\Catstmon$ the $\infty$-category of symmetric monoidal stable $\infty$-categories (cf.\ \cite[2.1.4.13]{Lurie_Higher_algebra}).
Finally, if $\cC$ is a stable $\infty$-category, we denote by $\stMap_\cC$ the \emph{stable} mapping space, i.e.\ the mapping space enriched in the $\infty$-category $\Sp$ of spectra. \medbreak

If $\cT$ is a presentable $\infty$-category and $(\cC, \tau)$ is a Grothendieck site, we denote by $\Sh_\cT(\cC, \tau)$ the (presentable) $\infty$-topos of $\tau$-sheaves on $\cC$ with values in $\cT$.
If $\cT = \cS$, we simply write $\Sh(\cC, \tau)$ instead of $\Sh_{\cS}(\cC, \tau)$.
We write $\St_\cT(\cC, \tau)$ for the $\infty$-topos of hypercomplete sheaves on $(\cC, \tau)$: $\St_{\cT}(\cC, \tau) \coloneqq \Sh_{\cT}(\cC, \tau)^\wedge$ (cf.\ \cite[\S 6.5.3]{HTT}).
Given a diagram of $\infty$-categories
\[ \begin{tikzcd}
	\cC \arrow{r}{L} \arrow{d}{F} & \cD \arrow{d}{G} \\
	\cC' \arrow{r}{L'} & \cD'
\end{tikzcd} \]
which commutes up to a specified homotopy $\alpha \colon L' \circ F \to G \circ L$, we say that the square is \emph{right adjointable} if $L$ and $L'$ have right adjoints $R \colon \cD \to \cC$ and $R' \colon \cD' \to \cC'$ and the induced push-pull transformation
\[ F \circ R \to R' \circ G \]
is an equivalence (cf.\ \cite[7.3.1.1]{HTT}).
\medbreak

We work over a fixed base field $k$ (of any characteristic).
We denote by $\Aff_k$ the category of affine schemes \emph{of finite presentation} over $k$.
We equip it with the standard \'etale topology, denoted $\tauet$.
Moreover, we write $\St_k$ for the $\infty$-topos $\St(\Aff_k, \tauet)$.
For a fixed scheme $X$, we denote by $\QCoh(X)$ the stable $\infty$-category of quasi-coherent sheaves on $X$.
We endow $\QCoh(X)$ with its standard $t$-structure.
The heart $\QCohh(X)$ is the $1$-category of (discrete) quasi-coherent sheaves on $X$.
Following \cite[\S 1.3]{DAG-VIII}, we can promote the assignment $X \mapsto \QCoh(X)$ to an $\infty$-functor
\[ \bfQCoh^\otimes \colon \St_k\op \to \Catstmon . \]
Finally, $\bfCoh^{\otimes,-}$ and $\bfPerf^\otimes$ denote the full substacks of $\bfQCoh^\otimes$ spanned respectively by almost perfect modules\footnote{Let $X$ be a noetherian scheme and let $\cF \in \QCoh(X)$ be a quasi-coherent complex on $X$. Then $\cF$ is almost perfect if and only if the cohomology sheaves $\rH^i(\cF)$ are coherent sheaves and vanish for $i \gg 0$} and by perfect modules. \medbreak

In \cref{appendix:comparison} we consider $k$ as a field endowed with the trivial valuation.
We denote by $\Ank$ the category of $k$-analytic spaces, which are understood in the sense of Berkovich.
We refer to \cite{Conrad_Several_approaches_2008,Bosch_Lectures_2014,Berkovich_Vanishing_II_1996} for some background on $k$-analytic spaces.

\section{Localization for almost perfect modules} \label{sec:localization_almost_perfect}

The goal of this first section is to prove a localization theorem for almost perfect modules, in the same spirit of the localization for $\Perf$ proved by Thomason (cf.\ \cite{Thomason_Trobaugh_Algebraic_K_theory_1990,Neeman_Connection_K_theory_localization}).
This theorem is used later in \cref{subsec:case_schemes} in order to provide a global description of the category of almost perfect modules on the punctured formal neighbourhood (cf.\ \cref{thm:Zariski_computational_tool}).

We could not locate this localization result in the literature, and on the other side we expect it to be of independent interest.
For these reasons, we decided to include a detailed proof, which occupies this whole section.
Before plunging into the details, let us spend a couple of words on the proof.
The localization theorem of Thomason takes place in the $\infty$-category $\Catstidem$ of stable $\infty$-categories which are idempotent complete.
It states that if $X$ is a quasi-compact and quasi-separated scheme, $i \colon U \to X$ is a quasi-compact open embedding and $\cK \coloneqq \ker( i^* \colon \Perf(X) \to \Perf(U) )$, then
\[ \cK \hookrightarrow \Perf(X) \to \Perf(U) \]
is a cofiber sequence in $\Catstidem$.
With the technology of $\infty$-categories available nowadays, the proof can be summarized as follows: since the Ind construction establishes an equivalence of $\infty$-categories between $\Catstidem$ and $\LPromegast$, it is equivalent to prove the localization result for the $\infty$-category $\Ind(\Perf(X))$.
Since $X$ is quasi-compact and quasi-separated, we know that $\Ind(\Perf(X)) \simeq \QCoh(X)$.
Then, the result follows immediately from the fact that the functor $i_* \colon \QCoh(U) \to \QCoh(X)$ is fully faithful and commutes with arbitrary colimits.

In \cite{Gaitsgory_IndCoh} an analogous result is proven for $\IndCoh$ instead of $\QCoh$.
The key step is to prove that
\[ i_*^{\IndCoh} \colon \IndCoh(U) \to \IndCoh(X) \]
is fully faithful and commutes with arbitrary colimits.
This is achieved by reduction to the already known case of $\QCoh(X)$.
From here, taking compact objects, one deduces that the category $\Cohb(X)$ has the localization property in $\Catstidem$.

Nevertheless, this approach does not work equally well for almost perfect modules.
The reason is to be found in the difficulty of defining a pushforward functoriality for $\Ind(\Coh^-(X))$.
Our strategy consists rather in a reduction to the case of $\Cohb(X)$ via the equivalence of $\infty$-categories $\Catstlb \simeq \Catstlc$ between stable $\infty$-categories with left bounded $t$-structures (and $t$-exact functors between them) and stable $\infty$-categories with left complete $t$-structures (and $t$-exact functors between them).
In order to apply this strategy, we need some general results concerning Verdier quotients of stable $\infty$-categories and $t$-structures.
This is the content of the next subsection.

\subsection{Verdier quotients and $t$-structures} \label{subsec:Verdier_quotient_t_structure}

The key result of this subsection is \cref{prop:quotient_t_structure}, which studies the conditions under which a Verdier quotient of a stable $\infty$-category $\cC$ equipped with a $t$-structure inherits a $t$-structure in such a way that the quotient map is both left and right $t$-exact.

We start with the following well known lemma.
We include the proof for completeness.

\begin{lem} \label{lem:t_structure_Ind}
	Let $\cC$ be a stable $\infty$-category equipped with a $t$-structure $(\cC^{\le 0}, \cC^{\ge 0})$.
	Then $(\Ind(\cC^{\le 0}), \Ind(\cC^{\ge 0}))$ defines a $t$-structure on $\Ind(\cC)$ satisfying the following requirements:
	\begin{enumerate}
		\item the Yoneda embedding $\cC \to \Ind(\cC)$ is $t$-exact;
		\item the $t$-structure on $\Ind(\cC)$ is compatible with filtered colimits.
	\end{enumerate}
\end{lem}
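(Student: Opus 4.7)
The plan is to verify the three defining axioms of a $t$-structure for the pair $(\Ind(\cC^{\le 0}), \Ind(\cC^{\ge 0}))$ inside $\Ind(\cC)$, and then deduce the two additional properties almost for free from the construction.

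First I would note that both $\Ind(\cC^{\le 0})$ and $\Ind(\cC^{\ge 0})$ are presentable full subcategories of $\Ind(\cC)$, closed under filtered colimits by construction; closure of the former under $[1]$ and of the latter under $[-1]$ is immediate because the shift functor on $\Ind(\cC)$ is an equivalence (hence commutes with filtered colimits) and restricts on $\cC$ to the corresponding shifts on $\cC^{\le 0}$ and $\cC^{\ge 0}$.

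Next, I would establish the Hom-vanishing axiom. Let $X \in \Ind(\cC^{\le 0})$ and $Y \in \Ind(\cC^{\ge 1})$, and write $X \simeq \colim_i X_i$ with $X_i \in \cC^{\le 0}$ and $Y \simeq \colim_j Y_j$ with $Y_j \in \cC^{\ge 1}$, both colimits filtered. Since each $X_i$ is compact in $\Ind(\cC)$ (it lies in the image of the Yoneda embedding), we compute
\[
\Map_{\Ind(\cC)}(X, Y) \simeq \lim_i \colim_j \Map_{\Ind(\cC)}(X_i, Y_j) \simeq \lim_i \colim_j \Map_{\cC}(X_i, Y_j),
\]
and each mapping space $\Map_{\cC}(X_i, Y_j)$ is contractible by the $t$-structure on $\cC$. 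Hence $\Map_{\Ind(\cC)}(X, Y) \simeq 0$.

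Then I would construct the truncation triangles. Given $Z \in \Ind(\cC)$, pick a filtered diagram $I \to \cC$, $i \mapsto Z_i$, with $Z \simeq \colim_i Z_i$. The truncation functors on $\cC$ yield morphisms of diagrams $\tau_{\le 0} Z_\bullet \to Z_\bullet \to \tau_{\ge 1} Z_\bullet$ which are fiber sequences levelwise. Taking filtered colimits in the stable $\infty$-category $\Ind(\cC)$ preserves fiber sequences, and produces
\[
\colim_i \tau_{\le 0} Z_i \longrightarrow Z \longrightarrow \colim_i \tau_{\ge 1} Z_i,
\]
which is the sought-after truncation triangle with first term in $\Ind(\cC^{\le 0})$ and third term in $\Ind(\cC^{\ge 1})$. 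The functoriality of the truncations on $\cC$ makes the colimits well defined, and this is the step I expect to be the most delicate — essentially, it rests on the fact that filtered colimits in $\Ind(\cC)$ are exact, which is standard.

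Finally, the Yoneda embedding sends $\cC^{\le 0}$ and $\cC^{\ge 0}$ to their respective Ind-completions by definition, hence is $t$-exact; and compatibility of the $t$-structure with filtered colimits is tautological, since $\Ind(\cC^{\ge 0})$ is generated under (and therefore closed under) filtered colimits.
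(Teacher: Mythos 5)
Your proof is correct, but it follows a genuinely different route from the paper's. The paper does not verify the $t$-structure axioms by hand: it invokes \cite[1.4.4.11]{Lurie_Higher_algebra} to produce the $t$-structure on $\Ind(\cC)$ whose connective part is the smallest full subcategory containing $\cC^{\le 0}$ and closed under all colimits, and then identifies $\Ind(\cC)^{\le 0} = \Ind(\cC^{\le 0})$ and $\Ind(\cC)^{\ge 0} = \Ind(\cC^{\ge 0})$ by adjunction and mapping-space arguments, obtaining (1) and (2) along the way. You instead check the axioms directly on the pair $(\Ind(\cC^{\le 0}), \Ind(\cC^{\ge 0}))$: orthogonality via compactness of Yoneda images and the $\lim_i \colim_j$ formula, closure under shifts, and truncation triangles produced as filtered colimits of the levelwise truncation triangles, using that filtered colimits in the stable $\infty$-category $\Ind(\cC)$ preserve (co)fiber sequences. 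Your route is more elementary and yields the identification of the connective and coconnective parts by construction, whereas the paper's route additionally records that $\Ind(\cC^{\le 0})$ is closed under \emph{all} colimits in $\Ind(\cC)$ and that the $t$-structure is accessible --- facts convenient for the later use of \cref{prop:quotient_t_structure}, though not needed for the statement itself. One small point you should spell out: calling compatibility with filtered colimits ``tautological'' hides the (easy but necessary) observation that $\Ind(\cC^{\ge 0}) \to \Ind(\cC)$ is fully faithful and preserves filtered colimits, so its essential image is closed under filtered colimits in $\Ind(\cC)$; being ``generated under filtered colimits'' by $\cC^{\ge 0}$ does not by itself assert closure.
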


\begin{proof}
	Let $\Ind(\cC)^{\le 0}$ be the smallest full subcategory of $\Ind(\cC)$ containing $\cC^{\le 0}$ and closed under colimits.
	It follows from \cite[1.4.4.11]{Lurie_Higher_algebra} that this defines a (unique) $t$-structure on $\Ind(\cC)$.
	Let us first prove that the Yoneda embedding $j \colon \cC \to \Ind(\cC)$ is $t$-exact.
	Since it is left $t$-exact by construction, it is enough to prove that $j$ is right $t$-exact.
	To see this, we first remark that the induced functor $\Ind(\cC^{\le 0}) \to \Ind(\cC)$ is fully faithful.
	Since the inclusion $\cC^{\le 0} \hookrightarrow \cC$ commutes with colimits, it follows that precomposition with $i$
	\[ i^* \colon \PSh(\cC) \to \PSh(\cC^{\le 0}) \]
	takes presheaves that commutes with finite limits to presheaves with the same property.
	Therefore, $i^*$ restricts to a right adjoint for $\Ind(\cC^{\le 0}) \to \Ind(\cC)$.
	This shows that $\Ind(\cC^{\le 0})$ is closed under colimits in $\Ind(\cC)$, and therefore that $\Ind(\cC)^{\le 0} \subseteq \Ind(\cC^{\le 0})$.
	The reverse inclusion follows directly from the definition, and so we obtain the equality
	\[ \Ind(\cC)^{\le 0} = \Ind(\cC^{\le 0}) . \]
	Let now $X \in \cC^{\ge 1}$ and $Y \in \Ind(\cC)^{\le 0}$.
	We can write
	\[ Y = \colim j(Y_\alpha) , \]
	where $Y_\alpha \in \cC^{\le 0}$.
	As consequence, we obtain
	\begin{align*}
	\Map_{\Ind(\cC)}(Y, j(X)) & = \lim \Map_{\Ind(\cC)}(j(Y_\alpha), j(X)) \\
	& = \lim \Map_{\cC}(Y_\alpha, X) = 0
	\end{align*}
	This shows that $j(X) \in \Ind(\cC)^{\ge 1}$ and thus completes the proof that $j$ is $t$-exact.
	
	Let us now prove that the $t$-structure on $\Ind(\cC)$ is compatible with filtered colimits.
	Let $X \colon I \to \Ind(\cC)^{\ge 0}$ be a filtered diagram and let $Y \in \Ind(\cC)^{\le 0}$.
	Write again
	\[ Y = \colim j(Y_\alpha) , \]
	with $Y_\alpha \in \cC^{\le 0}$. Then
	\[ \Map_{\Ind(\cC)}(Y, \colim_\beta X_\beta) = \lim_\alpha \colim_\beta \Map_{\cC}(Y_\alpha, X_\beta) = 0 , \]
	and so $\colim_\beta X_\beta \in \Ind(\cC)^{\ge 0}$.
	
	Let us finally prove that $\Ind(\cC)^{\ge 0} = \Ind(\cC^{\ge 0})$.
	Since $j$ is $t$-exact and $\Ind(\cC)^{\ge 0}$ is closed under filtered colimits, we see that
	\[ \Ind(\cC^{\ge 0}) \subseteq \Ind(\cC)^{\ge 0} . \]
	Now let $X \in \Ind(\cC)^{\ge 0}$.
	Write
	\[ X = \colim j(X_\alpha) . \]
	Since $\tau_{\ge 0} \colon \Ind(\cC) \to \Ind(\cC)^{\ge 0}$ is left adjoint to the inclusion $\Ind(\cC)^{\ge 0} \subset \Ind(\cC)$ (cf.\ \cite[1.2.1.5]{Lurie_Higher_algebra}), we conclude that
	\[ X \simeq \tau_{\ge 0}(X) \simeq \colim j( \tau_{\ge 0}(X_\alpha) ) . \]
	It follows that $X \in \Ind(\cC^{\ge 0})$, and therefore that
	\[ \Ind(\cC)^{\ge 0} = \Ind(\cC^{\ge 0}) . \]
	The proof is thus complete.
\end{proof}

\begin{lem} \label{lem:t_structure_Ind_restriction}
	Let $\cC$ be a stable $\infty$-category equipped with a $t$-structure $(\cC^{\le 0}, \cC^{\ge 0})$ and endow $\Ind(\cC)$ with the $t$-structure provided by \cref{lem:t_structure_Ind}.
	Let $\cD$ be any other stable $\infty$-category and let $f \colon \cC \to \cD$ be an exact functor.
	If $f$ is essentially surjective, the following conditions are equivalent:
	\begin{enumerate}
		\item $\cD$ admits a $t$-structure such that $f$ is $t$-exact.
		\item $\Ind(\cD)$ admits a $t$-structure such that $\Ind(f) \colon \Ind(\cC) \to \Ind(\cD)$ is $t$-exact.
	\end{enumerate}
\end{lem}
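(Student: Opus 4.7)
The plan is to prove the two implications separately, using the explicit description of the $t$-structure on $\Ind(\cC)$ (and $\Ind(\cD)$, once available) obtained in \cref{lem:t_structure_Ind}, and pivoting on the essential surjectivity of $f$ in the harder direction.

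For the direction $(1) \Rightarrow (2)$, I would apply \cref{lem:t_structure_Ind} to $\cD$ in order to equip $\Ind(\cD)$ with the induced $t$-structure. Both $t$-structures on $\Ind(\cC)$ and $\Ind(\cD)$ are compatible with filtered colimits and their non-positive parts coincide with $\Ind(\cC^{\le 0})$ and $\Ind(\cD^{\le 0})$. Since $\Ind(f)$ preserves filtered colimits and since by hypothesis $f$ sends $\cC^{\le 0}$ to $\cD^{\le 0}$, the functor $\Ind(f)$ sends $\Ind(\cC)^{\le 0}$ into $\Ind(\cD)^{\le 0}$. The same argument using the description $\Ind(\cC)^{\ge 0} = \Ind(\cC^{\ge 0})$ established in the previous lemma gives right $t$-exactness, and hence $t$-exactness of $\Ind(f)$.

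The main work lies in the converse $(2) \Rightarrow (1)$. Let $j \colon \cC \to \Ind(\cC)$ and $j' \colon \cD \to \Ind(\cD)$ denote the Yoneda embeddings; both are fully faithful, and $j$ is $t$-exact by \cref{lem:t_structure_Ind}. I would define
\[ \cD^{\le 0} \coloneqq (j')^{-1}\bigl(\Ind(\cD)^{\le 0}\bigr), \qquad \cD^{\ge 0} \coloneqq (j')^{-1}\bigl(\Ind(\cD)^{\ge 0}\bigr), \]
and show that this defines a $t$-structure on $\cD$. The only non-formal point is that $\cD$ is closed in $\Ind(\cD)$ under the truncation functors $\tau_{\le 0}$ and $\tau_{\ge 1}$. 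This is where essential surjectivity of $f$ enters: given $Y \in \cD$, write $Y \simeq f(X)$ for some $X \in \cC$. Because $j$ is $t$-exact, one has $\tau_{\le 0}(j(X)) \simeq j(\tau_{\le 0}^{\cC}(X))$ in $\Ind(\cC)$, and using the assumed $t$-exactness of $\Ind(f)$ together with $\Ind(f) \circ j \simeq j' \circ f$, I obtain
\[ \tau_{\le 0}(j'(Y)) \simeq \tau_{\le 0}(\Ind(f)(j(X))) \simeq \Ind(f)(j(\tau_{\le 0}^{\cC}(X))) \simeq j'(f(\tau_{\le 0}^{\cC}(X))) , \]
which lies in the essential image of $j'$. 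The same computation handles $\tau_{\ge 1}$, so the truncation triangle of $j'(Y)$ in $\Ind(\cD)$ lies in $j'(\cD)$, yielding the desired $t$-structure on $\cD$. Finally, $f$ is $t$-exact by construction: $f(\cC^{\le 0}) \subseteq \cD^{\le 0}$ follows from $\Ind(f)(j(\cC^{\le 0})) \subseteq \Ind(\cD)^{\le 0}$ and the fact that $j'$ is fully faithful and reflects the $t$-structure, and similarly for the connective half.

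The anticipated obstacle is the $(2) \Rightarrow (1)$ direction: one needs to argue that the truncation of $j'(Y)$ in $\Ind(\cD)$ actually comes from an object of $\cD$, which is not automatic and crucially uses both the essential surjectivity of $f$ and the pre-existence of a $t$-structure on $\cC$ to produce the required lift via the identity $\tau_{\le 0}(j(X)) \simeq j(\tau_{\le 0}^{\cC}(X))$.
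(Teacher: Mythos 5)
Your proposal is correct and follows essentially the same route as the paper: the easy direction applies \cref{lem:t_structure_Ind} to $\cD$ exactly as the paper does, and for the converse you define $\cD^{\le 0}$, $\cD^{\ge 0}$ by pulling back along the Yoneda embedding and use essential surjectivity of $f$ to lift an object to $\cC$, truncate there, and transport the truncation triangle via $\Ind(f) \circ j_\cC \simeq j_\cD \circ f$ and the $t$-exactness of $\Ind(f)$ — which is the paper's argument, phrased through truncations rather than by exhibiting the fiber sequence $f(Y') \to f(Y) \to f(Y'')$ directly.
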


\begin{proof}
	Suppose first that $\cD$ admits a $t$-structure and that $f$ is $t$-exact.
	Since $\Ind(f)$ takes $\Ind(\cC^{\le 0})$ (resp.\ $\Ind(\cC^{\ge 0})$) to $\Ind(\cD^{\le 0})$ (resp.\ $\Ind(\cD^{\ge 0})$), it follows directly from \cref{lem:t_structure_Ind} that $\Ind(f)$ is $t$-exact.
	
	Suppose now that $\Ind(\cD)$ admits a $t$-structure such that $\Ind(f)$ is $t$-exact.
	Define
	\[ \cD^{\le 0} \coloneqq \cD \cap \Ind(\cD)^{\le 0}, \qquad \cD^{\ge 0} \coloneqq \cD \cap \Ind(\cD)^{\ge 0} . \]
	Since $j_\cD \colon \cD \to \Ind(\cD)$ is fully faithful we immediately see that for $X \in \cD^{\le 0}$ and $Y \in \cD^{\ge 1}$ one has
	\[ \Map_{\cD}(X, Y) = \Map_{\Ind(\cD)}(j_\cD(X), j_\cD(Y)) = 0 . \]
	Similarly, since $j_\cD$ is exact, we obtain
	\[ \cD^{\le 0}[1] \subseteq \cD^{\le 0}, \qquad \cD^{\ge 0}[-1] \subseteq \cD^{\ge 0} . \]
	Now let $X \in \cD$.
	We need to exhibit a fiber sequence
	\[ X' \to X \to X'' , \]
	where $X' \in \cD^{\le 0}$ and $X'' \in \cD^{\ge 1}$.
	Since $f$ is essentially surjective, we can choose an object $Y \in \cC$ and an equivalence $f(Y) \simeq X$.
	Pick a fiber sequence in $\cC$
	\[ Y' \to Y \to Y'' , \]
	where $Y' \in \cC^{\le 0}$ and $Y'' \in \cC^{\ge 1}$.
	Then
	\[ f(Y') \to f(Y) \to f(Y'') \]
	is again a fiber sequence in $\cC$.
	Furthermore, using the fact that $\Ind(f)$ is $t$-exact, we obtain:
	\[ j_\cD(f(Y')) = \Ind(f)(j_\cC(Y')) \in \Ind(\cD)^{\le 0}, \qquad j_\cD(f(Y'')) = \Ind(f)(j_\cC(Y'')) \in \Ind(\cD)^{\ge 1} . \]
	This shows that $f(Y') \in \cD^{\le 0}$ and $f(Y'') \in \cD^{\ge 1}$, and thus that $(\cD^{\le 0}, \cD^{\ge 0})$ defines a $t$-structure on $\cD$.
	Furthermore, it shows that, with respect to this $t$-structure, $f$ is $t$-exact.
\end{proof}

\begin{lem} \label{lem:localization_essential_surjective}
	Let $\cC$ be an $\infty$-category and let $S$ be a collection of arrows in $\cC$.
	Let $\rh_n \colon \Cat_\infty \to \Cat_\infty^{\le n}$ be the fundamental $n$-category functor (cf.\ \cite[2.3.4.12]{HTT}).
	Then $\rh_n(\cC[S\inv])$ can be identified with the localization in $\Cat_\infty^{\le n}$ of $\rh_n(\cC)$ at the image of $S$ via the canonical functor $\cC \to \rh_n(\cC)$.
	In particular, the localization functor $\cC \to \cC[S\inv]$ is essentially surjective.
\end{lem}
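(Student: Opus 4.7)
The plan is to treat the two statements in sequence: first the universal characterisation of $\rh_n(\cC[S\inv])$, then essential surjectivity as a specialisation to $n=1$.

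For the first assertion, I would exploit the fact that $\rh_n \colon \Cat_\infty \to \Cat_\infty^{\le n}$ is left adjoint to the fully faithful inclusion $\Cat_\infty^{\le n} \hookrightarrow \Cat_\infty$ (this is \cite[2.3.4.12]{HTT}). Given any $n$-category $\cD$, the adjunction together with the defining universal property of $\cC[S\inv]$ yields a chain of natural equivalences
\[ \Map_{\Cat_\infty^{\le n}}(\rh_n(\cC[S\inv]), \cD) \simeq \Map_{\Cat_\infty}(\cC[S\inv], \cD) \simeq \Map_{\Cat_\infty}(\cC, \cD)_S , \]
where the last decoration means the full subspace spanned by those functors that send every arrow of $S$ to an equivalence of $\cD$. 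Applying the adjunction once more gives $\Map_{\Cat_\infty}(\cC, \cD) \simeq \Map_{\Cat_\infty^{\le n}}(\rh_n(\cC), \cD)$, and the inversion condition translates into inverting the image of $S$ inside $\rh_n(\cC)$. By Yoneda, this identifies $\rh_n(\cC[S\inv])$ with the localization of $\rh_n(\cC)$ at the image of $S$, computed internally to $\Cat_\infty^{\le n}$.

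For the second assertion, I would specialise to $n=1$. By the previous paragraph, $\rh_1(\cC[S\inv])$ is the classical Gabriel--Zisman localization of the $1$-category $\rh_1(\cC)$ at the image of $S$; such a localization does not introduce new objects, so the functor $\rh_1(\cC) \to \rh_1(\cC[S\inv])$ is surjective on objects. An $\infty$-functor is essentially surjective if and only if the induced functor on fundamental $1$-categories is essentially surjective, so the localization $\cC \to \cC[S\inv]$ is itself essentially surjective.

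The only subtlety I anticipate lies in matching the two notions of equivalence implicit in the computation above: an arrow in $\cD$ must be an equivalence in $\cD$ regarded as an $n$-category if and only if it is an equivalence in $\cD$ regarded as an $\infty$-category. This follows directly from the full faithfulness of $\Cat_\infty^{\le n} \hookrightarrow \Cat_\infty$ on mapping spaces; beyond this routine verification, the argument reduces to a straightforward adjointness computation followed by a classical observation about $1$-categorical localizations.
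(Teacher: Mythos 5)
Your proof is correct and follows essentially the same route as the paper: the chain of equivalences coming from the adjunction of \cite[2.3.4.12]{HTT} together with the universal property of $\cC[S\inv]$, followed by the specialisation to $n=1$ and the Gabriel--Zisman description of the $1$-categorical localization, which creates no new objects. The only difference is cosmetic (mapping spaces versus functor categories, and your explicit remark about matching notions of equivalence, which the paper leaves implicit).
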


\begin{proof}
	It follows from \cite[2.3.4.12(4)]{HTT} that there for every $n$-category $\cD$ there is a canonical equivalence
	\[ \Fun(\rh_n(\cC[S\inv]), \cD) \simeq \Fun(\cC[S\inv], \cD) \simeq \Fun_S(\cC, \cD) \simeq \Fun_S(\rh_n(\cC), \cD) . \]
	This proves the first statement.
	To see that $\cC \to \cC[S\inv]$ is essentially surjective, it is enough to apply the lemma with $n = 1$: this shows that $\rh(\cC[S\inv])$ is the $1$-categorical localization of $\rh(\cC)$ at the collection of maps $S$. The functor $\rh(\cC) \to \rh(\cC)[S\inv]$ is essentially surjective because of the explicit description of the $1$-categorical localization given in \cite{Gabriel_Zisman_Calculus_of_fractions}, and this is enough to complete the proof.
\end{proof}

\begin{lem} \label{lem:quotient_t_structure_left_complete}
	Let $\cC$ be a stable $\infty$-category equipped with a left complete $t$-structure $(\cC^{\le 0}, \cC^{\ge 0})$.
	Let $\cK \subset \cC$ be a full stable subcategory and let $\cD \coloneqq \cC / \cK$ be the Verdier quotient of $\cC$ by $\cK$, computed in $\Catst$.
	Suppose that:
	\begin{enumerate}
		\item $\cK$ is the kernel of the quotient map $L \colon \cC \to \cD$;
		\item $\cD$ admits a $t$-structure $(\cD^{\le 0}, \cD^{\ge 0})$ such that $L$ is $t$-exact.
	\end{enumerate}
	Then the following conditions are equivalent:
	\begin{enumerate}
		\item the $t$-structure on $\cD$ is left complete;
		\item the full subcategory $\cK$ is left complete in the sense that $X \in \cC$ belongs to $\cK$ if and only if $\tau_{\ge n}(X) \in \cK$ for every $n \in \mathbb Z$.
	\end{enumerate}
\end{lem}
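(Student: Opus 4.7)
The proof has two parts, and I would approach both by exploiting the $t$-exactness of $L$ together with the essential surjectivity of $L$ supplied by \cref{lem:localization_essential_surjective}. A preliminary observation simplifies the biconditional: the ``only if'' half of condition (2) is automatic, since $\cK = \ker(L)$ and the $t$-exactness of $L$ give $X \in \cK \Rightarrow L(\tau_{\ge n} X) \simeq \tau_{\ge n} L(X) \simeq 0 \Rightarrow \tau_{\ge n} X \in \cK$. So condition (2) really amounts to its converse, which via $\cK = \ker L$ translates to the following statement in $\cD$: for every $X \in \cC$, if $\tau_{\ge n} L(X) \simeq 0$ for all $n \in \mathbb Z$, then $L(X) \simeq 0$.

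For (1) $\Rightarrow$ (2), suppose $\tau_{\ge n} X \in \cK$ for all $n$. Then by $t$-exactness we have $\tau_{\ge n} L(X) \simeq 0$ for all $n$, and left completeness of the $t$-structure on $\cD$ forces $L(X) \simeq 0$, so $X \in \cK$.

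For (2) $\Rightarrow$ (1), I would lift the problem to $\cC$ via essential surjectivity: write any $Y \in \cD$ as $L(X)$ and apply $L$ to the equivalence $X \simeq \lim_n \tau_{\le n} X$ provided by left completeness of $\cC$, producing a comparison $Y \simeq L(X) \to \lim_n L(\tau_{\le n} X) \simeq \lim_n \tau_{\le n} Y$ in $\cD$. The key point is that hypothesis (2) controls the fiber of this comparison: all of its $\tau_{\ge n}$-truncations lie in $\cK$, so by (2) the fiber itself lies in $\cK$ and hence vanishes after applying $L$. In particular, taking $Y$ with $\tau_{\ge n} Y \simeq 0$ for all $n$ shows $Y \simeq 0$, giving the intersection condition $\bigcap_n \cD_{\ge n} = 0$.

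I expect the main obstacle to be making the (2) $\Rightarrow$ (1) step rigorous, especially the claim that the fiber of the comparison map lifts to an object of $\cC$ whose truncations all lie in $\cK$. The easy half of left-completeness for $\cD$, namely the intersection condition $\bigcap_n \cD_{\ge n} = 0$, drops out immediately from (2) via essential surjectivity of $L$; the harder half is the convergence of the Postnikov tower in $\cD$, which requires transferring the corresponding property from $\cC$ along $L$ while carefully tracking which countable limits the quotient functor preserves, and using hypothesis (2) to kill the error terms that do not a priori commute with $L$.
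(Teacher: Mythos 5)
Your argument is correct and is essentially the paper's: both implications are handled exactly as you do, by combining $t$-exactness of $L$, the identification $\cK = \ker(L)$, and the essential surjectivity of $L$ coming from \cref{lem:localization_essential_surjective}. The one point worth clarifying is your worry about the ``harder half'' of left completeness: the paper's proof of (2) $\Rightarrow$ (1) consists precisely of what you call the easy half, namely lifting an object of $\bigcap_{n} \cD^{\le n}$ (note the indexing: these are the objects killed by all $\tau_{\ge n}$, not $\bigcap_n \cD^{\ge n}$) to some $Y \in \cC$, observing $\tau_{\ge n}(Y) \in \cK$ for all $n$, and concluding $Y \in \cK$, hence $L(Y) \simeq 0$. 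In other words, the paper identifies left completeness of the quotient with the vanishing of this intersection and never attempts the Postnikov-tower convergence you sketch, so your proposed comparison map $Y \to \varprojlim_n L(\tau_{\ge n} X)$ and the attendant issues (existence of the countable limit in $\cD$, commutation of $L$ with it, lifting the fiber to $\cC$) are not needed to reproduce the paper's proof; if one insists on left completeness in the stronger sense of convergence of Postnikov towers, that would indeed require extra input beyond what either you or the paper provides.
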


\begin{proof}
	Suppose first that the $t$-structure on $\cD$ is left complete.
	Let $Y \in \cC$ and suppose that $\tau_{\ge n}(Y) \in \cK$ for all $n$.
	Since $L$ is $t$-exact, we deduce that $\tau_{\ge n}(L(Y)) = L(\tau_{\ge n}(Y)) \simeq 0$, and therefore that $L(Y) \simeq 0$.
	Since $\cK$ is the kernel of $L$, we can conclude that $Y \in \cK$, so that $\cK$ is left complete.
	
	Suppose now that $\cK$ is left complete.
	Let
	\[ X \in \bigcap_{n \in \mathbb Z} \cD^{\le n} . \]
	Combining \cite[Theorem 1.3]{Drew_Verdier_quotients_2015} and \cref{lem:localization_essential_surjective}, we see that $L$ is essentially surjective.
	Thus, we can choose $Y \in \cC$ such that $L(Y) \simeq X$.
	Since $L$ is $t$-exact, we have
	\[ L(\tau_{\ge n}(Y)) = \tau_{\ge n}(L(Y)) = 0 . \]
	Since $\cK$ is the kernel of $L$, this implies that $\tau_{\ge n}(Y) \in \cK$.
	Thus, if $\cK$ is left complete, we conclude that $Y \in \cK$ and therefore that $X \simeq L(Y) \simeq 0$, i.e.\ the $t$-structure on $\cD$ is left complete.
\end{proof}

\begin{lem} \label{lem:Verdier_fiber_sequence}
	Let $\cC$ be a stable $\infty$-category and let $i \colon \cK \hookrightarrow \cC$ be the inclusion of a full stable subcategory.
	Let $L \colon \cC \to \cC / \cK$ be the canonical quotient map to the Verdier quotient.
	Suppose that:
	\begin{enumerate}
		\item $i \colon \cK \hookrightarrow \cC$ has a right adjoint $R \colon \cC \to \cK$;
		\item $L \colon \cC \to \cC / \cK$ has a fully faithful right adjoint $j \colon \cC / \cK \to \cC$;
		\item $\cK$ is the kernel of $L \colon \cC \to \cC / \cK$.
	\end{enumerate}
	Then for any $X \in \cC$ there is a canonical fiber sequence
	\[ \begin{tikzcd}
		iR(X) \arrow{r}{\varepsilon_X} & X \arrow{r}{\eta_X} & jL(X) ,
	\end{tikzcd} \]
	where $\varepsilon_X$ and $\eta_X$ are respectively the counit of $(i,R)$ and the unit of $(L,j)$.
\end{lem}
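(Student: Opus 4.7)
The plan is to establish the claim in three steps: first show that the composition $\eta_X \circ \varepsilon_X$ is null, which produces a canonical map from $iR(X)$ into $F \coloneqq \fib(\eta_X)$; second, verify that $F$ belongs to $\cK$; third, identify $F$ with $iR(X)$ via the map just constructed.

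For the first step, note that by the $(L,j)$-adjunction together with the fact that $i R(X)$ lies in $\cK = \ker(L)$, we have
\[ \Map_{\cC}(iR(X), jL(X)) \simeq \Map_{\cC/\cK}(L i R(X), L(X)) \simeq \Map_{\cC/\cK}(0, L(X)) \simeq * . \]
In particular, the composite $\eta_X \circ \varepsilon_X$ is canonically null, which yields a (canonical) map $\varphi \colon iR(X) \to F$ whose composition with the structural map $F \to X$ is $\varepsilon_X$.

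For the second step, apply the exact functor $L$ to the fiber sequence $F \to X \to jL(X)$. Because $j$ is fully faithful, the counit of $(L,j)$ is an equivalence; combined with a triangle identity this shows that $L(\eta_X) \colon L(X) \to LjL(X)$ is an equivalence. Consequently $L(F) \simeq 0$, so $F \in \cK$. By the standard fact that, for a fully faithful left adjoint, the counit is an equivalence precisely on the essential image, both $\varepsilon_F \colon iR(F) \to F$ and $\varepsilon_{iR(X)} \colon iR(iR(X)) \to iR(X)$ are equivalences.

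For the third step, apply $R$ to the same fiber sequence. As a right adjoint between stable \infcats, $R$ preserves fiber sequences, and we claim $R(jL(X)) \simeq 0$. Indeed, for any $Y \in \cK$,
\[ \Map_{\cK}(Y, R(jL(X))) \simeq \Map_{\cC}(iY, jL(X)) \simeq \Map_{\cC/\cK}(L i Y, L(X)) \simeq *, \]
since $iY \in \cK = \ker(L)$. Hence $R(F) \to R(X)$ is an equivalence, and therefore so is $iR(F) \to iR(X)$. The naturality square
\[ \varphi \circ \varepsilon_{iR(X)} \simeq \varepsilon_F \circ iR(\varphi) \]
then exhibits $\varphi \circ \varepsilon_{iR(X)}$ as a composite of equivalences, and since $\varepsilon_{iR(X)}$ is itself an equivalence, $\varphi$ is an equivalence.

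The main obstacle is purely bookkeeping: one has to keep track of the two units and two counits simultaneously and verify that the canonical map produced by the null-homotopy in the first step agrees, up to the identifications of the second and third steps, with the abstract equivalence $F \simeq iR(F) \simeq iR(X)$. No deeper input beyond the triangle identities and the full faithfulness of $i$ and $j$ is required.
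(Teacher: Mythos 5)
Your proof is correct, and it rests on the same two observations as the paper's: that $L$ kills $\fib(\eta_X)$ (since $j$ is fully faithful, $L(\eta_X)$ is an equivalence, so the fiber lies in $\cK = \ker(L)$), and that mapping spaces out of objects of $\cK$ into $jL(X)$ are contractible. The difference is in the packaging. The paper uses the second observation directly: any map $Y' \to X$ with $Y' \in \cK$ factors through $\fib(\eta_X)$ up to a contractible space of choices, so $\fib(\eta_X) \to X$ satisfies the universal property of the $\cK$-colocalization and is thereby identified with $\varepsilon_X \colon iR(X) \to X$, with no comparison map to construct or check. You instead build the comparison map $\varphi$ from the nullhomotopy and verify it is an equivalence by applying $R$, which needs the additional (easy) vanishing $R(jL(X)) \simeq 0$ and a final counit-naturality argument. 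One small point in that last step: what you literally establish is that $iR$ applied to the structural map $F \to X$ is invertible, whereas the naturality square requires $iR(\varphi)$ to be invertible; this does follow, because $R(\varepsilon_X)$ is an equivalence by a triangle identity (the unit of $(i,R)$ is invertible as $i$ is fully faithful), so $R(\varphi)$ is an equivalence by two-out-of-three --- exactly the kind of bookkeeping you flagged. Both routes work; the universal-property formulation is simply shorter and avoids that bookkeeping.
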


\begin{proof}
	Let $Y \coloneqq \fib( X \xrightarrow{\eta_X} jL(X) )$.
	Then $L(Y) \simeq 0$, and therefore $Y \in \cK$ by assumption (3).
	Furthermore, if $Y' \in \cK$ and $Y' \to X$ is any given map, we see that
	\[ \Map_\cC(Y', jL(X)) \simeq \Map_{\cC / \cK}(L(Y'), L(X)) \simeq 0 . \]
	In particular, the map factors up to a contractible space of choices through $Y$.
	This shows that $Y$ is a localization of $X$ with respect to $\cK$.
	In other words, $Y \simeq iR(X)$, and the map $Y \to X$ is equivalent to $\varepsilon_X \colon iR(X) \to X$.
\end{proof}

\begin{rem}
	The hypotheses of the above lemma are in particular satisfied if $\cC$ is a presentable, stable $\infty$-category and $\cK$ is a full presentable, stable $\infty$-category closed under colimits.
	Indeed, the first two conditions are well known to be satisfied in this setting, and for the third one we refer to \cite[Proposition 1.18]{Robalo_Homotopy_noncommutative_spaces}.
\end{rem}

We now turn to the main result of this subsection.
Before stating it, we need to introduce a couple of notations.
Let $\cC$ be a presentable, compactly generated, stable $\infty$-category equipped with an accessible $t$-structure $(\cC^{\le 0}, \cC^{\ge 0})$.
Let $i_\cK \colon \cK \hookrightarrow \cC$ be the inclusion of a full stable subcategory which is closed under colimits, and let $R \colon \cC \to \cK$ be a right adjoint for $i_{\cK}$.
Suppose furthermore that $i_\cK$ preserves compact objects.
Let $\cD \coloneqq \cC / \cK$ be the Verdier quotient of $\cC$ by $\cK$ (computed in $\LPromega_{\mathrm{st}}$).
Combining \cite[Theorem 5.7]{Blumberg_Gepner_Universal_2013} and \cite[5.5.4.20]{HTT}, we can identify $\cD$ with the full subcategory of $\cC$ spanned by objects that are right orthogonal to $\cK$.
Let $i_\cD \colon \cD \to \cC$ be the fully faithful right adjoint to the quotient map $L$.
Using \cite[1.4.4.11]{Lurie_Higher_algebra} we see that there exists a unique $t$-structure on $\cD$ whose connective part is the smallest full subcategory of $\cD$ which is closed under colimits and contains $L(\cC^{\le 0})$. We denote this $t$-structure by $(\cD^{\le 0}, \cD^{\ge 0})$.
Observe that the quotient map $L \colon \cC \to \cD$ is left $t$-exact by construction.
A standard adjunction argument shows that $i_\cD$ is right $t$-exact.
With these notations, we can prove:

\begin{prop} \label{prop:quotient_t_structure}
	Let $\cC$ be a presentable, compactly generated, stable $\infty$-category equipped with an accessible $t$-structure $(\cC^{\le 0}, \cC^{\ge 0})$.
	Let $i_{\cK} \colon \cK \hookrightarrow \cC$ be a full stable subcategory which is closed under colimits and preserves compact objects and let $R \colon \cC \to \cK$ be a right adjoint for $i_{\cK}$.
	Finally, let $\cD \coloneqq \cC / \cK$ be the Verdier quotient (computed in $\LPromega_{\mathrm{st}}$), equipped with the $t$-structure $(\cD^{\le 0}, \cD^{\ge 0})$ constructed above.
	The following conditions are equivalent:
	\begin{enumerate}
		\item the quotient map $L \colon \cC \to \cD$ is $t$-exact;
		\item there is a $t$-structure on $\cK$ such that the inclusion $i_\cK \colon \cK \to \cC$ is $t$-exact and, furthermore, for every $X \in \cC^{\ge 0}$, the canonical map $i_{\cK}(R(X)) \to X$ induces a monomorphism on $\rH^0$.
	\end{enumerate}
\end{prop}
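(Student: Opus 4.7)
The plan is to study the natural fiber sequence
\[ i_\cK R(X) \to X \to i_\cD L(X) \]
provided by \cref{lem:Verdier_fiber_sequence} (whose hypotheses are visibly satisfied in the present setting) and to track how each piece sits with respect to the $t$-structure on $\cC$. Two adjunction facts are invoked throughout: since $L \dashv i_\cD$ and $L$ sends $\cC^{\le 0}$ into $\cD^{\le 0}$ by construction, the right adjoint $i_\cD$ sends $\cD^{\ge 0}$ into $\cC^{\ge 0}$; and if $i_\cK$ is $t$-exact, then by the same adjunction principle the right adjoint $R$ sends $\cC^{\ge 0}$ into $\cK^{\ge 0}$, whence $i_\cK R$ sends $\cC^{\ge 0}$ into $\cC^{\ge 0}$.

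For (1) $\Rightarrow$ (2), I would define $\cK^{\le 0} \coloneqq \cK \cap \cC^{\le 0}$ and $\cK^{\ge 0} \coloneqq \cK \cap \cC^{\ge 0}$. Given $Y \in \cK$, applying the (assumed) $t$-exact functor $L$ to the truncation triangle of $Y$ in $\cC$ yields the truncation triangle of $L(Y) = 0$, so both truncations of $Y$ lie in $\ker L = \cK$; this verifies that $(\cK^{\le 0}, \cK^{\ge 0})$ is a $t$-structure on $\cK$ for which $i_\cK$ is $t$-exact. For the monomorphism condition, fix $X \in \cC^{\ge 0}$: by $t$-exactness of $L$ and the preservation property of $i_\cD$ recalled above, $i_\cD L(X) \in \cC^{\ge 0}$, hence $\rH^{-1}(i_\cD L(X)) = 0$; the long exact cohomology sequence associated with the fiber sequence then forces $\rH^0(i_\cK R(X)) \hookrightarrow \rH^0(X)$.

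For the reverse direction, the only property of $L$ that remains to check is $L(\cC^{\ge 0}) \subseteq \cD^{\ge 0}$. Fix $X \in \cC^{\ge 0}$, so that $i_\cK R(X) \in \cC^{\ge 0}$ by the second adjunction fact. The long exact cohomology sequence of the fiber sequence shows that $\rH^n(i_\cD L(X)) = 0$ for every $n \le -2$ (both surrounding terms vanish), and that the fragment
\[ 0 = \rH^{-1}(X) \to \rH^{-1}(i_\cD L(X)) \to \rH^0(i_\cK R(X)) \to \rH^0(X) \]
is exact; the hypothesis that the last map is injective therefore forces $\rH^{-1}(i_\cD L(X)) = 0$. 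Hence $i_\cD L(X) \in \cC^{\ge 0}$.

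To conclude $L(X) \in \cD^{\ge 0}$, note that $\cD^{\le -1}$ is generated under colimits by $L(\cC^{\le -1})$ and that $\Map_\cD(-, L(X))$ takes colimits to limits, so it suffices to show $\Map_\cD(L(Y), L(X)) = 0$ for every $Y \in \cC^{\le -1}$. By adjunction this mapping space coincides with $\Map_\cC(Y, i_\cD L(X))$, which vanishes by the orthogonality of $\cC^{\le -1}$ and $\cC^{\ge 0}$ inside $\cC$. I expect the main delicate step to be the reverse direction, and specifically the extraction of $\rH^{-1}(i_\cD L(X)) = 0$ from the available data; this is precisely the point where the otherwise somewhat peculiar-looking monomorphism condition enters essentially.
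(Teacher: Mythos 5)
Your proof is correct and follows essentially the same route as the paper's: both directions hinge on the localization fiber sequence $i_\cK R(X) \to X \to i_\cD L(X)$, the adjunction facts about $i_\cD$ and $R$, and the long exact cohomology sequence, with the monomorphism hypothesis killing $\rH^{-1}(i_\cD L(X))$ and orthogonality through $i_\cD$ giving $L(X)\in\cD^{\ge 0}$. No substantive differences to report.
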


\begin{rem}
	A similar result has been obtained independently by B.\ Antieau, D.\ Gepner and J.\ Heller in \cite{Antieau_Gepner_Heller_Negative_heart}.
\end{rem}

\begin{proof}
	We start by observing that, for every $X \in \cC$, there is a canonical fiber sequence
	\begin{equation} \label{eq:localization_fiber_sequence}
	i_\cK(R(X)) \to X \to i_\cD(L(X)) .
	\end{equation}
	Remark that in this situation $\cK$ is forcibly the kernel of the quotient map $L \colon \cC \to \cD$.
	
	Suppose first that $L$ is $t$-exact.
	If $L(X) \simeq 0$, then
	\[ L(\tau_{\le n}(X)) \simeq \tau_{\le n}(L(X)) = 0, \qquad L(\tau_{\ge n}(X)) \simeq \tau_{\ge n}(L(X)) \simeq 0 . \]
	It follows that both $\tau_{\le n}(X)$ and $\tau_{\ge n}(X)$ belong to $\cK$, and therefore that the $t$-structure on $\cC$ restricts to a $t$-structure on $\cK$.
	Let now $X \in \cC^{\ge 0}$.
	Since $L$ is $t$-exact, a standard adjunction argument shows that $i_\cD$ is right $t$-exact.
	In particular, $i_\cD(L(X)) \in \cC^{\ge 0}$.
	Passing to the long exact sequence in the fiber sequence \eqref{eq:localization_fiber_sequence} we obtain
	\[ 0 = \rH^{-1}(i_\cD(L(X))) \to \rH^0(i_\cK(R(X))) \to \rH^0(X) \to \rH^0(i_\cD(L(X))) , \]
	from which (2) follows.
	
	Suppose now that (2) holds.
	By construction, $L$ is left $t$-exact.
	It will therefore be sufficient to show that $L$ is right $t$-exact.
	Let $X \in \cC^{\ge 0}$.
	We want to show that $L(X) \in \cD^{\ge 0}$.
	For this, it is enough to show that for every $X' \in \cC^{\le -1}$ one has
	\[ \Map_{\cD}(L(X'), L(X)) = \Map_{\cC}(X', i_\cD(L(X))) = 0 . \]
	In other words, it is enough to prove that $i_\cD(L(X)) \in \cC^{\ge 0}$.
	Since $i_\cK \colon \cK \hookrightarrow \cC$ is $t$-exact, we see that $R$ is right $t$-exact.
	In particular, $R(X) \in \cK^{\ge 0}$, and therefore $i_\cK(R(X)) \in \cC^{\ge 0}$.
	Using the fiber sequence \eqref{eq:localization_fiber_sequence}, we get that $i_\cD(L(X))$ belongs to $\cC^{\ge -1}$. Moreover, with the long exact sequence
	\[ 0=\rH^{-1}(X) \to \rH^{-1}(i_\cD(L(X))) \to \rH^{0}(i_\cK(R(X))) \overset{f}{\to} \rH^{0}(X) . \]
	and the assumption that $f$ is a monomorphism, we get $\rH^{-1}(i_\cD(L(X))) = 0$.
	This completes the proof.
\end{proof}

\begin{cor} \label{cor:quotient_t_structure}
	Let $\cC$ be a stable $\infty$-category equipped with a $t$-structure $(\cC^{\le 0}, \cC^{\ge 0})$.
	Let $i \colon \cK \subseteq \cC$ be a full stable subcategory of $\cC$.
	Suppose that:
	\begin{enumerate}
		\item the $t$-structure on $\cC$ restricts to a $t$-structure on $\cK$;
		\item the induced functor $i \colon \cK^\heartsuit \to \cC^\heartsuit$ has a right adjoint $R \colon \cC^\heartsuit \to \cK^\heartsuit$ and the counit transformation $i(R(X)) \to X$ is a monomorphism for every $X \in \cC^\heartsuit$.
	\end{enumerate}
	Let $\cD \coloneqq \cC / \cK$ be the Verdier quotient (computed in $\Catst$), and let $L \colon \cC \to \cD$ be the quotient map.
	Then there exists a unique $t$-structure on $\cD$ such that $L$ is $t$-exact.
	Furthermore, if $\cK$ is the kernel of $L$ and the $t$-structure on $\cC$ is left complete, the same goes for the one induced on $\cD$.
\end{cor}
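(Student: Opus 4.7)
The plan is to deduce the corollary from \cref{prop:quotient_t_structure} by passing to Ind-completions. Equip $\Ind(\cC)$ and $\Ind(\cK)$ with the Ind-$t$-structures of \cref{lem:t_structure_Ind}. Hypothesis~(1) makes the inclusion $i\colon \cK \hookrightarrow \cC$ $t$-exact, so $\Ind(i)$ is fully faithful, colimit-preserving, $t$-exact, and preserves compact objects; the Verdier quotient $\Ind(\cC)/\Ind(\cK)$ computed in $\LPromegast$ is naturally identified with $\Ind(\cD)$. The remaining hypothesis of \cref{prop:quotient_t_structure} to verify is the monomorphism condition on $\rH^0$. For this, the compatibility of the Ind-$t$-structure with filtered colimits yields $\Ind(\cC)^\heartsuit \simeq \Ind(\cC^\heartsuit)$ (and similarly for $\cK$), so that the restriction of $\Ind(i)$ to hearts is $\Ind(i^\heartsuit)$. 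Since $i^\heartsuit$ preserves compact objects, its Ind-extension admits a right adjoint $\tilde R$ which preserves filtered colimits and agrees with $R$ on $\cC^\heartsuit$. A short adjunction computation yields $\rH^0 R'(X) \simeq \tilde R(\rH^0 X)$ for every $X \in \Ind(\cC)^{\ge 0}$, where $R'$ denotes the right adjoint of $\Ind(i)$. Writing $\rH^0 X \simeq \colim_j M_j$ with $M_j \in \cC^\heartsuit$, the relevant counit becomes $\colim_j (iR(M_j) \to M_j)$: each arrow is a monomorphism by hypothesis~(2), and filtered colimits of monomorphisms in a Grothendieck abelian category remain monomorphisms.

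\cref{prop:quotient_t_structure} then produces a $t$-structure on $\Ind(\cD)$ making $\Ind(L)$ $t$-exact. Since $L\colon \cC \to \cD$ is essentially surjective by \cref{lem:localization_essential_surjective}, \cref{lem:t_structure_Ind_restriction} transfers this to a $t$-structure on $\cD$ with $L$ itself $t$-exact. Uniqueness is immediate: essential surjectivity of $L$ combined with the $t$-structure axioms forces $\cD^{\le 0}$ (resp.\ $\cD^{\ge 0}$) to coincide with the equivalence closure of $L(\cC^{\le 0})$ (resp.\ $L(\cC^{\ge 0})$).

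Finally, assume $\cK = \ker L$ and that the $t$-structure on $\cC$ is left complete. By \cref{lem:quotient_t_structure_left_complete}, it suffices to check that $\cK$ is left complete in that lemma's sense: every $Y \in \cC$ with $\tau_{\ge n}(Y) \in \cK$ for all $n$ must lie in $\cK$. Hypothesis~(1) ensures that each cohomology $\rH^i(Y) \simeq \rH^0(\tau_{\ge i}(Y))$ belongs to $\cK^\heartsuit$, so $L$ sends all cohomologies of $Y$ to zero. The intended strategy is to reconstruct $Y$ from its tower of truncations using left completeness of $\cC$, arguing inductively via stability of $\cK$ under extensions that the bounded pieces lie in $\cK$, and then transferring the resulting sequential limit to $\Ind(\cC)$, where $\Ind(\cK)$ is reflective and hence closed under arbitrary limits, to conclude via the identification $\cC \cap \Ind(\cK) = \cK$. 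Reconciling the paper's notion of left completeness with the limit computation in $\Ind(\cC)$ is the main subtlety of this step.
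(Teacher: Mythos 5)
Your construction of the $t$-structure on $\cD$ is correct and is essentially the paper's own argument: pass to $\Ind$, identify $\Ind(\cD)$ with $\Ind(\cC)/\Ind(\cK)$, check the two requirements of condition (2) of \cref{prop:quotient_t_structure} (the monomorphism condition being verified on the heart $\Ind(\cC)^\heartsuit \simeq \Ind(\cC^\heartsuit)$ and propagated through filtered colimits, which is exactly what the paper does via the natural isomorphism $j_\cK \circ R \simeq \widetilde{R}\circ j_\cC$), and then descend to $\cD$ using \cref{lem:localization_essential_surjective} and \cref{lem:t_structure_Ind_restriction}. Your uniqueness argument via essential surjectivity of $L$ is also fine.

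The last step, however, has a genuine gap. First, the technical claim it rests on is wrong: $\Ind(\cK)$ is \emph{coreflective} in $\Ind(\cC)$ (the inclusion $\Ind(i)$ preserves colimits, hence has a right adjoint), so it is closed under colimits but not under limits; $\Ind(i)$ does not in general preserve infinite products, so there is no left adjoint and no closure under the sequential limit $\varprojlim_n \tau_{\ge n}Y$ you want to use. Second, and more seriously, the statement you are trying to establish there --- that the hypotheses of the corollary together with $\cK = \ker(L)$ and left completeness of $\cC$ force $\cK$ to be left complete in the sense of \cref{lem:quotient_t_structure_left_complete} --- is false. Take $\cC = \Coh^-(\Spec k)$ and $\cK = \Cohb(\Spec k)$: the $t$-structure restricts to $\cK$, the heart condition is trivially satisfied ($\cK^\heartsuit = \cC^\heartsuit$), $\cK$ is thick so $\cK = \ker(L)$, and $\cC$ is left complete; yet $\tau_{\ge n}Y \in \cK$ for \emph{every} $Y \in \cC$ and every $n$, while $\cK \neq \cC$, so $\cK$ is not left complete. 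Correspondingly, the unique $t$-structure on $\cD = \cC/\cK$ making $L$ $t$-exact is the degenerate one with $\cD^{\ge 0} = 0$ (since $\cC^{\ge 0}\subset \cK$), which is not left complete even though $\cD \neq 0$. So no argument can close this step as stated; you were right to identify that \cref{lem:quotient_t_structure_left_complete} requires left completeness of $\cK$ (not of $\cC$), but that hypothesis has to be added, or verified separately in each application --- note that the paper's own proof simply invokes the lemma at this point, and in its actual use (\cref{thm:localization_for_coh_minus}) the relevant kernel is left complete by \cref{prop:internal_characterisation_kernel_localisation}, so with that extra input the conclusion follows from the lemma directly and no limit computation in $\Ind(\cC)$ is needed.
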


\begin{proof}
	The universal property of the $\Ind$-construction shows that $\Ind(\cD)$ can be canonically identified with the Verdier quotient $\Ind(\cC) / \Ind(\cK)$ (computed in $\LPromega_{\mathrm{st}}$).
	Using \cref{lem:t_structure_Ind} we see that $(\Ind(\cK^{\le 0}), \Ind(\cK^{\ge 0}))$ defines a $t$-structure on $\Ind(\cK)$.
	Furthermore, the inclusion $\Ind(\cK) \hookrightarrow \Ind(\cC)$ is $t$-exact.
	Therefore, the first requirement in condition (2) of \cref{prop:quotient_t_structure} is satisfied.
	Let us prove that the second requirement is satisfied as well.
	
	Let us observe that $\Ind(i) \colon \Ind(\cK) \hookrightarrow \Ind(\cC)$ is fully faithful, exact and commutes with filtered colimits by construction.
	It follows that it admits a right adjoint $\oR \colon \Ind(\cC) \to \Ind(\cK)$.
	Since $\Ind(i)$ is $t$-exact, adjointness shows that $\oR$ is right $t$-exact.
	In particular, the functor $\tR \coloneqq \tau_{\le 0} \circ \oR \colon \Ind(\cC)^\heartsuit \to \Ind(\cK)^\heartsuit$ is right adjoint to the induced functor
	\[ \Ind(j) \colon \Ind(\cK)^\heartsuit \hookrightarrow \Ind(\cC)^\heartsuit . \]
	Let $j_\cK \colon \cK \to \Ind(\cK)$ and $j_\cC \colon \cC \to \Ind(\cC)$ be the Yoneda embeddings.
	Since they are $t$-exact by \cref{lem:t_structure_Ind}, we obtain a commutative square
	\[ \begin{tikzcd}
	\Ind(\cK)^\heartsuit \arrow{r}{\Ind(i)} & \Ind(\cC)^\heartsuit \\
	\cK^\heartsuit \arrow{u}{j_\cK} \arrow{r}{i} & \cC^\heartsuit \arrow{u}{j_\cC} .
	\end{tikzcd} \]
	Recall that, by assumption, the functor $i \colon \cK^\heartsuit \to \cC^\heartsuit$ has a right adjoint $R$.
	Using fully faithfulness of $j_\cK$ and $j_\cC$, we obtain that, for every $X \in \cK^\heartsuit$ and every $Y \in \cC^\heartsuit$, one has:
	\begin{align*}
	\Hom_{\Ind(\cK)^\heartsuit}(j_\cK(X), j_\cK(R(Y)) & = \Hom_{\cK^\heartsuit}(X, R(Y)) \\
	& = \Hom_{\cC^\heartsuit}(i(X), Y) \\
	& = \Hom_{\Ind(\cC)^\heartsuit}(j_\cC(i(X)), j_\cC(Y) ) \\
	& = \Hom_{\Ind(\cC)^\heartsuit}(\Ind(i)(j_\cK(X)), j_\cC(Y)) \\
	& = \Hom_{\Ind(\cK)^\heartsuit}(j_\cK(X), \tR(j_\cC(Y)) .
	\end{align*}
	This implies that there is a natural isomorphism
	\[ j_\cK \circ R \simeq \tR \circ j_\cC . \]
	Since $j_\cK$ commutes with limits, we conclude that $j_\cK$ preserves monomorphisms.
	In particular, for every $X \in \cC^\heartsuit$, the canonical map
	\[ \rH^0(\Ind(i)(R(X))) \to \rH^0(X) \]
	is a monomorphism.
	Since if $X \in \cC^{\ge 0}$ one has
	\[ \rH^0(\Ind(i)(R(X))) = \rH^0(\Ind(i)(R(\tau_{\le 0} X))) , \]
	we see that the same conclusion holds for every $X \in \cC^{\ge 0}$.
	Finally, since filtered colimits commute with finite limits and therefore with monomorphism, we conclude that the second requirement of condition (2) of \cref{prop:quotient_t_structure} is satisfied.
	
	Thus, \cref{prop:quotient_t_structure} implies that $\Ind(\cD)$ admits a unique $t$-structure such that the quotient map $\Ind(\cC) \to \Ind(\cD)$ is $t$-exact.
	Combining \cref{lem:localization_essential_surjective} and \cref{lem:t_structure_Ind_restriction} we see that this $t$-structure induces a $t$-structure on $\cD$ in such a way that the localization map $L \colon \cC \to \cD$ becomes $t$-exact.
	Finally, we can use \cref{lem:quotient_t_structure_left_complete} to deduce that if the $t$-structure on $\cC$ is left complete, then the same goes for the one on $\cD$.
\end{proof}

\subsection{Localization}

We now turn to the main result of the section, which is the localization property for almost perfect modules.
We start with the following useful result:

\begin{prop} \label{prop:internal_characterisation_kernel_localisation}
	Let $X$ be a quasi-compact quasi-separated scheme which is locally Noetherian over $k$.
	Let $i \colon U \to X$ be a quasi-compact open immersion and let $j \colon Z \to X$ be the inclusion of the complementary closed (reduced) subscheme.
	Then the kernel $\cK$ of the pullback map $i^* \colon \Coh^-(X) \to \Coh^-(U)$ coincides with the smallest full stable subcategory of $\Coh^-(X)$ which is left complete and contains the essential image of $j_* \colon \Coh^-(Z) \to \Coh^-(X)$.
\end{prop}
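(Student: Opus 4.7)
\medskip

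\noindent\textbf{Proof plan.} Let $\cK_0$ denote the smallest full stable left complete subcategory of $\Coh^-(X)$ containing the essential image of $j_*$. The plan is to establish both inclusions $\cK_0 \subseteq \cK$ and $\cK \subseteq \cK_0$.

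For the easy direction $\cK_0 \subseteq \cK$, I would first observe that $i^* \circ j_* = 0$ because $U \cap Z = \emptyset$, so $j_*(\Coh^-(Z)) \subseteq \cK$. Next, $\cK$ is plainly a full stable subcategory since $i^*$ is exact. Finally, $\cK$ is left complete in the sense of \cref{lem:quotient_t_structure_left_complete}: if $\cF \in \Coh^-(X)$ satisfies $\tau_{\ge n}(\cF) \in \cK$ for every $n \in \Z$, then $t$-exactness of $i^*$ gives $\tau_{\ge n}(i^* \cF) \simeq i^*\tau_{\ge n}(\cF) = 0$ for all $n$; since $i^* \cF \in \Coh^-(U)$ has vanishing cohomology in every degree it must itself vanish, so $\cF \in \cK$. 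By minimality, $\cK_0 \subseteq \cK$.

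The substantive direction is $\cK \subseteq \cK_0$. Given $\cF \in \cK$, I would first analyse its cohomology sheaves $\rH^p(\cF) \in \Coh^\heartsuit(X)$. Since $i^*$ is $t$-exact (and exact at the level of hearts), $i^*\rH^p(\cF) = \rH^p(i^* \cF) = 0$, so each $\rH^p(\cF)$ is set-theoretically supported on $Z$. Using that $X$ is quasi-compact and locally Noetherian and $Z$ is reduced with ideal sheaf $\cI$, there exists $N \ge 1$ (depending on $p$) such that $\cI^N \cdot \rH^p(\cF) = 0$. The $\cI$-adic filtration
\[
0 = \cI^{N}\rH^p(\cF) \subseteq \cI^{N-1}\rH^p(\cF) \subseteq \cdots \subseteq \cI\cdot\rH^p(\cF) \subseteq \rH^p(\cF)
\]
then has successive quotients annihilated by $\cI$, hence each is of the form $j_*(\cG_k)$ for some $\cG_k \in \Coh^\heartsuit(Z) \subseteq \Coh^-(Z)$. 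As $\cK_0$ is closed under fibre sequences and contains the essential image of $j_*$, induction on $N$ yields $\rH^p(\cF) \in \cK_0$.

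To finish, I would leverage boundedness: since $\cF \in \Coh^-(X)$, for every $n \in \Z$ the truncation $\tau_{\ge n}(\cF)$ lies in $\Cohb(X)$ and is obtained as a finite iterated extension of shifts of the sheaves $\rH^p(\cF)$, hence $\tau_{\ge n}(\cF) \in \cK_0$. The left completeness of $\cK_0$ then forces $\cF \in \cK_0$, completing the proof. The main delicate point is isolating the correct left completeness axiom to bridge from the (bounded) truncations, where the filtration-by-$\cI$ argument applies, to the unbounded-below object $\cF$; this is precisely why the statement is formulated in terms of the left complete hull of $j_*(\Coh^-(Z))$ rather than just the stable hull.
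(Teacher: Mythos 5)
Your proposal is correct and follows essentially the same route as the paper: the easy inclusion via $t$-exactness of $i^*$ and left completeness, and the hard inclusion by reducing to the heart, killing each cohomology sheaf by a power of the ideal of $Z$ (using quasi-compactness and Noetherianity), running the $\cI$-adic filtration to land in the stable hull of $j_*(\Coh^-(Z))$, and then passing from bounded truncations to the general object by left completeness. The only cosmetic difference is that the paper filters by the quotients $\cF/\cJ^l\cF$ and inducts on the number of nonvanishing cohomology sheaves, while you filter the heart objects by $\cI^k\rH^p(\cF)$ and assemble truncations as iterated extensions — the same argument in substance.
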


\begin{proof}
	Since $i \colon U \to X$ is an open immersion, the pullback $i^* \colon \Coh^-(X) \to \Coh^-(U)$ is $t$-exact.
	It follows that $i^*$ commutes with truncations, and therefore that $\cK \coloneqq \ker(i^*)$ is closed under truncations.
	Furthermore, if $X \in \Coh^-(X)$ is such that $\tau_{\ge n}(X) \in \cK$ for every $n \in \mathbb Z$, we obtain:
	\[ \tau_{\ge n}(i^*(X)) \simeq i^*(\tau_{\ge n}(X)) \simeq 0 . \]
	Since the $t$-structure on $\Coh^-(U)$ is left complete, we conclude that $i^*(X) \simeq 0$, i.e.\ $X \in \cK$.
	Thus $\cK$ is left complete.
	
	Let now $\cC$ be the smallest full stable subcategory of $\Coh^-(X)$ which is closed uner truncations, which is left complete and contains the essential image of $j_* \colon \Coh^-(Z) \to \Coh^-(X)$.
	By definition, we have $\cC \subseteq \cK$.
	To prove that $\cK \subseteq \cC$, let us start by proving that $\cK^\heartsuit \subseteq \cC$.
	Let $\cF \in \Coh^\heartsuit(X)$ and suppose that $i^*(\cF) \simeq 0$.
	Let $\cJ$ be the ideal sheaf defining $Z$.
	If $V \subset X$ is an affine open and $X$ is locally noetherian, we see that $\cJ|_V^n \subseteq \mathrm{Ann}(\cF |_V)$.
	Since $X$ is quasi-compact, we can find an integer $m \gg 0$ such that $\cJ^m \subset \mathrm{Ann}(\cF)$.
	Thus $\cF$ is supported on $Z^{(m)} \coloneqq \Spec_X(\cO_X / \cJ^m)$ and hence it belongs to the essential image of
	\[ j_*^{(m)} \colon \Coh^-(Z^{(m)}) \to \Coh^-(X) . \]
	Now observe that for $1 \le l \le m$ we have fiber sequences
	\[ \cJ^{l-1} \cF / \cJ^l \cF \to \cF / \cJ^l \cF \to \cF / \cJ^{l - 1} \cF \]
	in $\Coh^-(X)$.
	Then $\cJ^{l-1} \cF / \cJ^l \cF$ is supported on $Z$, and $\cF / \cJ^{l-1} \cF$ is supported on $Z^{(l-1)}$.
	Thus, starting with $l = 2$ and applying induction, we see that $\cF / \cJ^l \cF$ belongs to the stable image of $j_* \colon \Coh^-(Z) \to \Coh^-(X)$ for every $2 \le l \le m$.
	Taking $l = m$, we conclude that the $\cF$ belongs to the stable image of $j_*$, and hence to $\cC$.
	
	We can now prove that $\cK \subset \cC$. Let $\cF \in \cK$.
	Since the $t$-structure on $\Coh^-(X)$ is right bounded and since $X$ is quasi-compact, we can find an integer $m$ such that for every $m' > m$ one has $\rH^{m'}(\cF) = 0$.
	Moreover, we can assume without loss of generality that $m = 0$.
	Since $\cC$ is left complete by assumption, it is enough to prove that $\tau_{\ge n}(\cF)$ belongs to $\cC$ for every $n \le 0$.
	In other words, we can assume that $\cF$ belongs to $\Cohb(X)$.
	We can therefore proceed by induction on the number $n$ of non-vanishing cohomology sheaves of $\cF$.
	If $n \le 1$, $\cF \in \cK^\heartsuit$ and thus we already proved that $\cF \in \cC$.
	If $n > 1$, we have a fiber sequence
	\[ \tau_{\le -1} \cF \to \cF \to \tau_{\ge 0} \cF . \]
	Now $\tau_{\ge 0} \cF \in \cK^\heartsuit$ and $\tau_{\le -1} \cF \in \cK$ and it has at most $n-1$ non-vanishing cohomology sheaves.
	Therefore, the induction hypothesis implies that $\tau_{\ge 0} \cF$ and $\tau_{\le -1} \cF \in \cC$.
	Since $\cC$ is stable, we conclude that $\cF \in \cC$ as well.
	Thus $\cC = \cK$, and the proof is complete.
\end{proof}

We isolate the following by-product of the proof of the previous proposition:

\begin{cor} \label{cor:internal_characterisation_kernel_localisation}
	Let $X$ be a quasi-compact quasi-separated scheme which is locally Noetherian over $k$.
	Let $i \colon U \to X$ be a quasi-compact open immersion and let $j \colon Z \to X$ be the inclusion of the complementary closed (reduced) subscheme.
	Let $\cK$ be the kernel of $i^* \colon \Coh^-(X) \to \Coh^-(U)$.
	Then $\cK^\heartsuit$ consists of those $\cF \in \Coh^\heartsuit(X)$ such that $\cF$ is annihilated by some power of the ideal of definition of $Z$.
\end{cor}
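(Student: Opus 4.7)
The argument is essentially extracted from the proof of \cref{prop:internal_characterisation_kernel_localisation}, and I would present it as a short two-direction verification.

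For the easy direction, suppose $\cF \in \Coh^\heartsuit(X)$ is annihilated by $\cJ^m$, where $\cJ$ is the ideal sheaf of $Z$. Then $\cF$ factors through $j_*^{(m)} \colon \Coh^\heartsuit(Z^{(m)}) \to \Coh^\heartsuit(X)$, where $Z^{(m)} = \Spec_X(\cO_X/\cJ^m)$. Since $Z^{(m)}$ and $Z$ have the same underlying topological space, $Z^{(m)} \cap U = \emptyset$ and hence $i^*\cF = 0$, showing $\cF \in \cK^\heartsuit$.

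For the converse, I would repeat verbatim the local-to-global step already appearing in the proof of \cref{prop:internal_characterisation_kernel_localisation}. Given $\cF \in \Coh^\heartsuit(X)$ with $i^*\cF \simeq 0$, pick any affine open $V = \Spec(A) \subseteq X$. Since $X$ is locally noetherian, $A$ is noetherian and $\cF|_V$ corresponds to a finitely generated $A$-module $M$ whose localization along $\cJ|_V$ vanishes. This forces $\cJ|_V^{n_V}$ to annihilate $M$ for some $n_V \gg 0$, i.e.\ $\cJ|_V^{n_V} \subseteq \mathrm{Ann}(\cF|_V)$. Because $X$ is quasi-compact, it admits a finite cover by such affines, so one can pick a uniform $m$ with $\cJ^m \subseteq \mathrm{Ann}(\cF)$.

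Both directions are essentially formal given the local noetherian and quasi-compact hypotheses; no step presents a genuine obstacle, and the statement really is a direct corollary of the argument already written out in the previous proposition. The only point deserving a sentence of care is the passage from local annihilation (with exponent depending on the open) to a global uniform annihilation exponent, which is where quasi-compactness of $X$ is used.
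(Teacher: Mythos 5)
Your proof is correct and follows essentially the same route as the paper, which obtains this corollary as a by-product of the proof of \cref{prop:internal_characterisation_kernel_localisation}: the nontrivial direction is exactly the local noetherian argument ($\cJ|_V^{n}\subseteq\mathrm{Ann}(\cF|_V)$ on each affine, then quasi-compactness to get a uniform exponent), and the converse is the trivial observation that a sheaf supported on $Z^{(m)}$ restricts to zero on $U$. No gaps; only the phrase ``localization along $\cJ|_V$ vanishes'' should be tightened to ``$\mathrm{Supp}(M)\subseteq V(J)$, hence $J\subseteq\sqrt{\mathrm{Ann}(M)}$ and, $J$ being finitely generated, $J^{n}\subseteq\mathrm{Ann}(M)$''.
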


\begin{thm}  \label{thm:localization_for_coh_minus}
	Let $X$ be a quasi-compact quasi-separated scheme which is locally Noetherian over $k$.
	Let $i \colon U \to X$ be a quasi-compact open immersion.
	Let $\cK$ be the kernel of the pullback $i^* \colon \Coh^-(X) \to \Coh^-(U)$.
	Then the square
	\[ \begin{tikzcd}
	\cK \arrow[hook]{r} \arrow{d} & \Coh^-(X) \arrow{d} \\
	0 \arrow{r} & \Coh^-(U)
	\end{tikzcd} \]
	is a cofiber sequence in $\Catst$.
\end{thm}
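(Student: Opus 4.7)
Set $\cD \coloneqq \Coh^-(X)/\cK$ for the Verdier quotient in $\Catst$, and let $L \colon \Coh^-(X) \to \cD$ be the quotient map. The functor $i^*$ kills $\cK$ by definition and therefore factors as $F \circ L$ for an essentially unique exact functor $F \colon \cD \to \Coh^-(U)$. The theorem reduces to showing that $F$ is an equivalence. My strategy is to equip $\cD$ with a left complete $t$-structure via the machinery of \cref{subsec:Verdier_quotient_t_structure}, reduce the statement to the left bounded level, and there appeal to the localization statement for $\Cohb$ that is known to follow from Gaitsgory's work on $\IndCoh$.

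\textbf{Step 1: a $t$-structure on $\cD$.} I would apply \cref{cor:quotient_t_structure} with $\cC = \Coh^-(X)$ and the kernel $\cK$. The standard $t$-structure on $\Coh^-(X)$ restricts to $\cK$ because $i^*$ is $t$-exact and therefore commutes with truncations (this is also the first observation in the proof of \cref{prop:internal_characterisation_kernel_localisation}). The description in \cref{cor:internal_characterisation_kernel_localisation} identifies $\cK^\heartsuit$ with the full subcategory of coherent sheaves annihilated by some power of the ideal of definition of $Z$; the local cohomology functor $\cF \mapsto \Gamma_Z(\cF)$ is then a right adjoint to $\cK^\heartsuit \hookrightarrow \Coh^\heartsuit(X)$ whose counit is the defining monomorphism. \cref{cor:quotient_t_structure} therefore produces a unique $t$-structure on $\cD$ making $L$ $t$-exact; left completeness of $\Coh^-(X)$, combined with the left completeness of $\cK$ established in \cref{prop:internal_characterisation_kernel_localisation} and the fact that $\cK$ is tautologically the kernel of $L$, guarantees that the resulting $t$-structure on $\cD$ is again left complete.

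\textbf{Step 2: reduction to left bounded objects.} Since $L$ is $t$-exact and essentially surjective (\cref{lem:localization_essential_surjective}), and since $F \circ L = i^*$ is $t$-exact, $F$ itself is $t$-exact. Almost perfect modules are cohomologically bounded above, so the left bounded subcategories of $\cD$ and $\Coh^-(U)$ are $\cD^b = L(\Cohb(X))$ and $\Cohb(U)$ respectively. Both target and source being left complete, the equivalence $\Catstlc \simeq \Catstlb$ reduces the problem to showing that $F$ restricts to an equivalence $F^b \colon \cD^b \to \Cohb(U)$ in $\Catst$. A short truncation argument (use essential surjectivity of $L$ to write an arbitrary object of $\cD^{\ge -n}$ as $L(X)$, then replace $X$ by $\tau_{\ge -n}(X)$) identifies $\cD^b$ with the Verdier quotient $\Cohb(X)/\cK^b$ computed in $\Catst$, where $\cK^b \coloneqq \cK \cap \Cohb(X)$.

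\textbf{Step 3: the bounded localization and the main obstacle.} It remains to prove that the induced functor $\Cohb(X)/\cK^b \to \Cohb(U)$ is an equivalence in $\Catst$. The analogous equivalence in $\Catstidem$ is known by Gaitsgory's localization theorem for $\IndCoh$ (\cite{Gaitsgory_IndCoh}, \S 4.1), obtained there from the corresponding localization for $\IndCoh$ by passing to compact objects. The main obstacle I anticipate is upgrading this equivalence from $\Catstidem$ to $\Catst$: concretely, one must verify that $\Cohb(X)/\cK^b$ is already idempotent complete, equivalently that $F^b$ is essentially surjective on the nose. This should follow from the standard fact that every coherent sheaf on the quasi-compact quasi-separated open $U$ extends to a coherent sheaf on $X$, together with the exactness properties of Verdier quotients set up in \cref{subsec:Verdier_quotient_t_structure}. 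Once combined with Step~2, this yields the desired cofiber sequence in $\Catst$.
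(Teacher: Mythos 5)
Your proposal follows essentially the same route as the paper's proof: install a $t$-structure on the Verdier quotient via \cref{cor:quotient_t_structure} (using \cref{cor:internal_characterisation_kernel_localisation} and the $J$-torsion right adjoint on hearts), observe that it is left complete, reduce to the left bounded level through the equivalence $\Catstlc \simeq \Catstlb$, and finish with Gaitsgory's localization for $\Cohb$ in $\Catstidem$, upgraded to $\Catst$ by extension of coherent sheaves from $U$ to $X$ and induction on cohomological amplitude. All the key ingredients are correctly identified, and your observation that $\cK = \ker(L)$ is immediate from the factorization $i^* = F \circ L$ is in fact a slight simplification of the paper's detour through the idempotent completion.

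The one step you assert without adequate justification is in Step 2: the claim that a ``short truncation argument identifies $\cD^b$ with the Verdier quotient $\Cohb(X)/\cK^b$ computed in $\Catst$''. The truncation argument only shows that $L$ carries $\Cohb(X)$ essentially surjectively onto the left bounded part $\cD^b$; it does not show that the comparison functor $\Cohb(X)/\cK^b \to \cD$ furnished by the universal property is fully faithful, and without that you cannot replace $\cD^b$ by the bounded quotient before invoking the $\Cohb$-localization in Step 3. The gap is real but repairable with the tools you already use: mapping spaces in the quotient are computed by roofs $\cF' \to \cF$ whose cofiber lies in the kernel, and bounded roofs are cofinal among all roofs, because for $\cF \in \Cohb(X)$ and $\cofib(\cF' \to \cF) \in \cK$ the map factors through $\tau_{\ge n}\cF'$ for $n \ll 0$ and the new cofiber again lies in $\cK^b$ --- here one uses that $\cK$ is closed under truncations and contains every almost perfect complex all of whose cohomology sheaves lie in $\cK^\heartsuit$. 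Alternatively, you can organize the reduction as the paper does: show that the $\Catst$-quotient, with its induced $t$-structure, is also the cofiber in $\Catstlc$ (any exact functor out of it killing $\cK$ is automatically $t$-exact, since $\cD^{\le 0} = L(\Coh^-(X)^{\le 0})$ and $\cD^{\ge 0} = L(\Coh^-(X)^{\ge 0})$), prove the corresponding cofiber statement for $\Cohb$ in $\Catstlb$, and transport along $\Catstlc \simeq \Catstlb$. With either patch your argument is complete and coincides in substance with the paper's.
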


\begin{proof}
	Since $i^* \colon \Coh^-(X) \to \Coh^-(U)$ is both left and right $t$-exact, we see that $\cK$ is closed under truncations.
	Thus, the $t$-structure on $\Coh^-(X)$ restricts to a $t$-structure on $\cK$.
	Furthermore, \cref{cor:internal_characterisation_kernel_localisation} shows that we can identify $\cK^\heartsuit$ with the inclusion of the full subcategory of $\Coh^\heartsuit(X)$ spanned by those $M$ such that $\cJ^n \subset \mathrm{Ann}(M)$, where $\cJ$ is the ideal sheaf defining $Z \coloneqq X \smallsetminus U$ (equipped with its reduced scheme structure).
	It follows that $\cK^\heartsuit \hookrightarrow \Coh^\heartsuit(X)$ has a right adjoint, given by the $J$-torsion.
	In particular, the counit map of this adjunction is a monomorphism.
	Thus, the hypotheses of \cref{cor:quotient_t_structure} are satisfied.
	Let $\cD$ be the Verdier quotient $\Coh^-(X) / \cK$.
	Then \cref{cor:quotient_t_structure} shows that $\cD$ admits a $t$-structure such that the localization map $\Coh^-(X) \to \cD$ is $t$-exact.
	
	Let us show that, in addition, the $t$-structure on $\cD$ is left complete.
	For this, it will be enough to prove that $\cK$ is the kernel of $L \colon \Coh^-(X) \to \cD$.
	Let $\widetilde{\cD}$ be the idempotent completion of $\cD$.
	Then $i \colon \cD \to \widetilde{\cD}$ is fully faithful, and therefore
	\[ \ker(L) = \ker(i \circ L) . \]
	Furthermore, $\widetilde{\cD}$ can be identified with the Verdier quotient $\Coh^-(X) / \cK$ computed in $\Catstidem$.
	It follows from \cite[Proposition 1.18]{Robalo_Homotopy_noncommutative_spaces} that $\cK$ is the kernel of the map $i \circ L \colon \Coh^-(X) \to \widetilde{\cD}$.
	Thus $\cK = \ker(L)$, and therefore the conclusion of \cref{cor:quotient_t_structure} applies.
	
	We claim that $\cD$ coincides also with the cofiber of $\cK \hookrightarrow \Coh^-(X)$ when computed in the $\infty$-category $\Catstlc$ of stable $\infty$-categories equipped with a left complete $t$-structure and $t$-exact functors between them.
	Indeed, let $\cE \in \Catstlc$.
	Then, since any functor $F \colon \cD \to \cE$ which is naturally equivalent to a $t$-exact functor is $t$-exact on its own, we conclude that there is a homotopy monomorphism
	\[ u \colon \Map_{\Catstlc}(\cD, \cE) \hookrightarrow \Map_{\Catst}(\cD, \cE) \simeq \Map_{\Catst}^\cK(\Coh^-(X), \cE) , \]
	where $\Map_{\Catst}^\cK(\Coh^-(X), \cE)$ denotes the full subsimplicial set of $\Map_{\Catst}(\Coh^-(X), \cE)$ generated by those functors $F \colon \Coh^-(X) \to \cE$ having the property that for every $\cF \in \cK$, the object $F(\cF)$ is a zero object in $\cE$.
	Similarly, there is a homotopy monomorphism
	\[ v \colon \Map_{\Catstlc}^\cK(\Coh^-(X), \cE) \hookrightarrow \Map_{\Catst}^\cK(\Coh^-(X), \cE) , \]
	and $u$ clearly factors through $v$.
	It will therefore be enough to prove that the induced homotopy monomorphism
	\[ w \colon \Map_{\Catstlc}(\cD, \cE) \hookrightarrow \Map_{\Catstlc}^\cK(\Coh^-(X), \cE) \]
	is surjective on $\pi_0$.
	To see this, fix an $\infty$-functor $F \in \Map_{\Catstlc}^\cK(\Coh^-(X), \cE)$.
	Reviewing $F$ as an $\infty$-functor in $\Catst$ and using the universal property of the cofiber, we can produce an extension
	\[ \widetilde{F} \colon \cD \to \cE \]
	in $\Catst$, and all we are left to check is that $\widetilde{F}$ is $t$-exact.
	Since the localization functor $L \colon \Coh^-(X) \to \cD$ is essentially surjective by \cref{lem:localization_essential_surjective}, we can combine \cref{lem:t_structure_Ind} and \cref{lem:t_structure_Ind_restriction} to conclude that
	\[ \cD^{\le 0} \simeq L((\Coh^-(X))^{\le 0}) . \]
	We now claim that one has also
	\[ \cD^{\ge 0} \simeq L((\Coh^-(X))^{\ge 0}) . \]
	Indeed, let $\cG \in \cD^{\ge 0}$.
	Since $L$ is essentially surjective, we can find $\cF \in \Coh^-(X)$ such that
	\[ L(\cF) \simeq G . \]
	Consider the fiber sequence
	\[ \tau_{\le - 1} \cF \to \cF \to \tau_{\ge 0} \cF . \]
	Since $L$ is $t$-exact, we obtain
	\[ L(\tau_{\le -1} \cF) \simeq \tau_{\le - 1} L(\cF) \simeq \tau_{\le -1}(\cG) \simeq 0 . \]
	Thus
	\[ \cG \simeq L(\cF) \simeq L(\tau_{\ge 0} \cF) . \]
	We can now proceed with the proof of the original claim.
	Let $\cG \in \cD^{\le 0}$ (resp.\ $\cG \in \cD^{\ge 0}$) and write it as $\cG \simeq L(\cF)$, with $\cF \in (\Coh^-(X))^{\le 0}$ (resp.\ $\cF \in (\Coh^-(X))^{\ge 0}$).
	Then
	\[ \widetilde{F}(\cG) \simeq \widetilde{F}(L(\cF)) \simeq F(\cF) . \]
	Since $F$ was assumed to be $t$-exact, we conclude that $\widetilde{F}(\cG) \in \cE^{\le 0}$ (resp.\ $\widetilde{F}(\cG) \in \cE^{\ge 0}$).
	Thus, $\widetilde{F}$ is $t$-exact and the proof of the claim is completed.
	
	Recall from \cite[1.2.1.18]{Lurie_Higher_algebra} that there is an equivalence between $\Catstlc$ and the $\infty$-category $\Catstlb$ of stable $\infty$-categories equipped with a left bounded $t$-structure.
	Under this equivalence, $\Coh^-(X)$ and $\Coh^-(U)$ correspond to $\Cohb(X)$ and $\Cohb(U)$, respectively, and $\cK$ corresponds to the kernel
	\[ \cK^\rb \coloneqq \ker( i^* \colon \Cohb(X) \to \Cohb(U) ) . \]
	For this reason we can reduce the proof of the proposition to the analogous statement for $\Cohb(X)$.
	In this case, combining \cite[Lemma 4.1.1]{Gaitsgory_IndCoh} and the fact that the $\Ind$-construction commutes with localizations of $\infty$-categories, we can deduce that $\Cohb(U)$ is the cofiber of $\cK^\rb \hookrightarrow \Cohb(X)$ in $\Catstidem$.
	We claim that this is also a cofiber sequence in $\Catst$.
	Using the construction of Verdier quotients in $\Catst$ given in \cref{thm:general_Verdier_quotients}, we see that it is enough to prove that the smallest full stable subcategory of $\Cohb(U)$ containing the essential image of $i^* \colon \Cohb(X) \to \Cohb(U)$ is $\Cohb(U)$ itself.
	Denote by $\cA$ this full stable subcategory of $\Cohb(U)$ and let $\cG \in \Cohb(U)$.
	Since $U$ is quasi-compact, $\cG$ lives in a finite cohomological range.
	Let $n(\cG)$ be the cohomological amplitude of $\cG$.
	We prove that $\cG \in \cA$ by induction on $n(\cG)$.
	If $n(\cG) \le 1$, then $\cG \in \Cohh(U)$.
	Therefore, it exists a coherent extension $\cF \in \Cohh(X)$ (i.e.\ the sheaf $\cF$ satisfies $i^*(\cF) \simeq \cG$).
	Suppose now that $n(\cG) \ge 2$.
	We can find a fiber sequence
	\[ \tau_{\le k - 1} \cG \to \cG \to \tau_{\ge k} \cG \]
	such that
	\[ n(\tau_{\le k-1} \cG), n(\tau_{\ge k} \cG) < n(\cG) . \]
	We can thus use the induction hypothesis to deduce that $\tau_{\le k - 1} \cG, \tau_{\ge k} \cG \in \cA$.
	Since $\cA$ is a full stable subcategory of $\Cohb(U)$ (which is closed under equivalences), we conclude that
	\[ \cG \simeq \fib( \tau_{\ge k} \cG \to \tau_{\le k - 1} \cG[-1] ) \in \cA . \]
	This completes the proof of the claim.
	
	It follows now from \cite{Drew_Verdier_quotients_2015} that $\Cohb(U)$ is the localization of $\Cohb(X)$ at the collection of maps
	\[ S_\cK \coloneqq \{ \varphi \colon \cF \to \cG \in \Cohb(X) \mid \cofib(\varphi) \in \cK \} . \]
	Combining this with \cref{lem:localization_essential_surjective}, we conclude that the map $\Cohb(X) \to \Cohb(U)$ is essentially surjective.
	At this point, the same argument used for $\Coh^-(X)$ applies verbatim to show that $\Cohb(U)$ is the cofiber of $\cK^\rb \hookrightarrow \Cohb(X)$ in $\Catstlb$.
	The proof is thus complete.
\end{proof}

\section{Sheaves on the punctured formal neighbourhood}\label{section:sheaves_on_PFN}

In this section we introduce several models for the punctured formal neighbourhood of a closed immersion $Z \hookrightarrow X$.
When we say ``formal neighbourhood'' we simply mean the formal completion $\hZ$ of $X$ along $Z$, which we think geometrically as a tubular neighbourhood of $Z$ inside $X$.
As it has been remarked in the introduction, the ``punctured formal neighbourhood'' does not exist in the realm of formal algebraic geometry, because $\hZ \smallsetminus Z$ is the empty formal scheme.
In this section we introduce three different models for this geometric object:
\begin{enumerate}
	\item at first we construct a functor $\hPsi^\circ_Z \colon X\Et \to \St_k$, where $X\Et$ denotes the big \emph{affine} \'etale site of $X$. The intuition behind $\hPsi^\circ_Z$ is the following: suppose that $X$ is affine, say $X = \Spec(A)$, and let $\widehat{A}$ be the formal completion of $A$ at the ideal defining $Z$; to $\widehat{A}$ we can then associate two different objects: on one side, we have its formal spectrum $\widehat{Z} = \Spf(\widehat{A})$, and on the other side we can consider the affinization of $\widehat{Z}$, i.e.\ $\Spec(\widehat{A})$. Now, $Z$ defines a closed subscheme of $\Spec(\widehat{A})$, and its open complement plays to some extent the role of the punctured formal neighbourhood of $Z$ inside $X$. The operation of taking this open complement is functorial in $X$, and we denote the resulting functor by $\hPsi^\circ_Z$.
	\item Even though $\hPsi^\circ_Z$ is easy to understand and to manipulate, it lacks the most fundamental properties that would allow us to think about it as a sort of geometric object. Surely, the most important issue is that $\hPsi^\circ_Z$ is not continuous with respect to the \'etale topology: it does not take \v{C}ech nerves for the \'etale topology on $\Aff_k$ to colimit diagrams in $\St_k$. For this reason we are lead to introduce variations $\hPhi^\circ_Z$ and $\hrP^\circ_Z$, defined respectively as the composition of $\hPsi^\circ_Z$ with the stacks $\bfCoh^-$ and $\bfPerf$. We think of these functors as ``non-commutative'' incarnations of the punctured formal neighbourhood.
\end{enumerate}
The main result in this section (\cref{thm:etale_descent}) is that the functor $\hPhi^\circ_Z$ satisfies \'etale descent.
From here it is easy to deduce that also $\hrP^\circ_Z$ satisfies \'etale descent (cf.\ \cref{prop:restricted_etale_descent_Perf}).
In turn, this allows us to define the stacks $\bfCoh_{\hZ \smallsetminus Z}$ and $\bfPerf_{\hZ \smallsetminus Z}$, that will play a central role in \cref{sec:formal_glueing}.
It is also important to note that we will actually consider symmetric monoidal variations of these functors.

Before passing to the actual constructions and proofs, we fix the following notations.
We denote by $X$ a fixed Artin stack locally of finite type over $\Spec(k)$, where $k$ is a field.
We fix a closed substack $j \colon Z \to X$ and $\cJ$ will always denote the ideal sheaf of $Z$ inside $X$.
Given a map $U \to X$ of Artin stacks, we denote by $Z_U$ the base change $U \times_X Z$.
We denote the ideal sheaf of $Z_U$ inside $U$ by $\cJ_U$.
The $n$-th formal neighbourhood of $Z_U$ inside $U$ is by definition the scheme
\[ Z_U^{(n)} = \Spec_X( \cO_U / \cJ_U^n ) . \]
We let $\widehat{Z_U}$ be the formal completion of $U$ along $Z_U$, seen as formal stack.
We have
\[ \widehat{Z_U} \simeq \colim Z_U^{(n)} , \]
the colimit being computed either in the category $\PSh(\Aff_k)$ or in the category of formal stacks over $k$.

\subsection{Recollection on formal schemes}

Let $\fX$ be a formal stack over $\Spec(k)$.
We let $\fX_s$ denote the closed fiber of $\fX$ (cf.\ \cite[\S 1]{Berkovich_Vanishing_1994}).
We recall the following well known result:

\begin{lem}[{cf.\ \cite[Lemma 2.1]{Berkovich_Vanishing_1994}}] \label{lem:topological_invariance_etale}
	Let $\fX$ be a formal scheme over $k$.
	The closed fiber functor determines an equivalence of sites
	\[ \fX\et \to (\fX_s)\et . \]
\end{lem}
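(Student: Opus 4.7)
The plan is to reduce to the classical topological invariance of the étale site under nilpotent thickenings. Since this statement is local on $\fX$, I would first reduce to the case where $\fX = \Spf(A)$ is affine, with $A$ an adic ring and $I \subset A$ an ideal of definition, so that $\fX_s = \Spec(A/I)$. Write $\fX = \colim_n \Spec(A/I^{n+1})$ as a filtered colimit of closed immersions, each of which is a nilpotent thickening of $\fX_s$.

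Next, I would invoke the definition of the étale site of a formal scheme: an étale morphism $\fY \to \fX$ is (up to the usual equivalence) a compatible system $\{Y_n \to \Spec(A/I^{n+1})\}_n$ of étale morphisms of ordinary schemes, with the transition squares Cartesian; and a covering family is one that is jointly surjective on the closed fibers. By the classical topological invariance of the étale site (SGA 4, Exp.\ VIII, Thm.\ 1.1), for each $n$ the pullback functor
\[ (\Spec(A/I^{n+1}))\et \to (\Spec(A/I))\et \]
is an equivalence of sites, and these equivalences are compatible with the transition maps. Therefore, passing to the (strict) limit over $n$, the datum of a compatible system $\{Y_n\}_n$ is equivalent to the datum of a single étale morphism $Y_0 \to \fX_s$, with $Y_n$ determined uniquely as the unique étale lift to $\Spec(A/I^{n+1})$. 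This gives the equivalence of underlying categories $\fX\et \simeq (\fX_s)\et$.

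Finally, I would check that this equivalence matches the Grothendieck topologies on both sides: a family $\{\fY_\alpha \to \fX\}$ is a covering in $\fX\et$ iff it is jointly surjective on $\fX_s$, which is precisely the condition that the corresponding family $\{Y_{\alpha,0} \to \fX_s\}$ is a covering in $(\fX_s)\et$. To pass from the affine case to the general one, I would use that both sides satisfy Zariski descent on $\fX$ (since étale morphisms and covering families are Zariski-local), and glue.

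The only step that requires a little care is verifying that the notion of étale morphism we are using for formal schemes is indeed the one expressed as a compatible system of étale morphisms of ordinary schemes; this is standard but depends on conventions (see e.g.\ EGA IV or \cite{Berkovich_Vanishing_1994}). Once this is set up, the proof is essentially a bookkeeping exercise built on top of the topological invariance for ordinary nilpotent thickenings, so there is no substantial obstacle.
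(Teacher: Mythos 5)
Your argument is correct and is essentially the standard one: the paper itself gives no proof, citing \cite[Lemma 2.1]{Berkovich_Vanishing_1994}, and Berkovich's proof is exactly this reduction — describe étale morphisms of formal schemes as compatible systems over the nilpotent thickenings $\Spec(A/I^{n+1})$, apply topological invariance of the étale site levelwise, pass to the limit, and match the coverings via surjectivity on closed fibers. So your proposal matches the intended (cited) approach; no gap.
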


\begin{lem} \label{lem:formal_morphism_sites}
	The assignment
	\[ X\et \to \St_k \]
	given by
	\[ U \mapsto \widehat{Z_U} \]
	factors through $\hZ{\et}$, the small \emph{formal} affine \'etale site of $\hZ$.
	Furthermore, the induced functor
	\[ X\et \to \hZ\et \]
	is a morphism of sites.
\end{lem}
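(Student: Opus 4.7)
The plan is to verify two independent claims: (i) for each affine étale $U \to X$, the canonical map $\widehat{Z_U} \to \widehat Z$ (coming from the base change) is a formal étale morphism of affine formal schemes; and (ii) the resulting functor $X_\et \to \widehat Z_\et$ is continuous in the sense that it sends covers to covers and preserves fiber products.

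For (i), I would first observe that $\widehat{Z_U}$ is affine: writing $U = \Spec(A_U)$ and letting $\cJ_U$ be the ideal of $Z_U$ in $U$, one has $\widehat{Z_U} = \Spf( \lim_n A_U / \cJ_U^n )$. For each $n$, the square
\[ \begin{tikzcd}
Z_U^{(n)} \arrow{r} \arrow{d} & Z^{(n)} \arrow{d} \\
U \arrow{r} & X
\end{tikzcd} \]
is cartesian, so $Z_U^{(n)} \to Z^{(n)}$ is étale as the base change of $U \to X$ along $Z^{(n)} \hookrightarrow X$. The compatible system $\{Z_U^{(n)} \to Z^{(n)}\}_n$ then defines a morphism of formal schemes $\widehat{Z_U} \to \widehat Z$ whose closed fiber is the étale morphism $Z_U \to Z$. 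By \cref{lem:topological_invariance_etale}, étale morphisms to $\widehat Z$ are in bijection with étale morphisms to $Z$; the unique lift of $Z_U \to Z$ is therefore $\widehat{Z_U} \to \widehat Z$, and in particular the latter is formal étale. Functoriality of the assignment $U \mapsto (\widehat{Z_U} \to \widehat Z)$ follows from functoriality of base change and colimits.

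For (ii), I would check the two conditions separately. Given an étale cover $\{U_i \to U\}$ in $X_\et$, base-changing to $Z$ yields a jointly surjective étale family $\{Z_{U_i} \to Z_U\}$; by \cref{lem:topological_invariance_etale} applied to $\widehat{Z_U}$, this family lifts to a jointly surjective étale family $\{\widehat{Z_{U_i}} \to \widehat{Z_U}\}$, i.e.\ a cover in $\widehat Z_\et$. For fiber products, given $U_1, U_2 \to U$ in $X_\et$, base change to $Z$ gives a canonical identification
\[ Z_{U_1 \times_U U_2} \simeq Z_{U_1} \times_{Z_U} Z_{U_2} , \]
and another application of \cref{lem:topological_invariance_etale} (viewing both sides as étale over $Z_U$) promotes this to an equivalence $\widehat{Z_{U_1 \times_U U_2}} \simeq \widehat{Z_{U_1}} \times_{\widehat{Z_U}} \widehat{Z_{U_2}}$ in $\widehat Z_\et$.

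The main technical point is that every one of these steps reduces, via \cref{lem:topological_invariance_etale}, to the corresponding well-known statement for the ordinary étale site of $Z$; what has to be checked carefully is just the compatibility of the concretely constructed morphism $\widehat{Z_U} \to \widehat Z$ (produced as a colimit of the nilpotent thickenings $Z_U^{(n)} \to Z^{(n)}$) with the abstract lift produced by topological invariance. Once that identification is in place, the rest of the argument is purely formal.
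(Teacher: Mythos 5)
Your proposal is correct and takes essentially the same route as the paper: both arguments use \cref{lem:topological_invariance_etale} to reduce everything (étaleness of $\widehat{Z_U} \to \hZ$, preservation of covers, preservation of fiber products) to the corresponding statements for the base-change functor $U \mapsto Z \times_X U$, which is manifestly a morphism of sites $X\et \to Z\et$. The paper merely compresses the levelwise identifications and verifications you spell out into the single observation that, after applying the equivalence of sites, the functor in question is base change along $Z \to X$.
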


\begin{proof}
	Using \cref{lem:topological_invariance_etale}, we are reduced to show that the functor $X\et \to \St_k$ given by
	\[ U \mapsto Z \times_X U \]
	factors as a morphism of sites
	\[ X\et \to Z\et . \]
	This is clear, since it is the base change functor along the map $Z \to X$.
\end{proof}

\begin{defin}
	Let $A \in \CAlg_k$ be a commutative algebra.
	We define $\Coh^-(A)$ as the full stable $\infty$-subcategory of $\QCoh(A)$ spanned by those $\cF \in \QCoh(A)$ satisfying the following two conditions:
	\begin{enumerate}
		\item for every integer $i \in \mathbb Z$, the cohomology group $\rH^i(\cF)$ is coherent over $A$;
		\item $\rH^i(\cF) = 0$ for $i \gg 0$.
	\end{enumerate}
\end{defin}

The assignment $\Spec(A) \mapsto \Coh^-(A)$ can be promoted to an $\infty$-functor
\[ \bfCoh^{\otimes,-} \colon \Aff_k\op \to \Catstmon , \]
using the $(-)^*$ functoriality. It comes with a pointwise fully faithful natural transformation $\bfCoh^{\otimes,-} \to \bfQCoh^\otimes$.

Since $\bfQCoh^{\otimes}$ is an hypercomplete sheaf for the \'etale topology, and since being coherent is a local property, the functor $\bfCoh^{\otimes,-}$ is an hypercomplete sheaf for the \'etale topology.
In particular, both $\bfQCoh^\otimes$ and $\bfCoh^{\otimes,-}$ extend uniquely into limit-preserving functors
\[
\bfQCoh^{\otimes},\,\bfCoh^{\otimes,-} \colon \St_k\op \to \Catstmon
\]

\begin{defin}
The category of formal schemes embeds, through the functor of points, into the category $\St_k$. In particular, for any formal scheme $\fX$, we get two stable symmetric monoidal $\infty$-categories $\Coh^-(\fX)$ and $\QCoh(\fX)$.
\end{defin}

\begin{rem}
	If $\fX$ is a formal scheme, we can find a filtered diagram $X \colon I \to \Sch_k$ with the property that for every map $\alpha \to \beta$ in $I$, the transition map $X_{\alpha} \to X_{\beta}$ is an infinitesimal thickening and such that
	\[ \fX \simeq \colim_{\alpha \in I} X_\alpha , \]
	the colimit being computed in $\St_k$.
	It follows that there is a canonical categorical equivalence
	\[ \Coh^-(\fX) \simeq \varprojlim_{\alpha \in I} \Coh^-(X_\alpha) , \]
	the inverse limit being computed in $\Catstmon$ or, equivalently, in $\Cat_\infty$ (cf.\ \cite[1.1.4.4, 4.7.4.5]{Lurie_Higher_algebra}).
\end{rem}

Let $\fX$ be any formal scheme.
\cref{lem:topological_invariance_etale} implies that the formal \'etale topology on $\fX\et$ is subcanonical.
We conclude that the composite $\infty$-functor
\[ \bfCoh^{\otimes,-} \colon \fX\et \to \St_k \to \Catstmon \]
is a sheaf for the formal \'etale topology.
Furthermore, we have:

\begin{lem} \label{lem:basic_formal_GAGA}
	If $\fX = \Spf(A)$, where $A$ is a Noetherian adic ring, then there is a canonical equivalence of symmetric monoidal stable $\infty$-categories:
	\[ \Coh^-(\fX) \simeq \Coh^-(A) . \]
\end{lem}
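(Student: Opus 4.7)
Let $I \subset A$ be an ideal of definition, set $A_n \coloneqq A/I^n$. Since $A$ is Noetherian, $\fX \simeq \colim_n \Spec(A_n)$ in $\St_k$, and the transition maps $\Spec(A_n) \to \Spec(A_{n+1})$ are infinitesimal thickenings, so the remark preceding the lemma gives a symmetric monoidal equivalence $\Coh^-(\fX) \simeq \lim_n \Coh^-(A_n)$ in $\Catstmon$. The plan is then to exhibit the base-change functor
\[ \Phi \colon \Coh^-(A) \to \lim_n \Coh^-(A_n), \qquad M \mapsto (M \otimes_A^{\mathbb L} A_n)_n, \]
as a symmetric monoidal equivalence. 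It is well-defined (almost perfectness is preserved under derived base change) and symmetric monoidal by construction.

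For fully faithfulness, given $M, N \in \Coh^-(A)$, the universal property of the limit together with the adjunction $(-\otimes_A^{\mathbb L} A_n, \text{forget})$ gives
\[ \Map_{\lim}(\Phi(M), \Phi(N)) \simeq \lim_n \Map_A(M, N \otimes_A^{\mathbb L} A_n) \simeq \Map_A(M, \widehat{N}), \]
where $\widehat{N}$ denotes the derived $I$-adic completion. Since $A$ is Noetherian and $I$-adically complete by assumption and $N$ has coherent cohomology vanishing above a fixed degree, the canonical map $N \to \widehat{N}$ is an equivalence (derived completeness of almost perfect modules over a Noetherian complete ring, which follows from classical Krull/Artin--Rees applied cohomology group by cohomology group).

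Essential surjectivity is where I expect the real work. Given a compatible system $(M_n) \in \lim_n \Coh^-(A_n)$, the compatibility $M_n \otimes_{A_n}^{\mathbb L} A_{n-1} \simeq M_{n-1}$ and the Tor spectral sequence force the cohomological amplitudes of the $M_n$ to be uniformly bounded above. Define $M \coloneqq \lim_n M_n$ computed in $\QCoh(A)$; the aim is to show $M \in \Coh^-(A)$ and that $\Phi(M) \simeq (M_n)_n$. The strategy is induction on truncations: for each $k$, the truncated system $(\tau^{\geq -k} M_n)_n$ lies in $\lim_n \Coh^b(A_n)$, and here one invokes the classical formal GAGA on the heart (Grothendieck--EGA III), which provides an equivalence between finitely generated $A$-modules and coherent sheaves on $\Spf(A)$ via $M \mapsto (M/I^n M)_n$. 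Extending from the heart to $\Coh^b$ by induction on the amplitude, each $\tau^{\geq -k} M$ is almost perfect with $\tau^{\geq -k} M \otimes_A^{\mathbb L} A_n \simeq \tau^{\geq -k} M_n$; taking the limit in $k$ and using that $\Coh^-(A)$ is left complete with respect to the standard $t$-structure yields $M \in \Coh^-(A)$ and $\Phi(M) \simeq (M_n)_n$.

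The main obstacle is controlling the interaction between the limit $\lim_n$ and the truncation, i.e.\ verifying the required Mittag-Leffler / derived-Nakayama conditions that force $\tau^{\geq -k}(\lim_n M_n) \simeq \lim_n \tau^{\geq -k} M_n$ in the Noetherian adic setting. Once this is in place the lemma follows, and the argument is local enough to respect the symmetric monoidal structure inherited from $\bfCoh^{\otimes,-}$.
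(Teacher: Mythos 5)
Your overall strategy --- identify $\Coh^-(\fX)$ with $\varprojlim_n \Coh^-(A/I^n)$ and exhibit the base-change functor out of $\Coh^-(A)$ as a symmetric monoidal equivalence --- is the right one, and the fully faithfulness step (reduction to derived $I$-completeness of almost perfect $A$-modules, which holds because a Noetherian adic ring is $I$-adically complete and Artin--Rees gives the needed pro-vanishing of higher Tor's) is essentially sound. The genuine gap is in essential surjectivity, and it is the one you flag yourself. The reduction you propose does not work as stated: the transition functors $- \otimes^{\mathbb L}_{A_{n+1}} A_n$ are only right $t$-exact (the nilpotent kernels $I^n/I^{n+1}$ produce higher Tor terms), so truncation does not commute with them, and the family $(\tau^{\geq -k} M_n)_n$ is \emph{not} an object of $\varprojlim_n \Cohb(A_n)$ --- the compatibility isomorphisms only exist after re-truncating each base change. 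Likewise, setting $M = \lim_n M_n$ in $\QCoh(A)$ and asserting $M \otimes^{\mathbb L}_A A_m \simeq M_m$ and $M \in \Coh^-(A)$ requires precisely the limit-versus-truncation / limit-versus-base-change control (Mittag-Leffler, Artin--Rees) that you defer; without it the induction on cohomological amplitude never gets started. So as written the proof is incomplete at its decisive step.

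For comparison, the paper does not attempt this argument at all: after the same identification $\Coh^-(\fX) \simeq \varprojlim_n \Coh^-(A/I^n)$ and the construction of the comparison functor via the universal property of the limit, it uses conservativity of the forgetful functor $\Catstmon \to \Catst$ to reduce to a statement about plain stable $\infty$-categories and then cites Lurie's theorem on almost perfect modules over complete Noetherian rings ([DAG-XII, Theorem 5.3.2]), which is exactly the statement your essential-surjectivity sketch is trying to reprove. Filling your gap therefore amounts to redoing the proof of that theorem (which is where the truncation/limit interchange is handled carefully); alternatively, you can simply quote it, as the paper does.
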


\begin{proof}
	Choose an ideal of definition $I$ for $A$.
	Then we have an equivalence of symmetric monoidal stable $\infty$-categories
	\[ \Coh^-(\fX) \simeq \varprojlim_n \Coh^-(A / I^n) . \]
	The natural maps $A \to A / I^n$ induce symmetric monoidal functors
	\[ \Coh^-(A) \to \Coh^-(A / I^n) \]
	that induce by the universal property of inverse limits a symmetric monoidal functor
	\[ \Coh^-(A) \to \Coh^-(\fX) . \]
	Since the forgetful functor $\Catstmon \to \Catst$ is conservative, it is enough to prove that the above functor is an equivalence of stable $\infty$-categories.
	This follows directly from \cite[Theorem 5.3.2]{DAG-XII}.
\end{proof}

\subsection{Punctured tubular neighbourhoods} \label{subsec:punctured_formal_neighbourhood}

We fix in this section an Artin stack $X$, which we assume to be locally of finite type over $\Spec(k)$.
Furthermore, we fix two closed substacks $Z, Z' \hookrightarrow X$ and we suppose that $Z \to X$ factors through $Z' \to X$.

Let $X\Et$ be the big affine \'etale site of $X$.
Given $U \in X\Et$, denote by $Z_U$ (resp.\ $Z'_U$) the base changes $U \times_X Z$ (resp.\ $U \times_X Z'$).
These are closed subscheme of $U$ and $Z_U$ is also a closed subscheme of $Z'_U$.
We denote by $\widehat{Z_U}$ the formal completion of $U$ along $Z_U$.

The assignment $U \mapsto \widehat{Z_U}$ defines a functor:
\[ \hPsi_{Z/X} \colon X\Et \to \St_k . \]
Define a new functor
\[ \hPsi_{Z/X}^{\mathrm{aff}} \colon X\Et \to \St_k \]
as the composition
\[ \begin{tikzcd}
X\Et \arrow{r}{\hPsi_{Z/X}} & \St_k \arrow{r}{\Spec \circ \Gamma} & \St_k .
\end{tikzcd} \]
If $U = \Spec(A_U)$, $J_U$ denotes the ideal defining $Z_U$ inside $A_U$ and $\widehat{A_U}$ denotes the formal completion of $A_U$ at $J_U$, then
\[ \hPsi_{Z/X}^{\mathrm{aff}}(U) \coloneqq \Spec(\widehat{A_U}) . \]
To keep the notations consistent, let us denote by $\fW_X$ the functor
\[ X\Et \to \St_k \]
defined by sending $U$ to $U = U \times_X X$.
Observe that there is a natural transformation $\hPsi_{Z/X}^{\mathrm{aff}} \to \fW_X$.
Similarly, the functor
\[ \fW_{Z'/X} \coloneqq - \times_X Z' \colon X\Et \to \St_k \]
is equipped with a natural transformation $\fW_{Z' / X} \to \fW_X$.
We denote by $\hPsi_{Z,Z'/X}$ the pullback
\[ \hPsi^{\mathrm{aff}}_{Z,Z' / X} \coloneqq \fW_{Z'/X} \times_{\fW_X} \hPsi_{Z/X}^{\mathrm{aff}} , \]
and we let
\[ j \colon \hPsi^{\mathrm{aff}}_{Z,Z'/X} \to \hPsi_{Z/X}^{\mathrm{aff}} \]
be the induced natural transformation.
Note that if $J'_U$ denotes the ideal of $Z'_U$ inside $U = \Spec(A_U)$, then
\[ \hPsi^{\mathrm{aff}}_{Z,Z'/X}(U) = \Spec(\widehat{A_U} / J'_U \widehat{A_U}) . \]

\begin{lem}
	For every $U \in X\Et$, $j_U \colon \hPsi^{\mathrm{aff}}_{Z,Z'/X}(U) \to \hPsi_{Z/X}^{\mathrm{aff}}(U)$ is a closed immersion of affine schemes.
	Furthermore, $j$ is a cartesian transformation, in the sense that for every morphism $V \to U$ in $X\Et$, the induced square
	\[ \begin{tikzcd}
	\hPsi^{\mathrm{aff}}_{Z,Z'/X}(V) \arrow{r} \arrow{d}{j_V} & \hPsi^{\mathrm{aff}}_{Z, Z'/X}(U) \arrow{d}{j_U} \\
	\hPsi_{Z/X}^{\mathrm{aff}}(V) \arrow{r} & \hPsi_{Z/X}^{\mathrm{aff}}(U)
	\end{tikzcd} \]
	is a pullback square.
\end{lem}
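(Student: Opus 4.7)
The plan is to translate everything into commutative algebra through the equivalence between affine schemes and commutative rings, since the two assertions are essentially pointwise statements about the rings $\widehat{A_U}$ and $\widehat{A_U}/J'_U\widehat{A_U}$.

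First, for any $U = \Spec(A_U)$ in $X\Et$, I would unwind the definitions: $\hPsi^{\mathrm{aff}}_{Z/X}(U) = \Spec(\widehat{A_U})$ and $\hPsi^{\mathrm{aff}}_{Z,Z'/X}(U) = \Spec(\widehat{A_U}/J'_U\widehat{A_U})$, and the transition map $j_U$ is dual to the quotient map $\widehat{A_U} \twoheadrightarrow \widehat{A_U}/J'_U\widehat{A_U}$. Since this is a surjection of commutative rings, $j_U$ is a closed immersion of affine schemes, establishing the first assertion.

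For the cartesian statement, given a morphism $V \to U$ in $X\Et$ (over $X$), write $V = \Spec(A_V)$. Pullbacks in the full subcategory of affine schemes inside $\St_k$ correspond to pushouts of rings, so it suffices to verify that the canonical map
\[
\widehat{A_V} \otimes_{\widehat{A_U}} \bigl(\widehat{A_U}/J'_U\widehat{A_U}\bigr) \;\longrightarrow\; \widehat{A_V}/J'_V\widehat{A_V}
\]
is an isomorphism. The left-hand side simplifies to $\widehat{A_V}/J'_U\widehat{A_V}$ (where $J'_U$ denotes its image in $\widehat{A_V}$). For the right-hand side, I would use that $Z'_V = V \times_X Z' = V \times_U Z'_U$, so the ideal $J'_V$ is generated by the image of $J'_U$ in $A_V$, i.e.\ $J'_V = J'_U A_V$. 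Extending to $\widehat{A_V}$ yields $J'_V\widehat{A_V} = J'_U\widehat{A_V}$, and the identification follows.

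I do not anticipate any serious obstacle: the only subtlety is observing that pullbacks of affine schemes, which a priori only represent pullbacks in $\Aff_k$, also compute the pullback in the larger category $\St_k$ (this holds because the fully faithful embedding $\Aff_k \hookrightarrow \St_k$ preserves finite limits). With that in mind, the argument is a direct two-line computation in commutative algebra using only the basic compatibility between completion along an ideal and base change of the defining ideal.
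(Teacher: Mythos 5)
Your proposal is correct and follows essentially the same route as the paper: identify $j_U$ with the quotient $\widehat{A_U} \to \widehat{A_U}/J'_U\widehat{A_U}$ to get the closed immersion, and reduce the cartesian claim to the equality $J'_V\widehat{A_V} = J'_U\widehat{A_V}$, which follows from $Z'_V = V \times_U Z'_U$ giving $J'_V = J'_U A_V$. Your extra remark that the pullback of affines also computes the pullback in $\St_k$ is a harmless clarification the paper leaves implicit.
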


\begin{proof}
	Set $U = \Spec(A_U)$ and $V = \Spec(A_V)$.
	Let $J_U$ and $J_V$ be the defining ideals of $Z_U$ and $Z_V$ in $U$ and $V$, respectively.
	Similarly, let $J'_U$ and $J'_V$ be the defining ideals of $Z'_U$ and $Z'_V$ in $U$ and $V$, respectively.
	Finally, let $\widehat{A_U}$ and $\widehat{A_V}$ be the formal completions of $A_U$ and $A_V$ at $J_U$ and $J_V$ respectively.
	Then $\fW_{Z/X}(U) = Z_U = \Spec(A_U / J_U)$ and $\hPsi^{\mathrm{aff}}_{Z/X}(U) = \Spec(\widehat{A_U})$.
	By construction, $j_U$ corresponds to the quotient map
	\[ \widehat{A_U} \to \widehat{A_U} / J'_U \widehat{A_U} , \]
	and therefore it is a closed immersion.
	Observe now that the commutative square
	\[ \begin{tikzcd}
	Z'_V \arrow{r} \arrow{d} & Z'_U \arrow{d} \\
	V \arrow{r} & U
	\end{tikzcd} \]
	is a pullback square.
	It follows that $J'_V = J'_U A_V$, and thus that $J'_V \widehat{A_V} = J'_U \widehat{A_V}$.
	In particular, the square
	\[ \begin{tikzcd}
	\Spec(\widehat{A_V} / J'_V \widehat{A_V}) \arrow{r} \arrow{d}{j_V} & \Spec(\widehat{A_U} / J'_U \widehat{A_U}) \arrow{d}{j_U} \\
	\Spec(\widehat{A_V}) \arrow{r} & \Spec(\widehat{A_U})
	\end{tikzcd} \]
	is a pullback as well.
\end{proof}

Since the natural transformation $j \colon \hPsi^{\mathrm{aff}}_{Z,Z'/X} \to \hPsi^{\mathrm{aff}}_{Z/X}$ is cartesian and it is composed of closed immersions, we can define a new functor
\[ \hPsi^\circ_{Z,Z'/X} \colon X\Et \to \St_k \]
by the rule
\[ \hPsi^\circ_{Z,Z'/X}(U) \coloneqq \hPsi^{\mathrm{aff}}_{Z/X}(U) \smallsetminus \hPsi^{\mathrm{aff}}_{Z,Z'/X}(U) . \]
Finally, we introduce $\infty$-functors
\[ \hPhi_{Z/X}^\otimes, \hPhi^{\circ, \otimes}_{Z,Z'/X} \colon X\Et\op \to \Catstmon \]
defined as the compositions
\[ \hPhi_{Z/X}^\otimes \coloneqq \bfCoh^{\otimes,-} \circ \hPsi_{Z/X}, \qquad \hPhi^{\circ, \otimes}_{Z,Z'/X} \coloneqq \bfCoh^{\otimes,-} \circ \hPsi^\circ_{Z,Z'/X} . \]
Note that \cref{lem:basic_formal_GAGA} shows that the natural map $\hPsi_{Z/X} \to \hPsi^{\mathrm{aff}}_{Z/X}$ induces an equivalence
\[ \bfCoh^{\otimes,-} \circ \hPsi_{Z/X} \simeq \bfCoh^{\otimes,-} \circ \hPsi^{\mathrm{aff}}_{Z/X} . \]
There are also perfect variants
\[ \hrP_{Z/X}^\otimes, \hrP^{\circ,\otimes}_{Z,Z'/X} \colon X\Et\op \to \Catstmon , \]
defined as the compositions
\[ \hrP_{Z/X}^\otimes \coloneqq \bfPerf^\otimes \circ \hPsi_{Z/X}, \qquad \hrP^{\circ, \otimes}_{Z/X} \coloneqq \bfPerf^\otimes \circ \hPsi^\circ_{Z,Z'/X} . \]

\begin{notation}\label{notation:frakCoh}
	In all of our applications, we will be interested in the case where $Z = Z'$.
	When this is the case, we write $\hPsi^\circ_{Z/X}$, $\hPhi^{\circ, \otimes}_{Z / X}$ etc.\ instead of $\hPsi^\circ_{Z,Z' / X}$, $\hPhi^{\circ, \otimes}_{Z, Z' / X}$ etc.
	We allow the case where $Z$ is different from $Z'$ because it will play an important role in the proof of the \'etale descent of $\hPhi^{\circ,\otimes}_{Z/X}$.
	
	Furthermore, when $X$ is clear out of the context and there is no possibility of confusion, we will simply write $\fW_Z$, $\hPsi_Z^\circ$, $\hPhi_Z^{\circ,\otimes}$ etc.\ instead of $\fW_{Z/X}$, $\hPsi^\circ_{Z/X}$, $\hPhi^{\circ, \otimes}_{Z/X}$ etc.
\end{notation}

\begin{rem}
	The functor $\hPsi^\circ_{Z/X}$ plays the role of the punctured formal neighbourhood.
	Nevertheless, it is extremely important to remark that $\hPsi^\circ_{Z/X}$ cannot be a geometric object: indeed, it doesn't take \v{C}ech covers for the \'etale topology on $\Aff_k$ to colimit diagrams in $\St_k$.
	On the other side, the main result of this section shows that $\hPhi^{\circ, \otimes}_{Z/X}$ and $\hrP^{\circ, \otimes}_{Z/X}$ are stacks for the \'etale topology.
	For this reason, we think more of $\hPhi^{\circ, \otimes}_{Z/X}$ and of $\hrP^{\circ, \otimes}_{Z/X}$ rather than $\hPsi^\circ_{Z/X}$ as ``geometric models'' of the punctured formal neighbourhood.
	From a certain point of view, $\hPhi^{\circ, \otimes}_{Z/X}$ and $\hrP^{\circ, \otimes}_{Z/X}$ constitute realizations of the punctured formal neighbourhood as a non-commutative space.
\end{rem}

In the next section we will prove that $\hPhi_Z^{\circ, \otimes}$ is a hypercomplete sheaf for the \'etale topology (cf.\ \cref{thm:etale_descent}).
Let us introduce the site $X\Et^{\Geom}$: its objects are morphisms $Y \to X$, where $Y$ is an Artin stack locally of finite type over $k$, and its topology is the usual \'etale topology for geometric stacks.
There is a natural continuous morphism of sites
\[ X\Et \hookrightarrow X\Et^{\Geom} , \]
and since every Artin stack admits an atlas, we see that the conditions of \cite[Proposition 2.22]{Porta_Yu_Higher_analytic_stacks_2014} are satisfied.
In particular, there is an equivalence of $\infty$-topoi
\[ \St(X\Et, \tauet) \simeq \St(X\Et^{\Geom}, \tauet) . \]
Therefore we can associate to $\hPhi^{\circ, \otimes}_Z \in \St(X\Et, \tauet)$ a unique stack on $(X\Et^{\Geom}, \tauet)$ with values in $\Catstmon$, which we still denote $\hPhi^{\circ, \otimes}_Z$.

Let now $(\Aff_k, \tauet)$ be the category of affine schemes over $k$ of finite type, equipped with the \'etale topology.
The canonical morphism $X \to \Spec(k)$ induces by base change a continuous morphism of sites
\[ u \colon (\Aff_k, \tauet) \to (X\Et^{\Geom}, \tauet) . \]

\begin{defin}\label{def:coherentPFN}
	With the notations introduced above, the functor $\bfCoh^{\otimes,-}_{\hZ \smallsetminus Z}$ is defined as the composition
	\[ \hPhi_Z^{\circ, \otimes} \circ u \colon \Aff_k\op \to \Catstmon . \]
	We further define the category $\Coh^-(\hZ \smallsetminus Z)$ to be the category of $k$-points of $\bfCoh^{\otimes,-}_{\hZ \smallsetminus Z}$, i.e.\
	\[ \Coh^-(\hZ \smallsetminus Z) \coloneqq \bfCoh^{\otimes,-}_{\hZ \smallsetminus Z}(\Spec(k)) . \]
\end{defin}

\begin{rem}
Let us give a description of $\Coh^-(\hZ \smallsetminus Z)$. Recall that $X$ is equivalent to the colimit
\[
X \simeq \colim_{U \to X} \Spec A.
\]
where $U$ runs through affine schemes.
As $\hPhi_Z^{\circ, \otimes}$ is a stack, we get:
\[
\Coh^-(\hZ \smallsetminus Z) \simeq \lim_{U \to X} \hPhi_Z^{\circ,\otimes}(U) \simeq \lim_{U \to X} \Coh^-(\hZ_U \smallsetminus Z_U) .
\]
\end{rem}

\begin{prop} \label{prop:etale_descent_coh}
	The functor $\bfCoh^{\otimes,-}_{\hZ \smallsetminus Z} \colon \Aff_k\op \to \Catstmon$ is a stack for the \'etale topology.
\end{prop}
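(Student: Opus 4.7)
The plan is to deduce this proposition essentially formally from \cref{thm:etale_descent}, which asserts that $\hPhi_Z^{\circ,\otimes}$ is a hypercomplete stack on the small affine \'etale site $(X\Et, \tauet)$ with values in $\Catstmon$. All that remains is to transfer this descent along the morphism of sites $u$ introduced immediately before the statement.

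First, I would use the equivalence of $\infty$-topoi $\St(X\Et, \tauet) \simeq \St(X\Et^{\Geom}, \tauet)$ recalled just above (a consequence of \cite[Proposition 2.22]{Porta_Yu_Higher_analytic_stacks_2014} together with the existence of smooth atlases) to regard $\hPhi_Z^{\circ,\otimes}$ as a stack on the big geometric \'etale site $(X\Et^{\Geom}, \tauet)$, still denoted $\hPhi_Z^{\circ,\otimes}$.

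Next, I would check that the functor $u$, sending an affine $Y = \Spec A$ of finite type over $k$ to the projection $Y \times X \to X$, is a morphism of sites in the standard sense: it preserves fiber products (since base change does) and sends \'etale covers of $Y$ to \'etale covers of $Y \times X$ (by stability of \'etale coverings under base change). Any such continuous $u$ induces a geometric morphism between the associated (hypercomplete) $\infty$-topoi of sheaves, and the direct image in this geometric morphism is pointwise precomposition with $u\op$. Because the direct image part of a geometric morphism preserves hypercomplete sheaves, and because the same pointwise descent condition characterizes sheaves with values in any presentable $\infty$-category such as $\Catstmon$, we conclude that
\[
	\bfCoh^{\otimes,-}_{\hZ \smallsetminus Z} \;=\; \hPhi_Z^{\circ,\otimes} \circ u \colon \Aff_k\op \to \Catstmon
\]
is a stack on $(\Aff_k, \tauet)$, as required.

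The only substantive ingredient in the above is \cref{thm:etale_descent}, whose proof is the objective of the next section. Thus the main obstacle does not lie here: once \cref{thm:etale_descent} is available, the content of the present proposition is a routine change of sites via the continuous functor $u$.
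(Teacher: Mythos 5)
Your argument is correct and matches the paper's own proof, which deduces the statement directly from \cref{thm:etale_descent} together with \cite[Lemma 2.13]{Porta_Yu_Higher_analytic_stacks_2014}, i.e.\ the fact that precomposition with the continuous morphism of sites $u$ preserves the sheaf condition (the extension to $X\Et^{\Geom}$ being already built into \cref{def:coherentPFN}). The only cosmetic caveat is that you need not insist on $\Catstmon$ being presentable; descent only requires the target $\infty$-category to admit the relevant limits.
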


\begin{proof}
	This is a direct consequence of \cref{thm:etale_descent} and of \cite[Lemma 2.13]{Porta_Yu_Higher_analytic_stacks_2014}.
\end{proof}

\subsection{\'Etale descent for almost perfect modules} \label{subsec:etale_descent}

The main goal of this section is to prove the following result:

\begin{thm} \label{thm:etale_descent}
	Let $X$ be an Artin stack locally of finite type over $k$.
	Suppose that $Z \hookrightarrow X$ is a closed substack.
	Then functor $\hPhi_Z^{\circ, \otimes} \colon X\Et\op \to \Catstmon$ is a hypercomplete sheaf with respect to the \'etale topology $\tauet$ on $X\Et$.
\end{thm}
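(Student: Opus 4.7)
The plan is to factor $\hPhi_Z^{\circ,\otimes}$ as
\[ \hPhi_Z^{\circ,\otimes} \;=\; \bfCoh^{\otimes,-} \circ \hPsi_Z^\circ \colon X\Et\op \longrightarrow \St_k\op \longrightarrow \Catstmon, \]
and to leverage the fact that $\bfCoh^{\otimes,-}$ is already a hypercomplete \'etale sheaf on $\St_k$. The content of the theorem is then the assertion that, although $\hPsi_Z^\circ$ itself fails to be an \'etale sheaf (taking $\Spec$ of an adic completion is not an \'etale-local construction), this failure is invisible to $\bfCoh^{\otimes,-}$.

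The key geometric input is the classical Noetherian base change for completions: for every \'etale morphism $V \to U$ in $X\Et$, the natural map of formal stacks $\widehat{Z_V} \to \widehat{Z_U} \times_U V$ is an equivalence. Combined with \cref{lem:basic_formal_GAGA}, which identifies $\Coh^-$ of the formal completion with $\Coh^-$ of the scheme $\Spec$ of the completion, this immediately yields hyperdescent for the non-punctured functor $\hPhi_Z^\otimes = \bfCoh^{\otimes,-} \circ \hPsi_Z$, by applying levelwise hyperdescent of $\bfCoh^{\otimes,-}$ to an \'etale hypercover $V_\bullet \to U$.

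For the punctured version, I would use the localization cofiber sequence of \cref{thm:localization_for_coh_minus} to present
\[ \hPhi_Z^{\circ,\otimes}(U) \;\simeq\; \Coh^-\bigl(\Spec(\widehat{A_U})\bigr)\,\big/\,\cK_U \]
as a Verdier quotient in $\Catst$. By \cref{cor:internal_characterisation_kernel_localisation}, $\cK_U^{\heartsuit}$ consists of coherent sheaves on $\hZ_U$ annihilated by some power of the ideal of $Z_U$, which, via the left-complete characterization in \cref{prop:internal_characterisation_kernel_localisation}, identifies $\cK_U$ as being built out of the \'etale sheaves $U \mapsto \Coh^-(Z_U^{(n)})$. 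Granting that this presentation makes $U \mapsto \cK_U$ itself a hypercomplete \'etale sheaf on $X\Et$, descent for $\hPhi_Z^{\circ,\otimes}$ follows from the two established descents combined with the fact that the Verdier quotient in $\Catst$ is compatible with the cosimplicial limits computing hyperdescent.

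The main obstacle is exactly this final compatibility, since Verdier quotients in $\Catst$ do not automatically commute with arbitrary cosimplicial limits; one must pay careful attention to the fact that the kernels $\cK_V$ pull back correctly along \'etale maps $V \to U$ and that the left-complete closure operation is compatible with descent. I suspect that the flexibility of allowing $Z \neq Z'$ in the more general construction $\hPsi_{Z,Z'/X}^\circ$ is brought in precisely to manage this last step: by varying the auxiliary $Z'$ among thickenings of $Z$, one can reduce the descent of $\hPhi_Z^{\circ,\otimes}$ to cases where the kernel and the quotient have a more transparent local structure, and then recover the result for the original $Z$ by a comparison between the various $\hPhi_{Z,Z'/X}^{\circ,\otimes}$.
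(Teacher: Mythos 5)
There is a genuine gap, and it sits exactly where you flag it. Your plan presents $\hPhi^{\circ,\otimes}_Z(U)$ as the Verdier quotient $\Coh^-(\Spec(\widehat{A_U}))/\cK_U$ via \cref{thm:localization_for_coh_minus} and then asserts that descent follows once $U \mapsto \cK_U$ is a sheaf, ``combined with the fact that the Verdier quotient in $\Catst$ is compatible with the cosimplicial limits computing hyperdescent.'' That ``fact'' is false in general: cofibers in $\Catst$ do not commute with totalizations, and in this setting the compatibility is not a formal lemma but the entire content of the theorem. Indeed, the paper points out explicitly (in the discussion preceding \cref{thm:Zariski_computational_tool}) that proving the assignment $U \mapsto \Coh^-(\widehat{Z_U})/\cK(U)$ satisfies descent is equivalent to a statement whose proof, even just for quasi-compact quasi-separated schemes, occupies all of \cref{subsec:case_schemes} and requires delicate base-change and right-adjointability inputs (e.g.\ \cref{cor:PFN_right_adjointable}). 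So your reduction replaces the theorem by a claim that is at least as hard, and no argument is offered for it. Your reading of the auxiliary $Z'$ is also off: it is not a device for varying thickenings of $Z$, but for running an induction on the number of generators $f_1,\dots,f_n$ of the ideal cutting out the \emph{puncturing} locus, splitting the complement by a Zariski cover (complements of $V(f_1)$ and of $V(f_2,\dots,f_n)$ inside $\Spec(\widehat{A_U})$) so as to reduce to the principal case $Z'=V(f)$.

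The paper's actual route avoids Verdier quotients altogether in this proof (the localization theorem is only used later, for \cref{thm:Zariski_computational_tool}). After reducing to $X$ affine and to the principal case, the punctured neighbourhood is the honest affine scheme $\Spec(\widehat{A_U}[f_U\inv])$, so $\hPhi^\circ_{Z,f}(U) = \Coh^-(\widehat{A_U}[f_U\inv])$. Descent is then proved first on the heart, by identifying $\Cohh(\widehat{A_U}[f_U\inv])$ with coherent sheaves on the rigid generic fibre and invoking the Bosch--G\"ortz fpqc-descent theorem (\cref{prop:etale_descent_heart}), after a reduction to the $f$-torsion-free case using the flatness statement \cref{lem:etale_sent_to_flat}; the statement is then lifted from the heart to all of $\Coh^-$ via $t$-structures on cosimplicial limits (\cref{lem:flat_cosimplicial_t_structure}), the right-adjointability of \cref{lem:right_adjointable_key_lemma}, and a spectral-sequence argument producing a $t$-exact right adjoint whose unit and counit are checked on cohomology. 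If you want to salvage your approach, you would need to supply precisely this kind of heart-level descent plus $t$-exactness bookkeeping to control the interaction of the quotient with the totalization; as written, the proposal assumes it.
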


The proof will take us the rest of this section.

\subsubsection{Initial reductions} \label{subsubsec:initial_reduction}

We start with a couple of easy reductions.
First of all, let us remember that the forgetful functor
\[ G \colon \Catstmon = \CAlg(\Cat_\infty^{\mathrm{Ex}, \times}) \to \Catst \]
is conservative and commutes with arbitrary limits in virtue of Lurie-Barr-Beck theorem (cf.\ \cite[4.7.4.5]{Lurie_Higher_algebra}).
It is therefore enough to prove \cref{thm:etale_descent} for the downgraded functor
\[ \hPhi_Z^\circ \coloneqq G \circ \hPhi_Z^{\circ, \otimes} \colon X\Et\op \to \Catst . \]

\begin{notation}
	All the functors valued in $\Catstmon$ introduced in \cref{subsec:punctured_formal_neighbourhood} admit a downgraded version valued in $\Catst$ and obtained by composition with $G \colon \Catstmon \to \Catst$.
	To distinguish between these two versions, we simply drop the symbol $\otimes$ in the notation of the functor.
	So $\hPhi_Z$ is the downgraded version of $\hPhi_Z^\otimes$, $\hPhi_Z^\circ$ is the downgraded version of $\hPhi_Z^{\circ, \otimes}$ etc.
	Thanks to the above observation, from now on we will only consider functors with values in $\Catst$ and not in $\Catstmon$.
\end{notation}

Furthermore we can easily reduce ourselves to the case where $X$ is affine.
Indeed, since $X\Et$ is by definition the affine \'etale site of $X$, it is sufficient to prove that for every affine scheme $U = \Spec(A_U)$ equipped with a morphism $U \to X$ and every \'etale affine hypercover $\{U^\bullet \to U\}$, the canonical morphism
\[ \hPhi_Z^\circ(U) \to \varprojlim_{n \in \mathbf \Delta} \hPhi_Z^\circ(U^n) \]
is an equivalence.
For this, it is enough to restrict ourselves to the case where $X = U$.
We can therefore assume from the very beginning that $X$ is an affine scheme of finite type.
Moreover, it is enough to show that the restriction of $\hPhi_Z^\circ$ to the \emph{small} \'etale site of $X$ is a hypercomplete sheaf.
Committing a slight abuse of notation, we still denote by $\hPhi_Z^\circ$ this restriction, and therefore we consider $\hPhi_Z^\circ$ as a functor
\[ \hPhi^\circ_Z \colon X\et \to \Catst . \]

As usual, we write $X = \Spec(A)$ and we let $J$ be the defining ideal of $Z$ inside $X$.
If $U \to \Spec(A)$ is an \'etale map with affine source, we write $U = \Spec(A_U)$ and let $J_U \coloneqq J A_U$ be the image of the ideal $J$ in $A_U$.
Furthermore, we denote by $\widehat{A_U}$ the formal completion of $A_U$ at $J_U$.

Since $A$ is noetherian, we can choose a finite number of generators $f_1, \ldots, f_n$ for $J$.
For every subset $S$ of $[n] \coloneqq \{1, \ldots, n\}$, we let $J_S$ denote the ideal $(f_i)_{i \in S}$ and by $Z_S \coloneqq V(J_S)$ the closed subscheme determined by $J_S$.
In the previous section we introduced a presheaf
\[ \hPhi^\circ_{Z,Z_S / X} \colon X\et \to \Catst , \]
and $\hPhi^\circ_Z = \hPhi^\circ_{Z,Z_{[n]} / X } = \hPhi^\circ_{Z,Z/X}$.
We will prove that the presheaves $\hPhi^\circ_{Z,Z_S / X}$ are hypercomplete sheaves with respect to $\tauet$ by induction on the cardinality of $S$.

The proof of the base step will occupy us for a while.
Before getting there, let us show how to deal with the induction step.
So, suppose that the result has been proven for all $X = \Spec(A)$, all the ideals $J \subset A$ generated by $n$ elements $f_1, \ldots, f_n$ and all the subsets $S \subset [n]$ of $m-1$ elements.
Choose a subset $S' \subset [n]$ of $m$ elements.
Without loss of generality, we can assume $n = m$ and thus $S' = [n]$.
Let $J_1 \coloneqq (f_1)$ and $J_2 \coloneqq (f_2, \ldots, f_n)$.
Set $Z_1 \coloneqq V(J_1)$, $Z_2 \coloneqq V(J_2)$.
Finally, let $J_{12} \coloneqq (f_1 f_2, \ldots, f_1 f_n)$ and $Z_{12} \coloneqq V(J_{12})$.
Observe that
\[ \hPsi^\circ_{Z,Z_1 / X} \cap \hPsi^\circ_{Z,Z_2 / X} = \hPsi^\circ_{Z,Z_{12} / X} . \]
It is now sufficient to remark that, by Zariski descent, we have
\[ \hPhi^\circ_{Z / X} \simeq \hPhi^\circ_{Z, Z_1 / X} \times_{\hPhi^\circ_{Z, Z_{12} / X}} \hPhi^\circ_{Z, Z_2 / X} . \]
Indeed, the induction hypothesis shows that $\hPhi^\circ_{Z, Z_1, X}$, $\hPhi^\circ_{Z,Z_{12} / X}$ and $\hPhi^\circ_{Z, Z_2 / X}$ are hypercomplete sheaves with respect to $\tauet$.
Thus, it follows that the same goes for $\hPhi^\circ_{Z / X}$.

\subsubsection{Reduction to the torsion-free case} \label{subsubsec:reduction_torsionfree}

To summarize, we are reduced to prove the following statement.
Let $X = \Spec(A)$ and let $J$ be the ideal defining $Z$ inside $X$.
Let $f \in J$ be an element and let $Z' \coloneqq V(f)$.
Then
\[ \hPhi^\circ_{Z,f} \coloneqq \hPhi^\circ_{Z, Z'/X} \colon X\Et \to \Catst \]
is a hypercomplete sheaf for $\tauet$.
Observe furthermore that, given $U = \Spec(A_U)$ in $X\Et$, we have
\[ \hPhi^\circ_{Z,f}(U) = \Coh^-(\widehat{A_U}[f_U\inv]) , \]
where $\widehat{A_U}$ denotes the formal completion of $A_U$ at $J_U$ and $f_U$ denotes the image of $f$ via $A \to A_U$.

The proof of the base step of the induction started in \S \ref{subsubsec:initial_reduction} relies in an essential way on the results of \cite{Bosch_Gortz_Coherent_modules_1998}.
However, in order to apply the main result in loc.\ cit.\ it is important to reduce ourselves to the case where the algebras $\widehat{A_U}$ don't have $f_U$-torsion.
To see why this is possible, we need the following elementary lemma of commutative algebra, which will be handy also in what follows.

\begin{lem} \label{lem:etale_sent_to_flat}
	Let $U \to V$ be a flat morphism in $X\Et$.
	Then the induced map $\hat{g} \colon \widehat{A_V} \to \widehat{A_U}$ is flat.
\end{lem}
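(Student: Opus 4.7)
The plan is to reduce the claim to a direct application of the local criterion of flatness for Noetherian, adically complete rings. Set $R \coloneqq \widehat{A_V}$, $S \coloneqq \widehat{A_U}$, and $I \coloneqq J \widehat{A_V}$. First, I observe that both $A_V$ and $A_U$ are Noetherian (they are étale over the Noetherian ring $A$, hence of finite type over $A$), so their $J$-adic completions $\widehat{A_V}$ and $\widehat{A_U}$ are also Noetherian. Moreover $I S = J\,\widehat{A_U} = J_U \widehat{A_U}$ is precisely the ideal with respect to which $\widehat{A_U}$ is complete, so $S$ is $IS$-adically complete.

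Next I invoke the standard topological flatness criterion (see e.g.\ EGA $0_{\mathrm{I}}$, 7.7 or \cite[Tag~0523]{stacks-project}): if $R$ is Noetherian, $I \subset R$ is an ideal, and $S$ is a Noetherian $R$-algebra which is $IS$-adically complete, then $S$ is $R$-flat if and only if $S/I^n S$ is $R/I^n$-flat for every $n \geq 1$. So it remains only to verify this level-wise flatness.

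For each $n$, the completion identifications give
\[ R / I^n = \widehat{A_V} / J^n \widehat{A_V} \;\cong\; A_V / J_V^n, \qquad S / I^n S = \widehat{A_U} / J^n \widehat{A_U} \;\cong\; A_U / J_U^n. \]
Because $J_U = J_V A_U$, one has the base-change identification
\[ A_U / J_U^n \;\cong\; A_U \otimes_{A_V} A_V / J_V^n. \]
Flatness of the hypothesised morphism $A_V \to A_U$ is preserved under base change, so $A_V / J_V^n \to A_U / J_U^n$ is flat for every $n$. This establishes the assumption of the criterion and yields flatness of $\hat g \colon \widehat{A_V} \to \widehat{A_U}$.

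The only mildly subtle point is to apply the flatness criterion in its correct formulation: one needs $S$ to be complete along $IS$ (not merely along some $R$-ideal), and the Noetherian hypotheses on both $R$ and $S$; these are exactly the conditions that hold here thanks to the étaleness over the Noetherian base $A$.
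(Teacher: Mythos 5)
Your proof is correct and takes essentially the same route as the paper: both reduce to the level-wise statement that $\widehat{A_V}/J_V^n\widehat{A_V} \cong A_V/J_V^n \to A_U/J_U^n \cong \widehat{A_U}/J_V^n\widehat{A_U}$ is flat by base change from $A_V \to A_U$, and then invoke the local flatness criterion of \cite[Tag 0523]{stacks-project}, with the $J_U$-adic completeness of $\widehat{A_U}$ supplying the Jacobson-radical condition that the paper extracts from \cite[Theorem 8.14]{Matsumura_Commutative_ring}. The only cosmetic slip is your justification of Noetherianity (objects of $X\Et$ need not be étale over $X$, but they are of finite presentation over $k$, which suffices).
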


\begin{proof}
	We apply \cite[Tag 0523]{stacks-project} taking $R = \widehat{A_V}$, $S = \widehat{A_U}$, $I = J_V \widehat{A_V}$ and $M = S$.
	Since $A_V$ and $A_U$ are noetherian, the same goes for $R$ and $S$ (cf.\ \cite[Theorem 10.26]{Atiyah_Macdonald_Commutative_algebra}).
	Let us show that for every $n$, $S / I^n S$ is flat over $R / I^n$.
	Indeed, $I^n S = J_V^n \widehat{A_U} = J_U^n \widehat{A_U}$.
	Therefore,
	\[ S / I^n S = \widehat{A_U} / J_U^n \widehat{A_U} = A_U / J_U^n = A_U \otimes_{A_V} (A_V / J_V^n) . \]
	Since $A_V \to A_U$ is flat by assumption, the same goes for $A_V / J_V^n \to A_U / J_U^n$, and hence for $\widehat{A_V} / J_V^n \widehat{A_V} \to \widehat{A_U} / J_U^n \widehat{A_U}$.
	Therefore, the hypotheses of \cite[Tag 0523]{stacks-project} are satisfied. The conclusion now follows from \cite[Theorem 8.14]{Matsumura_Commutative_ring}, which implies that $J_U \widehat{A_U}$ is contained in the Jacobson radical of $\widehat{A_U}$.
\end{proof}

\begin{cor}
	Let $U \to V$ be an \'etale morphism in $X\Et$.
	Let $I_{f,V}$ be the $f_V$-torsion in $\widehat{A_V}$ and similarly let $I_{f,U}$ be the $f_U$-torsion in $\widehat{A_U}$.
	Then $I_{f,U} = I_{f,V} \widehat{A_U}$.
\end{cor}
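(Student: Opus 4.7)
The plan is to reduce the claim to a direct consequence of the flatness of the induced map $\widehat{g}\colon \widehat{A_V} \to \widehat{A_U}$, which was just established in the previous lemma (since any \'etale morphism is flat). The general principle I would invoke is that formation of $f$-torsion submodules commutes with flat base change, because $f$-torsion is describable as the kernel of a localization map, and both kernels and localizations are preserved by flat extensions.

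More concretely, first I would write down the defining exact sequence for $I_{f,V}$ as
\[ 0 \to I_{f,V} \to \widehat{A_V} \to \widehat{A_V}[f_V^{-1}] . \]
Tensoring with $\widehat{A_U}$ over $\widehat{A_V}$, flatness of $\widehat{g}$ preserves exactness. On the right-hand term, localization commutes with base change, and since $\widehat{g}(f_V) = f_U$ we obtain the identification
\[ \widehat{A_V}[f_V^{-1}] \otimes_{\widehat{A_V}} \widehat{A_U} \simeq \widehat{A_U}[f_U^{-1}]. \]
Moreover, flatness of $\widehat{g}$ ensures that the map $I_{f,V} \otimes_{\widehat{A_V}} \widehat{A_U} \to \widehat{A_U}$ remains injective, so its image can be identified with $I_{f,V}\widehat{A_U}$.

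Combining these observations, the resulting exact sequence
\[ 0 \to I_{f,V}\widehat{A_U} \to \widehat{A_U} \to \widehat{A_U}[f_U^{-1}] \]
identifies $I_{f,V}\widehat{A_U}$ with the kernel of $\widehat{A_U} \to \widehat{A_U}[f_U^{-1}]$, which is by definition $I_{f,U}$. I do not expect any significant obstacle here, as the corollary is essentially a formal consequence of flatness plus the characterization of the torsion submodule as the kernel of a localization map; the main work has already been done in the previous lemma.
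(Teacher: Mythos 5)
Your proof is correct and follows essentially the same route as the paper: both arguments rest on the flatness of $\widehat{A_V} \to \widehat{A_U}$ from the preceding lemma, together with the fact that the torsion ideal, characterized as the kernel of a map compatible with base change, is preserved under flat extension. The only cosmetic difference is that the paper presents $I_{f,V}$ as the kernel of multiplication by $f_V$ on $\widehat{A_V}$, whereas you present it as the kernel of the localization map $\widehat{A_V} \to \widehat{A_V}[f_V^{-1}]$; either description feeds into the same flat-base-change argument.
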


\begin{proof}
	Observe that $I_{f,V}$ is the kernel of the map $m_{f_V} \colon \widehat{A_V} \to \widehat{A_V}$ given by multiplication by $f_V$.
	Since the map $\widehat{A_V} \to \widehat{A_U}$ is flat by \cref{lem:etale_sent_to_flat}, we conclude that
	\[ I_{f,V} \widehat{A_U} = I_{f,V} \otimes_{\widehat{A_V}} \widehat{A_U} \]
	is the kernel of the map $m_{f_V} \otimes \widehat{A_U} = m_{f_U}$.
	In other words, $I_{f,V} \widehat{A_U} = I_{f,U}$.
\end{proof}

Keeping the notations of the above corollary, we introduce the ring
\[ B_U \coloneqq \widehat{A_U} / I_{f,U} . \]
Observe that if $U \to V$ is an \'etale map in $X\et$, the above corollary guarantees that
\[ B_U \simeq B_V \otimes_{\widehat{A_V}} \widehat{A_U} . \]
Combining this remark with \cref{lem:etale_sent_to_flat}, we conclude that the map $B_V \to B_U$ is again flat.

Committing a slight abuse of notation, we still denote by $f_U$ the image of $f_U$ under $\widehat{A_U} \to B_U$.
Then there is a canonical isomorphism
\[ B_U[f_U\inv] \simeq \widehat{A_U}[f_U\inv] . \]
As consequence, we can factor the functor $\hPhi^\circ_{Z,f} \colon X\et \to \Catst$ through
\[ \Spf(B)\et \to \Catst . \]
In other words, we can assume out of the very beginning that $\hA$ doesn't have $f$-torsion.

\subsubsection{The base step for the heart}

We keep working with the notations introduced in the previous paragraphs.
Observe that for every $U \in X\et$, the stable $\infty$-category
\[ \hPhi^\circ_{Z,f}(U) = \Coh^-(\widehat{A_U}[f_U\inv]) \]
has a canonical $t$-structure.
Furthermore, \cref{lem:etale_sent_to_flat} implies that given a map $i \colon U \to V$ in $X\et$, the induced morphism
\[ \widehat{A_V} \to \widehat{A_U} \]
is flat.
As consequence, the morphism
\[ \widehat{A_V}[f_V\inv] \to \widehat{A_U}[f_U\inv] \]
is flat as well.
It follows that $\hPhi^\circ_{Z,f}(i) \colon \hPhi^\circ_{Z,f}(V) \to \hPhi^\circ_{Z,f}(U)$ is $t$-exact.
This allows us to promote $\hPhi^\circ_{Z,f}$ to an $\infty$-functor
\[ \hPhi^\circ_{Z,f} \colon X\et \to \Catstlc . \]
Composing with
\[ (-)^\heartsuit \colon \Catstlc \to \AbCat , \]
we obtain a new functor $(\hPhi^\circ_{Z,f})^\heartsuit \colon X\et \to \AbCat$.
The reductions performed in \S \ref{subsubsec:initial_reduction} and in \S \ref{subsubsec:reduction_torsionfree} allow us finally to invoke \cite{Bosch_Gortz_Coherent_modules_1998} to conclude that $(\hPhi^\circ_{Z,f})^\heartsuit$ has \'etale descent.

Let us explain why this is true.
Recall that at the beginning of \S \ref{subsubsec:initial_reduction} we fixed a finite number of generators $f_1, \ldots, f_n$ for $J$.
Furthermore, we can always assume that $f$ is among these generators.
To simplify notations, we assume $f_1 = f$.
Consider the map $k[T_1, \ldots, T_n] \to A$ defined by sending $T_i$ to $f_i$.
Passing to formal completions at $(T_1, \ldots, T_n)$ and at $J$ respectively, we obtain a map
\[ R \coloneqq k \llb T_1, \ldots, T_n \rrb  \to \widehat{A} . \]
Observe that $(T_1, \ldots, T_n) \hA = J \hA$ is an ideal of definition for $\hA$.
Furthermore, $\hA / I \hA \simeq A / J$ is finitely generated over $k$, and hence over $k\llb T_1, \ldots, T_n\rrb$.
Thus it follows that $\hA$ is topologically of finite generation over $R$ (cf.\ \cite[Chapter 0, 7.5.3]{EGA1}).
Since $\hA$ is noetherian, it also follows that it is topologically of finite presentation.

We consider $R$ as a ring of type (N) in the sense of \cite[\S 1]{Bosch_Gortz_Coherent_modules_1998}.
In order to do this, we have to specify an ideal $I$ inside $R$.
We let $I = (T_1)$.
Observe that the reduction performed in \S \ref{subsubsec:reduction_torsionfree} allows us to assume that $\widehat{A}$ does not have $I$-torsion.
In other words, $\widehat{Z_U} \simeq \Spf(\widehat{A_U})$ is an admissible formal $R$-scheme.
We let $\widehat{Z_U}_{\mathrm{rig}}$ denote the rigid space associated to $\fX$ introduced in \cite[\S 1]{Bosch_Gortz_Coherent_modules_1998}.
The very definition of fpqc maps of rigid spaces given in \cite[\S 3]{Bosch_Gortz_Coherent_modules_1998} shows that the assignment
\[ \fU \mapsto \fU_{\mathrm{rig}} \]
defines a morphism of sites
\[ \hZ\et \to (\hZ_{\mathrm{rig}})_{\mathrm{fpqc}} . \]
Furthermore, we have:

\begin{lem}
	For every $U \in X\et$, there is a canonical equivalence
	\[ (\hPhi^\circ_{Z,f}(U))^\heartsuit \simeq \Coh^\heartsuit(\widehat{Z_U}_{\mathrm{rig}}) . \]
\end{lem}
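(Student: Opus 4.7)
My plan for this lemma is to unwind the two sides and match them via the Raynaud--Bosch--Görtz dictionary between formal models and their rigid generic fibers.

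First, I would identify the left-hand side. By construction, $\hPhi^\circ_{Z,f}(U) = \Coh^-(\widehat{A_U}[f_U^{-1}])$, with its natural $t$-structure inherited from $\QCoh(\widehat{A_U}[f_U^{-1}])$. Since $\widehat{A_U}$ is noetherian (as the completion of a noetherian ring along an ideal), $\widehat{A_U}[f_U^{-1}]$ is noetherian as well, so the heart $(\hPhi^\circ_{Z,f}(U))^\heartsuit$ is the abelian category of finitely generated $\widehat{A_U}[f_U^{-1}]$-modules, which by noetherianity coincides with the category of finitely presented such modules.

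Next, I would identify the right-hand side using the Raynaud rigidification as formalized in \cite{Bosch_Gortz_Coherent_modules_1998}. By the preliminary reductions in \S\ref{subsubsec:initial_reduction}--\ref{subsubsec:reduction_torsionfree}, $\widehat{Z_U} = \Spf(\widehat{A_U})$ is an admissible formal $R$-scheme (no $f = T_1$-torsion) that is topologically of finite presentation over the ring of type (N) $R = k\llb T_1,\ldots,T_n\rrb$ with ideal of definition $I = (T_1)$. Its associated rigid space $\widehat{Z_U}_{\mathrm{rig}}$ is therefore affinoid, with ring of functions canonically identified with the localization $\widehat{A_U} \otimes_R R[T_1^{-1}] = \widehat{A_U}[f_U^{-1}]$. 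The standard equivalence for affinoid rigid spaces then yields a canonical equivalence between $\Coh^\heartsuit(\widehat{Z_U}_{\mathrm{rig}})$ and the category of finitely presented $\widehat{A_U}[f_U^{-1}]$-modules.

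Composing the two identifications produces the desired equivalence. The only point that requires any care is naturality in $U$: for a morphism $U \to V$ in $X\et$, one needs the square relating the two descriptions to commute. This is automatic from the functoriality of Raynaud's construction under maps of admissible formal $R$-schemes, combined with the fact that $\widehat{A_V} \to \widehat{A_U}$ is flat by \cref{lem:etale_sent_to_flat}, so that base change of finitely presented modules along this map is compatible on both sides. I do not anticipate any serious obstacle here; the content of the lemma is really a bookkeeping statement identifying the two standard avatars — the algebraic one via modules over $\widehat{A_U}[f_U^{-1}]$ and the rigid-analytic one via coherent sheaves on $\widehat{Z_U}_{\mathrm{rig}}$ — of the same abelian category. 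Its main use will be to transfer the \'etale descent results of Bosch--Görtz, established on the rigid side, back to $(\hPhi^\circ_{Z,f})^\heartsuit$.
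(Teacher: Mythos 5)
Your strategy --- identify both sides with finitely generated $\widehat{A_U}[f_U^{-1}]$-modules --- is natural, and your identification of the algebraic side is fine, but the step where you identify the rigid-analytic side is exactly where all the content of the lemma sits, and your justification for it does not hold as stated. The space $\widehat{Z_U}_{\mathrm{rig}}$ is not an affinoid to which Tate's and Kiehl's ``standard equivalence for affinoid rigid spaces'' applies: the base here is the ring of type (N) $R = k\llb T_1,\ldots,T_n\rrb$ with ideal of definition $(T_1)$ (equivalently, in the Berkovich incarnation of \cref{appendix:comparison}, the trivially valued field $k$), and in that incarnation the analogous space --- the generic fiber of $\Spf(\widehat{A_U})$ with the zero locus of $f$ removed --- is only an increasing union of affinoids; even computing its global sections and its coherent sheaves requires an affinoid exhaustion, Tate acyclicity on each piece, and descent for the $G$-topology, as carried out in the proof of \cref{lem:higher_codimension_induction_step}. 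In the Raynaud-style framework of \cite{Bosch_Gortz_Coherent_modules_1998} actually used here, coherent modules on $\widehat{Z_U}_{\mathrm{rig}}$ are defined via formal models, and the assertion that they are equivalent to finitely generated $\widehat{A_U}[f_U^{-1}]$-modules is precisely \cite[Corollary 1.8]{Bosch_Gortz_Coherent_modules_1998} --- which is, verbatim, the paper's entire proof of this lemma. So as written your argument presents the one nontrivial point as bookkeeping; once you replace the ``standard affinoid equivalence'' step by the correct citation, it collapses to the paper's one-line proof.

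A smaller remark: the naturality in $U$ you worry about is not part of the statement (the lemma is for each fixed $U$); in the paper the functoriality is packaged by the morphism of sites $\hZ\et \to (\hZ_{\mathrm{rig}})_{\mathrm{fpqc}}$ constructed just before the lemma, and the flatness input you invoke (\cref{lem:etale_sent_to_flat}) enters there and in the subsequent descent argument rather than in the lemma itself.
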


\begin{proof}
	This is precisely the content of \cite[Corollary 1.8]{Bosch_Gortz_Coherent_modules_1998}.
\end{proof}

At this point, \cite[Theorem 3.1]{Bosch_Gortz_Coherent_modules_1998} implies directly:

\begin{prop} \label{prop:etale_descent_heart}
	The functor $(\hPhi^\circ_{Z,f})^\heartsuit \colon X\et \to \AbCat$ is a $\tauet$-sheaf.
\end{prop}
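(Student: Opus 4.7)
The plan is to transport the descent problem for $(\hPhi^\circ_{Z,f})^\heartsuit$ to a descent problem for ordinary coherent sheaves on a rigid-analytic space, where the required statement is a theorem of Bosch--G\"ortz. All the ingredients for this transport have already been assembled in the paragraph preceding the proposition.

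First, I would fix an \'etale cover $\{U_\alpha \to U\}$ in $X\et$, together with its \v{C}ech nerve $U^\bullet$. Using the lemma immediately preceding the proposition, each term can be rewritten as
\[
(\hPhi^\circ_{Z,f}(U^n))^\heartsuit \simeq \Coh^\heartsuit\bigl(\widehat{Z_{U^n}}_{\mathrm{rig}}\bigr),
\]
and, by naturality of the Bosch--G\"ortz construction $\fU \mapsto \fU_{\mathrm{rig}}$, the transition maps on the left-hand side correspond to pullback along the maps
\[
\widehat{Z_{U^n}}_{\mathrm{rig}} \to \widehat{Z_U}_{\mathrm{rig}}.
\]
Thus the descent statement for $(\hPhi^\circ_{Z,f})^\heartsuit$ along $\{U_\alpha \to U\}$ is equivalent to a descent statement for $\Coh^\heartsuit$ of rigid spaces along the induced cover $\{\widehat{Z_{U_\alpha}}_{\mathrm{rig}} \to \widehat{Z_U}_{\mathrm{rig}}\}$.

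Second, I would invoke the observation recalled just above the proposition, namely that $\fU \mapsto \fU_{\mathrm{rig}}$ defines a morphism of sites $\hZ\et \to (\hZ_{\mathrm{rig}})_{\mathrm{fpqc}}$; here one uses \cref{lem:etale_sent_to_flat} (flatness of $\widehat{A_V} \to \widehat{A_U}$) together with the explicit definition of admissible fpqc covers in \cite[\S 3]{Bosch_Gortz_Coherent_modules_1998}. In particular, the cover $\{\widehat{Z_{U_\alpha}}_{\mathrm{rig}} \to \widehat{Z_U}_{\mathrm{rig}}\}$ is fpqc in the rigid-analytic sense, and its \v{C}ech nerve agrees with the image of $U^\bullet$ under rigidification (since rigidification commutes with the relevant fibre products of admissible formal schemes).

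Finally, \cite[Theorem 3.1]{Bosch_Gortz_Coherent_modules_1998} asserts that $\Coh^\heartsuit$ satisfies fpqc descent on admissible rigid spaces, so the canonical map
\[
\Coh^\heartsuit\bigl(\widehat{Z_U}_{\mathrm{rig}}\bigr) \to \varprojlim_{n \in \mathbf\Delta} \Coh^\heartsuit\bigl(\widehat{Z_{U^n}}_{\mathrm{rig}}\bigr)
\]
is an equivalence of abelian categories. Translating back via the equivalence of the preceding lemma yields the desired sheaf condition for $(\hPhi^\circ_{Z,f})^\heartsuit$. The only real obstacle I foresee is bookkeeping: one must check that all reductions performed in \S\S \ref{subsubsec:initial_reduction}--\ref{subsubsec:reduction_torsionfree} are genuinely in force so that $\widehat{Z_U}$ is admissible in the sense of Bosch--G\"ortz (i.e.\ topologically of finite presentation over $R$ and $f$-torsion free), and that the identification of \v{C}ech nerves under rigidification is compatible with the monoidal structure; both points are immediate from the definitions but deserve a brief remark.
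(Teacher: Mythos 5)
Your proposal is correct and follows essentially the same route as the paper: after the reductions of \S\S \ref{subsubsec:initial_reduction}--\ref{subsubsec:reduction_torsionfree}, one identifies the hearts with categories of coherent sheaves on the associated rigid spaces via \cite[Corollary 1.8]{Bosch_Gortz_Coherent_modules_1998}, notes that \'etale covers rigidify to fpqc covers (using \cref{lem:etale_sent_to_flat}), and concludes by \cite[Theorem 3.1]{Bosch_Gortz_Coherent_modules_1998}. The only superfluous point is your remark about compatibility with the monoidal structure, which plays no role in this heart-level statement.
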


\begin{cor} \label{cor:etale_hyperdescent_heart}
	The functor $(\hPhi^\circ_{Z,f})^\heartsuit \colon X\et \to \AbCat$ is a hypercomplete $\tauet$-sheaf.
\end{cor}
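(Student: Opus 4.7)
The plan is to upgrade the sheaf property established in \cref{prop:etale_descent_heart} to hyperdescent via a simple truncation argument, without revisiting the analytic input from \cite{Bosch_Gortz_Coherent_modules_1998}. The key point is that $\AbCat$ is, as an $\infty$-category, $2$-truncated: its objects are abelian categories, its $1$-morphisms are (additive) functors, and its $2$-morphisms are natural transformations, with no non-trivial higher morphisms. Any $\tauet$-sheaf on $X\et$ taking values in such a truncated target should automatically be hypercomplete, and this is the entire content of the deduction.

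More precisely, I would invoke the following general principle: if $\cT$ is a presentable $\infty$-category which is $n$-truncated (i.e.\ all its mapping spaces are $(n-1)$-truncated), then every sheaf $F \colon \cC\op \to \cT$ on an $\infty$-site $(\cC, \tau)$ is automatically hypercomplete. The reason is that for any hypercover $U^\bullet \to U$, the limit $\varprojlim_{\mathbf{\Delta}} F(U^\bullet)$ only depends on the truncation of $U^\bullet$ to simplicial degrees bounded in terms of $n$, and on those degrees any hypercover may be refined by a Čech nerve of a $\tau$-cover, for which descent holds by assumption. At the level of $\infty$-topoi, this is a manifestation of \cite[Proposition 6.5.2.9 and \S 6.5.3]{HTT}, which says that $n$-truncated objects in an $\infty$-topos are hypercomplete; applied to the presentable $\infty$-topos of $\cT$-valued sheaves on $X\et$, it gives the claim.

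Applying this general principle with $\cT = \AbCat$ (which is $2$-truncated) and $F = (\hPhi^\circ_{Z,f})^\heartsuit$ yields the corollary, since $F$ is a $\tauet$-sheaf by \cref{prop:etale_descent_heart}. There is essentially no obstacle in this step: all the substantive work was already done in proving the ordinary sheaf property, and the upgrade to hyperdescent is a purely formal consequence of the $2$-categorical nature of the target.
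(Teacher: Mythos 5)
Your argument is correct and is essentially the paper's own proof: the paper likewise notes that $\AbCat$ is a truncated ($1$-categorical) target and deduces hypercompleteness of the $\tauet$-sheaf $(\hPhi^\circ_{Z,f})^\heartsuit$ from \cref{prop:etale_descent_heart} together with the general fact that sheaves valued in a truncated $\infty$-category automatically satisfy hyperdescent (citing \cite[Proposition A.1]{Porta_Comparison_2015} where you cite \cite{HTT}). The only differences are the choice of reference and the bookkeeping of the exact truncation level of $\AbCat$, neither of which affects the argument.
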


\begin{proof}
	Since $\AbCat$ is a $1$-category, this statement is a consequence of \cref{prop:etale_descent_heart} and of, for example, \cite[Proposition A.1]{Porta_Comparison_2015}.
\end{proof}

\subsubsection{The base step in general}

We now turn to the proof that the $\infty$-functor $\hPhi^\circ_{Z,f} \colon X\et \to \Catst$ satisfies $\tauet$-hyperdescent.
This will complete the proof of the base step of the induction started in \S \ref{subsubsec:initial_reduction} and as consequence also the proof of \cref{thm:etale_descent}.

We need the following result on the behaviour of $t$-structures in families:

\begin{lem} \label{lem:flat_cosimplicial_t_structure}
	Let $p \colon \cX \to S$ be a stable fibration and let $F \colon S\op \to \Catst$ be the associated $\infty$-functor.
	Suppose furthermore that:
	\begin{enumerate}
		\item for every $s \in S$, we are given a $t$-structure $(\cX_s^{\le 0}, \cX_s^{\ge 0})$ on the fiber $\cX_s$;
		\item for every edge $f \colon s \to s'$ the induced functor $f^* \colon \cX_{s'} \to \cX_s$ is $t$-exact.
	\end{enumerate}
	Then the stable $\infty$-category $\varprojlim F$ has a (unique) $t$-structure characterised by the requirement that for every $s \in S$ the induced functor
	\[ e_s \colon \varprojlim F \to \cX_s \]
	is $t$-exact.
\end{lem}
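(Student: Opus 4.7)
The natural approach is to define the putative $t$-structure pointwise using the evaluation functors and then verify the three axioms. Set $\cC \coloneqq \varprojlim F$ and define
\[ \cC^{\le 0} \coloneqq \{X \in \cC \mid e_s(X) \in \cX_s^{\le 0} \text{ for every } s \in S\}, \]
and analogously $\cC^{\ge 0}$. Uniqueness is then immediate: any $t$-structure making every $e_s$ $t$-exact is forced to coincide with the above on both sides, because requiring $e_s(X)\in\cX_s^{\le 0}$ for all $s$ both is necessary and, together with the truncation triangle built below, is sufficient.

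Shift closure follows from the exactness of each $e_s$. For the Hom-orthogonality axiom, one uses that mapping spaces in a limit of $\infty$-categories are computed as limits of the mapping spaces: for $X\in\cC^{\le 0}$ and $Y\in\cC^{\ge 1}$,
\[ \Map_\cC(X,Y) \simeq \varprojlim_{s\in S\op}\Map_{\cX_s}(e_s(X), e_s(Y)) \simeq \ast, \]
since each factor is contractible by the $t$-structure axioms on $\cX_s$.

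The main step is the existence of truncation triangles. The key input is that because each transition functor $f^* \colon \cX_{s'}\to\cX_s$ is $t$-exact, it commutes up to canonical equivalence with $\tau_{\le 0}$ and $\tau_{\ge 1}$. Concretely, one can upgrade $F$ to a functor $\widetilde F \colon S\op \to \Catst^t$ into the $\infty$-category of stable $\infty$-categories equipped with a $t$-structure and $t$-exact functors, and check that the forgetful functor $\Catst^t \to \Catst$ preserves limits: this produces a $t$-structure on $\cC$ whose connective and coconnective parts are exactly the $\cC^{\le 0}$ and $\cC^{\ge 0}$ defined above, and for which every $e_s$ is tautologically $t$-exact. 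More concretely, realising $\cC$ as the $\infty$-category of cartesian sections of the fibration $p \colon \cX \to S$, the pointwise truncations $s\mapsto \tau_{\le 0}^s(e_s X)$ and $s\mapsto\tau_{\ge 1}^s(e_s X)$ assemble into cartesian sections (the cartesian-gluing data being supplied by the canonical equivalences $f^*\circ\tau_{\le 0}^{s'}\simeq \tau_{\le 0}^s\circ f^*$), and the pointwise triangles $\tau_{\le 0}^s(e_s X) \to e_s X \to \tau_{\ge 1}^s(e_s X)$ glue to a fiber sequence in $\cC$ because fiber sequences are detected fibrewise in a limit of stable $\infty$-categories.

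The hard part will be the formal verification that pointwise truncations really glue to cartesian sections, i.e.\ that the natural equivalences $f^*\tau_{\le 0}^{s'}\simeq\tau_{\le 0}^sf^*$ are coherent enough to define a morphism in $\varprojlim F$. The cleanest way around this is to observe that $\Catst^t$ can be exhibited as a full subcategory of $\Fun(\Delta^1,\Catst)$ (via the inclusion $\cD^{\le 0}\hookrightarrow\cD$) cut out by conditions which are stable under limits, so that limits in $\Catst^t$ exist and are computed underlying-wise; the hypothesis that the $f^*$ are $t$-exact is exactly what is needed for $\widetilde F$ to land in $\Catst^t$. Once this is granted, the $t$-structure on $\cC$ is the limit $t$-structure, and its characterisation coincides with the pointwise one given above.
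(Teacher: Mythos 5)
Your setup coincides with the paper's: you realise $\varprojlim F$ as the $\infty$-category of cartesian sections of $p$ (via \cite[3.3.3.2]{HTT}), define $\cC^{\le 0}$ and $\cC^{\ge 0}$ pointwise, prove orthogonality by writing mapping spaces in the limit as limits of mapping spaces, and get shift closure from the fact that finite (co)limits of cartesian sections are computed objectwise. Your uniqueness remark is also fine once existence is granted. The problem is the one step you yourself flag as hard: the existence of the truncation triangles. Your proposed shortcut -- that $\Catst^t$ sits inside $\Fun(\Delta^1,\Catst)$ ``cut out by conditions which are stable under limits'' -- begs the question. The conditions that obviously pass to limits (fully faithfulness of $\cD^{\le 0}\hookrightarrow\cD$, closure under shifts and extensions, orthogonality) are not the issue; the condition that the pair is a $t$-structure includes the existence of the truncation functor, i.e.\ that $\varprojlim \cX_s^{\le 0}\hookrightarrow\varprojlim\cX_s$ admits a right adjoint, and there is no formal reason this is ``stable under limits.'' Indeed it fails without $t$-exactness of the transition functors, whereas in your sketch $t$-exactness is only invoked to make $\widetilde F$ land in $\Catst^t$, which is the easy part. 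So as written the key coherence verification is asserted, not proved.

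What is missing is precisely how the paper closes this gap, and it is worth knowing the mechanism: work on the fibration rather than on the diagram. Let $\cX^{\le 0}\subseteq\cX$ be the full subcategory of objects lying fiberwise in $\cX_s^{\le 0}$, with inclusion $j$. Then $p\circ j$ is again a Cartesian fibration and $j$ preserves cartesian edges; $j$ admits a right adjoint fiberwise by \cite[1.2.1.5]{Lurie_Higher_algebra}, so by \cite[7.3.2.6]{Lurie_Higher_algebra} it admits a right adjoint $\tau_{\le 0}\colon\cX\to\cX^{\le 0}$ \emph{relative to} $S$. The hypothesis that each $f^*$ is $t$-exact is then used exactly where it matters: it shows that $\tau_{\le 0}$ preserves cartesian edges (the truncation squares commute), so postcomposition with $\tau_{\le 0}$ carries cartesian sections to cartesian sections and yields a right adjoint $\tau_{\le 0}^*$ to $\cC^{\le 0}\hookrightarrow\cC$. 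The cofiber of the counit $\tau_{\le 0}^*(x)\to x$, computed objectwise, then lands in $\cC^{\ge 1}$ and gives the required triangle. Your first sketch (gluing the pointwise truncations as cartesian sections via the equivalences $f^*\circ\tau_{\le 0}^{s'}\simeq\tau_{\le 0}^s\circ f^*$) is the right idea, but the homotopy-coherent assembly you worry about is exactly what the relative-adjunction argument packages; without it, or some equivalent adjointability/Beck--Chevalley argument, the proof is incomplete.
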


\begin{proof}
	Using \cite[3.3.3.2]{HTT} we can represent $X$ as the $\infty$-category of cartesian sections of $p \colon \cX \to S$:
	\[ \varprojlim F \simeq \Map_S^{\flat}(S^\sharp, \cX^\natural) . \]
	In order to define a $t$-structure on $\varprojlim F$ it is enough to define a $t$-structure on
	\[ \cC \coloneqq \Map_{S}^{\flat}(S^\sharp, \cX^\natural) . \]
	Define $\cC^{\le 0}$ (resp.\ $\cC^{\ge 0}$) as the full subcategory of $\cC$ spanned by those $x \in \cC$ such that
	\[ x(s) \in \cX_s^{\le 0} \qquad (\mathrm{resp.\ } x(s) \in \cX_s^{\ge 0}) . \]
	for every $s \in S$.
	We claim that this defines a $t$-structure on $\cC$.
	Let $x \in \cC^{\le 0}$ and $y \in \cC^{\ge 1}$.
	Then we have
	\[ \Map_{\cC}(x,y) = \varprojlim_{s \in S} \Map_{\cX_s}(x(s), y(s)) = 0 . \]
	This proves the orthogonality condition.
	
	We next prove that $\cC^{\le 0}[-1] \subset \cC^{\le 1}$ and $\cC^{\ge 0}[1] \subset \cC^{\ge -1}$.
	Since $p \colon \cX \to S$ is a stable fibration, we know that for every edge $f \colon s \to s'$ in $S$ the induced functor $f^* \colon \cX_{s'} \to \cX_s$ is an exact functor between stable $\infty$-categories.
	This implies that finite limits and finite colimits in $\cC$ can be computed objectwise.
	Therefore the previous conditions can be checked sectionwise and thus they follow from the fact that for every $s \in S$, $(\cX_s^{\le 0}, \cX_s^{\ge 0})$ forms a $t$-structure on $\cX_s$.
	
	We finally prove that for every $x \in \cC$ there exists a fiber sequence
	\[ x' \to x \to x'' , \]
	where $x' \in \cC^{\le 0}$ and $x'' \in \cC^{\ge 1}$.
	Let $\cX^{\le 0}$ be the full subcategory of $\cX$ spanned by those objects $x \in \cX$ such that $x \in \cX_{p(x)}^{\le 0}$.
	We denote by $j \colon \cX^{\le 0} \hookrightarrow \cX$ the inclusion.
	The composite functor $p\circ j$ is again a Cartesian fibration, and the inclusion $j \colon \cX^{\le 0} \hookrightarrow \cX$ preserves cartesian edges.
	Furthermore, it follows from \cite[1.2.1.5]{Lurie_Higher_algebra} that $j$ admits fiberwise a right adjoint.
	Therefore, the hypotheses of \cite[7.3.2.6]{Lurie_Higher_algebra} are satisfied.
	Thus, $j$ admits a right adjoint $\tau_{\le 0} \colon \cX \to \cX^{\le 0}$ relative to $S$.
	We claim that $\tau_{\le 0}$ preserves Cartesian edges as well.
	This amounts to show that for every edge $f \colon s \to s'$ in $S$, the induced square
	\[ \begin{tikzcd}
	\cX_{s'} \arrow{r}{f^*} \arrow{d}{\tau_{\le 0}} & \cX_s \arrow{d}{\tau_{\le 0}} \\
	\cX_{s'}^{\le 0} \arrow{r}{f^*} & \cX_s^{\le 0}
	\end{tikzcd} \]
	is commutative.
	Since $f^*$ is $t$-exact by construction, it commutes with truncations.
	Therefore the assertion is proved, and $\tau_{\le 0} \colon \cX \to \cX^{\le 0}$ preserves Cartesian edges.
	
	In particular, composition with $\tau_{\le 0}$ preserves Cartesian sections, and therefore we obtain a well defined functor
	\[ \tau_{\le 0}^* \colon \cC \to \cC^{\le 0} \]
	formally defined by $\tau_{\le 0}^*(x) \coloneqq \tau_{\le 0} \circ x$.
	Since $\tau_{\le 0}$ is right adjoint to $j$, we see that $\tau_{\le 0}^*$ is right adjoint to the inclusion $j^* \colon \cC^{\le 0} \to \cC$.
	In particular, for every $x \in \cC$ there is a natural transformation
	\[ \varepsilon_x \colon \tau_{\le 0} \circ x \to x . \]
	Set $x'' \coloneqq \cofib( \varepsilon_x )$.
	Since finite colimits of cartesian sections can be computed objectwise, we see indeed that $x''(s) \in \cC^{\ge 1}$ for every $s \in S$.
	This completes the proof that $(\cC^{\le 0}, \cC^{\ge 0})$ forms a $t$-structure on $\cC$, and therefore the proof of the lemma is achieved.
\end{proof}

We are finally ready to prove the main result.
Let us start by fixing a couple of notations.
Let $u^\bullet \colon U^\bullet \to X$ be an \'etale hypercover in $X\et$.
To ease the notation, we simply write $B_n$ for $A_{U^n}$ and $f_n$ for $f_{U^n}$.
Furthermore, we write
\[ v^\bullet \colon \widehat{A}[f\inv] \to \widehat{B_n}[f_n\inv] \]
be the induced morphisms.
In this way, we get a natural functor
\[ v^{\bullet *} \colon \hPsi^\circ_{Z,f}(X) \to \varprojlim_{n \in \mathbf \Delta} \hPsi^\circ_{Z,f}(U^n) . \]
Unravelling the definitions, we can rewrite the above map as
\[ v^{\bullet *} \colon \Coh^-(\widehat{A}[f\inv]) \to \varprojlim_{n \in \mathbf \Delta} \Coh^-(\widehat{B_n}[f_n\inv]) . \]
Committing a slight abuse of notation, we still denote by $v^{\bullet *}$ the induced map
\[ v^{\bullet *} \colon \QCoh(\widehat{A}[f\inv]) \to \varprojlim_{n \in \mathbf \Delta} \QCoh(\widehat{B_n}[f_n\inv]) . \]
Since the categories $\QCoh(\widehat{A}[f\inv])$ and $\QCoh(\widehat{B_n}[f_n\inv])$ are presentable, we see that $v^{\bullet *}$ has a right adjoint, which we denote $v^\bullet_*$.
It follows from \cite[Corollary 8.6]{Porta_Yu_Higher_analytic_stacks_2014} that $v^\bullet_*$ can be identified with the functor given by the informal assignment
\[ \{\cF^n\} \mapsto \varprojlim v^n_* \cF^n . \]

As direct consequence of \cref{prop:etale_descent_heart} we obtain the following:

\begin{lem}[{cf.\ \cite[Step 1]{Conrad_Descent_for_coherent_2003}}] \label{lem:right_adjointable_key_lemma}
	Let $u^\bullet \colon U^\bullet \to X$ be an \'etale hypercover in $X\et$.
	Let
	\[ \{\cF^n\} \in \varprojlim_{n \in \mathbf \Delta} \Coh^\heartsuit(\widehat{B_n}[f_n\inv]) . \]
	Let $\cF \simeq \mathrm{eq}( \cF^0 \rightrightarrows \cF^1 ) \in \Coh^\heartsuit(\widehat{A}[f\inv])$ be the coherent $\widehat{A}[f\inv]$-module obtained by descent.
	Then there is a canonical equivalence
	\[ \cF \simeq v^\bullet_*( \{\cF^n\} ) . \]
	In particular, the square
	\[ \begin{tikzcd} \label{eq:right_adjointable_for_devissage}
		\Cohh(\widehat{A}[f\inv]) \arrow{d}{i}  \arrow{r}{v^{\bullet *}} & \varprojlim \Cohh(\widehat{B_n}[f_n\inv]) \arrow{d}{i^\bullet} \\
		\QCoh(\widehat{A}[f\inv]) \arrow{r}{v^{\bullet *}} & \varprojlim \QCoh(\widehat{B_n}[f_n\inv])
	\end{tikzcd} \]
	is right adjointable.
\end{lem}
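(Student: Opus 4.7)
The plan is to prove the identification $v^\bullet_*\{\cF^n\} \simeq \cF$ in $\QCoh(\widehat{A}[f\inv])$; the right adjointability of the square then follows formally, since by \cref{prop:etale_descent_heart} the right adjoint of the top horizontal map is precisely the descent functor $\{\cF^n\} \mapsto \mathrm{eq}(\cF^0 \rightrightarrows \cF^1) = \cF$, and the requested equivalence $i(\cF) \simeq v^\bullet_*(i^\bullet\{\cF^n\})$ is exactly the central identification.

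To establish that $v^\bullet_*\{\cF^n\} = \varprojlim_{\mathbf{\Delta}} v^n_* \cF^n$ agrees with $\cF$, I would first equip the cosimplicial diagram $n \mapsto \QCoh(\widehat{B_n}[f_n\inv])$ with the termwise $t$-structure of \cref{lem:flat_cosimplicial_t_structure}; its hypothesis holds because \cref{lem:etale_sent_to_flat} makes the transition pullbacks flat, hence $t$-exact. The descent datum $\{\cF^n\}$ then lies in the heart of the limit. Since each $v^n_*$ is restriction of scalars and thus exact, $v^\bullet_*\cF^\bullet$ is a cosimplicial object of $\Cohh(\widehat{A}[f\inv])$; by the Dold--Kan correspondence, its totalization inside $\QCoh(\widehat{A}[f\inv])$ is computed by the associated cochain complex
\[
[v^0_*\cF^0 \to v^1_*\cF^1 \to v^2_*\cF^2 \to \cdots].
\]
The $\rH^0$ of this complex is the equalizer, which equals $\cF$ by \cref{prop:etale_descent_heart}, so it suffices to show that the higher $\rH^p$ vanish.

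For the vanishing, I would recognise this complex as the \v{C}ech complex of $\cF$ with respect to the family $\{\Spec(\widehat{B_n}[f_n\inv]) \to \Spec(\widehat{A}[f\inv])\}$, interpreted as an étale hypercover of the affine Noetherian scheme $\Spec(\widehat{A}[f\inv])$. Higher quasi-coherent cohomology vanishes on an affine, and the standard \v{C}ech-to-derived-functor spectral sequence then gives $\rH^p = 0$ for $p \geq 1$.

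The principal obstacle is justifying that the above family is indeed an étale hypercover of $\Spec(\widehat{A}[f\inv])$. Étaleness of each leg is standard, since completion of an étale morphism between Noetherian rings is étale and localization preserves étaleness. The delicate part is propagating joint surjectivity through the formal process: one uses that, under our Noetherian and finite-presentation hypotheses, the natural map $B_n \otimes_A \widehat{A} \to \widehat{B_n}$ is an isomorphism, so that $\{\Spec(\widehat{B_n})\}$ is the base change of the original hypercover $U^\bullet \to X$ along $\Spec(\widehat{A}) \to \Spec(A)$ and therefore a hypercover of $\Spec(\widehat{A})$; subsequent restriction along the open immersion $\Spec(\widehat{A}[f\inv]) \hookrightarrow \Spec(\widehat{A})$ is again a base change and preserves the hypercover structure.
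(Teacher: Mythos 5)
Your reductions are fine: right adjointability does come down to the single identification $i(\cF)\simeq v^\bullet_*(i^\bullet\{\cF^n\})$, and via Dold--Kan this amounts to exactness, beyond degree $0$, of the normalized cochain complex $[v^0_*\cF^0\to v^1_*\cF^1\to\cdots]$. The gap is in how you try to get that exactness. The key claim that $B_n\otimes_A\widehat{A}\to\widehat{B_n}$ is an isomorphism is false: completion commutes with base change for \emph{finite} $A$-modules, while $B_n$ is only étale (finite type, quasi-finite) over $A$. For example, with $A=k[x,y]$, $J=(x)$ and $B=A[y\inv]$ one has $B\otimes_A\widehat{A}=k[y][[x]][y\inv]$, which is strictly contained in $\widehat{B}=k[y,y\inv][[x]]$ (the series $\sum_i y^{-i}x^i$ lies in the latter but not the former). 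Consequently $\Spec(\widehat{B_\bullet}[f_\bullet\inv])$ is \emph{not} the base change of $U^\bullet$ along $\Spec(\widehat{A})\to\Spec(A)$; the maps $\Spec(\widehat{B_n}[f_n\inv])\to\Spec(\widehat{A}[f\inv])$ are not of finite type (so not étale, contrary to your ``completion of an étale morphism is étale''); and since completion also fails to commute with tensor products, the terms in degree $n\ge 1$ are completed rather than ordinary fiber products, so the simplicial scheme is not a hypercover (nor even the Čech nerve of a flat cover) of $\Spec(\widehat{A}[f\inv])$ in the étale or fppf topology of schemes. Hence the Čech-to-derived-functor spectral sequence on the affine base cannot be invoked, and the vanishing of $\rH^p$ for $p\ge 1$ — which is precisely the hard content of this section, the part supplied by the rigid-analytic theorem of Bosch--Görtz through \cref{prop:etale_descent_heart} — is left unproved; even granting degreewise faithful flatness of the legs (which does hold, by \cref{lem:etale_sent_to_flat} together with $J\widehat{A}$ lying in the Jacobson radical), the complex is not an Amitsur complex for any flat cover, so fpqc descent cannot be quoted either.

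For comparison, the paper avoids any acyclicity argument over the punctured neighbourhood itself: it endows the relevant limits with $t$-structures (\cref{lem:etale_sent_to_flat}, \cref{lem:flat_cosimplicial_t_structure}), shows that the push--pull transformation $i^*\hat{u}^\bullet_*\to v^\bullet_* i^{\bullet*}$ relating the punctured and unpunctured squares is an equivalence on termwise-heart objects, and then transports the known equivalence $\hat{u}^{\bullet*}\colon\Coh^-(\widehat{A})\to\varprojlim\Coh^-(\widehat{B_n})$ (formal GAGA, \cref{lem:basic_formal_GAGA}, plus ordinary étale descent at each infinitesimal level) through this transformation, using \cref{cor:etale_hyperdescent_heart} to write $\{\cF^n\}\simeq v^{\bullet*}\cF$ and the essential surjectivity of $\Cohh(\widehat{A})\to\Cohh(\widehat{A}[f\inv])$ to lift $\cF$. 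If you want to keep your route, you would have to prove the exactness of your complex directly, which essentially re-proves the Bosch--Görtz input rather than deducing it.
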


\begin{proof}
	To start with, we consider the commutative square
	\[ \begin{tikzcd}
		\QCoh(\widehat{A}) \arrow{r}{\hat{u}^{\bullet *}} \arrow{d}{i^*} & \varprojlim \QCoh(\widehat{B_n}) \arrow{d}{i^{\bullet *}} \\
		\QCoh(\widehat{A}[f\inv]) \arrow{r}{v^{\bullet *}} & \varprojlim \QCoh(\widehat{B_n}[f_n\inv]) .
	\end{tikzcd} \]
	Since both $\hat{u}^{\bullet *}$ and $v^{\bullet *}$ have right adjoints, we obtain the Beck-Chevalley transformation.
	\[ \varphi \colon i^* \circ \hat{u}^\bullet_* \to v^\bullet_* \circ i^{\bullet *} . \]
	Moreover, using Lemmas \ref{lem:etale_sent_to_flat} and \ref{lem:flat_cosimplicial_t_structure} we can endow the categories
	\[ \varprojlim \QCoh(\widehat{B_n}), \quad \varprojlim \QCoh(\widehat{B_n}[f_n\inv]) \]
	with $t$-structures such that $i^{\bullet *}$ becomes a $t$-exact functor.
	
	Let $\{\cF^n\} \in \varprojlim \QCoh(\widehat{B_n})$ and suppose that for every $n \in \mathbf \Delta$, $\cF^n \in \QCoh^{\heartsuit}(\widehat{B_n})$.
	We claim that in this case $\varphi_{\{\cF^n\}}$ is an equivalence.
	To see this, we remark that, since $\QCoh(\widehat{A}[f\inv])$ is canonically equipped with a complete $t$-structure, it is enough to prove that $\rH^i(\varphi_{\{\cF^n\}})$ is an equivalence for every $i \in \mathbb Z$.
	Observe that $\hat{u}^{\bullet}_*$ and $v^\bullet_*$ are right $t$-exact.
	On the other side, $i^*$ and $i^{\bullet *}$ are $t$-exact.
	Thus, we deduce that
	\[ \rH^i(i^* \hat{u}^\bullet_*(\{\cF^n\})) \simeq 0 \simeq \rH^i(v^\bullet_* i^{\bullet *}(\{\cF^n\})) \]
	for every $i < 0$.
	We now let $i \ge 0$.
	In this case, we first remark that
	\[ \rH^i (i^* \hat{u}^\bullet_*( \{\cF^n\} )) \simeq i^* \rH^i(\hat{u}^\bullet_*( \{\cF^n\} )) . \]
	Next, we observe that $\rH^i(\hat{u}^\bullet_*(\{\cF^n\}))$ can be computed by a finite limit inside $\QCoh^\heartsuit(\widehat{A})$.
	Since $i^* \colon \QCoh(\widehat{A}) \to \QCoh(\widehat{A}[f\inv])$ is $t$-exact, we deduce that $i^*$ commutes with this particular limit.
	It follows that $i^* \rH^i(\hat{u}^\bullet_*( \{\cF^n\} ))$ can be canonically identified with $\rH^i( v^\bullet_*( \{i^{n*} cF^n\} ) )$.
	This completes the proof of the claim.
	
	We can now complete the proof of the lemma.
	Fix therefore $\{\cF^n\} \in \varprojlim \Coh^\heartsuit(\widehat{B_n}[f_n\inv])$.
	Using the effectivity of the descent proven in \cref{cor:etale_hyperdescent_heart} we can rewrite $\{\cF^n\}$ as $v^{\bullet *}(\cF)$, where
	\[ \cF \coloneqq \mathrm{eq}( \cF^0 \rightrightarrows \cF^1 ) . \]
	Furthermore, the localization map
	\[ i^* \colon \Coh^\heartsuit(\widehat{A}) \to \Coh^\heartsuit(\widehat{A}[f\inv]) \]
	is essentially surjective.
	We can therefore choose $\cG \in \Coh^\heartsuit(\hA)$ such that $i^*(\cG) \simeq \cF$.
	Since the canonical map
	\[ \hat{u}^{\bullet *} \colon \Coh^-(\widehat{A}) \to \varprojlim_{n \in \mathbf \Delta} \Coh^-(\widehat{B_n}) \]
	is an equivalence of stable $\infty$-categories, we see that the canonical morphism
	\[ \cG \to \hat{u}^{\bullet}_* \hat{u}^{\bullet *} \cG \]
	is an equivalence.
	It is now sufficient to remark that, since $\varphi_{\{\cF^n\}}$ is an equivalence, it follows that $i^*$ takes this map to the unit
	\[ \cF \simeq i^*(\cG) \to v^\bullet_* v^{\bullet *} (i^*(\cG)) \simeq v^\bullet( \{\cF^n\} ) . \]
	The proof is thus complete.
\end{proof}

Consider now the following diagram
\[ \begin{tikzcd}
\Coh^-(\widehat{A}[f\inv]) \arrow{r}{v^{\bullet *}} \arrow{d}{i_-} & \varprojlim \Coh^-(\widehat{B_n}[f_n\inv]) \arrow{d}{i^\bullet_-} \\
\QCoh(\widehat{A}[f\inv]) \arrow{r}{v^{\bullet *}} & \varprojlim \QCoh(\widehat{B_n}[f_n\inv]) .
\end{tikzcd} \]
It follows from Lemmas \ref{lem:etale_sent_to_flat} and \ref{lem:flat_cosimplicial_t_structure} that both
\[ \varprojlim \Coh^-(\widehat{B_n}[f_n\inv]), \quad \varprojlim \QCoh(\widehat{B_n}[f_n\inv]) \]
have $t$-structures and that all the functors $i_-$, $i_-^\bullet$ and $v^{\bullet *}$ are $t$-exact.

\begin{lem}
	The induced functor
	\[ \begin{tikzcd}
	\varprojlim \Coh^-(\widehat{B_n}[f_n\inv]) \arrow{r}{i^\bullet_-} & \varprojlim \QCoh(\widehat{B_n}[f_n\inv]) \arrow{r}{v^\bullet_*} & \QCoh(\widehat{A}[f\inv])
	\end{tikzcd} \]
	is $t$-exact.
\end{lem}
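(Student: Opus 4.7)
The plan is to handle left and right $t$-exactness separately. For left $t$-exactness, each ring map $\widehat{A}[f\inv] \to \widehat{B_n}[f_n\inv]$ is flat by \cref{lem:etale_sent_to_flat}, and the $t$-structure on the limit category (from \cref{lem:flat_cosimplicial_t_structure}) is defined pointwise, so $v^{\bullet *}$ is $t$-exact; hence its right adjoint $v^\bullet_*$ is left $t$-exact, and composing with the $t$-exact $i^\bullet_-$ preserves this.

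The content lies in right $t$-exactness: for $\{\cF^n\} \in \varprojlim \Coh^{\le 0}(\widehat{B_n}[f_n\inv])$, I must show $v^\bullet_*(\{\cF^n\}) \in \QCoh^{\le 0}$. I first treat the bounded case by induction on $N \ge 0$, assuming each $\cF^n$ lies in $\Coh^{[-N,0]}$. The base case $N = 0$ is \cref{lem:right_adjointable_key_lemma}, which identifies $v^\bullet_*(\{\cF^n\})$ with $\mathrm{eq}(\cF^0 \rightrightarrows \cF^1) \in \Coh^\heartsuit$. For the inductive step, apply the exact functor $v^\bullet_*$ to the canonical fiber sequence $\tau_{\le -1}\cF^n \to \cF^n \to \rH^0(\cF^n)$: the right-hand term lies in the heart by the base case; the left-hand term lies in $\QCoh^{\le -1}$, since $(\tau_{\le -1}\cF^n)[1]$ has amplitude $N - 1$ so the inductive hypothesis combined with the compatibility of $v^\bullet_*$ with shifts applies; and $\QCoh^{\le 0}$ is closed under fibers of morphisms between its members (a direct long-exact-sequence check), so the middle term belongs to $\QCoh^{\le 0}$. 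Note that this argument in fact proves the sharper statement: $v^\bullet_*$ preserves any uniform upper bound, sending $\{\cG^n\}$ with $\cG^n \in \Coh^{\le -k}$ into $\QCoh^{\le -k}$.

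The main obstacle is extending this to the unbounded case. Using left completeness of the $t$-structure on $\Coh^-(\widehat{B_n}[f_n\inv])$, one has $\cF^n \simeq \varprojlim_m \tau_{\ge -m} \cF^n$, and since limits in $\varprojlim \QCoh$ are computed pointwise and $v^\bullet_*$ is a right adjoint,
\[ v^\bullet_*(\{\cF^n\}) \simeq \varprojlim_m Y_m, \qquad Y_m \coloneqq v^\bullet_*(\{\tau_{\ge -m} \cF^n\}) . \]
Each $Y_m$ lies in $\QCoh^{\le 0}$ by the bounded case, but a priori the countable limit could acquire an $\rH^1$ contribution $\varprojlim^1 \rH^0(Y_m)$ via the Milnor sequence. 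The key point is that the tower $\{\rH^0(Y_m)\}_m$ is in fact essentially constant. To see this, apply $v^\bullet_*$ to the fiber sequence $\tau_{\le -1}\tau_{\ge -m}\cF^n \to \tau_{\ge -m}\cF^n \to \rH^0(\cF^n)$: the sharpened statement from the previous paragraph puts the leftmost term in $\QCoh^{\le -1}$, so the long exact sequence yields $\rH^0(Y_m) \simeq \rH^0(v^\bullet_*(\{\rH^0\cF^n\}))$ naturally in $m$ with identity transition maps. The Milnor sequence then gives $\varprojlim_m Y_m \in \QCoh^{\le 0}$, completing the proof.
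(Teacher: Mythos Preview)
Your proof is correct and takes a genuinely different route from the paper's. The paper handles the hard direction (preservation of $\Coh^{\le 0}$; note that the paper uses the opposite naming convention from yours, calling this ``left $t$-exact'') via a single spectral-sequence argument: the double-complex spectral sequence $E_1^{p,q} = \rH^p(v^q_* \cF^q) \Rightarrow \rH^{p+q}(\varprojlim v^n_* \cF^n)$, together with affineness of the $v^q$ (so $E_1^{p,q} = v^q_* \rH^p(\cF^q)$) and the heart descent of \cref{cor:etale_hyperdescent_heart} plus the right adjointability of \cref{lem:right_adjointable_key_lemma} (so the cosimplicial objects $q \mapsto v^q_* \rH^p(\cF^q)$ are acyclic in positive cosimplicial degree), yields degeneration at $E_2$ and the result drops out immediately, including for unbounded $\{\cF^n\}$. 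Your approach---induction on amplitude for the bounded case, then passage to the unbounded case via left completeness and a Milnor-sequence analysis of the tower $\{Y_m\}$---is more hands-on and avoids spectral sequences, at the cost of a separate limiting argument. Both rest on the same key input, namely \cref{lem:right_adjointable_key_lemma} for the heart.

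One minor imprecision: your claim that ``$\QCoh^{\le 0}$ is closed under fibers of morphisms between its members'' is false as stated (the fiber of the zero map $0 \to M$ for $M \in \QCoh^\heartsuit$ is $M[-1] \in \QCoh^{\ge 1}$). What you actually use, and what the long exact sequence does establish, is that $\QCoh^{\le 0}$ is closed under \emph{extensions}: in a fiber sequence $X \to Y \to Z$ with $X \in \QCoh^{\le -1}$ and $Z \in \QCoh^{\le 0}$, the middle term $Y$ lies in $\QCoh^{\le 0}$. With that correction the inductive step goes through unchanged.
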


\begin{proof}
	It is easily seen that this composition is right $t$-exact: since $v^{\bullet *}$ is $t$-exact, it follows from adjunction that $v^\bullet_*$ is right $t$-exact.
	Therefore $v^\bullet_* \circ i^\bullet_-$ is right $t$-exact as the composition of two functors with this property.
	Let us show that $v^\bullet_* \circ i^\bullet_-$ is also left $t$-exact.
	Let $\{\cF^n\} \in \varprojlim \Coh^-(\widehat{B_n}[f_n\inv])$ and suppose that
	\[ \cF^n \in \Coh^{\le 0}(\widehat{B_n}[f_n\inv]) \]
	for every $n \ge 0$.
	In this case, the double complex spectral sequence reads:
	\[ E_1^{p,q} = \rH^p( v^q_*(\cF^q) ) \Rightarrow \rH^{p+q}( \varprojlim v^n_* \cF^n ) . \]
	Since the maps $v^q$ are affine, we can rewrite the $E_1$-page as
	\[ E_1^{p,q} = v^q_*( \rH^p( \cF^q ) ) . \]
	Recall that the sheaves $\rH^p( \cF^q )$ are coherent.
	Combining \cref{cor:etale_hyperdescent_heart} with right adjointability of the square \eqref{eq:right_adjointable_for_devissage}, we conclude that the cosimplicial objects
	\[ \{v^q_*( \rH^p(\cF^q) )\}_{q \in \Delta} \]
	are acyclic for every $q \ge 1$.
	Therefore, the spectral sequence degenerates at the second page.
	Since $\cF^n \in \Coh^{\le 0}(\widehat{B_n}[f_n\inv])$, we conclude that
	\[ v^\bullet_*( i^\bullet_- \{\cF^n\} ) \in \QCoh^{\le 0}(\widehat{A}[f\inv]) . \]
\end{proof}

Observe furthermore that the spectral sequence argument given in the above lemma shows that if $\{\cF^n\} \in \varprojlim \Coh^-(\widehat{B_n}[f_n\inv])$, then the cohomologies of $v^\bullet_*( \{\cF^n\} )$ are coherent.
As consequence, the functor $u^\bullet_* \circ i^\bullet_-$ factors as a $t$-exact functor
\[ R \colon \varprojlim \Coh^-(\widehat{B_n}[f_n\inv]) \to \Coh^-(\widehat{A}[f\inv]) . \]
Since the functors $i_-$ and $i^\bullet_-$ are fully faithful, we conclude that $R$ is right adjoint to $u^{\bullet *}$.

Finally, we can complete the proof of \cref{thm:etale_descent}.
All the preparations made until this point show that it is enough to prove that the map
\[ v^{\bullet *} \colon \Coh^-(\widehat{A}[f\inv]) \to \varprojlim \Coh^-(\widehat{B_n}[f_n\inv]) \]
is an equivalence.
Since $v^{\bullet *}$ has a right adjoint $R$ it is enough to prove that both the unit $\eta$ and counit $\varepsilon$ are equivalences.
Since both $v^{\bullet *}$ and $R$ are $t$-exact, it is enough to prove that $\rH^i(\eta)$ and $\rH^i(\varepsilon)$ are equivalences.
But this is precisely the content of \cref{cor:etale_hyperdescent_heart}.

\subsection{\'Etale descent for perfect modules} \label{subsec:perfect_complexes}

In this section we show how to obtain an analogous for perfect modules of the stack $\bfCoh^{\otimes,-}_{\hZ \smallsetminus Z}$ we constructed in \cref{subsec:punctured_formal_neighbourhood}.
As usual, $X$ denotes a fixed Artin stack locally of finite presentation over $k$.

From \cref{thm:etale_descent} we can deduce that $\hrP_Z^\circ$ is a hypercomplete sheaf:

\begin{prop} \label{prop:restricted_etale_descent_Perf}
	The functor $\hrP_Z^{\circ, \otimes} \colon X\Et\op \to \Catstmon$ introduced in \cref{subsec:punctured_formal_neighbourhood} is a hypercomplete sheaf.
\end{prop}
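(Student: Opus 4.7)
The strategy is to deduce hyperdescent for $\hrP^{\circ,\otimes}_Z$ from that of $\hPhi^{\circ,\otimes}_Z$ (established in \cref{thm:etale_descent}), exploiting the pointwise fully faithful symmetric monoidal inclusion $\bfPerf^\otimes \hookrightarrow \bfCoh^{\otimes,-}$. Concretely, for any étale hypercover $U^\bullet \to U$ in $X\Et$, the map
\[
\lim_n \hrP^{\circ,\otimes}_Z(U^n) \hookrightarrow \lim_n \hPhi^{\circ,\otimes}_Z(U^n) \simeq \hPhi^{\circ,\otimes}_Z(U)
\]
is fully faithful, and its essential image consists of those $\cF \in \hPhi^{\circ,\otimes}_Z(U)$ whose pullback to each $\hPhi^{\circ,\otimes}_Z(U^n)$ is perfect. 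Hence the proposition reduces to proving the local-to-global statement: $\cF$ is perfect as soon as its restriction to $U^0$ is.

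The ``only if'' direction is trivial. For the converse, the reductions carried out in \S\ref{subsubsec:initial_reduction}--\S\ref{subsubsec:reduction_torsionfree} allow us to assume that $U = \Spec(A_U)$ is an affine scheme, that $\cF \in \Coh^-(\widehat{A_U}[f_U^{-1}])$ for some $f_U \in A_U$, and that $U^0 = \Spec(A_{U^0})$ is affine and étale surjective over $U$. By \cref{lem:etale_sent_to_flat} the induced map $\widehat{A_U} \to \widehat{A_{U^0}}$ is flat. Moreover, because $\widehat{A_U}$ is $J_U$-adically complete, $J_U \widehat{A_U}$ lies in its Jacobson radical, so the maximal ideals of $\widehat{A_U}$ correspond bijectively with the closed points of $Z_U = \Spec(A_U / J_U)$. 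Since $U^0 \to U$ is surjective, every such closed point lifts to a closed point of $Z_{U^0}$, and the map $\Spec(\widehat{A_{U^0}}) \to \Spec(\widehat{A_U})$ meets every maximal ideal; combined with flatness this shows that $\widehat{A_U} \to \widehat{A_{U^0}}$ is faithfully flat. Localizing at $f_U$, this faithful flatness is preserved, so the map $\widehat{A_U}[f_U^{-1}] \to \widehat{A_{U^0}}[f_{U^0}^{-1}]$ is faithfully flat as well.

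The proof then concludes by descent of perfectness along faithfully flat maps of noetherian rings: a coherent complex over a noetherian ring is perfect if and only if it has bounded Tor-amplitude, and this property is both preserved and reflected by a faithfully flat base change. The main technical point, and the only step requiring care, is the verification of faithful flatness of $\widehat{A_U} \to \widehat{A_{U^0}}$; the subtlety is that an étale cover $U^0 \to U$ is \emph{not} preserved by formal completion in a naive sense, since completion loses the primes not specializing into $Z_U$. The crucial observation is that this loss is exactly balanced: after completion, the maximal spectrum of $\widehat{A_U}$ already coincides with that of $Z_U$, so surjectivity over $Z_U$ is precisely what is needed.
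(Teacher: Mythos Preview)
Your proof is correct and follows essentially the same route as the paper: reduce to the downgraded functor, use that $\hrP^\circ_Z$ is a full sub-presheaf of the sheaf $\hPhi^\circ_Z$, and then check that perfectness is local by showing the relevant map on formal completions is faithfully flat. Two minor remarks on the comparison: your detour through the reductions of \S\ref{subsubsec:initial_reduction}--\S\ref{subsubsec:reduction_torsionfree} to a single generator $f_U$ is unnecessary---the paper instead observes directly that $\hPsi^\circ_Z(U^0) \to \hPsi^\circ_Z(U)$ is the base change of $\Spec(\widehat{A_{U^0}}) \to \Spec(\widehat{A_U})$ along an open immersion, so faithful flatness of the latter immediately gives faithful flatness of the former; on the other hand, your explicit Jacobson-radical argument for surjectivity on $\mathrm{MaxSpec}$ is more careful than the paper, which simply cites \cref{lem:etale_sent_to_flat} (a statement only about flatness) for faithful flatness.
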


\begin{proof}
	As in the case of $\hPhi_Z^{\circ, \otimes}$, it is straightforward to reduce to prove the same statement for $\hrP_Z^\circ$ and to the case where $X$ is affine.
	
	It follows from \cref{thm:etale_descent} that $\hPhi_Z^\circ \colon X\et\op \to \Catst$ is a sheaf.
	Since $\hrP_Z^\circ$ is a fully faithful sub-presheaf of $\hPhi^\circ_Z$, it is enough to check that the property of belonging to $\hrP_Z^\circ$ is local for the \'etale topology.
	Let therefore $\{U_i \to U\}$ be an (affine) \'etale cover in $X\et$.
	Set $U^0 \coloneqq \coprod U_i$ and let $u \colon U^0 \to U$ be the induced map.
	Observe that the diagram
	\[ \begin{tikzcd}
		\hPsi^\circ_Z(U^0) \arrow{r}{v} \arrow{d}{j_{U^0}} & \hPsi^\circ_Z(U) \arrow{d} \\
		\Spec( \widehat{A_{U^0}} ) \arrow{r}{\hat{u}} & \Spec( \widehat{A_U} )
	\end{tikzcd} \]
	is a pullback square of $k$-schemes, where $v = \hPsi^\circ_Z(u)$.
	Using \cref{lem:etale_sent_to_flat}, we see that the map $\hat{u}$ is faithfully flat.
	Thus, we deduce that $v$ is faithfully flat as well.
	In particular, an almost perfect complex $\cF$ on $\hPsi^\circ_Z(U)$ is perfect if and only if $v^*(\cF)$ is perfect.
	The proof is therefore complete.
\end{proof}

Using again the equivalence $\St(X\Et, \tauet) \simeq \St(X\Et^{\Geom}, \tauet)$ as at the end of \cref{subsec:punctured_formal_neighbourhood}, we can extend $\hrP_Z^{\circ, \otimes}$ to an object in $\St(X\Et^{\Geom}, \tauet)$, which we still denote $\hrP_Z^{\circ, \otimes}$.
Finally, using again the continuous morphism of sites
\[ u \colon (\Aff_k, \tauet) \to (X\Et^{\Geom}, \tauet) \]
given by base change along the structural morphism $X \to \Spec(k)$, we can give the following definition:

\begin{defin} \label{def:perfect_PFN}
	We define the functor $\bfPerf^\otimes_{\hZ \smallsetminus Z}$ to be the composition
	\[ \bfPerf^\otimes_{\hZ \smallsetminus Z} \coloneqq \hrP^{\circ, \otimes}_Z \circ u \colon \Aff_k\op \to \Catstmon . \]
\end{defin}

\begin{prop} \label{prop:etale_descent_Perf}
	The functor $\bfPerf^\otimes_{\hZ \smallsetminus Z} \colon \Aff_k\op \to \Catst$ is a stack with respect to the \'etale topology.
\end{prop}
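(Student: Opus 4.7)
The plan is to mirror exactly the argument already used for \cref{prop:etale_descent_coh}, replacing $\hPhi_Z^{\circ,\otimes}$ with $\hrP_Z^{\circ,\otimes}$ throughout. The two ingredients are already in place: \cref{prop:restricted_etale_descent_Perf} furnishes the hyperdescent of $\hrP_Z^{\circ,\otimes} \colon X\Et\op \to \Catstmon$ for $\tauet$, and the equivalence $\St(X\Et, \tauet) \simeq \St(X\Et^{\Geom}, \tauet)$ (which applies since $X$ is Artin, as recalled just before \cref{def:coherentPFN}) allows us to view this sheaf as an object of $\St(X\Et^{\Geom}, \tauet)$. Thus the only thing left to check is that precomposing with the continuous morphism of sites $u \colon (\Aff_k, \tauet) \to (X\Et^{\Geom}, \tauet)$ sends hypercomplete $\Catstmon$-valued sheaves to $\Catstmon$-valued sheaves.

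First I would invoke \cite[Lemma 2.13]{Porta_Yu_Higher_analytic_stacks_2014} applied to the continuous morphism of sites $u$: precomposition by a continuous morphism of sites preserves the property of being a sheaf with values in an arbitrary presentable $\infty$-category, and in particular with values in $\Catstmon$ (which is presentable and where limits agree with those computed in $\Cat_\infty$ thanks to \cite[4.7.4.5]{Lurie_Higher_algebra}, as noted in the reduction at the start of \S\ref{subsubsec:initial_reduction}). Applying this to the hypercomplete sheaf $\hrP_Z^{\circ,\otimes} \in \St(X\Et^{\Geom}, \tauet)$ shows that $\bfPerf^\otimes_{\hZ \smallsetminus Z} = \hrP_Z^{\circ,\otimes} \circ u$ is a sheaf on $(\Aff_k, \tauet)$, which is the desired conclusion.

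There is no real obstacle here: the content was entirely absorbed into \cref{prop:restricted_etale_descent_Perf}, where the nontrivial step was checking that perfectness is \'etale-local on $\hPsi^{\mathrm{aff}}_Z(U)$ via the faithful flatness of the map $\hat{u}$ obtained from \cref{lem:etale_sent_to_flat}. The present proposition is therefore simply a packaging statement, and the proof is a two-line citation: apply \cref{prop:restricted_etale_descent_Perf} together with \cite[Lemma 2.13]{Porta_Yu_Higher_analytic_stacks_2014}, exactly as was done for $\bfCoh^{\otimes,-}_{\hZ \smallsetminus Z}$ in \cref{prop:etale_descent_coh}.
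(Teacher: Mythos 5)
Your proof is correct and is essentially the paper's own argument: the paper also deduces the statement directly from \cref{prop:restricted_etale_descent_Perf} together with \cite[Lemma 2.13]{Porta_Yu_Higher_analytic_stacks_2014}, mirroring the case of \cref{prop:etale_descent_coh}.
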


\begin{proof}
	This is a direct consequence of \cref{prop:restricted_etale_descent_Perf} and \cite[Lemma 2.13]{Porta_Yu_Higher_analytic_stacks_2014}.
\end{proof}

\section{Formal glueing and flag decomposition for (almost) perfect modules} \label{sec:formal_glueing}

Let $X$ be an Artin stack locally of finite presentation over $k$, and let $Z \hookrightarrow X$ be a closed substack.
In this section we prove our main formal decomposition results.
The first of these results (see \cref{cor:one_step_flag_decomposition_cohperf} below) is a ``formal glueing'' type of decomposition: it describes the stack (and in particular the category) of (almost) perfect modules on $X$ as the fiber product of the one on the open complementary $V \coloneqq X \smallsetminus Z$, the one on the formal completion $\hZ$, over the one on the formal punctured neighbourhood:
\[ \bfCoh^{\otimes,-}_X \simeq \bfCoh^{\otimes,-}_V \times_{\bfCoh^{\otimes,-}_{\hZ \smallsetminus Z}} \bfCoh^{\otimes,-}_{\hZ} , \qquad \bfPerf^\otimes_X \simeq \bfPerf^\otimes_V \times_{\bfPerf^\otimes_{\hZ \smallsetminus Z}} \bfPerf^\otimes_{\hZ} . \]
The key step in the proof is \cref{thm:etale_descent}, that allows to check this equivalence locally on $X$.
In this way, we can reduce to the situation where $X$ is affine, and therefore invoke the more classical formal decomposition results of, for example, \cite{Bhatt_algebraization_2014}.
It is also important to remark that when both $X$ and $Z$ are $k$-varieties, this result (combined with \cref{cor:comparison}) implies the main theorem Ben-Bassat and Temkin obtained in \cite{Ben-Bassat_Temkin_Tubular_2013}.

The second formal decomposition result that can be found in this section is the decomposition along a \emph{flag} $(Z_n \hookrightarrow Z_{n-1} \hookrightarrow \cdots Z_1 \hookrightarrow X)$ of closed substacks in $X$ (see Corollaries \ref{cor:iterated_flag_decomposition_stacky} and \ref{cor:iterated_flag_decomposition_categorical} below).
The proof of this result relies on the previous formal glueing decomposition, combined with a careful handling of the iteration.
In particular, we are able to prove these formal decomposition results only because our formal glueing theorem does not require neither $X$ nor $Z$ to be reduced.

\subsection{Formal glueing}\label{formalglueing}
Let $X$ be an Artin stack locally of finite presentation over $k$ and let $Z \hookrightarrow X$ be a closed substack.
Let $V \coloneqq X \smallsetminus Z$ and introduce the functor
\[ \fW_V \coloneqq - \times_X V \colon X\Et \to \St_k . \]
We denote by $\fCoh^\otimes_V$ the composition $\fCoh^\otimes_V \coloneqq \bfCoh^{\otimes,-} \circ \fW_V$ and by $\fPerf^\otimes_V$ the composition $\fPerf^\otimes_V \coloneqq \bfPerf^\otimes \circ \fW_V$.

Coherently with this notation, we let $\fW_X$ denote the inclusion functor:
\[ \fW_X \coloneqq - \times_X X \colon X\Et \to \St_k . \]
Observe that there is a natural transformation
\[ \alpha \colon \hPsi_Z \to \fW_X , \]
and since $\fW_X$ factors through $\Aff_k \hookrightarrow \St_k$, we conclude that $\alpha$ induces a natural transformation
\[ \alpha^{\mathrm{aff}} \colon \hPsi_Z^{\mathrm{aff}} \to \fW_X . \]
Observe also that for every $U \in X\Et$, the induced square
\[ \begin{tikzcd}
	Z_U \arrow[-, double equal sign distance]{r} \arrow[hook]{d} & Z_U \arrow[hook]{d} \\
	\widehat{Z_U} \arrow{r} & U
\end{tikzcd} \]
is a pullback.
This implies that there is an induced commutative square
\[ \begin{tikzcd}
	\hPsi^\circ_Z(U) \arrow{r}{\beta_U} \arrow[hook]{d}{i_U} & \fW_V(U) \arrow[hook]{d} \\
	\hPsi_Z^{\mathrm{aff}}(U) \arrow{r} & \fW_X(U) .
\end{tikzcd} \]
Since the natural transformation $\fW_V \hookrightarrow \fW_X$ is an open immersion, we conclude that the transformations $\beta_U$ assemble into a natural transformation
\[ \beta \colon \hPsi^\circ_Z \to \fW_V . \]
Recall the functors $\hPhi_Z^{\circ, \otimes} = \bfCoh^{\otimes, -} \circ \hPsi^\circ_Z$ and $\hPhi^\otimes_Z = \bfCoh^{\otimes,-} \circ \hPsi_Z^{\mathrm{aff}}$.
Composing the natural transformation $\beta$ (respectively $i \colon \hPsi_Z^\circ \to \hPsi_Z^{\mathrm{aff}}$) with $\bfCoh^{\otimes,-}$, we obtain well defined restriction maps
\[ \beta^* \colon \fCoh^\otimes_V \to \hPhi_Z^{\circ, \otimes} \leftarrow \hPhi^\otimes_Z \colon i^*. \]

\begin{thm}\label{prop:one_step_flag_decomposition_coh}
	Let $X$ be an Artin stack locally of finite type over $k$ and let $Z \hookrightarrow X$ be a closed substack, and $V \coloneqq X \smallsetminus Z$.
	The natural transformations $\beta^* \colon \fCoh^\otimes_V \to \hPhi^{\circ, \otimes}_Z$ and $i^* \colon \hPhi^\otimes_Z \to \hPhi^{\circ, \otimes}_Z$ induce an equivalence in $\Sh_{\Catstmon}(X\Et, \tauet)$:
	\[ \fCoh_X^\otimes \simeq \fCoh^\otimes_V \times_{\hPhi^{\circ, \otimes}_Z} \hPhi^\otimes_Z . \]
\end{thm}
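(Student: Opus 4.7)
The plan is to exploit that all four terms in the claimed equivalence are étale sheaves on $X\Et$ with values in $\Catstmon$, reduce the statement to a pointwise verification on affines, and there invoke a classical affine formal glueing theorem for almost perfect complexes.

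First I would verify the sheaf condition for all four functors. The functor $\fCoh^\otimes_X = \bfCoh^{\otimes,-}\circ \fW_X$ is a sheaf because $\bfCoh^{\otimes,-}$ has étale (hyper)descent; the same reasoning applied after base change to $V$ shows that $\fCoh^\otimes_V$ is a sheaf. For $\hPhi^\otimes_Z \simeq \bfCoh^{\otimes,-}\circ \hPsi^{\mathrm{aff}}_Z$ one combines \cref{lem:topological_invariance_etale} and \cref{lem:formal_morphism_sites} (an étale cover of $U$ induces an étale cover of the formal completion $\widehat{Z_U}$) with descent of $\Coh^-$ on formal schemes. Finally, $\hPhi^{\circ,\otimes}_Z$ is a sheaf by \cref{thm:etale_descent}. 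Since limits in the $\infty$-topos of sheaves are computed objectwise, the fiber product on the right is evaluated sectionwise, so it is enough to check the equivalence at each $U \in X\Et$.

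Next, fix $U = \Spec(A_U) \in X\Et$, let $J_U$ denote the defining ideal of $Z_U$ in $U$, and let $\widehat{A_U}$ be the $J_U$-adic completion of $A_U$. Unwinding the definitions of the four functors, the statement reduces to a symmetric monoidal equivalence
\[
\Coh^-(A_U) \;\simeq\; \Coh^-(U \smallsetminus Z_U) \times_{\Coh^-(\Spec(\widehat{A_U}) \smallsetminus Z_U)} \Coh^-(\widehat{A_U}).
\]
This is the almost-perfect incarnation of the classical Beauville-Laszlo formal glueing theorem. Since $A_U$ is Noetherian and $J_U$ is finitely generated, it follows from the formal glueing results of \cite{Bhatt_algebraization_2014} (applied to $\Coh^-$) that the natural comparison functor (``restrict to $U \smallsetminus Z_U$ on one hand and complete along $J_U$ on the other'') is an equivalence. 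The monoidal upgrade is automatic: all restriction and completion functors involved are symmetric monoidal, and the forgetful functor $\Catstmon \to \Catst$ is conservative and preserves limits by Lurie-Barr-Beck \cite[4.7.4.5]{Lurie_Higher_algebra}, so an equivalence in $\Catst$ lifts to one in $\Catstmon$.

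I expect the main obstacle to be exactly the affine statement in the previous paragraph: a clean derived formal glueing for $\Coh^-$ under only noetherian hypotheses (arbitrary finitely generated defining ideal, no transversality assumption, no reducedness). This is precisely what \cite{Bhatt_algebraization_2014} provides, and once it is in hand the descent engine of \cref{thm:etale_descent} absorbs the passage from affines to arbitrary Artin stacks locally of finite type. It should be emphasised that nowhere in the argument do we need to assume $X$ or $Z$ reduced, a feature indispensable for iterating this one-step formal glueing into the flag decomposition of the next subsection.
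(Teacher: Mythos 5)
Your proposal is correct and follows essentially the same route as the paper: reduce to $\Catst$ via the conservative, limit-preserving forgetful functor, use that all terms are \'etale sheaves (with $\hPhi^{\circ,\otimes}_Z$ handled by \cref{thm:etale_descent}) so the fiber product is computed objectwise, and then settle the affine case by invoking the formal glueing result of \cite[Proposition 5.6]{Bhatt_algebraization_2014}. The only cosmetic difference is that you perform the monoidal reduction at the end rather than at the outset.
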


\begin{proof}
	First of all, since the forgetful functor $\Catstmon = \CAlg(\Cat_\infty^{\mathrm{Ex}, \otimes}) \to \Catst$ is conservative and commutes with arbitrary limits, we are immediately reduced to prove the analogous statement for the downgraded functors with values in $\Catst$.
	Now, it is clear that $\fCoh_V$ and $\hPhi_Z$ define sheaves on $X\Et$.
	Furthermore, \cref{thm:etale_descent} shows that also $\hPhi^\circ_Z$ is a sheaf on $X\Et$.
	Since limits in $\Sh_{\Catst}(X\Et, \tauet)$ can be computed in $\PSh_{\Catst}(X\Et)$, we see that the question is local on $X$.
	We can therefore suppose $X$ to be affine.
	In this case, the result is a direct consequence of \cite[Proposition 5.6]{Bhatt_algebraization_2014}.
\end{proof}

A similar result holds for perfect modules.
Indeed, observe that the natural transformation $i^* \colon \hPhi_Z^\otimes \to \hPhi^{\circ,\otimes}_Z$ restricts by definition to a natural transformation
\[ i^* \colon \hrP^\otimes_Z \to \hrP^{\circ,\otimes}_Z . \]
(cf.\ \cref{subsec:perfect_complexes} for the notations used here).
Furthermore, since $\hrP^\circ_Z$ is a full substack of $\hPhi^{\circ, \otimes}_Z$ and perfect modules are stable under base change, we see that also $\beta^* \colon \fCoh^\otimes_V \to \hPhi^{\circ, \otimes}_Z$ restricts to a natural transformation
\[ \beta^* \colon \fPerf^\otimes_V \to \hrP^{\circ, \otimes}_Z . \]

\begin{thm} \label{thm:one_step_flag_decomposition_perf}
	Let $X$ be an Artin stack locally of finite type over $k$ and let $Z \hookrightarrow X$ be a closed substack, and $V \coloneqq X \smallsetminus Z$.
	The natural transformations $\beta^* \colon \fPerf\otimes_V \to \hrP^{\circ,\otimes}_Z$ and $i^* \colon \hrP^{\otimes}_Z \to \hrP^{\circ, \otimes}_Z$ induce an equivalence in $\Sh_{\Catstmon}(X\Et, \tauet)$:
	\[ \fPerf^\otimes_X \simeq \fPerf^\otimes_V \times_{\hrP^{\circ,\otimes}_Z} \hrP^\otimes_Z . \]
\end{thm}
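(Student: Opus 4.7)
The plan is to follow closely the strategy used for \cref{prop:one_step_flag_decomposition_coh}, reducing to the almost perfect case and then carefully identifying essential images. First, since the forgetful functor $\Catstmon \to \Catst$ is conservative and preserves arbitrary limits, I would immediately reduce to the analogous statement for the downgraded functors valued in $\Catst$. The presheaves $\fPerf_V$ and $\hrP_Z$ are sheaves for $\tauet$ from the sheaf property of $\bfPerf^\otimes$, and \cref{prop:restricted_etale_descent_Perf} guarantees that $\hrP^\circ_Z$ is also a hypercomplete sheaf on $X\Et$. Since limits in $\Sh_{\Catst}(X\Et, \tauet)$ are computed pointwise, the question is local on $X$, so I may assume $X = \Spec(A)$ is affine.

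Next, I would exploit the fact that $\bfPerf^\otimes$ is a full substack of $\bfCoh^{\otimes,-}$, so that the natural transformations $\fPerf^\otimes_V \hookrightarrow \fCoh^\otimes_V$, $\hrP^\otimes_Z \hookrightarrow \hPhi^\otimes_Z$ and $\hrP^{\circ,\otimes}_Z \hookrightarrow \hPhi^{\circ,\otimes}_Z$ are all pointwise fully faithful. Combining this with \cref{prop:one_step_flag_decomposition_coh}, one obtains a commutative square
\[ \begin{tikzcd}
\fPerf^\otimes_X \arrow{r} \arrow[hook]{d} & \fPerf^\otimes_V \times_{\hrP^{\circ,\otimes}_Z} \hrP^\otimes_Z \arrow[hook]{d} \\
\fCoh^{\otimes,-}_X \arrow{r}{\sim} & \fCoh^{\otimes,-}_V \times_{\hPhi^{\circ,\otimes}_Z} \hPhi^\otimes_Z
\end{tikzcd} \]
whose bottom row is an equivalence and whose right vertical arrow is fully faithful, since fully faithful natural transformations between cospan diagrams induce fully faithful maps on fiber products. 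Consequently the top arrow is fully faithful as well, and it suffices to identify its essential image.

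Unwinding the bottom equivalence, an object in the target of the top arrow corresponds to an almost perfect complex $\cF$ on $X$ whose restriction $\beta^*\cF$ to $V$ is perfect and whose pullback $i^*\cF$ to the formal completion $\hZ$ is perfect. The remaining content of the theorem is therefore the statement that, for $\cF \in \Coh^-(X)$, $\cF$ is itself perfect if and only if $\beta^*\cF$ and $i^*\cF$ are perfect. The ``only if'' direction is immediate since pullback preserves perfectness. The main obstacle, and the only genuinely new input beyond the coherent case, is the converse. In the affine situation to which we have reduced, this is the perfect variant of the Beauville--Laszlo/Bhatt affine formal glueing established in \cite{Bhatt_algebraization_2014}: perfectness of an almost perfect $A$-complex can be tested after base change along the flat cover $\Spec(\widehat{A}) \sqcup V \to \Spec(A)$ associated to the ideal defining $Z$. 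Alternatively, one can argue directly by observing that an almost perfect complex is perfect exactly when it has finite Tor-amplitude, and that Tor-amplitude is detected by the pair $(V, \hZ)$ since the corresponding pullback functors are jointly conservative and $t$-exact on $\Coh^-(X)$. Either route closes the identification of the essential image and completes the proof.
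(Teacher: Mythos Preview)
Your proof is correct and follows essentially the same strategy as the paper's: reduce from $\Catstmon$ to $\Catst$, use \cref{prop:restricted_etale_descent_Perf} to see that $\hrP^\circ_Z$ is a sheaf, localize to the affine case, and then invoke \cite[Proposition 5.6]{Bhatt_algebraization_2014}.

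The only difference is one of presentation. The paper cites Bhatt's result directly for perfect complexes, whereas you take a small detour: you first embed the perfect statement into the already-established almost perfect one (\cref{prop:one_step_flag_decomposition_coh}) to get fully faithfulness, and then separately argue that perfectness descends along the fpqc cover $V \sqcup \Spec(\widehat{A}) \to \Spec(A)$. This is a perfectly valid alternative, and arguably more transparent about \emph{why} the result holds; the paper's version is simply terser because Bhatt's Proposition 5.6 already treats the perfect case in one stroke. One small caution: in your second alternative for the essential image, ``jointly conservative and $t$-exact'' is not quite enough on its own to detect Tor-amplitude---what you really use is that the cover is faithfully flat, which you state correctly in your first alternative.
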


\begin{proof}
	As usual, it is enough to prove the analogous statement for the downgraded functors with values in $\Catst$ instead of $\Catstmon$.
	Using \cref{prop:restricted_etale_descent_Perf} we see that $\hrP^\circ_Z$ is a sheaf.
	We can therefore reduce to the case where $X$ is local, and the assertion is a direct consequence of \cite[Proposition 5.6]{Bhatt_algebraization_2014}.
\end{proof}

\begin{cor}\label{cor:one_step_flag_decomposition_cohperf}
	Let $X$ be an Artin stack locally of finite type over $k$ and let $Z \hookrightarrow X$ be a closed substack.
	Let $V \coloneqq X \smallsetminus Z$ be the open complementary stack.
	Then there are canonical decompositions of stacks
	\[ \bfCoh^{\otimes,-}_X \simeq \bfCoh^{\otimes,-}_V \times_{\bfCoh^{\otimes,-}_{\hZ \smallsetminus Z}} \bfCoh^{\otimes,-}_{\hZ} \]
	and
	\[ \bfPerf^\otimes_X \simeq \bfPerf^\otimes_V \times_{\bfPerf^\otimes_{\hZ \smallsetminus Z}} \bfPerf^\otimes_{\hZ} . \]
	In particular, by taking $k$-points, we get equivalences of symmetric monoidal stable $\infty$-categories
	\[ \Coh^-(X) \simeq \Coh^-(V) \times_{\Coh^-(\hZ \smallsetminus Z)} \Coh^-(\hZ) \]
	and
	\[ \Perf(X) \simeq \Perf(V) \times_{\Perf(\hZ \smallsetminus Z)} \Perf(\hZ) . \]
\end{cor}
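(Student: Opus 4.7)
The plan is to deduce this corollary directly from \cref{prop:one_step_flag_decomposition_coh} and \cref{thm:one_step_flag_decomposition_perf}, which already establish the desired fiber product decompositions inside $\Sh_{\Catstmon}(X\Et, \tauet)$. The task is essentially to transport those equivalences along the morphism of sites $u \colon (\Aff_k, \tauet) \to (X\Et^{\Geom}, \tauet)$ already used in \cref{def:coherentPFN} and \cref{def:perfect_PFN}, and then to evaluate at $\Spec(k)$.

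First, I would promote the equivalences produced by \cref{prop:one_step_flag_decomposition_coh} and \cref{thm:one_step_flag_decomposition_perf} from $\Sh_{\Catstmon}(X\Et, \tauet)$ to $\Sh_{\Catstmon}(X\Et^{\Geom}, \tauet)$, using the canonical equivalence of $\infty$-topoi between these two sheaf categories recalled at the end of \cref{subsec:punctured_formal_neighbourhood}. Next, I would precompose with $u$. Since precomposition is a right adjoint at the level of functor categories, it commutes with arbitrary limits, and in particular with fiber products. By \cite[Lemma 2.13]{Porta_Yu_Higher_analytic_stacks_2014} (already invoked in the proofs of \cref{prop:etale_descent_coh} and \cref{prop:etale_descent_Perf}) this precomposition sends stacks to stacks.

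The identifications of the six vertices of the two fiber squares are then routine, and constitute the only slightly delicate point. On one hand, by \cref{def:coherentPFN} and \cref{def:perfect_PFN} the compositions $\hPhi_Z^{\circ,\otimes} \circ u$ and $\hrP_Z^{\circ,\otimes} \circ u$ are by definition $\bfCoh^{\otimes,-}_{\hZ \smallsetminus Z}$ and $\bfPerf^\otimes_{\hZ \smallsetminus Z}$, respectively. On the other hand, for any affine $T \in \Aff_k$ one has canonical identifications
\[
\fW_V(u(T)) \simeq T \times_k V, \qquad \fW_X(u(T)) \simeq T \times_k X, \qquad \hPsi_Z(u(T)) \simeq T \times_k \hZ,
\]
(the last one being the formal completion of $T \times_k X$ along $T \times_k Z$), so that the compositions of $\fCoh^\otimes_V$, $\fCoh^\otimes_X$, $\hPhi^\otimes_Z$ and their perfect analogues with $u$ recover exactly $\bfCoh^{\otimes,-}_V$, $\bfCoh^{\otimes,-}_X$, $\bfCoh^{\otimes,-}_{\hZ}$, $\bfPerf^\otimes_V$, $\bfPerf^\otimes_X$, $\bfPerf^\otimes_{\hZ}$ respectively, under the usual limit-preserving extension of $\bfCoh^{\otimes,-}$ and $\bfPerf^\otimes$ from $\Aff_k$ to $\St_k$. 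This yields the two fiber product equivalences of stacks.

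Finally, for the statement about $k$-points, evaluation at $\Spec(k)$ is the functor $\bfF \mapsto \bfF(\Spec(k))$, which preserves all limits in the target $\Catstmon$; applying it to the fiber product equivalences above produces the two equivalences of symmetric monoidal stable $\infty$-categories. I do not expect any genuine obstacle here: the only thing to be careful about is the bookkeeping of the identifications in the previous paragraph, which reduces in each case to the defining formula of the relevant stack on $\Aff_k$.
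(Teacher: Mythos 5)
Your proposal is correct and follows essentially the same route as the paper: transport the sheaf-level equivalences of \cref{prop:one_step_flag_decomposition_coh} and \cref{thm:one_step_flag_decomposition_perf} along the morphism of sites $u$, using that precomposition with $u$ is a right adjoint and hence commutes with fiber products, then identify the vertices and evaluate at $\Spec(k)$. The only point you treat as bookkeeping that the paper spells out is the identification $\hPhi_Z \circ u \simeq \bfCoh^{\otimes,-}_{\hZ}$, which is justified by writing $\hZ \simeq \colim_n Z^{(n)}$ in $\St_k$ and using universality of colimits to see that $T \times \hZ$ is the formal completion of $T \times Z$ inside $T \times X$; this short argument should be included, but it does not change the structure of your proof.
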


\begin{proof}
	As before, it is enough to prove the analogous statements for the downgraded functors with values in $\Catst$.
	Now, recall from Definitions \ref{def:coherentPFN} and \ref{def:perfect_PFN} that the stacks $\bfCoh^-_{\hZ \smallsetminus Z}$ and $\bfPerf_{\hZ \smallsetminus Z}$ are defined as the compositions
	\[ \bfCoh^-_{\hZ \smallsetminus Z} \coloneqq \hPhi^\circ_{Z} \circ u \quad \text{and} \quad \bfPerf_{\hZ \smallsetminus Z} \coloneqq \hrP^\circ_{Z} \circ u , \]
	where $u \colon (\Aff_k, \tauet) \to (X\et^{\Geom}, \tauet))$ is the morphism of sites induced by the canonical map $X \to \Spec(k)$.
	We now remark that
	\begin{gather*}
		\bfCoh^-_X \simeq \fCoh_X \circ u, \quad \bfCoh^-_V \simeq \fCoh_V \circ u , \\
		\bfPerf_X \simeq \fPerf_X \circ u, \quad \bfPerf_V \simeq \fPerf_V \circ u .
	\end{gather*}
	Finally, we claim that
	\[ \bfCoh^-_{\hZ} \simeq \hPhi_Z \circ u \quad \text{and} \quad \bfPerf_{\hZ} \simeq \hrP_Z \circ u . \]
	Indeed, we can write
	\[ \hZ \simeq \colim_{n \in \mathbb N} Z^{(n)} \]
	in $\St_k$, and since colimits are universal in $\St_k$, we obtain that for every $Y \in \Aff_k$, we can identify $Y \times \hZ$ with the formal completion of $Y \times Z$ inside $Y \times X$.
	In particular,
	\[ \bfCoh^-_{\hZ}(Y) = \Coh^-(Y \times \hZ) \simeq \Coh^-(\widehat{Z_{Y \times X}}) \simeq \hPhi_Z(Y \times X) \simeq \hPhi_Z(u(Y)) .  \]
	A completely analogous argument shows that $\bfPerf_{\hZ} \simeq \hrP_Z \circ u$.
	
	At this point, the conclusion follows from the fact that the functor
	\[ u^s \colon \Sh( X\Et^{\Geom}, \tauet ) \to \Sh(\Aff_k, \tauet) = \St_k , \]
	which is given by $F \mapsto F \circ u$, is a right adjoint (cf.\ in virtue of \cite[Lemmas 2.13 and 2.14]{Porta_Yu_Higher_analytic_stacks_2014}).
	In particular, it commutes with fiber products, and therefore we see that the corollary is a direct consequence of Theorems \ref{prop:one_step_flag_decomposition_coh} and \ref{thm:one_step_flag_decomposition_perf}.
\end{proof}

\subsection{Decomposition along a flag}\label{subsection:flagdec}

We are now able to give decomposition theorems for the stacks $\bfCoh_X^{\otimes,-}$ and $\bfPerf^\otimes_X$ \emph{along a non-linear flag} in $X$, and deduce the corresponding decomposition results for the derived $\infty$-categories $\Coh^{-}(X)$ and $\Perf(X)$. \\
We start by recalling the notion of non-linear flag on $X$.

\begin{defin}\label{definitionofflag}
	Let $X$ be an Artin stack locally of finite type over $k$.
	A \emph{flag of length $n$} on $X$ is a sequence $\cZ \coloneqq (Z_0, Z_1, \ldots, Z_n)$ of closed substacks of $X$ such that:
	\begin{enumerate}
		\item $Z_0 = X$;
		\item the inclusion $Z_{i+1} \hookrightarrow X$ factors through $Z_i \hookrightarrow X$;
	\end{enumerate}
\end{defin}

Let $X$ be an Artin stack locally of finite type over $k$.
Fix a flag $\cZ = (Z_0, Z_1, \ldots, Z_n)$ on $X$.
For $i$ in $1,\dots,n$, we define stacks
\[ \hPhi^{\circ,\otimes}_i,\,\hrP^{\circ,\otimes}_i \colon X\Et\op \to \Catstmon \]
as follows:
\begin{align*}
&i=1 & \hPhi^{\circ,\otimes}_1 &\coloneqq \hPhi^{\circ,\otimes}_{Z_1/X}, & \hrP^{\circ,\otimes}_1 &\coloneqq \hrP^{\circ,\otimes}_{Z_1/X} \\
&i\geq 2 & \hPhi^{\circ,\otimes}_{i} &\coloneqq \varprojlim_m \hPhi^{\circ,\otimes}_{Z_{i}/Z_{i-1}^{(m)}}, & \hrP^{\circ,\otimes}_{i} &\coloneqq \varprojlim_m \hrP^{\circ,\otimes}_{Z_{i}/Z_{i-1}^{(m)}}
\end{align*}
where $Z_{i-1}^{(m)}$ is the $m^\mathrm{th}$ infinitesimal neighborhood of $Z_{i-1}$ in $X$. Note that the definition for $i \geq 2$ also holds for $i = 1$ as $Z_0^{(m)} = X$ for all $m$.
We also define $\hPhi_i^\otimes \,\hrP_i^\otimes \colon X\Et\op \to \Catst$ by
\begin{align*}
&i=0 & \hPhi_0^\otimes &\coloneqq \fCoh^\otimes_X, & \hrP^\otimes_0 &\coloneqq \fPerf^\otimes_X \\
&i\geq 1 & \hPhi^\otimes_{i} &\coloneqq \varprojlim_m \hPhi^\otimes_{Z_{i}/Z_{i-1}^{(m)}}, & \hrP^\otimes_{i} &\coloneqq \varprojlim_m \hrP^\otimes_{Z_{i}/Z_{i-1}^{(m)}}.
\end{align*}
Note that for any $i \geq 1$, we have
\[\hPhi^\otimes_{i} \simeq \varprojlim_{m,n} \hPhi^\otimes_{Z_{i}^{(n)} / Z_{i-1}^{(m)}} \simeq \hPhi^\otimes_{Z_{i} / X} \hspace{1cm} \text{and} \hspace{1cm} \hrP^\otimes_{i} \simeq \varprojlim_{m,n} \hrP^\otimes_{Z_{i}^{(n)} / Z_{i-1}^{(m)}} \simeq \hrP^\otimes_{Z_{i} / X}.
\]
We also have $\hPhi^\otimes_{0} \simeq \fCoh^\otimes_X \simeq \hPhi^\otimes_{Z_0/X}$ and $\hrP^\otimes_{0} \simeq \fPerf^\otimes_X \simeq \hrP^\otimes_{Z_0/X}$.
Finally, we let $V_{i+1}^{(m)}$ be the open complement of $Z_{i+1}$ inside $Z_i^{(m)}$, and we set
\[ \fCoh^\otimes_{V,i+1} \coloneqq \varprojlim_m \fCoh^\otimes_{V_{i+1}^{(m)}} \hspace{1cm} \text{and} \hspace{1cm} \fPerf^\otimes_{V,i+1} \coloneqq \varprojlim_m \fPerf^\otimes_{V_{i+1}^{(m)}}. \]
Note that the sequence $\{V_{i+1}^{(m)}\}_m$ defines a formal open subscheme $\fV_{i+1}$ of
\[ \widehat{Z_i} = \colim_m Z_i^{(m)} . \]
Note that $\fV_1$ simply coincides with $V = X \smallsetminus Z$.
With these notations, we have $\fCoh^\otimes_{V,i+1}(X) = \Coh^-(\fV_{i+1})$ and $\fPerf^\otimes_{V,i+1}(X) = \Perf^-(\fV_{i+1})$.

\begin{prop} \label{prop:flag_decomposition_induction_step}
	Let $X$ be an Artin stack locally of finite presentation over $k$ and let $\cZ = (Z_0, Z_1, \ldots, Z_n)$ be a flag on $X$.
	Then for every $0 \le i \le n-1$ there are natural equivalences in $\Sh_{\Catst}(X\Et, \tauet)$:
	\[ \hPhi^\otimes_i \simeq \fCoh^\otimes_{V,i+1} \times_{\hPhi^{\circ,\otimes}_{i+1}} \hPhi^\otimes_{i+1} \]
	and
	\[ \hrP^\otimes_i \simeq \fPerf^\otimes_{V, i+1} \times_{\hrP^{\circ, \otimes}_{i+1}} \hrP^{\otimes}_{i+1} . \]
\end{prop}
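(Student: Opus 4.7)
The plan is to reduce the claim to the one-step formal glueing (\cref{prop:one_step_flag_decomposition_coh} for $\hPhi^\otimes$ and \cref{thm:one_step_flag_decomposition_perf} for $\hrP^\otimes$), applied level by level to each infinitesimal neighbourhood $Z_i^{(m)}$, and then to pass to the inverse limit in $m$. The case $i = 0$ is immediate: $Z_0^{(m)} = X$ for all $m$, so the four sheaves appearing in the claim collapse to $\fCoh^\otimes_X$, $\fCoh^\otimes_V$, $\hPhi^{\circ,\otimes}_{Z_1/X}$, and $\hPhi^\otimes_{Z_1/X}$, and the equivalence is exactly \cref{prop:one_step_flag_decomposition_coh} (respectively \cref{thm:one_step_flag_decomposition_perf}).

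Now fix $i \geq 1$ and $m \geq 1$. The stack $Z_i^{(m)}$ is an Artin stack locally of finite type over $k$, with closed substack $Z_{i+1} \hookrightarrow Z_i^{(m)}$ and open complement $V_{i+1}^{(m)}$; I would apply the one-step theorem to this data to produce an equivalence
\[ \fCoh^\otimes_{Z_i^{(m)}} \simeq \fCoh^\otimes_{V_{i+1}^{(m)}} \times_{\hPhi^{\circ,\otimes}_{Z_{i+1}/Z_i^{(m)}}} \hPhi^\otimes_{Z_{i+1}/Z_i^{(m)}} \]
in $\Sh_{\Catstmon}((Z_i^{(m)})\Et, \tauet)$. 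Pulling back along the continuous morphism of sites $X\Et \to (Z_i^{(m)})\Et$, $U \mapsto U \times_X Z_i^{(m)}$, promotes this to an equivalence in $\Sh_{\Catstmon}(X\Et, \tauet)$. Because the constructions $\fW_{-/-}$, $\hPsi_{-/-}$, and $\hPsi^\circ_{-/-}$ are visibly functorial in the ambient Artin stack, the comparison maps $\beta^*$ and $i^*$ of \cref{formalglueing} are natural with respect to the infinitesimal thickenings $Z_i^{(m)} \hookrightarrow Z_i^{(m+1)}$. The above equivalences therefore assemble into a diagram $\mathbb{N}\op \to \Sh_{\Catstmon}(X\Et, \tauet)$ of one-step decompositions.

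Passing to $\varprojlim_m$ now finishes the proof. On the right-hand side, limits commute with fiber products, and by the very definitions of $\fCoh^\otimes_{V,i+1}$ and $\hPhi^{\circ,\otimes}_{i+1}$, together with the identification $\varprojlim_m \hPhi^\otimes_{Z_{i+1}/Z_i^{(m)}} \simeq \hPhi^\otimes_{i+1}$ recorded in the preamble to the proposition, the limit reads $\fCoh^\otimes_{V,i+1} \times_{\hPhi^{\circ,\otimes}_{i+1}} \hPhi^\otimes_{i+1}$. On the left-hand side, evaluation at $U \in X\Et$ gives
\[ \varprojlim_m \Coh^-\!\bigl(Z_i^{(m)} \times_X U\bigr) \simeq \Coh^-\!\bigl(\widehat{Z_i \times_X U}\bigr) = \hPhi^\otimes_{Z_i/X}(U) = \hPhi^\otimes_i(U), \]
since $\widehat{Z_i \times_X U} \simeq \colim_m (Z_i^{(m)} \times_X U)$ in $\St_k$ and $\bfCoh^{\otimes,-}$ sends this colimit to the corresponding inverse limit in $\Catstmon$. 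The argument for $\hrP^\otimes_i$ is identical, with \cref{thm:one_step_flag_decomposition_perf} replacing \cref{prop:one_step_flag_decomposition_coh}.

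Aside from its essentially formal structure, the only delicate point in the proof is the naturality in $m$ of the one-step decomposition: one must check that the transformations $\beta^*$ and $i^*$ of \cref{formalglueing} are functorial with respect to closed immersions of the ambient stack. This is immediate from the explicit descriptions of $\hPsi_{Z/X}$, $\hPsi^{\mathrm{aff}}_{Z/X}$, and $\hPsi^\circ_{Z/X}$ given in \cref{subsec:punctured_formal_neighbourhood}, and is really the reason why it was important, in \cref{prop:one_step_flag_decomposition_coh}, not to restrict to the case where $X$ is reduced.
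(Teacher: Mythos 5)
Your proposal is correct and follows the same route as the paper: the paper's proof is precisely to apply the one-step glueing theorems (Theorems \ref{prop:one_step_flag_decomposition_coh} and \ref{thm:one_step_flag_decomposition_perf}) to each infinitesimal neighbourhood $Z_i^{(m)}$ and then pass to the limit over $m$, using that limits commute with limits. You have simply spelled out the details (base-change to $X\Et$, naturality in $m$, identification of the two sides of the limit) that the paper leaves implicit in "unravelling the definitions".
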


\begin{proof}
	Unravelling the definitions and using the fact that limits commute with limits, we see that this proposition follows by applying iteratively Theorems \ref{prop:one_step_flag_decomposition_coh} and \ref{thm:one_step_flag_decomposition_perf}.
\end{proof}

\begin{cor} \label{cor:iterated_flag_decomposition_stacky}
	Let $X$ be an Artin stack locally of finite type over $k$ and let $\cZ \coloneqq (Z_0, Z_1, \ldots, Z_n)$ be a flag on $X$.
	Then for every $0 \le i \le n-1$, there are natural equivalences in $\Sh_{\Catst}(\Aff_k, \tauet)$:
	\[ \bfCoh^{\otimes,-}_{\widehat{Z_i}} \simeq \bfCoh^{\otimes,-}_{\fV_{i+1}} \times_{\bfCoh^{\otimes,-}_{\widehat{Z_{i+1}} \smallsetminus Z_{i+1}}} \bfCoh^{\otimes,-}_{\widehat{Z_{i+1}}} \]
	and
	\[ \bfPerf^\otimes_{\widehat{Z_i}} \simeq \bfPerf^\otimes_{\fV_{i+1}} \times_{\bfPerf^\otimes_{\widehat{Z_{i+1}} \smallsetminus Z_{i+1}}} \bfPerf^\otimes_{\widehat{Z_{i+1}}} . \]
\end{cor}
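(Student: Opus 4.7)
The plan is to deduce this corollary from \cref{prop:flag_decomposition_induction_step} in the same way that \cref{cor:one_step_flag_decomposition_cohperf} was deduced from Theorems \ref{prop:one_step_flag_decomposition_coh} and \ref{thm:one_step_flag_decomposition_perf}: by pushing the equivalence from $\Sh_{\Catst}(X\Et, \tauet)$ down to $\Sh_{\Catst}(\Aff_k, \tauet) = \St_k$ along the morphism of sites $u \colon (\Aff_k, \tauet) \to (X\Et^{\Geom}, \tauet)$ induced by base change along $X \to \Spec(k)$. As recalled in the proof of \cref{cor:one_step_flag_decomposition_cohperf}, the functor $u^s \colon F \mapsto F \circ u$ is a right adjoint and in particular preserves fiber products, so applying it to the equivalence $\hPhi^\otimes_i \simeq \fCoh^\otimes_{V,i+1} \times_{\hPhi^{\circ,\otimes}_{i+1}} \hPhi^\otimes_{i+1}$ (and its perfect analogue) yields the desired equivalence of stacks on $\Aff_k$ once the four resulting functors are matched with those appearing in the statement.

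The identifications of the outer vertices are the easy part. For $\hPhi^\otimes_i \circ u$ and $\hPhi^\otimes_{i+1} \circ u$, one uses the observation (made right after the definition of $\hPhi^\otimes_i$) that $\hPhi^\otimes_i \simeq \hPhi^\otimes_{Z_i/X}$, and then the universality of colimits in $\St_k$: writing $\widehat{Z_j} \simeq \colim_m Z_j^{(m)}$ in $\St_k$ gives $Y \times \widehat{Z_j} \simeq \widehat{Y \times Z_j}$ inside $Y \times X$ for every $Y \in \Aff_k$, exactly as in the proof of \cref{cor:one_step_flag_decomposition_cohperf}. This yields $\hPhi^\otimes_j \circ u \simeq \bfCoh^{\otimes,-}_{\widehat{Z_j}}$ (and similarly for $\hrP$ and $\bfPerf$). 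For the open term, $\fCoh^\otimes_{V,i+1} \circ u$ is tautologically $\varprojlim_m \bfCoh^{\otimes,-}_{V_{i+1}^{(m)}}$, which we interpret as $\bfCoh^{\otimes,-}_{\fV_{i+1}}$ by defining the right-hand side in this way (consistent with $\fV_{i+1} = \colim_m V_{i+1}^{(m)}$ in $\St_k$).

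The main conceptual obstacle — really a definitional issue — is the punctured-neighbourhood vertex $\bfCoh^{\otimes,-}_{\widehat{Z_{i+1}} \smallsetminus Z_{i+1}}$. The construction in \cref{def:coherentPFN} was given only for a closed substack of an Artin stack, whereas here $\widehat{Z_i}$ is merely a formal stack, so the notation must be extended. Following the pattern used in \cref{subsection:flagdec} to define $\hPhi^{\circ, \otimes}_{i+1}$, the natural definition is
\[ \bfCoh^{\otimes,-}_{\widehat{Z_{i+1}} \smallsetminus Z_{i+1}} \coloneqq \varprojlim_m \bfCoh^{\otimes,-}_{\widehat{Z_{i+1}} \smallsetminus Z_{i+1}}\big|_{Z_i^{(m)}}, \]
where the inner term is defined as in \cref{def:coherentPFN} applied to the closed immersion $Z_{i+1} \hookrightarrow Z_i^{(m)}$ (an honest Artin stack). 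With this convention, the identification $\hPhi^{\circ,\otimes}_{i+1} \circ u \simeq \bfCoh^{\otimes,-}_{\widehat{Z_{i+1}} \smallsetminus Z_{i+1}}$ becomes tautological, since both sides are the limit over $m$ of the pullbacks $\hPhi^{\circ,\otimes}_{Z_{i+1}/Z_i^{(m)}} \circ u$, and the same goes for the perfect variant using \cref{def:perfect_PFN}. Once these identifications are in place, applying $u^s$ to the equivalences of \cref{prop:flag_decomposition_induction_step} yields the claimed equivalences, and naturality is automatic.
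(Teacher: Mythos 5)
Your proposal is correct and matches the paper's (implicit) argument: the corollary is obtained from \cref{prop:flag_decomposition_induction_step} exactly as \cref{cor:one_step_flag_decomposition_cohperf} is obtained from Theorems \ref{prop:one_step_flag_decomposition_coh} and \ref{thm:one_step_flag_decomposition_perf}, by applying the limit-preserving right adjoint $u^s$ and identifying the four vertices, with the punctured-neighbourhood term over the formal stack $\widehat{Z_i}$ understood as the limit over the infinitesimal neighbourhoods $Z_i^{(m)}$, just as in the definitions of \cref{subsection:flagdec}. Your remark that this last identification is a definitional extension of \cref{def:coherentPFN} is exactly the convention the paper is tacitly using.
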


By taking $k$-points of the stacks in the previous corollary, we get the following decomposition for derived $\infty$-categories of almost perfect and perfect modules on $X$:

\begin{cor} \label{cor:iterated_flag_decomposition_categorical}
	Let $X$ be an Artin stack locally of finite type over $k$ and let $\cZ \coloneqq (Z_0, Z_1, \ldots, Z_n)$ be a flag on $X$.
	Then for every $0 \le i \le n-1$, there are equivalences of symmetric monoidal stable $\infty$-categories
	\[ \Coh^-(\widehat{Z_i}) \simeq \Coh^-(\fV_{i+1}) \times_{\Coh^-(\widehat{Z_{i+1}} \smallsetminus Z_{i+1})} \Coh^-(\widehat{Z_{i+1}}) \]
	and
	\[ \Perf(\widehat{Z_i}) \simeq \Perf(\fV_{i+1}) \times_{\Perf(\widehat{Z_{i+1}} \smallsetminus Z_{i+1})} \Perf(\widehat{Z_{i+1}}) . \]
\end{cor}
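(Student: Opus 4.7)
The plan is to deduce this categorical statement directly from the stacky decomposition in Corollary \ref{cor:iterated_flag_decomposition_stacky} by evaluating at $\Spec(k)$. The key observation is that the evaluation functor
\[ \ev_{\Spec(k)} \colon \Sh_{\Catstmon}(\Aff_k,\tauet) \to \Catstmon, \qquad \cF \mapsto \cF(\Spec(k)) \]
preserves arbitrary limits, because it is the composition of the restriction along $\{\Spec(k)\} \hookrightarrow \Aff_k\op$ with the underlying presheaf functor, both of which preserve limits (limits in a sheaf $\infty$-topos with values in a presentable $\infty$-category are computed pointwise). In particular, $\ev_{\Spec(k)}$ preserves fiber products.

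Next, I would verify the identifications needed to interpret the evaluated stacks. For $\bfCoh^{\otimes,-}_{\widehat{Z_{i+1}} \smallsetminus Z_{i+1}}$ and $\bfPerf^\otimes_{\widehat{Z_{i+1}} \smallsetminus Z_{i+1}}$, this is simply \cref{def:coherentPFN} and \cref{def:perfect_PFN}, which declare $\Coh^-(\widehat{Z_{i+1}} \smallsetminus Z_{i+1})$ and (by analogy) $\Perf(\widehat{Z_{i+1}} \smallsetminus Z_{i+1})$ to be the $k$-points of the respective stacks. For $\bfCoh^{\otimes,-}_{\widehat{Z_i}}$, $\bfCoh^{\otimes,-}_{\fV_{i+1}}$ and their perfect analogues, the identifications follow from the definition of $\bfCoh^{\otimes,-}$ and $\bfPerf^\otimes$ as limit-preserving extensions along the inclusion $\Aff_k \hookrightarrow \St_k$: evaluating at $\Spec(k)$ gives back $\Coh^-(\widehat{Z_i})$ and $\Coh^-(\fV_{i+1})$ respectively (and analogously for $\Perf$). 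These identifications have already been used implicitly in the proof of \cref{cor:one_step_flag_decomposition_cohperf}.

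Having these identifications in hand, the proof is then immediate: apply $\ev_{\Spec(k)}$ to both sides of the two equivalences of \cref{cor:iterated_flag_decomposition_stacky}, use the fact that $\ev_{\Spec(k)}$ preserves fiber products in $\Catstmon$, and translate each evaluated stack into the corresponding symmetric monoidal stable $\infty$-category as above. The resulting equivalences are exactly the stated ones. There is no substantive obstacle here; the whole content of the statement is already packaged into \cref{cor:iterated_flag_decomposition_stacky}, and the real work was done in establishing the \'etale descent result \cref{thm:etale_descent} and the one-step formal glueing Theorems \ref{prop:one_step_flag_decomposition_coh} and \ref{thm:one_step_flag_decomposition_perf}. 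If anything requires a brief remark, it is only the verification that limit-preservation of $\ev_{\Spec(k)}$ is compatible with the symmetric monoidal structure, which follows from the fact that the forgetful functor $\Catstmon \to \Catst$ creates limits.
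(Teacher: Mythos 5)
Your proposal is correct and is exactly the argument the paper intends: the paper's "proof" consists of the single remark that one takes $k$-points in \cref{cor:iterated_flag_decomposition_stacky}, relying on the fact that evaluation at $\Spec(k)$ preserves the relevant fiber products (limits of sheaves being computed objectwise) together with the identifications of the $k$-points of $\bfCoh^{\otimes,-}_{\widehat{Z_i}}$, $\bfCoh^{\otimes,-}_{\fV_{i+1}}$, $\bfCoh^{\otimes,-}_{\widehat{Z_{i+1}}\smallsetminus Z_{i+1}}$ (and their perfect analogues) already spelled out in the proof of \cref{cor:one_step_flag_decomposition_cohperf} and in Definitions \ref{def:coherentPFN} and \ref{def:perfect_PFN}. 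Your slightly more explicit bookkeeping of why $\ev_{\Spec(k)}$ preserves limits and is compatible with the monoidal structure is a harmless elaboration of the same route, so nothing further is needed.
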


\begin{warning}
	The notation we chose is slightly ambiguous: note that while the category $\Coh^-(\widehat{Z_{i+1}})$ only depends on the embedding of $Z_{i+1}$ in $X$, the categories $\Coh^-(\fV_{i+1})$ and $\Coh^-(\widehat{Z_{i+1}} \smallsetminus Z_{i+1})$ depend on the embedding $Z_{i+1} \hookrightarrow Z_i$.
	Therefore, the right hand side in the above decomposition depends on the given flag and not only on the closed subscheme $Z_{i+1}$.
\end{warning}

Iterating the flag decomposition provided by Corollary \ref{cor:iterated_flag_decomposition_categorical}, we get a \emph{full flag decomposition}:

\begin{cor} \label{cor:full_flag_decomposition}
	Let $X$ be an Artin stack locally of finite type over $k$ and let $\cZ \coloneqq (Z_0, Z_1, \ldots, Z_n)$ be a flag on $X$.
	Then there are equivalences in $\Sh_{\Catst}(\Aff_k, \tauet)$:
	\[ \bfCoh^{\otimes,-}_X \simeq \bfCoh^{\otimes,-}_{\fV_1} \times_{\bfCoh^{\otimes,-}_{\widehat{Z_1} \smallsetminus Z_1}} \left( \bfCoh^{\otimes,-}_{\fV_2} \times_{\bfCoh^{\otimes,-}_{\widehat{Z_2} \smallsetminus Z_2}} \left( \cdots \left( \bfCoh^{\otimes,-}_{\fV_n} \times_{\bfCoh^{\otimes,-}_{\widehat{Z_n} \smallsetminus Z_n}} \bfCoh^{\otimes,-}_{\widehat{Z_n}} \right) \cdots \right) \right) \]
	and
	\[ \bfPerf^\otimes_X \simeq \bfPerf^\otimes_{\fV_1} \times_{\bfPerf^\otimes_{\widehat{Z_1} \smallsetminus Z_1}} \left( \bfPerf^\otimes_{\fV_2} \times_{\bfPerf^\otimes_{\widehat{Z_2} \smallsetminus Z_2}} \left( \cdots \left( \bfPerf^\otimes_{\fV_n} \times_{\bfPerf^\otimes_{\widehat{Z_n} \smallsetminus Z_n}} \bfPerf^\otimes_{\widehat{Z_n}} \right) \cdots \right) \right) . \]
	Furthermore, taking $k$-points in the above equivalences, we obtain the following equivalences of symmetric monoidal stable $\infty$-categories:
	\[ \Coh^-(X) \simeq \Coh^-(\fV_1) \times_{\Coh^-(\widehat{Z_1} \smallsetminus Z_1)} \left( \Coh^-(\fV_2) \times_{\Coh^-(\widehat{Z_2} \smallsetminus Z_2)} \left( \cdots \left( \Coh^-(\fV_n) \times_{\Coh^-(\widehat{Z_n} \smallsetminus Z_n)} \Coh^-(\widehat{Z_n}) \right) \cdots \right) \right) \]
	and
	\[ \Perf(X) \simeq \Perf(\fV_1) \times_{\Perf(\widehat{Z_1} \smallsetminus Z_1)} \left( \Perf(\fV_2) \times_{\Perf(\widehat{Z_2} \smallsetminus Z_2)} \left( \cdots \left( \Perf(\fV_n) \times_{\Perf(\widehat{Z_n} \smallsetminus Z_n)} \Perf(\widehat{Z_n}) \right) \cdots \right) \right) \]
\end{cor}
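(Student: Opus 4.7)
The plan is a straightforward induction on the length $n$ of the flag, using \cref{cor:iterated_flag_decomposition_stacky} as the one-step decomposition at each stage. Both equivalences (for $\bfCoh^{\otimes,-}$ and $\bfPerf^\otimes$) are proved by exactly the same argument, so I will describe the $\bfCoh^{\otimes,-}$-case only. The base case $n=1$ is \cref{cor:one_step_flag_decomposition_cohperf} applied to the closed substack $Z_1 \hookrightarrow X$, noting that $\fV_1 = X \smallsetminus Z_1$ by definition.

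For the inductive step, assume the statement is known for flags of length $n-1$. Applied to the flag $(Z_0,Z_1) = (X,Z_1)$, \cref{cor:iterated_flag_decomposition_stacky} (the case $i=0$, where $\widehat{Z_0} = X$) gives the equivalence
\[ \bfCoh^{\otimes,-}_X \simeq \bfCoh^{\otimes,-}_{\fV_1} \times_{\bfCoh^{\otimes,-}_{\widehat{Z_1} \smallsetminus Z_1}} \bfCoh^{\otimes,-}_{\widehat{Z_1}} \]
in $\Sh_{\Catstmon}(\Aff_k,\tauet)$. Now observe that $(Z_1, Z_2, \ldots, Z_n)$ is itself a flag of length $n-1$ inside $\widehat{Z_1}$, with the ``relative'' formal open complements $\fV_2, \ldots, \fV_n$ and the punctured formal neighbourhoods $\widehat{Z_{i+1}} \smallsetminus Z_{i+1}$ already built from the correct data by the very definition given before \cref{prop:flag_decomposition_induction_step}. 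The inductive hypothesis, applied to $\widehat{Z_1}$ in place of $X$, therefore yields a decomposition
\[ \bfCoh^{\otimes,-}_{\widehat{Z_1}} \simeq \bfCoh^{\otimes,-}_{\fV_2} \times_{\bfCoh^{\otimes,-}_{\widehat{Z_2} \smallsetminus Z_2}} \left( \cdots \left( \bfCoh^{\otimes,-}_{\fV_n} \times_{\bfCoh^{\otimes,-}_{\widehat{Z_n} \smallsetminus Z_n}} \bfCoh^{\otimes,-}_{\widehat{Z_n}} \right) \cdots \right). \]
Substituting this equivalence into the right-hand factor of the one-step decomposition (which is legitimate since pullbacks preserve equivalences) produces the claimed nested fiber product decomposition for $\bfCoh^{\otimes,-}_X$.

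For the $k$-point statements, I would simply evaluate the iterated fiber product at $\Spec(k)$; since the evaluation functor $\St_k \to \Catstmon$, $\cF \mapsto \cF(\Spec k)$, is a right adjoint (it is evaluation at a final object of $\Aff_k$), it preserves all limits and in particular all fiber products. Combined with the tautological identifications $\bfCoh^{\otimes,-}_Y(\Spec k) = \Coh^-(Y)$ for any of the stacks $Y$ involved, this gives the stated equivalences of symmetric monoidal stable $\infty$-categories.

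The only delicate point is the bookkeeping at each induction step: one must verify that the formal open substacks $\fV_{i+1}$ and punctured formal neighbourhoods $\widehat{Z_{i+1}} \smallsetminus Z_{i+1}$ that appear when one applies \cref{cor:iterated_flag_decomposition_stacky} inside $\widehat{Z_1}$ really coincide with those defined globally from the original flag on $X$. This is, however, immediate from the definitions preceding \cref{prop:flag_decomposition_induction_step}: $\fV_{i+1}$ is the open complement of $Z_{i+1}$ inside $\widehat{Z_i}$, which depends only on the pair $(Z_i, Z_{i+1})$, and similarly for $\widehat{Z_{i+1}} \smallsetminus Z_{i+1}$. No genuine obstruction arises; the proof is purely formal once \cref{cor:iterated_flag_decomposition_stacky} is in hand.
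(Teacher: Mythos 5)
There is a genuine gap in your inductive step. You invoke ``the inductive hypothesis, applied to $\widehat{Z_1}$ in place of $X$'', but the statement you are proving is about an \emph{Artin stack locally of finite type over $k$}, and $\widehat{Z_1}$ is not such an object: it is a formal ind-stack, the colimit of the infinitesimal thickenings $Z_1^{(m)}$. Neither the notion of flag (\cref{definitionofflag} requires $Z_0=X$ an Artin stack) nor the one-step glueing theorem (\cref{prop:one_step_flag_decomposition_coh}) applies verbatim to $\widehat{Z_1}$, so the decomposition of $\bfCoh^{\otimes,-}_{\widehat{Z_1}}$ along the residual flag is not an instance of the inductive hypothesis. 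This is exactly the point the paper is careful about: the decomposition of $\bfCoh^{\otimes,-}_{\widehat{Z_i}}$ for $i\geq 1$ is obtained in \cref{prop:flag_decomposition_induction_step} (and hence \cref{cor:iterated_flag_decomposition_stacky}) by applying the one-step glueing to each of the \emph{non-reduced Artin stacks} $Z_{i-1}^{(m)}$ and passing to the inverse limit over $m$ (using that limits commute with limits); it is precisely to make this work that the functors $\hPhi^\otimes_i$, $\hPhi^{\circ,\otimes}_i$, $\fCoh^\otimes_{V,i}$ are defined as $\varprojlim_m$ of the corresponding functors relative to $Z_{i-1}^{(m)}$, and why it matters that the one-step result holds without reducedness hypotheses.

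The gap is easily repaired, and the repaired argument is the paper's: you do not need an induction with a formal ambient stack at all. \cref{cor:iterated_flag_decomposition_stacky}, which you already cite, gives the decomposition
\[ \bfCoh^{\otimes,-}_{\widehat{Z_i}} \simeq \bfCoh^{\otimes,-}_{\fV_{i+1}} \times_{\bfCoh^{\otimes,-}_{\widehat{Z_{i+1}} \smallsetminus Z_{i+1}}} \bfCoh^{\otimes,-}_{\widehat{Z_{i+1}}} \]
simultaneously for \emph{every} $0\le i\le n-1$ (with $\widehat{Z_0}=X$), all relative to the fixed flag on $X$; substituting the $(i+1)$-st equivalence into the $i$-th one, starting from $i=0$, yields the nested fiber product directly (formally, an induction on $n$ whose hypothesis quantifies over the index $i$ in the fixed flag, not over a new ambient space). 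Your treatment of the $k$-points statement is fine: limits of sheaves are computed objectwise, so evaluation at $\Spec(k)$ preserves the iterated fiber products, as in \cref{cor:iterated_flag_decomposition_categorical}.
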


\begin{warning}
	Since the stacks $\bfCoh^{\otimes,-}(\widehat{Z_i} \smallsetminus Z_i)$ depends on the given embedding $Z_i \hookrightarrow Z_{i-1}$, we see that we cannot remove the parentheses in the equivalence of \cref{cor:full_flag_decomposition}.
	A similar caution has to be applied to all the other decompositions proved above.
\end{warning}

In the case $X=S$ is a surface, we get a particularly readable expression of the flag decomposition. There is a stacky version (like in Corollary \ref{cor:iterated_flag_decomposition_stacky}), that we omit, from which we get the following:

\begin{cor} \label{cor:full_flag_decomposition_surface}
	Let $S$ be a surface and let $(S, C, x)$ be a flag in $S$.
	Then there are natural equivalences of symmetric monoidal stable $\infty$-categories
	\[ \Coh^-(S) \simeq \Coh^-(S \smallsetminus C) \times_{\Coh^-(\widehat{C} \smallsetminus C)} \left( \Coh^-(\widehat{C} \smallsetminus x) \times_{\Coh^-(\hat{x} \smallsetminus x)} \Coh^-(\hat{x}) \right) \]
	and
	\[ \Perf(S) \simeq \Perf(S \smallsetminus C) \times_{\Perf(\widehat{C} \smallsetminus C)} \left( \Perf(\widehat{C} \smallsetminus x) \times_{\Perf(\hat{x} \smallsetminus x)} \Perf(\hat{x}) \right) \]
\end{cor}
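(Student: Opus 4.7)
The plan is to deduce this statement as a direct instance of \cref{cor:full_flag_decomposition} applied to the flag of length $n = 2$ given by $\cZ = (Z_0, Z_1, Z_2) = (S, C, x)$. First I would verify that the data $(S, C, x)$ indeed defines a flag in the sense of \cref{definitionofflag}: this amounts to the trivial observations that $Z_0 = S$ by convention, that the closed immersion $C \hookrightarrow S$ factors (tautologically) through $Z_0 = S$, and that $x \hookrightarrow S$ factors through $C \hookrightarrow S$ by hypothesis (since $(S, C, x)$ is a flag).

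Next I would translate the formal data appearing on the right-hand side of \cref{cor:full_flag_decomposition} into the notation used in the corollary. With $Z_1 = C$ and $Z_2 = x$, the formal open substack $\fV_1$ is, by construction, the open complement of $Z_1 = C$ inside $Z_0 = S$, so $\fV_1 = S \smallsetminus C$; similarly $\fV_2$ is the open complement of $Z_2 = x$ inside the formal completion $\widehat{Z_1} = \widehat{C}$, which is exactly $\widehat{C} \smallsetminus x$. Finally $\widehat{Z_2} = \hat{x}$, $\widehat{Z_2} \smallsetminus Z_2 = \hat{x} \smallsetminus x$, and $\widehat{Z_1} \smallsetminus Z_1 = \widehat{C} \smallsetminus C$.

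Substituting these identifications into the two chains of equivalences of \cref{cor:full_flag_decomposition} yields the desired decompositions of $\bfCoh^{\otimes,-}_S$ and $\bfPerf^\otimes_S$ as stacks. To obtain the stated categorical equivalences, I would take $k$-points, using that by \cref{def:coherentPFN} and \cref{def:perfect_PFN} the $k$-points of $\bfCoh^{\otimes,-}_{\hZ \smallsetminus Z}$ and $\bfPerf^\otimes_{\hZ \smallsetminus Z}$ compute $\Coh^-(\hZ \smallsetminus Z)$ and $\Perf(\hZ \smallsetminus Z)$, and that the global sections functor $\bfCoh^{\otimes,-}, \bfPerf^\otimes \colon \St_k\op \to \Catstmon$ send fiber products of stacks to fiber products of symmetric monoidal stable $\infty$-categories (which holds since evaluation at $\Spec(k) \in \Aff_k$ is a right adjoint, hence commutes with limits). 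Since the statement is purely a bookkeeping specialization of the general flag decomposition, there is no real obstacle; the only mild subtlety is to make sure the parenthesization matches the order $(Z_0, Z_1, Z_2)$ prescribed by the flag, which is exactly how it has been written in the corollary above.
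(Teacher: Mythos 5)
Your proposal is correct and matches the paper's own (essentially implicit) argument: the paper obtains \cref{cor:full_flag_decomposition_surface} precisely as the specialization of \cref{cor:full_flag_decomposition} to the length-two flag $(S, C, x)$, with the $k$-points step already built into that corollary. Your identifications $\fV_1 = S \smallsetminus C$, $\fV_2 = \widehat{C} \smallsetminus x$, $\widehat{Z_2} = \hat{x}$ and the remark that evaluation at $\Spec(k)$ commutes with limits are exactly the bookkeeping the paper relies on.
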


\section{Formal glueing and flag decomposition for $G$-bundles} \label{section:gbundles}

Let $X$ be an Artin stack locally of finite presentation over $k$, $Z\hookrightarrow X$ a closed substack. In this section, we will prove analogues of the formal glueing decomposition (Section \ref{formalglueing}) and of the flag decomposition (Section \ref{subsection:flagdec}) for the stack of $G$-bundles on $X$, where $G$ is an affine algebraic $k$-group scheme (see \cref{thm:gbundles}). These results are consequences of the corresponding results for perfect modules, once we prove  the key property that the classifying stack $\rB G$ is $\Perf$-local (see \cref{lem:bgperflocal}).

\begin{defin}
Let $G$ denote a smooth, affine $k$-algebraic group scheme.
Let us recall the functor $\hPsi^\circ_Z \colon X\Et \to \St_k$.
Let us denote by $\Bunhat^\circ_G$ the composite of $\hPsi^\circ_Z$ with the functor $\Bun_G$ mapping a stack $Y$ to its groupoid of $G$-bundles:
\[
\Bunhat^\circ_G = \Bun_G \circ \hPsi^\circ_Z \colon X\Et \to \St_k \to \cS^\mathrm{op}
\]
Informally, the functor $\Bunhat^\circ_G$ maps a affine scheme $U = \Spec A$ equipped with an étale map $f \colon U \to X$ to the groupoid of $G$-bundles on the scheme $\Spec(\hat A_I) \smallsetminus Z_U$, where $I$ is the ideal defining $Z_U$ in $U$.
We define similarly 
\begin{align*}
\Bunhat_G = \Bun_G \circ \hPsi_Z &\colon U \mapsto \Bun_G(\Spec(\hat A_I)) \\
\bfBun_G^\circ = \Bun_G \circ \fW_V &\colon U \mapsto \Bun_G(U \smallsetminus Z_U)  \\
\bfBun_G^X = \Bun_G \circ \fW_X &\colon U \mapsto \Bun_G(U) .
\end{align*}
\end{defin}
\begin{rem}
The functor $\bfBun_G^X$ encodes the stack of $G$-bundles on $X$, while the functor $\bfBun_G^\circ$ encodes the stack of $G$-bundles on $X \smallsetminus Z$ (see \cref{def:stackofbundles} below).
\end{rem}

\begin{thm}\label{thm:gbundles}
The square of natural transformations
\[
\xymatrix{
\bfBun_G^X \ar[r] \ar[d] & \bfBun_G^\circ \ar[d] \\
\Bunhat_G \ar[r] & \Bunhat^\circ_G
}
\]
is cartesian.
\end{thm}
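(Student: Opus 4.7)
The strategy is to reduce the cartesian square for $G$-bundles to the already-established formal glueing result for perfect modules (Theorem \ref{thm:one_step_flag_decomposition_perf}), using the referenced \cref{lem:bgperflocal} that $\rB G$ is $\Perf$-local.

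First I would observe that the statement is pointwise in $X\Et$: since fiber products of presheaves are computed pointwise, it suffices to show that, for every affine $U = \Spec(A) \in X\Et$ (with ideal $I$ defining $Z_U \hookrightarrow U$), the square of $\infty$-groupoids
\[
\xymatrix{
\Bun_G(U) \ar[r] \ar[d] & \Bun_G(U \smallsetminus Z_U) \ar[d] \\
\Bun_G(\Spec(\hat A_I)) \ar[r] & \Bun_G(\Spec(\hat A_I) \smallsetminus Z_U)
}
\]
is cartesian in $\cS$. Throughout, the underlying spaces are honest (non-formal) schemes, except for the formal completion, for which we adopt the convention $\Bun_G(\hPsi_Z(U)) = \Bun_G(\Spec(\hat A_I))$ via the $\Perf$-local description of $\Bun_G$, or directly by writing the formal completion as a colimit of nilpotent thickenings and using the fact that $G$-bundles on those thickenings are classified by the same data as $G$-bundles on $\Spec(\hat A_I)$.

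Next, \cref{thm:one_step_flag_decomposition_perf} applied at the level $U$ (or, more precisely, as a pointwise consequence of the equivalence of sheaves) produces a cartesian square in $\Catstmon$
\[
\Perf(U) \simeq \Perf(U \smallsetminus Z_U) \times_{\Perf(\Spec(\hat A_I) \smallsetminus Z_U)} \Perf(\Spec(\hat A_I)) .
\]
Now \cref{lem:bgperflocal} asserts that $\rB G$ is $\Perf$-local, meaning that the functor $Y \mapsto \Bun_G(Y)$ may be computed as a mapping space in $\Catstmon$ (with appropriate exactness/Tannakian conditions) out of $\Perf(\rB G)$ into $\Perf(Y)$. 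Applying $\mathrm{Map}^{\otimes, \mathrm{Tann}}(\Perf(\rB G), -)$ to the cartesian square above, and using that mapping spaces in $\Catstmon$ send limits in the target to limits in $\cS$, yields precisely the desired cartesian square of $G$-bundle groupoids.

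The principal subtlety, which the proof reduces to invoking \cref{lem:bgperflocal}, is the Tannakian interpretation itself: one must know that the class of test objects arising here (the scheme $\Spec(\hat A_I)$, its open complement of $Z_U$, and $U$) all satisfy the hypotheses required for reconstruction of $G$-bundles from the symmetric monoidal category of perfect complexes. Once this is available, the proof is a formal consequence of the preservation of limits by the mapping space functor and the pointwise character of the desired equivalence.
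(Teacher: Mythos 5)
Your overall strategy -- reduce to a pointwise statement over affines, invoke the formal glueing for $\Perf$ and then transfer it to $G$-bundles via the $\Perf$-locality of $\rB G$ -- is exactly the paper's strategy, so the ingredients are the right ones. However, the step where you "apply $\mathrm{Map}^{\otimes,\mathrm{Tann}}(\Perf(\rB G),-)$ to the cartesian square and use that mapping spaces in $\Catstmon$ send limits in the target to limits in $\cS$" is not justified as written, and it is precisely where the content lies. The full mapping space $\Fun^\otimes(\Perf(\rB G),\Perf(-))$ does preserve limits, but $\Bun_G(Y)$ is only a \emph{full subspace} of it, cut out by the Tannakian (essential-image) conditions of \cite[Theorem 3.4.2]{Lurie_Tannaka_duality}; these conditions refer to the geometry of $Y$ (via $\QCoh(Y)$, connectivity, flatness, etc.) and are not conditions intrinsic to the object $\Perf(Y)$ of $\Catstmon$. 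So "the Tannakian mapping space preserves limits" is not formal: one must argue that a monoidal functor $\Perf(\rB G)\to\Perf(U)$, identified under the glueing equivalence with a compatible triple of functors into $\Perf(U\smallsetminus Z_U)$, $\Perf(\widehat{Z_U})$ and $\Perf(\Spec(\widehat{A_U})\smallsetminus Z_U)$, is Tannakian as soon as the components are. Note also that the subtlety you flag (whether the individual corners satisfy the hypotheses for Tannakian reconstruction) is not the sticking point; the sticking point is this compatibility of the essential-image condition with the fiber product, and \cref{lem:bgperflocal} as stated (an equivalence on $\Bun_G$ induced by a map of stacks which is a $\Perf$-equivalence) does not directly provide a natural, limit-compatible corepresentation of $\Bun_G$ of the kind you use.

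The paper sidesteps this by a small extra idea you are missing: form the pushout of functors $\fW_V \amalg_{\hPsi^\circ_Z} \hPsi_Z$ and consider the single comparison map $\fW_V \amalg_{\hPsi^\circ_Z} \hPsi_Z \to \fW_X$. Since $\Bun_G = \Map(-,\rB G)$ sends colimits of (pre)stacks to limits of spaces, $\Bun_G$ of this pushout \emph{is} the fiber product $\bfBun_G^\circ \times_{\Bunhat_G^\circ} \Bunhat_G$ with no further argument, while $\Perf$ of the pushout is the fiber product of the $\Perf$'s, which is equivalent to $\Perf$ of $\fW_X$ by \cref{thm:one_step_flag_decomposition_perf} (the paper cites \cref{prop:flag_decomposition_induction_step}). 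Then \cref{lem:bgperflocal}, applied in the form in which it is actually stated, immediately gives that $\bfBun_G^X \to \bfBun_G^\circ \times_{\Bunhat_G^\circ} \Bunhat_G$ is an equivalence; the essential-image transport you would otherwise have to redo is exactly what is carried out once and for all inside the proof of that lemma. With this reformulation your argument closes; without it, the limit-preservation claim for the restricted Tannakian mapping space remains an unproved assertion.
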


\begin{rem}
We can state the above theorem informally, by saying that for any closed subscheme $Z \subset X$, a $G$-bundle on $X$ amounts to a $G$-bundle on $X \smallsetminus Z$ and a $G$-bundle on the formal neighbourhood of $Z$ in $X$, together with some glueing data on the punctured formal neighbourhood.
Let us emphasize that no codimension requirement is made on $Z$.
\end{rem}

The proof of \cref{thm:gbundles} is based on a key lemma (see \cref{lem:bgperflocal} below). Let us record another important consequence of that lemma:

\begin{prop}\label{prop:bunGpunctdescent}
The functors $\Bunhat_G$ and $\Bunhat^\circ_G \colon X\Et \to \cS^\mathrm{op}$ is an hypercomplete sheaf for the \'etale topology.
\end{prop}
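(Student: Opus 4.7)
The plan is to deduce the hyperdescent of $\Bunhat_G$ and $\Bunhat^\circ_G$ from the already established hyperdescent of the corresponding sheaves of perfect modules on the formal and punctured formal neighbourhoods. The essential input is the forthcoming \cref{lem:bgperflocal}, which will express $\Bun_G$ as a limit-preserving functor of $\bfPerf^\otimes$: namely, there should be a limit-preserving functor $\Phi_G \colon \Catstmon \to \cS\op$ and a natural equivalence $\Bun_G(Y) \simeq \Phi_G(\bfPerf^\otimes(Y))$ for every stack $Y \in \St_k$.

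Granting this, I would argue as follows. For $\Bunhat^\circ_G$, unravelling the definitions gives
\[ \Bunhat^\circ_G \;=\; \Bun_G \circ \hPsi^\circ_Z \;\simeq\; \Phi_G \circ \bfPerf^\otimes \circ \hPsi^\circ_Z \;=\; \Phi_G \circ \hrP^{\circ,\otimes}_Z . \]
Now \cref{prop:restricted_etale_descent_Perf} asserts that $\hrP^{\circ,\otimes}_Z \colon X\Et\op \to \Catstmon$ is a hypercomplete sheaf for the \'etale topology, and since $\Phi_G$ preserves arbitrary limits, the composition $\Phi_G \circ \hrP^{\circ,\otimes}_Z$ is again a hypercomplete sheaf with values in $\cS\op$. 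A completely parallel argument, with $\hPsi_Z$ in place of $\hPsi^\circ_Z$ and $\hrP^\otimes_Z$ in place of $\hrP^{\circ,\otimes}_Z$, handles $\Bunhat_G$. The required hyperdescent for $\hrP^\otimes_Z$ is immediate: by construction $\hrP^\otimes_Z(U) = \Perf(\widehat{Z_U}) \simeq \lim_n \Perf(Z_U^{(n)})$, each level $U \mapsto \Perf(Z_U^{(n)})$ is a hypersheaf since $\bfPerf^\otimes$ has \'etale hyperdescent on $\St_k$, and limits of hypersheaves are hypersheaves.

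The main obstacle is thus the Perf-locality of $\rB G$ encoded in \cref{lem:bgperflocal}. For $G = \mathrm{GL}_n$ this is essentially tautological, since $\Bun_{\mathrm{GL}_n}(Y)$ is the maximal subgroupoid of $\bfPerf^\otimes(Y)$ consisting of locally free modules of rank $n$, a construction which visibly preserves limits in $\Catstmon$. For a general smooth affine $k$-group scheme $G$, one embeds $G \hookrightarrow \mathrm{GL}_n$ with affine quotient $\mathrm{GL}_n / G$ and uses the resulting presentation of $\rB G$ to bootstrap Perf-locality from the $\mathrm{GL}_n$ case; alternatively one invokes Tannakian reconstruction to express $\Bun_G(Y)$ functorially in terms of symmetric monoidal exact functors out of $\bfPerf^\otimes(\rB G)$. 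Either route demands some non-trivial bookkeeping, but once \cref{lem:bgperflocal} is in hand, the proposition follows formally from the descent results already at our disposal.
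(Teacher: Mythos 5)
Your overall strategy---reduce hyperdescent of $\Bunhat_G$ and $\Bunhat^\circ_G$ to the hyperdescent of perfect complexes on the (punctured) formal neighbourhood plus a Perf-locality property of $\rB G$---is the same as the paper's, but the mechanism you invoke is stronger than what is available, and this is a genuine gap. You posit a limit-preserving functor $\Phi_G \colon \Catstmon \to \cS\op$ with $\Bun_G(Y) \simeq \Phi_G(\bfPerf^\otimes(Y))$ naturally in $Y$. Tannaka duality does not provide this: it provides a \emph{fully faithful} map $\Bun_G(Y) = \Map(Y, \rB G) \to \Fun^{\otimes}(\Perf(\rB G), \Perf(Y))$ whose essential image is cut out by conditions (after Ind-extension: cocontinuity, preservation of connective and of flat objects) that are not invariants of the bare symmetric monoidal $\infty$-category $\Perf(Y)$ --- they refer to the $t$-structure and flatness in $\QCoh(Y)$, i.e.\ to $Y$ itself. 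So $\Phi_G$ is not known to exist; and even granting a Tannakian description, the substantive point --- that the essential-image condition is stable under totalization over a hypercover --- is exactly the gluing statement being proved, so the step ``hypersheaf composed with a limit-preserving functor'' silently assumes the conclusion. (A side remark: your fallback of embedding $G \hookrightarrow \mathrm{GL}_n$ ``with affine quotient'' is only available for reductive $G$ by Matsushima's criterion; in general the quotient is merely quasi-affine.)

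The repair is to use only the weaker statement that \cref{lem:bgperflocal} actually asserts: if a map of stacks $f \colon Y_1 \to Y_2$ induces an equivalence $\Perf(Y_2) \to \Perf(Y_1)$, then it induces an equivalence $\Bun_G(Y_2) \to \Bun_G(Y_1)$ (fully faithfulness from Tannaka duality, plus the transfer of the essential-image criterion along this one Perf-equivalence). This suffices because both $\bfPerf^\otimes$ and $\Bun_G = \Map(-, \rB G)$ send colimits in $\St_k$ to limits. Given a hypercover $U^\bullet \to U$ in $X\Et$, \cref{prop:restricted_etale_descent_Perf} says precisely that $\Perf$ sends the comparison map $\colim \hPsi^\circ_Z(U^\bullet) \to \hPsi^\circ_Z(U)$ to an equivalence; applying \cref{lem:bgperflocal} to this single map gives $\Bun_G(\hPsi^\circ_Z(U)) \simeq \varprojlim \Bun_G(\hPsi^\circ_Z(U^\bullet))$, which is the desired hyperdescent, and the same argument with $\hPsi_Z$ in place of $\hPsi^\circ_Z$ handles $\Bunhat_G$ (your observation that $\Perf(\widehat{Z_U}) \simeq \varprojlim_n \Perf(Z_U^{(n)})$ is a hypersheaf levelwise is fine for that input). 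This is exactly the paper's argument.
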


\begin{lem}\label{lem:bgperflocal}
Let $f \colon Y_1 \to Y_2$ be a map of stacks.
Assume that the pullback functor $f^* \colon \Perf(Y_2) \to \Perf(Y_1)$ is an equivalence, then the pullback functor
\[
f^* \colon \Bun_G(Y_2) \to \Bun_G(Y_1)
\]
is an equivalence.
\end{lem}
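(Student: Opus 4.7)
The approach is Tannakian: identify $\Bun_G(Y)$ with a space of symmetric monoidal functors out of $\Perf(\rB G)$, and observe that a symmetric monoidal equivalence on $\Perf$ induces an equivalence on such functor spaces. Since $G$ is a smooth affine algebraic $k$-group, $\Perf(\rB G)$ is the stable $\infty$-category of finite-dimensional $G$-representations, every object of which is dualizable; moreover $\rB G$ is a geometric stack with affine diagonal, so it is amenable to Tannaka duality.

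Concretely, the plan is in three steps. First, I would invoke Tannaka duality for $\rB G$ to obtain, functorially in $Y$, a natural equivalence
\[
\Bun_G(Y) \simeq \Map_{\St_k}(Y, \rB G) \simeq \Fun^{\otimes}\!\bigl(\Perf(\rB G),\, \Perf(Y)\bigr),
\]
where the right-hand side denotes $k$-linear symmetric monoidal exact functors (automatically preserving the unit and duals, since $\Perf(\rB G)$ is generated by dualizable objects). Second, the pullback $f^* \colon \Perf(Y_2) \to \Perf(Y_1)$ is always symmetric monoidal and exact, and is an equivalence by hypothesis; postcomposition with it therefore yields an equivalence
\[
f^* \circ (-) \colon \Fun^{\otimes}\!\bigl(\Perf(\rB G),\, \Perf(Y_2)\bigr) \xrightarrow{\ \sim\ } \Fun^{\otimes}\!\bigl(\Perf(\rB G),\, \Perf(Y_1)\bigr).
\]
Third, by naturality of the Tannakian identification in $Y$, this equivalence is exactly the map $f^* \colon \Bun_G(Y_2) \to \Bun_G(Y_1)$, which is thus an equivalence.

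The hard part will be securing the Tannakian identification in Step 1, since the lemma is stated for arbitrary maps of stacks, and in the intended applications one of $Y_1, Y_2$ may be a non-geometric prestack (such as $\hPsi^\circ_Z(U)$ from \cref{subsec:punctured_formal_neighbourhood}). A robust way around this is to work étale-locally on $Y$: both $Y \mapsto \Bun_G(Y)$ and, by \cref{prop:restricted_etale_descent_Perf}, $Y \mapsto \Perf(Y)$ satisfy étale descent, so the lemma reduces to the case where $Y$ is an affine scheme, for which Tannaka duality in the form invoked above is classical. An alternative, elementary reduction proceeds by fixing a faithful representation $G \hookrightarrow GL_n$: the functor $\Bun_{GL_n}(-)$ is visibly determined by $\Perf$ (it is the groupoid of rank-$n$ locally free objects), and one then handles reductions of structure group using that $GL_n/G$ is a quasi-affine scheme.
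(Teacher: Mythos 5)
Your overall strategy (Tannakian, via $\rB G$) is the same as the paper's, but Step 1 contains a genuine gap: you assert a natural equivalence $\Bun_G(Y) \simeq \Fun^{\otimes}\bigl(\Perf(\rB G), \Perf(Y)\bigr)$, and this is not what Tannaka duality provides. The result the paper invokes (\cite[Theorem 3.4.2]{Lurie_Tannaka_duality}) only says that $\alpha_Y \colon \Map(Y,\rB G) \to \Fun^{\otimes}(\QCoh(\rB G),\QCoh(Y))$ is \emph{fully faithful}, with an essential image characterized by extra conditions (preservation of colimits together with connectivity/flatness-type conditions); it does not identify $\Bun_G(Y)$ with \emph{all} exact symmetric monoidal functors on $\Perf$. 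An arbitrary $k$-linear exact symmetric monoidal functor $\Perf(\rB G)\to\Perf(Y)$ carries no t-structure information, and there is no off-the-shelf statement that it automatically satisfies those image conditions after Ind-extension; so the comparison map from $\Bun_G(Y)$ to your functor category is, a priori, only fully faithful. Once Step 1 is weakened to full faithfulness, your Steps 2--3 give that $f^*\colon \Bun_G(Y_2)\to\Bun_G(Y_1)$ is fully faithful, but essential surjectivity — the actual content of the lemma — is left unproved.

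The paper's proof is designed exactly around this point: it uses only (a) full faithfulness of $\alpha_{Y_i}$, valid for arbitrary stacks $Y_i$, together with the factorization through $\Fun^{\otimes}(\Perf(\rB G),\Perf(Y_i))$, and (b) the observation that the essential-image criterion of Lurie's theorem for $\alpha_{Y_1}$ holds for a functor $\beta$ if and only if it holds for $F_{\Perf}^{-1}(\beta)$ over $Y_2$, where $F_{\Perf}$ is the equivalence induced by $f^*$ on $\Perf$. Thus one never needs to know what the essential image is, only that membership in it is detected through $F_{\Perf}$. Your fallback reductions do not repair the gap: the hypothesis concerns one specific map $f$, and in the intended application ($\fW_V \amalg_{\hPsi^\circ_Z} \hPsi_Z \to \fW_X$) the source is a non-geometric colimit of stacks, so "work étale-locally and reduce to $Y$ affine" is not available for $Y_1$; likewise the $GL_n$/quasi-affine $GL_n/G$ route would itself require identifying $\Bun_{GL_n}$ of such a colimit with rank-$n$ objects of its $\Perf$, which is again a Tannakian statement of the same kind. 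If you replace Step 1 by full faithfulness plus the image-detection argument, your proof becomes the paper's.
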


\begin{proof}
To prove this lemma, we will use Tannaka duality as proven for instance in \cite[Theorem 3.4.2]{Lurie_Tannaka_duality}. It gives us a description of the groupoid of $G$-bundles in terms of monoidal functors. Namely, for any stack $Y$, the map
\[
\alpha_Y \colon \Bun_G(Y) \simeq \Map(Y, \rB G) \to \Fun^{\otimes}(\QCoh(\rB G),\QCoh(Y))
\]
is fully faithful. Note also that we have a canonical embedding
\[
\Fun^{\otimes}(\Perf(\rB G),\Perf(Y)) \to \Fun^{\otimes}(\QCoh(\rB G),\QCoh(Y))
\]
given by the inclusion $\Perf(Y) \to \QCoh(Y)$ and the left Kan extension functor along $\Perf(\rB G) \to \QCoh(\rB G)$.
The functor $\alpha_Y$ actually factors through $\Fun^{\otimes}(\Perf(\rB G),\Perf(Y))$.
Let now $f \colon Y_1 \to Y_2$ be such that $f^* \colon \Perf(Y_2) \to \Perf(Y_1)$ is an equivalence.
We get a commutative diagram
\[
\xymatrix{
\Bun_G(Y_2) \ar[r] \ar[d]^{F_{\Bun}} & \Fun^{\otimes}(\Perf(\rB G),\Perf(Y_2)) \ar[r] \ar[d]^\simeq_{F_\Perf} & \Fun^{\otimes}(\QCoh(\rB G),\QCoh(Y_2)) \ar[d]^{F_\QCoh} \\
\Bun_G(Y_1) \ar[r] & \Fun^{\otimes}(\Perf(\rB G),\Perf(Y_1)) \ar[r] & \Fun^{\otimes}(\QCoh(\rB G),\QCoh(Y_1))
}
\]
From what precedes, we get that $F_{\Bun}$ is fully faithful. 
Recall the description of the essential images of the functors $\alpha_{Y_i}$ given in \cite[Theorem 3.4.2]{Lurie_Tannaka_duality}.
A monoidal functor $\beta \colon \Perf(\rB G) \to \Perf(Y_1)$ then lies in the essential image of $\alpha_{Y_1}$ if and only if its preimage $F_\Perf^{-1}(\beta)$ lies in the essential image of $\alpha_{Y_2}$.
It follows that $F_{\Bun}$ is an equivalence.
\end{proof}

\begin{rem}
In the above lemma, one could replace the classifying stack of $G$-bundles $\rB G$ by any geometric stack with affine diagonal (so that \cite[Theorem 3.4.2]{Lurie_Tannaka_duality} applies).
Stacks satisfying the conclusion of \cref{lem:bgperflocal} are usually called \emph{$\Perf$-local stacks}.
\end{rem}

\begin{proof}[Proof (of {\cref{thm:gbundles}})]
Using \cref{prop:flag_decomposition_induction_step}, we apply \cref{lem:bgperflocal} to the natural transformation
\[
\fW_V \amalg_{\hPsi^\circ_Z} \hPsi_Z \to \fW_X.
\]
\end{proof}

\begin{proof}[Proof (of \cref{prop:bunGpunctdescent})]
Let $U_\bullet \to U$ be an hypercovering in $X\Et$. Using \cref{prop:restricted_etale_descent_Perf}, we see that the map
\[
\colim \hPsi^\circ_Z(U_\bullet) \to \hPsi^\circ_Z(U)
\]
is sent to an equivalence by the functor $\Perf$. The same holds with $\hPsi_Z$ instead of $\hPsi^\circ_Z$. The result then follows from \cref{lem:bgperflocal}.
\end{proof}

\begin{defin}\label{def:stackofbundles}
Recall from right before \cref{def:coherentPFN} the morphism of sites $u \colon (\mathrm{Aff}_k, \tauet) \to (X\Et^{\Geom}, \tauet)$ mapping an affine scheme $T$ over $k$ to $X \times T$. For any closed embedding $Z \subset X$ with complement $V = X \smallsetminus Z$, we define the stacks $\mathrm{Aff}_k^\mathrm{op} \to \cS$
\begin{align*}
&\bfBun_G(X) = \bfBun_G^X \circ u \,\, \text{ of }G\text{-bundles on }X \\
&\bfBun_G(V) = \bfBun_G^\circ \circ u \,\, \text{ of }G\text{-bundles on } \fV = X \smallsetminus Z \\
&\bfBun_G(\hat{Z}) = \Bunhat_G \circ u \,\, \text{ of }G\text{-bundles on the formal neighbourhood }\hat{Z} \\
&\bfBun_G(\hat{Z} \smallsetminus Z) = \Bunhat^\circ_G \circ u \,\, \text{ of }G\text{-bundles on the punctured formal neighbourhood}
\end{align*}
As in the case of (almost) perfect complexes, we extend those definitions to the case where $X$ is a formal ind-stack (i.e.\ a filtered colimit of Artin stacks locally of finite presentation whose reduced part is constant) -- see shortly after \cref{definitionofflag}.
\end{defin}

\begin{rem}
With the above definition, if $Z \subset X$ is of codimension $1$ then we get a definition of the affine Grassmanian along $Z$ over $X$ as the fiber
\[
\bfBun_G(\hat{Z} \smallsetminus Z) \times_{\bfBun_G(\hat{Z})} *.
\]
In the case where $Z \subset X$ is the inclusion $\{0\} \subset \mathbb{A}^1$, this fiber is equivalent to the affine Grassmanian $G(\!(t)\!)/G[\![t]\!]$.
\end{rem}

\begin{cor}\label{flagbung}
Let $X$ be an Artin stack locally of finite type over $k$ and let $\cZ \coloneqq (Z_0, Z_1, \ldots, Z_n)$ be a flag on $X$ (cf.\ \cref{definitionofflag}). The stack $\bfBun_G(X)$ is equivalent to
\[
\bfBun_G(\fV_1) \times_{\bfBun_G(\widehat{Z_1} \smallsetminus Z_1)} \left( \bfBun_G(\fV_2) \times_{\bfBun_G(\widehat{Z_2} \smallsetminus Z_2)} \left( \cdots \left( \bfBun_G(\fV_n) \times_{\bfBun_G(\widehat{Z_n} \smallsetminus Z_n)} \bfBun_G(\widehat{Z_n}) \right) \cdots \right) \right) .
\]
where, as before $\fV_{i+1}$, is the formal ind-stack $\widehat{Z_{i}} \smallsetminus Z_{i+1}$.
In the particular case where $X$ is a surface $S$ and the flag consists of a point $x$ in a curve $C$ in $S$, we get:
\[
\bfBun_G(S) \simeq \bfBun_G(S \smallsetminus C) \times_{\bfBun_G(\widehat{C} \smallsetminus C)} \left( \bfBun_G(\widehat{C} \smallsetminus x) \times_{\bfBun_G(\hat{x} \smallsetminus x)} \bfBun_G(\hat{x}) \right) 
\].
\end{cor}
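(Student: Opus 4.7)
The plan is to mirror the iterative strategy already used for \cref{cor:full_flag_decomposition}, substituting \cref{thm:gbundles} for the one-step formal glueing of (almost) perfect modules. First I would apply \cref{thm:gbundles} directly to the pair $(X, Z_1)$, which gives
\[ \bfBun_G(X) \simeq \bfBun_G(\fV_1) \times_{\bfBun_G(\widehat{Z_1} \smallsetminus Z_1)} \bfBun_G(\widehat{Z_1}), \]
noting that $\bfBun_G(\widehat{Z_1}) \simeq \varprojlim_m \bfBun_G(Z_1^{(m)})$ by the extension to formal ind-stacks made right before \cref{def:stackofbundles}.

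Next I would establish the inductive step: for every $1 \le i \le n-1$, an equivalence
\[ \bfBun_G(\widehat{Z_i}) \simeq \bfBun_G(\fV_{i+1}) \times_{\bfBun_G(\widehat{Z_{i+1}} \smallsetminus Z_{i+1})} \bfBun_G(\widehat{Z_{i+1}}) . \]
For each $m$ the closed substack $Z_{i+1}$ sits inside $Z_i^{(m)}$ (since $Z_{i+1} \subseteq Z_i \subseteq Z_i^{(m)}$), so \cref{thm:gbundles} applies to the pair $(Z_i^{(m)}, Z_{i+1})$. Taking $\varprojlim_m$ of the resulting cartesian squares, and using that fiber products in $\cS$ commute with arbitrary limits, yields the desired decomposition once the limits of the four corners are identified. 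Two identifications are essentially tautological: $\varprojlim_m \bfBun_G(Z_i^{(m)} \smallsetminus Z_{i+1}) \simeq \bfBun_G(\fV_{i+1})$ by the very definition $\fV_{i+1} = \widehat{Z_i} \smallsetminus Z_{i+1}$, and $\varprojlim_m \bfBun_G(Z_i^{(m)}) \simeq \bfBun_G(\widehat{Z_i})$. The remaining identification, $\varprojlim_m \bfBun_G\bigl(\widehat{(Z_{i+1})_{Z_i^{(m)}}}\bigr) \simeq \bfBun_G(\widehat{Z_{i+1}})$ together with its punctured analogue, rests on the observation that the $l$-th infinitesimal neighbourhood of $Z_{i+1}$ inside $Z_i^{(m)}$ is cut out in $X$ by the ideal $\cJ_{Z_{i+1}}^l + \cJ_{Z_i}^m$; since $Z_{i+1} \subseteq Z_i$ gives $\cJ_{Z_i} \subseteq \cJ_{Z_{i+1}}$, for $m \ge l$ this ideal equals $\cJ_{Z_{i+1}}^l$, so the mixed bi-filtered colimit is cofinal with the filtration $(\cJ_{Z_{i+1}}^l)_l$ defining $\widehat{Z_{i+1}}$.

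Iterating this one-step inductive equivalence from $i = 0$ to $i = n-1$ exactly as in the proof of \cref{cor:full_flag_decomposition} produces the nested fiber product displayed in the statement, and the surface case is the specialisation $n = 2$ with flag $(S, C, x)$. The only real subtlety is the cofinality argument above; no further descent input is required, since the substantive geometric content is already packaged in \cref{thm:gbundles}, itself a consequence of the $\Perf$-locality of $\rB G$ established in \cref{lem:bgperflocal}. For this reason the corollary really is a direct consequence of \cref{thm:gbundles} once the bookkeeping of formal completions along the flag is done carefully.
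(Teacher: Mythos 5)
Your overall strategy is exactly the paper's (implicit) one: the corollary is deduced from \cref{thm:gbundles} by iteration along the flag, in the same way that \cref{cor:full_flag_decomposition} is deduced from the one-step decompositions. Applying \cref{thm:gbundles} to each pair $(Z_i^{(m)}, Z_{i+1})$ is legitimate precisely because no reducedness assumption is made there, passing to $\varprojlim_m$ of cartesian squares is fine since limits commute with limits (and the passage from the $X\Et$-level statement to the $\Aff_k$-level one along $u$ is the same argument as in \cref{cor:one_step_flag_decomposition_cohperf}), and your ideal-theoretic cofinality argument correctly justifies the identification $\varprojlim_m \bfBun_G\bigl(\widehat{(Z_{i+1})}_{Z_i^{(m)}}\bigr)\simeq \bfBun_G(\widehat{Z_{i+1}})$, which is the $\bfBun_G$-analogue of the equivalence $\hPhi^\otimes_{i+1}\simeq \hPhi^\otimes_{Z_{i+1}/X}$ that the paper asserts without proof before \cref{prop:flag_decomposition_induction_step}; supplying it is a genuine plus.

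One step, however, is wrong as written: the ``punctured analogue''. The term $\bfBun_G(\widehat{Z_{i+1}}\smallsetminus Z_{i+1})$ in the corollary is, by the convention of \cref{def:stackofbundles} (extension to formal ind-stacks) and in agreement with the Warning following \cref{cor:iterated_flag_decomposition_categorical}, taken \emph{relative to the flag}: it is by definition the limit over $m$ of the punctured formal neighbourhoods of $Z_{i+1}$ inside $Z_i^{(m)}$, so for that corner of the square no identification is needed at all. What your cofinality observation does \emph{not} give --- and what is in fact false --- is an identification of this limit with the punctured formal neighbourhood of $Z_{i+1}$ inside $X$: puncturing is performed after affinizing the completion, and this does not commute with the limit over $m$. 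For instance, for $x\in C\subset S=\mathbb{A}^2$ with $C=V(y)$ and $x=V(x,y)$, the flag-relative term is built from objects of the type $\Spec\bigl((k\llb x,y\rrb/(y^m))[x^{-1}]\bigr)$, which is very different from the punctured formal plane $\Spec(k\llb x,y\rrb)\smallsetminus\{0\}$. This flag-dependence is precisely the reason the parentheses in the iterated decomposition cannot be removed. Once that corner is read as a definition rather than as a consequence of cofinality, your argument goes through and coincides with the intended proof.
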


\section{Generalization to derived stacks} \label{sec:DAG}

In this section, we will generalise all the results of the previous sections to the context of derived algebraic geometry. Note that the content of this section is stronger than what precedes, even in the case of $X$ a variety.
Indeed \cref{cor:one_step_flag_decomposition_cohperf_derived} and \cref{thm:flag_bung_derived} establish that the \emph{derived} structures of the stacks of (almost) perfect complexes and of $G$-bundles can be formally glued as well.
For a comprehensive review of derived algebraic geometry, we refer the reader to \cite{Toen_Derived_2014}.

Let us nevertheless recall a few definitions, and fix our notations.

\begin{defin}
We will denote by $\mathbf{sCAlg}_k$ the $(\infty,1)$-category of simplicial commutative $k$-algebras up to homotopy equivalence.

A derived prestack is then a functor $\mathbf{sCAlg}_k \to \cS$ with values in the category of $\infty$-groupoids. A derived stack is a derived prestack satisfying the étale hyperdescent condition. We will denote by $\dSt_k \coloneqq \St(\mathrm{dAff}_k, \tauet)$ the $\infty$-category of derived stacks.

For any $A \in \mathbf{sCAlg}_k$, we will denote by $\pi_i A$ its $i^\mathrm{th}$ homotopy group. Recall that $\pi_0 A$ is endowed with a commutative $k$-algebra structure, and that for any $i$, the abelian group $\pi_i A$ is endowed with a structure of $\pi_0 A$-module.

For any derived stack $\mathcal X$, we will denote by $\trunc \mathcal X$ its restriction to (discrete) commutative $k$-algebras. The functor $\trunc \mathcal X$ is then a (non-derived) stack.
\end{defin}

\begin{defin}
Let $\mathcal X$ be a derived stack. We define its de Rham stack $\mathcal X_\mathrm{dR}$ as the derived prestack whose $A$-points are the reduced $A$-points of $\mathcal X$:
\[
\mathcal X_\mathrm{dR}(A) = \mathcal X(A_\mathrm{red})
\]
where $A_\mathrm{red}:= (\pi_0 (A))_\mathrm{red}$. It is endowed with a canonical morphism $\mathcal X \to \mathcal X_\mathrm{dR}$.

Let now $\mathcal Z \to \mathcal X$ be a map of derived stacks. The (\emph{derived}) \emph{formal completion} of $\mathcal Z$ in $\mathcal X$ (or of $\mathcal X$ along $\mathcal Z$) is defined as the pullback
\[
\widehat{\mathcal X}_\mathcal Z = \mathcal X \times^{\mathrm{d}}_{\mathcal X_\mathrm{dR}} \mathcal Z_\mathrm{dR} \in \dSt_k
\]
\end{defin}

\begin{rem}
Let us fix $\mathcal X$. The formal completion of $\mathcal X$ along $\mathcal Z$ only depends on the reduced part of $\mathcal Z$. In particular, it does not depend on the derived structure on $\mathcal Z$.
\end{rem}

\begin{thm} \label{thm:dag_completion_truncation}
Let $\mathcal Z \to \mathcal X$ be a map of derived stacks. Assume that $\mathcal X$ is a derived scheme and $\mathcal Z$ is a closed subscheme. Finally, let us assume that the ideal defining $\trunc(\mathcal Z)$ in $\trunc(\mathcal X)$ is locally finitely generated. The following assertions hold.
\begin{enumerate}
\item The formal completion $\widehat{\mathcal X}_\mathcal Z$ is a stack and is representable by a derived ind-scheme.\label{formalcompletion:indscheme}
\item For any map $\mathcal Y \to \mathcal X$, the formal completion of $\mathcal Y$ along $\mathcal Y \times^{\mathrm{d}}_{\mathcal X} \mathcal Z$ is canonically equivalent to the fiber product $\mathcal Y \times^{\mathrm{d}}_{\mathcal X} \widehat{\mathcal X}_{\mathcal Z}$.\label{formalcompletion:basechange}
\item The truncation $\trunc(\widehat{\mathcal X}_\mathcal Z)$ is canonically isomorphic to the formal completion of $\trunc(\mathcal X)$ along $\trunc(\mathcal Z)$. \label{formalcompletion:truncation}
\end{enumerate}
\end{thm}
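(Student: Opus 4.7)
My plan is to handle the three assertions in the order (\ref{formalcompletion:basechange}), (\ref{formalcompletion:truncation}), (\ref{formalcompletion:indscheme}), since the last is by far the substantive one while the first two are largely formal consequences of the definitions.

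The base change (\ref{formalcompletion:basechange}) rests on the observation that the de Rham functor $(-)_{\mathrm{dR}} \colon \dSt_k \to \dSt_k$ preserves finite limits. This holds because on a test affine $A$ one has $\mathcal X_{\mathrm{dR}}(A) = \mathcal X(A_{\mathrm{red}})$, limits in $\dSt_k$ are computed pointwise, and the assignment $\mathcal X \mapsto \mathcal X(A_{\mathrm{red}})$ preserves limits for each fixed $A$. Hence for $\mathcal W = \mathcal Y \times^{\mathrm{d}}_{\mathcal X} \mathcal Z$ one has $\mathcal W_{\mathrm{dR}} \simeq \mathcal Y_{\mathrm{dR}} \times^{\mathrm{d}}_{\mathcal X_{\mathrm{dR}}} \mathcal Z_{\mathrm{dR}}$. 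A direct manipulation of iterated fiber products then yields
\[ \mathcal Y \times^{\mathrm{d}}_{\mathcal X} \widehat{\mathcal X}_{\mathcal Z} \simeq \mathcal Y \times^{\mathrm{d}}_{\mathcal X_{\mathrm{dR}}} \mathcal Z_{\mathrm{dR}} \simeq \mathcal Y \times^{\mathrm{d}}_{\mathcal Y_{\mathrm{dR}}} \mathcal W_{\mathrm{dR}} = \widehat{\mathcal Y}_{\mathcal W}. \]
For (\ref{formalcompletion:truncation}), the functor $\trunc$ is right adjoint to the inclusion $\St_k \hookrightarrow \dSt_k$, so it commutes with limits; together with the evident identification $\trunc \circ (-)_{\mathrm{dR}} \simeq (-)_{\mathrm{dR}} \circ \trunc$ (for classical $A$, the two notions of reduction coincide), this yields
\[ \trunc \widehat{\mathcal X}_{\mathcal Z} \simeq \trunc \mathcal X \times_{(\trunc \mathcal X)_{\mathrm{dR}}} (\trunc \mathcal Z)_{\mathrm{dR}} = \widehat{\trunc \mathcal X}_{\trunc \mathcal Z}. \]

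For the ind-scheme representability (\ref{formalcompletion:indscheme}), I first reduce to the affine case: base change (\ref{formalcompletion:basechange}) allows one to check the statement on a Zariski cover of $\mathcal X$ by derived affines, so I may assume $\mathcal X = \Spec A$ and that the defining ideal $J \subset \pi_0 A$ of $\trunc(\mathcal Z)$ is finitely generated, say $J = (f_1, \ldots, f_n)$. Lifting the $f_i$ to elements $\tilde f_i \in \pi_0 A$, I construct derived closed subschemes $\mathcal Z^{(m)} \hookrightarrow \mathcal X$ as the spectra of the derived quotients $A \otimes^{\mathrm L}_{k[T_1, \ldots, T_n]} k[T_1, \ldots, T_n]/(T_1^m, \ldots, T_n^m)$ via $T_i \mapsto \tilde f_i$, and claim that $\widehat{\mathcal X}_{\mathcal Z} \simeq \colim_m \mathcal Z^{(m)}$ in $\dSt_k$. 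The test is on $\Spec B$-points: a map $\Spec B \to \widehat{\mathcal X}_{\mathcal Z}$ is a map $\Spec B \to \mathcal X$ whose reduction $\Spec(\pi_0 B)_{\mathrm{red}} \to \mathcal X$ factors through $\mathcal Z$, equivalently, a map such that each $\tilde f_i$ becomes nilpotent in $\pi_0 B$; this is exactly the condition that the map factors through some $\mathcal Z^{(m)}$.

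The main obstacle is to promote this pointwise identification to a genuine equivalence of derived stacks, and to ensure that the transitions $\mathcal Z^{(m)} \hookrightarrow \mathcal Z^{(m+1)}$ are honest closed immersions of derived schemes (not merely of their truncations). This requires a Tor-analysis of the derived quotients $A \otimes^{\mathrm L}_{k[T_i]} k[T_i]/(T_i^m)$ at the level of higher homotopy groups, together with a cofinality argument showing that the resulting ind-object is independent of the choice of generators of $J$. Finally, one must check that the local ind-scheme descriptions glue under Zariski covers; this follows because closed immersions of derived schemes satisfy Zariski descent and because base change (\ref{formalcompletion:basechange}) ensures that the $\mathcal Z^{(m)}$ constructed on overlapping affines restrict compatibly, so the local colimits assemble into a global derived ind-scheme structure on $\widehat{\mathcal X}_{\mathcal Z}$.
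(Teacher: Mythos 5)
Your treatment of assertion (2) is the same as the paper's: both rest on the fact that $(-)_{\mathrm{dR}}$ preserves fiber products, and your chain of identifications of iterated pullbacks is correct. The two remaining parts, however, each have a problem. For (3), your formal commutation of $\trunc$ with limits and with $(-)_{\mathrm{dR}}$ only identifies $\trunc(\widehat{\mathcal X}_{\mathcal Z})$ with the de Rham--style pullback $\trunc(\mathcal X) \times_{(\trunc \mathcal X)_{\mathrm{dR}}} (\trunc \mathcal Z)_{\mathrm{dR}}$. But the assertion being proved (and the way it is used later, e.g.\ to get $\trunc \circ \rR\hPsi^{\mathrm{aff}}_{Z/X} = \hPsi^{\mathrm{aff}}_{\trunc(Z)/\trunc(X)}$) is that this truncation is the \emph{classical} formal completion, i.e.\ the ind-scheme $\colim_p \Spec(A/I^p)$ of infinitesimal neighbourhoods. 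Identifying the reduced-factorization functor with that ind-scheme is exactly where the hypothesis of local finite generation of the ideal enters: a $B$-point is a map $A \to B$ carrying $I$ into the nilradical, and only for finitely generated $I$ does this force $I^p \mapsto 0$ for some $p$. Your argument never invokes finite generation and stops one step short of the statement; the paper reduces to the affine case via (2) and performs precisely this $B$-point computation.

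For (1), the paper does not prove ind-representability at all: it cites \cite[Proposition 6.3.1]{Gaitsgory-Rozenblyum}. Your attempted direct proof has a genuine gap at its central step. The claim that a map $\Spec B \to \widehat{\mathcal X}_{\mathcal Z}$ (i.e.\ a map to $\mathcal X$ with the $\tilde f_i$ nilpotent in $\pi_0 B$) ``is exactly the condition that the map factors through some $\mathcal Z^{(m)}$'' is not a pointwise condition in the derived setting: $\Map_{/\mathcal X}(\Spec B, \mathcal Z^{(m)})$ is a space of nullhomotopies of the elements $\tilde f_i^{\,m}$ in $B$, not a subspace of $\Map(\Spec B, \mathcal X)$, and the whole content of the Gaitsgory--Rozenblyum result (the paper's \cref{prop:gaitsroz} is the affine incarnation) is that in the filtered colimit over $m$ these spaces collapse onto the subspace of maps for which the $\tilde f_i$ are nilpotent. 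You acknowledge this obstacle (the ``Tor-analysis'' and cofinality over choices of generators) but do not carry it out, so as written part (1) is a sketch of the hard step rather than a proof; either supply that analysis or, as the paper does, quote the result from \cite{Gaitsgory-Rozenblyum}.
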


\begin{proof}
Assertion (\ref{formalcompletion:indscheme}) is \cite[Proposition 6.3.1]{Gaitsgory-Rozenblyum}.
Assertion (\ref{formalcompletion:basechange}) follows from the fact that the functor $\mathcal X \mapsto \mathcal X_\mathrm{dR}$ preserves fiber products.
Let us finally prove assertion (\ref{formalcompletion:truncation}). Using assertion (\ref{formalcompletion:basechange}), we can assume that $\mathcal X$ is affine. Let $A$ denote the algebra of functions on $\trunc(\mathcal X)$ and let $I \subset A$ be an ideal defining $\trunc(\mathcal Z)$ in $\trunc(\mathcal X)$. We can assume that $I$ is finitely generated.
The truncation functor $\trunc$ preserves fiber products. It follows that a $B$-point of $\trunc(\widehat{\mathcal X}_\mathcal Z)$ is a map $f \colon A \to B$ such that $f(I)$ is nilpotent in $B$. The functor $\trunc(\widehat{\mathcal X}_\mathcal Z)$ is therefore represented by the ind-scheme $\colim \Spec(A/I^p)$.
\end{proof}

\begin{defin}\label{defin:Ap}
Let $A$ be any almost finitely presented simplicial commutative $k$-algebra and let $I$ be a finitely generated ideal in $\pi_0 A$.
Let us chose vertices $f_1,\dots,f_n$ in $A$ whose images in $\pi_0 A$ generates $I$.
Let us fix $p \in \mathbb N$. The elements $f_i^p$ determine a morphism $f^p \colon k[t_1,\dots,t_n] \to A$ mapping $t_i$ to $f_i^p$. Let us denote by $A_p$ the derived tensor product $A \otimes^\mathbb{L}_{k[t_1,\dots,t_n]} k$, where the map $k[t_1,\dots,t_n] \to k$ maps the variables to $0$.\\

For any $p$, we have $A_p \otimes_A^{\mathbb L} A_{p+1} \simeq A_p \otimes \Sym_k(k^n[1])$. The canonical augmentation map $\Sym_k(k^n[1]) \to k$ induces a map $A_p \otimes_A^{\mathbb L} A_{p+1} \to A_p$. Using the canonical morphism $A_{p+1} \to A_p \otimes_A^{\mathbb L} A_{p+1}$, we get a canonical map $A_{p+1} \to A_p$.
\end{defin}

\begin{prop}[Gaitsgory-Rozenblyum, see {\cite[Proposition 6.7.4]{Gaitsgory-Rozenblyum}}]\label{prop:gaitsroz}
The derived formal completion of $\Spec(A)$ along $\Spec(\pi_0 A/I)$ is equivalent to the derived ind-scheme $\colim \Spec(A_p)$.
\end{prop}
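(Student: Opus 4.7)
The strategy is to exhibit a natural morphism
\[ \phi : \colim_p \Spec A_p \to \widehat{\Spec(A)}_{\Spec(\pi_0 A/I)} \]
in $\dSt_k$ and to verify it is an equivalence by comparing functors of points. The compatibility on truncations is provided by \cref{thm:dag_completion_truncation}, while the derived structure is handled by a direct mapping-space computation.

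First I would unfold the target via its de Rham description. For any $B \in \mathbf{sCAlg}_k$, the space $B_{\mathrm{red}}$ is discrete and reduced, so the pullback
\[ \widehat{\Spec(A)}_{\Spec(\pi_0 A/I)}(B) = \Map(A, B) \times^{\mathrm h}_{\Map(A, B_{\mathrm{red}})} \Map(\pi_0 A/I, B_{\mathrm{red}}) \]
identifies with the full subspace of $\Map_{\mathbf{sCAlg}_k}(A, B)$ spanned by those $\varphi$ for which $\pi_0 \varphi(\bar f_i)$ is nilpotent in $\pi_0 B$ for every $i$.

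Next I would construct $\phi$ from components $\phi_p : \Spec A_p \to \widehat{\Spec(A)}_{\Spec(\pi_0 A/I)}$. By \cref{defin:Ap} one has
\[ \Map_{\mathbf{sCAlg}_k}(A_p, B) \simeq \Map(A, B) \times^{\mathrm h}_{B^n} \{0\}, \]
where the arrow to $B^n$ evaluates $\varphi$ at $(\varphi(f_1)^p, \ldots, \varphi(f_n)^p)$. Every point in this fiber forces $\pi_0 \varphi(\bar f_i)^p = 0$, hence $\pi_0 \varphi(\bar f_i)$ is nilpotent, so the underlying $\varphi$ lies in the formal completion; this defines $\phi_p$. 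The transitions $A_{p+1} \to A_p$ are built from the augmentation $\Sym_k(k^n[1]) \to k$, and a direct check shows they are compatible with these factorizations. Assembling, we obtain $\phi$.

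Finally I would check $\phi$ is an equivalence. On truncations, $\trunc(\colim_p \Spec A_p) \simeq \colim_p \Spec(\pi_0 A / (\bar f_1^p, \ldots, \bar f_n^p))$, which is cofinal with $\colim_m \Spec(\pi_0 A/I^m)$ since $I = (\bar f_1, \ldots, \bar f_n)$ in $\pi_0 A$ (indeed $I^{np} \subseteq (\bar f_1^p, \ldots, \bar f_n^p) \subseteq I^p$); by \cref{thm:dag_completion_truncation}(\ref{formalcompletion:truncation}) this matches the truncation of the target. For the full equivalence, I would fix $\varphi : A \to B$ in the formal completion and analyze the homotopy fiber of $\phi(B)$ at $\varphi$. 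Unwinding the pullbacks and commuting the filtered colimit with finite products yields the fiber $\prod_i \colim_p F_{i,p}$, where $F_{i,p}$ is the space of paths in $B$ from $\varphi(f_i)^p$ to $0$, and $F_{i,p} \to F_{i,p+1}$ is multiplication by $\varphi(f_i)$. Since $\pi_0 \varphi(\bar f_i)$ is nilpotent there is $N$ with $\varphi(f_i)^N = 0$ in $\pi_0 B$. For $p \geq N$ the space $F_{i,p}$ is non-empty and based at the constant path, so $\pi_n F_{i,p} \cong \pi_{n+1} B$, on which the transition acts as multiplication by $\varphi(f_i)$; after $N$ iterations this map vanishes, so $\colim_p \pi_n F_{i,p} = 0$ for every $n \geq 0$. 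Hence each $\colim_p F_{i,p}$, and thus the whole fiber, is contractible, proving that $\phi$ is an equivalence.

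The main obstacle is justifying that the transitions in the $p$-colimit act as multiplication by $\varphi(f_i)$ on nullhomotopies. This requires carefully unfolding the definition of $A_{p+1} \to A_p$ through the identification $A_p \otimes^{\mathbb L}_A A_{p+1} \simeq A_p \otimes_k \Sym_k(k^n[1])$ and tracking the effect of the augmentation $\Sym_k(k^n[1]) \to k$ at the level of the Koszul-type equation $f_i^{p+1} = f_i \cdot f_i^p$; once this identification is made, the rest is a standard filtered-colimit manipulation in spaces.
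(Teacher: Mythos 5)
The paper contains no proof of this statement at all: \cref{prop:gaitsroz} is imported verbatim from Gaitsgory--Rozenblyum and used as a black box, so there is nothing in the text to compare your argument with step by step. What you propose is a self-contained proof, and it is correct in outline; it is in fact the standard argument underlying the cited result (and the analogous completion statements in \cite{DAG-XII}): identify the $B$-points of the derived completion with the union of connected components of $\Map_{\mathbf{sCAlg}_k}(A,B)$ on which each $\pi_0\varphi(\bar f_i)$ is nilpotent, identify $\Map(A_p,B)$ with the space of pairs $(\varphi,\text{nullhomotopies of }\varphi(f_i)^p)$, commute the filtered colimit past the fiber over a fixed $\varphi$, and observe that the transition maps multiply nullhomotopies by $\varphi(f_i)$, so that nilpotence of $\pi_0\varphi(\bar f_i)$ kills all homotopy groups of the colimit of fibers. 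You have also correctly isolated the one genuinely delicate point: the map $A_{p+1}\to A_p$ of \cref{defin:Ap} is pinned down only once the equivalence $A_p\otimes^{\mathbb L}_A A_{p+1}\simeq A_p\otimes\Sym_k(k^n[1])$ is taken with respect to the canonical nullhomotopy of $f_i^{p+1}=f_i\cdot f_i^p$ in $A_p$; with that choice the induced map on fibers is indeed multiplication by $\varphi(f_i)$, which is what makes the vanishing argument work. Two cosmetic repairs: for $p\ge N$ the path space $F_{i,p}$ has no distinguished ``constant path'' unless $\varphi(f_i)^p$ is literally zero, so you should fix compatible basepoints (the additive structure of $B$ still makes the transition act on $\pi_{n+1}B$ by module multiplication by $[\varphi(f_i)]$), and on $\pi_0$ one should argue with the $\pi_1 B$-torsor structure to see that the colimit of components is a single point; finally, the truncation comparison via \cref{thm:dag_completion_truncation} is a harmless consistency check but is subsumed by the fiber computation.
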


As we did in the non-derived case, we will then consider the affinisation $\Spec(\varprojlim A_p)$ of the formal neighbourhood $\colim \Spec(A_p)$. It begins with establishing a derived version of the compatibily between flatness and ring completion.
The simplicial algebra $\varprojlim A_p$ will play the role, in this derived context, of the ring completion $\widehat A$ along the ideal $I$.

\begin{defin}
Let $f \colon A \to B$ be a map of simplicial commutative algebras. We will say
\begin{itemize}
\item that $f$ is strong if for any $i$, the induced map $\pi_0 B \otimes_{\pi_0 A} \pi_i A \to \pi_i B$ is an isomorphism.
\item that $f$ is flat if it is strong and if the induced map of algebras $\pi_0 A \to \pi_0 B$ is flat in the usual sense.
\end{itemize}
\end{defin}

\begin{lem}(Lurie)\label{lem:dagflatcompletion}
The induced map $A \to \varprojlim_p A_p$ is flat. Moreover, there is a canonical isomorphism $\pi_0(\varprojlim A_p) \simeq \varprojlim \pi_0(A_p) \simeq \widehat{\pi_0 A}_I$.
\end{lem}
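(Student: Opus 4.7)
The plan is to analyze $A_p$ explicitly as a Koszul complex, reduce the computation of $\varprojlim_p A_p$ to a statement about inverse limits of homotopy groups via the Milnor sequence, and then control each pro-system using the Artin--Rees lemma.

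First, since the augmentation $k[t_1,\ldots,t_n]\to k$ sending $t_i\mapsto 0$ is resolved by the Koszul complex $K(t_1,\ldots,t_n)$, we obtain a canonical equivalence $A_p\simeq K(f_1^p,\ldots,f_n^p;A)$. In particular $\pi_0(A_p)\simeq \pi_0(A)/(f_1^p,\ldots,f_n^p)$. Since $I^{np}\subseteq (f_1^p,\ldots,f_n^p)\subseteq I^p$, the pro-system of ideals $\{(f_1^p,\ldots,f_n^p)\}_p$ is cofinal with $\{I^p\}_p$, so already on $\pi_0$ we see the target $\widehat{\pi_0 A}_I$. The transition maps $A_{p+1}\to A_p$ from \cref{defin:Ap} correspond, through the Koszul identification, to the natural maps of Koszul complexes induced by multiplication by $f_i$, and so on homotopy groups they are compatible with the quotient and restriction maps arising in the long exact sequences below.

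Second, I would compute $\pi_i(A_p)$ inductively in $n$ by writing $A_p\simeq A_p^{(n-1)}\otimes^{\mathbb L}_{k[t_n]} k$, which yields fiber sequences
\[ A_p^{(n-1)}\xrightarrow{\,f_n^p\,} A_p^{(n-1)}\longrightarrow A_p . \]
The induced long exact sequences describe each $\pi_i(A_p)$ via short exact sequences whose outer terms are a cokernel and a kernel of multiplication by $f_n^p$ on the $\pi_*(A_p^{(n-1)})$. Unwinding the induction, $\pi_i(A_p)$ is built from finitely many pieces of the form $\pi_j(A)/(f_{i_1}^p,\ldots,f_{i_r}^p)$ and $(f_{i_1}^p,\ldots,f_{i_r}^p)$-torsion in $\pi_j(A)$, for $j\le i$ and $r\le n$.

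Third, because $A$ is almost of finite presentation over $k$, the ring $\pi_0 A$ is Noetherian and each $\pi_j A$ is finitely generated over $\pi_0 A$. The Artin--Rees lemma applied to each $\pi_j(A)$ and its $(f_1^p,\ldots,f_n^p)$-torsion submodules (which stabilize for $p$ large) implies that, for every fixed $i$, the pro-system $\{\pi_i(A_p)\}_p$ is Mittag--Leffler. In particular $\varprojlim^1_p \pi_{i+1}(A_p)=0$, so the Milnor short exact sequence collapses to
\[ \pi_i\bigl(\varprojlim_p A_p\bigr)\;\simeq\; \varprojlim_p \pi_i(A_p). \]

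Finally, Artin--Rees together with the explicit description from step two identifies this inverse limit with $\pi_i(A)\otimes_{\pi_0 A}\widehat{\pi_0 A}_I$: indeed $I$-adic completion agrees with derived completion on finitely generated $\pi_0 A$-modules, and it kills the torsion contributions in the limit. Taking $i=0$ proves $\pi_0(\varprojlim_p A_p)\simeq \varprojlim_p \pi_0(A_p)\simeq \widehat{\pi_0 A}_I$, while for general $i$ it exhibits $A\to \varprojlim_p A_p$ as strong; combined with the classical flatness of $\pi_0 A\to \widehat{\pi_0 A}_I$ for Noetherian $\pi_0 A$, this yields flatness. The main obstacle in the argument is the uniform Mittag--Leffler control in the third paragraph; the rest is bookkeeping around the Koszul resolution and the Milnor sequence.
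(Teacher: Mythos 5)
Your plan is correct, and it reaches the statement by a more self-contained route than the paper does. The paper's own proof is essentially two citations plus one diagram chase: flatness is quoted from \cite[Corollary 3.2.9]{DAG-XII}, the Mittag--Leffler property of the tower $p \mapsto \pi_1(A_p)$ is quoted from \cite[Lemma 5.2.19]{DAG-XII}, and then the Milnor sequence $\varprojlim^1 \pi_1(A_p) \to \pi_0(\varprojlim A_p) \to \varprojlim \pi_0(A_p) \to 0$ gives the $\pi_0$ identification, the cofinality of the ideals $(f_1^p,\ldots,f_n^p)$ with the powers $I^p$ being left implicit. You keep the same skeleton for the $\pi_0$ statement (Milnor sequence plus $\varprojlim^1$-vanishing), but instead of citing Lurie you unfold the content of those citations: the Koszul identification $A_p \simeq K(f_1^p,\ldots,f_n^p;A)$, the inductive fiber sequences, and the Noetherian control making each tower $\{\pi_i(A_p)\}_p$ Mittag--Leffler; and you obtain flatness by running the same analysis in all degrees to establish strongness together with classical flatness of $\pi_0 A \to \widehat{\pi_0 A}_I$, which is in substance how the cited result in \cite{DAG-XII} is proved. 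This buys independence from the external reference, an explicit computation of all homotopy groups of the limit, and an explicit cofinality argument; the cost is the bookkeeping you acknowledge. One point worth making precise when you write it up: for the Mittag--Leffler step the operative mechanism is not Artin--Rees alone but the fact, which you record in your first paragraph, that the transition maps act on the torsion subquotients by multiplication by the $f_i$, so that (torsion submodules stabilizing by Noetherianity) those pro-systems are pro-zero, while the quotient towers have surjective transition maps; Artin--Rees then enters in identifying the surviving quotient towers with the $I$-adic ones in the last step. With that phrasing your third and fourth paragraphs are sound.
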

\begin{proof}
The first statement is \cite[Corollary 3.2.9]{DAG-XII}. For the second one, we use \cite[Lemma 5.2.19]{DAG-XII} to see that the diagram $p \mapsto \pi_1(A_p)$ satisfies the Mittag-Leffler condition. In particular $\varprojlim^1 \pi_1(A_p)$ vanishes and the exact sequence 
\[
\textstyle \varprojlim^1 \pi_1(A_p) \to \pi_0(\varprojlim A_p) \to \varprojlim \pi_0(A_p) \to 0
\]
implies the result.
\end{proof}

The following Lemma is a derived version of our \cref{lem:etale_sent_to_flat}. We will need it to establish our descent results in the context of derived algebraic geometry.

\begin{lem}
Let $A \to B$ be a flat map of almost finitely presented simplicial commutative algebras.
The induced map $\varprojlim A_p \to \varprojlim B_p$ is flat.
\end{lem}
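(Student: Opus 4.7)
The plan is to verify flatness of $\varprojlim A_p \to \varprojlim B_p$ by checking the two conditions defining flatness in the simplicial setting: flatness of the induced map on $\pi_0$ and the strong condition $\pi_i(\varprojlim A_p) \otimes_{\pi_0(\varprojlim A_p)} \pi_0(\varprojlim B_p) \xrightarrow{\sim} \pi_i(\varprojlim B_p)$ for every $i$.

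First, I would observe that since $A \to B$ is flat (hence the images of $f_1,\dots,f_n$ in $B$ generate $I \pi_0 B$), we may use the \emph{same} morphism $k[t_1,\dots,t_n] \to B$ (factoring through $A$) to construct the $B_p$ appearing in \cref{defin:Ap}. This yields a canonical equivalence $B_p \simeq B \otimes^{\mathbb L}_A A_p$ compatible with the transition maps, so in particular one obtains a map $\varprojlim A_p \to \varprojlim B_p$ extending $A \to B$. Next, I invoke \cref{lem:dagflatcompletion} applied to both $A$ and $B$: the maps $A \to \varprojlim A_p$ and $B \to \varprojlim B_p$ are flat, and on $\pi_0$ they identify with the $I$-adic, respectively $IB$-adic, completions $\pi_0 A \to \widehat{\pi_0 A}_I$ and $\pi_0 B \to \widehat{\pi_0 B}_{IB}$.

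To check condition (i), I would note that $\pi_0 A$ and $\pi_0 B$ are Noetherian (as both $A$ and $B$ are almost finitely presented over $k$), and $\pi_0 A \to \pi_0 B$ is flat by hypothesis. The classical commutative-algebra argument already used in the proof of \cref{lem:etale_sent_to_flat} (via \cite[Tag 0523]{stacks-project} applied to $R = \widehat{\pi_0 A}_I$, $S = \widehat{\pi_0 B}_{IB}$, and the filtration by $I^n$) then yields flatness of $\widehat{\pi_0 A}_I \to \widehat{\pi_0 B}_{IB}$, i.e.\ of $\pi_0(\varprojlim A_p) \to \pi_0(\varprojlim B_p)$.

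For condition (ii), the strategy is to use the flatness of $A \to \varprojlim A_p$, $B \to \varprojlim B_p$, and $A \to B$ to rewrite both sides in terms of $\pi_i A$. On one side,
\[ \pi_i(\varprojlim A_p) \otimes_{\pi_0(\varprojlim A_p)} \pi_0(\varprojlim B_p) \simeq \pi_i A \otimes_{\pi_0 A} \pi_0(\varprojlim A_p) \otimes_{\pi_0(\varprojlim A_p)} \pi_0(\varprojlim B_p) \simeq \pi_i A \otimes_{\pi_0 A} \pi_0(\varprojlim B_p) , \]
and on the other side, using flatness of $A \to B$ and then of $B \to \varprojlim B_p$,
\[ \pi_i(\varprojlim B_p) \simeq \pi_i B \otimes_{\pi_0 B} \pi_0(\varprojlim B_p) \simeq \pi_i A \otimes_{\pi_0 A} \pi_0 B \otimes_{\pi_0 B} \pi_0(\varprojlim B_p) \simeq \pi_i A \otimes_{\pi_0 A} \pi_0(\varprojlim B_p) , \]
so the canonical map between them is an equivalence. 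The main subtle point to verify carefully is condition (i); the derived step (ii) is formal once \cref{lem:dagflatcompletion} is in hand.
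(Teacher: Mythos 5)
Your proposal is correct and follows essentially the same route as the paper: both deduce the strong condition from the fact that $A \to \varprojlim A_p$ and $A \to B \to \varprojlim B_p$ are strong (via \cref{lem:dagflatcompletion} and the flatness hypothesis), and both obtain flatness on $\pi_0$ from the classical completion argument underlying \cref{lem:etale_sent_to_flat}. Your version merely spells out the base-change identification $B_p \simeq B \otimes^{\mathbb L}_A A_p$ and the strongness computation that the paper leaves implicit.
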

\begin{proof}
We have a commutative diagram
\[
\xymatrix{
A \ar[d] \ar[r] & \varprojlim A_p \ar[d] \\ B \ar[r] & \varprojlim B_p
}
\]
From the assumption and \cref{lem:dagflatcompletion}, we get that the maps $A \to \varprojlim A_p$ and $A \to \varprojlim B_p$ are strong. It follows that the map $\varprojlim A_p \to \varprojlim B_p$ is also strong.
The flatness of $\pi_0(\varprojlim A_p) \to \pi_0(\varprojlim B_p)$ follows from \cref{lem:dagflatcompletion} and \cref{lem:etale_sent_to_flat}.
\end{proof}

At this point we can run again the same construction performed in \cref{subsec:punctured_formal_neighbourhood}, but this time in the derived setting.
Since the construction is basically unchanged, we will skip some of the details given there.
To start with, we fix a derived Artin stack $X$, together with a derived closed substack $Z \hookrightarrow X$.
We let $X\Et$ denote the big derived affine \'etale site of $X$.
In other words, $X\Et$ coincides with the full subcategory of $(\dSt_k)_{/X}$ spanned by those map $Y \to X$ where $Y$ is a derived affine scheme.
The Grothendieck topology on $X\Et$ coincides with the usual derived \'etale topology.
As in the underived case, given $U \in X\Et$, we denote by $Z_U$ the base change $U \times^{\mathrm{d}}_X Z$ and by $\widehat{Z_U}$ the derived ind-scheme representing the formal completion of $U$ along $Z_U$.
We can assemble the assignment $U \mapsto \widehat{Z_U}$ into an $\infty$-functor
\[ \rR \hPsi_{Z/X} \colon X\Et \to \dSt_k . \]
Composing with the affinization functor produces
\[ \rR \hPsi^{\mathrm{aff}}_{Z/X} \coloneqq \Spec \circ \Gamma \circ \hPsi_{Z/X} \colon X\Et \to \dSt_k . \]
If $U = \Spec(A)$, with $A$ a derived ring almost of finite presentation over $k$, then we get, using \cref{prop:gaitsroz}
\[ \rR \hPsi^{\mathrm{aff}}_{Z/X}(U) = \Spec(\varprojlim A_p) \in \mathrm{dAff}_k , \]
where $A_p$ are the derived rings from \cref{defin:Ap}.

Using \cref{lem:dagflatcompletion}, we see that
\[ \trunc \circ \rR \hPsi^\mathrm{aff}_{Z/X} = \hPsi^{\mathrm{aff}}_{\trunc(Z) / \trunc(X)} . \]
For any derived scheme $Y$, the truncation functor yields a canonical equivalence of small Zariski sites
\[ Y_{\mathrm{zar}} \simeq \trunc(Y)_{\mathrm{zar}} . \]
For every $U \in X\Et$, we can use this equivalence to canonically lift $\hPsi^\circ_{\trunc(Z) / \trunc(X)}(\trunc(U))$ to a derived open subscheme
\[ \rR \hPsi^\circ_{Z / X}(U) \hookrightarrow \rR \hPsi_{Z / X}(U) . \]
Since this lift is canonical and functorial in $U$, we obtain a well defined $\infty$-functor
\[ \rR \hPsi^\circ_{Z / X} \colon X\Et \to \dSt_k . \]

On the other hand, we have the functors 
\[
\rR \bfPerf^\otimes ,\, \rR \bfCoh^{\otimes,-} \colon \dSt_k\op \to \Catstmon
\]
defined, as in the underived case, by extending to (derived) stacks the functors defined on (simplicial) algebras, mapping $A$ to the categories of perfect and almost perfect stable modules (see for instance \cite{HAG-II}).
Finally, we define
\begin{gather}
	\rR \hPhi_{Z / X}^\otimes \coloneqq \rR \bfCoh^{\otimes,-} \circ \rR \hPsi_{Z / X}, \quad \rR \hPhi^{\circ, \otimes}_{Z / X} \coloneqq \rR \bfCoh^{\otimes,-} \circ \rR \hPsi^\circ_{Z / X} \\
	\rR \hrP_{Z / X}^\otimes \coloneqq \rR \bfPerf^\otimes \circ \rR \hPsi_{Z / X}, \quad \rR \hrP^{\circ, \otimes}_{Z / X} \coloneqq \rR \bfPerf^\otimes \circ \rR \hPsi^\circ_{Z / X} .
\end{gather}

\begin{thm} \label{thm:etale_descent_derived}
	The functors $\rR \hPhi^{\circ,\otimes}_{Z / X} \colon X\Et\op \to \Catstmon$ and $\rR \hrP^{\circ, \otimes}_{Z / X} \colon X\Et\op \to \Catstmon$ are hypercomplete sheaves for $\tauet$.
\end{thm}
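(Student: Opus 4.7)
The plan is to adapt the proof of \cref{thm:etale_descent} (and \cref{prop:restricted_etale_descent_Perf}) to the derived setting, using the derived flatness results of the present section as the bridge between the derived formal completion and its underived truncation. As in the proof of \cref{thm:etale_descent}, conservativity and limit-preservation of the forgetful functor $\Catstmon \to \Catst$ allow us to downgrade the question, and since étale sheafification is local on $X$, we reduce to the case where $X = \Spec A$ is derived affine with $A$ almost of finite presentation over $k$ and $Z = \Spec(\pi_0 A / I)$ for a finitely generated ideal $I \subset \pi_0 A$.

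By \cref{prop:gaitsroz} we have $\rR\hPsi^{\mathrm{aff}}_{Z/X}(\Spec A) = \Spec(\hat{A})$ with $\hat{A} := \varprojlim_p A_p$, and \cref{lem:dagflatcompletion} tells us that $A \to \hat{A}$ is flat with $\pi_0(\hat{A}) \simeq \widehat{\pi_0 A}_I$. Given a derived étale hypercover $U^\bullet \to \Spec A$ in the small derived étale site, set $A^n := \Gamma(U^n)$ and write $\widehat{A^n}$ for the corresponding derived completion. The flatness lemma of this section shows that $\hat{A} \to \widehat{A^n}$ is flat; combined with \cref{lem:etale_sent_to_flat} applied to the truncations, this shows that the induced maps $\Spec(\widehat{A^\bullet}) \to \Spec(\hat{A})$ assemble into a derived étale hypercover. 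Passing to the open complement of the ideal defining $Z$ inside each $\Spec(\widehat{A^n})$ preserves the hypercover property, so we obtain a derived étale hypercover of $\rR\hPsi^\circ_{Z/X}(\Spec A)$. At this point the conclusion for $\rR\hPhi^{\circ,\otimes}_{Z/X}$ follows by combining étale hyperdescent for $\rR\bfCoh^{\otimes,-}$ on derived affine schemes (standard, cf.\ \cite[\S 1.3]{DAG-VIII}) with Zariski descent to handle the fact that $\rR\hPsi^\circ_{Z/X}(\Spec A)$ is a derived open subscheme rather than derived affine.

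For $\rR\hrP^{\circ,\otimes}_{Z/X}$ the argument mimics \cref{prop:restricted_etale_descent_Perf}: it embeds as a full substack of $\rR\hPhi^{\circ,\otimes}_{Z/X}$, and the property of being perfect can be checked after faithfully flat base change. Since the map $\coprod \Spec(\widehat{A^n}) \to \Spec(\hat{A})$ obtained above is faithfully flat at level zero (by the derived flatness and the faithful flatness of the underlying étale cover on truncations), any almost perfect complex on $\rR\hPsi^\circ_{Z/X}(\Spec A)$ whose pullbacks to the cover are perfect is itself perfect. The main obstacle I anticipate is the careful verification, in the middle step, that the derived étale condition is preserved under the operation $A \mapsto \hat{A}$; the strongness statement in \cref{lem:dagflatcompletion} should reduce this to the underived unramifiedness statement already exploited in the proof of \cref{thm:etale_descent}, but this reduction needs to be done with some care to ensure compatibility of derived structures under the limit $\varprojlim_p A_p$.
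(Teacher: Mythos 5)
There is a genuine gap in the middle step. You claim that the maps $\Spec(\widehat{A^\bullet}) \to \Spec(\hat A)$ "assemble into a derived étale hypercover" and then conclude by étale hyperdescent for $\rR\bfCoh^{\otimes,-}$. Neither assertion holds: the map $\hat A \to \widehat{A^0}$ is flat (\cref{lem:dagflatcompletion}) but typically not of finite presentation (already in the underived case, e.g.\ $k[y][[x]] \to k[y,1/y][[x]]$), so it is not étale; and, more fatally, completion does not commute with the fibre products defining the \v{C}ech nerve, so $\widehat{A^1}$ is not $\widehat{A^0}\otimes_{\hat A}\widehat{A^0}$ and the cosimplicial object $\Spec(\widehat{A^\bullet})$ is not a \v{C}ech nerve or hypercover of $\Spec(\hat A)$ in any of the standard topologies (étale, fppf, fpqc). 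Consequently there is no off-the-shelf descent statement for $\rR\bfCoh^{\otimes,-}$ you can invoke along this diagram, especially after puncturing along $Z$. Indeed, descent of $\Coh^-$ along exactly this kind of completed-and-punctured diagram is the entire non-formal content of \cref{thm:etale_descent}, whose proof required the reduction to a single equation, the elimination of $f$-torsion, and the rigid-analytic input of Bosch--G\"ortz; your argument, if it worked, would reprove that theorem for free, which is a sign that the hard part has been assumed. Your closing worry about "preservation of the derived étale condition under $A \mapsto \hat A$" cannot be repaired, because the statement is simply false.

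The paper's proof avoids this entirely by reducing the derived statement to the underived one through $t$-structures rather than through a geometric cover: after the same downgrading and affine reductions you perform, one equips $\Coh^-(\rR\hPsi^\circ_{Z/X}(X))$ and $\varprojlim \Coh^-(\rR\hPsi^\circ_{Z/X}(U^\bullet))$ with left complete, right bounded $t$-structures (via \cref{lem:flat_cosimplicial_t_structure} together with the derived flatness of \cref{lem:dagflatcompletion}), checks that the comparison functor $p^{\bullet *}$ is $t$-exact, and then applies \cref{lem:reduction_to_the_heart_II} to reduce the equivalence to the hearts. The hearts depend only on the truncations $\trunc(Z) \hookrightarrow \trunc(X)$, so the underived \cref{thm:etale_descent} finishes the argument; the perfect case then follows as in \cref{prop:restricted_etale_descent_Perf}. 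If you want to salvage your write-up, replace the hypercover step by this $t$-exactness and reduction-to-the-heart argument, keeping your (correct) use of \cref{lem:dagflatcompletion} as the source of flatness.
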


\begin{proof}
	Since the forgetful functor $\Catstmon = \CAlg(\Cat_\infty^{\mathrm{Ex}, \times}) \to \Catst$ is conservative and commutes with limits, we see that it is enough to prove the statement for the downgraded functors
	\[ \rR \hPhi^\circ_{Z / X}, \, \rR \hrP^\circ_{Z/X} \colon X\Et\op \to \Catst . \]
	Furthermore, reasoning as in \cref{prop:restricted_etale_descent_Perf}, we can limit ourselves to prove the theorem for $\rR \hPhi^\circ_{Z / X}$.
	Fix an \'etale cover $\{U_i \to U\}$ in $X\Et$.
	Without loss of generality, we may assume $X$ to be affine, and $U = X$.
	Let $U^0 \coloneqq \coprod U_i$ and let $U^\bullet$ be the \v{C}ech nerve of the total morphism $p \colon U^0 \to X$.
	We have to prove that the canonical functor
	\[ p^{\bullet *} \colon \Coh^-( \rR \hPsi^\circ_{Z / X}(X) ) \to \varprojlim \Coh^-( \rR \hPsi^\circ_{Z / X}(U^\bullet) ) \]
	is an equivalence.
	Observe that $\Coh^-(\rR \hPsi^\circ_{Z / X}(X))$ has a canonical $t$-structure, which is left complete and right bounded.
	Furthermore, combining Lemmas \ref{lem:flat_cosimplicial_t_structure} and \ref{lem:dagflatcompletion}, we deduce that
	\[ \varprojlim \Coh^-(\rR \hPsi^\circ_{Z / X}(U^\bullet)) \]
	has a $t$-structure, which is left complete and right bounded.
	A further application of \cref{lem:dagflatcompletion} shows that $p^{\bullet *}$ is both left and right $t$-exact.
	Therefore, \cref{lem:reduction_to_the_heart_II} guarantees that $p^{\bullet *}$ is an equivalence if and only if it induces an equivalence at the level of the heart of the $t$-structures we are considering.
	
	It is now sufficient to remark that
	\[ \Coh^\heartsuit(\rR \hPsi^\circ_{Z / X}(X)) \simeq \Coh^\heartsuit(\hPsi^\circ_{\trunc(Z) / \trunc(X)}(\trunc(X))) \]
	and that
	\[ \left( \varprojlim \Coh^-(\rR \hPsi^\circ_{Z / X}(U^\bullet) \right)^\heartsuit \simeq \varprojlim \Coh^\heartsuit(\rR \hPsi^\circ_{\trunc(Z) / \trunc(X)}(\trunc(U^\bullet))) . \]
	In this way, we are immediately reduced to the case of \cref{thm:etale_descent}.
\end{proof}

Just as in the underived setting, this theorem allows us to introduce the derived stack of almost perfect modules on the punctured formal neighbourhood.
We let $X\Et^{\mathrm{Geom}}$ be the big \'etale site of derived geometric stacks over $X$.
The inclusion $(X\Et, \tauet) \hookrightarrow (X\Et^{\mathrm{Geom}}, \tauet)$ satisfies the conditions of \cite[Proposition 2.22]{Porta_Yu_Higher_analytic_stacks_2014} and therefore it induces an equivalence
\[ \St_{\Catst}(X\Et, \tauet) \simeq \St_{\Catst}(X\Et^{\mathrm{Geom}}, \tauet) . \]
Thus, the above theorem allows to interpret the functor $\rR \hPhi^\circ_{Z / X}$ as an object in $\St(X\Et^{\mathrm{Geom}}, \tauet)$.
The canonical map $X \to \Spec(k)$ induces a well defined morphism of sites
\[ u \colon (\mathrm{dAff}_k, \tauet) \to (X\Et^{\mathrm{Geom}}, \tauet) . \]
We can therefore give the following definition:

\begin{defin} \label{def:derived_coherent_PFN}
	Let $X$ be a derived Artin stack locally almost of finite presentation and let $Z \hookrightarrow X$ be a derived closed substack.
	We define the functor $\rR \bfCoh^{\otimes,-}_{\hZ \smallsetminus Z}$ as the composition
	\[ \rR \bfCoh^{\otimes,-}_{\hZ \smallsetminus Z} \coloneqq \rR \hPhi^{\circ, \otimes}_{Z / X} \circ u \colon \mathrm{dAff}_k \to \Catstmon . \]
	We further define the symmetric monoidal stable $\infty$-category $\rR \Coh^-(\hZ \smallsetminus Z)$ of almost perfect modules on the punctured formal neighbourhood of $Z$ inside $X$ to be the category of $k$-points of $\rR \bfCoh^{\otimes,-}_{\hZ \smallsetminus Z}$, i.e.\
	\[ \rR \Coh^-(\hZ \smallsetminus Z) \coloneqq \rR \bfCoh^{\otimes,-}_{\hZ \smallsetminus Z}(\Spec(k)) . \]
	Similarly, we define
	\[ \rR \bfPerf^\otimes_{\hZ \smallsetminus Z} \coloneqq \rR \hrP^{\circ, \otimes}_{Z / X} \circ u, \quad \text{and} \quad \rR \Perf(\hZ \smallsetminus Z) \coloneqq \rR \bfPerf^\otimes_{\hZ \smallsetminus Z}(\Spec(k)) .  \]
\end{defin}

We can now easily obtain derived counterparts of our main formal decomposition results, namely \cref{prop:one_step_flag_decomposition_coh} and \cref{prop:flag_decomposition_induction_step}.

We start with the formal gluing along a closed substack.
Let $X$ be a derived Artin stack locally almost of finite presentation over $k$ and let $Z \hookrightarrow X$ be a closed substack.
Let $V \coloneqq X \smallsetminus Z$ and introduce the functor
\[ \rR \fW_V \coloneqq - \times^{\mathrm{d}}_X V \colon X\Et \to \dSt_k . \]
We denote by $\rR \fCoh_V^\otimes$ the composition $\rR \fCoh_V^\otimes \coloneqq \rR \bfCoh^{\otimes,-} \circ \rR \fW_V$ and by $\rR \fPerf_V^\otimes$ the composition $\rR \fPerf_V^\otimes \coloneqq \rR \bfPerf^\otimes \circ \rR \fW_V$.

Coherently with this notation, we let $\rR \fW_X$ denote the inclusion functor:
\[ \rR \fW_X \coloneqq - \times^{\mathrm{d}}_X X \colon X\Et \to \dSt_k . \]
Observe that there is a natural transformation
\[ \alpha \colon \rR \hPsi_Z \to \rR \fW_X , \]
and since $\fW_X$ factors through $\mathrm{dAff}_k \hookrightarrow \dSt_k$, we conclude that $\alpha$ induces a natural transformation
\[ \alpha^{\mathrm{aff}} \colon \rR \hPsi_Z^{\mathrm{aff}} \to \rR \fW_X . \]
Observe also that for every $U \in X\Et$, the canonical map $\rR \hPsi_Z^{\mathrm{aff}}(U) \to U$ is flat in the derived sense.
Introduce the following pullback in $\mathrm{dAff}_k$:
\[ \begin{tikzcd}
W \arrow{r}{g} \arrow[hook]{d} & Z_U \arrow[hook]{d} \\
\rR \hPsi^{\mathrm{aff}}_Z(U) \arrow{r} & U .
\end{tikzcd} \]
Since $\rR \hPsi^{\mathrm{aff}}_Z(U) \to U$ is flat, we deduce that the same thing goes for $g \colon W \to Z_U$.
In particular, this is a strong morphism.
We now observe that \cref{thm:dag_completion_truncation} implies $\trunc(g)$ is an equivalence.
It follows that $g$ is an equivalence as well.

This implies that there is an induced commutative square
\[ \begin{tikzcd}
	\rR \hPsi^\circ_Z(U) \arrow{r}{\beta_U} \arrow[hook]{d}{i_U} & \rR \fW_V(U) \arrow[hook]{d} \\
	\rR \hPsi_Z^{\mathrm{aff}}(U) \arrow{r} & \rR \fW_X(U) .
\end{tikzcd} \]
Since the natural transformation $\rR \fW_V \hookrightarrow \rR \fW_X$ is a Zariski open immersion, we can use the universal property of open immersions (cf.\ \cite[Remark 2.3.4]{DAG-V}) to promote the transformations $\beta_U$ into a natural transformation
\[ \beta \colon \rR\hPsi^\circ_Z \to \rR\fW_V . \]
Composing with $\bfCoh^{\otimes,-}$ and with $\bfPerf^\otimes$, we obtain well defined restriction maps
\[ \beta^* \colon \rR\fCoh_V^\otimes \to \rR\hPhi_Z^{\circ, \otimes} , \quad \beta^* \colon \rR \fPerf_V^\otimes \to \rR \hrP_Z^{\circ, \otimes} . \]
On the other side, the natural transformation $i \colon \rR \hPsi^{\circ, \otimes}_Z \to \hPsi_Z^{\mathrm{aff}}$ induces natural transformations
\[ i^* \colon \rR \hPhi_Z \to \rR \hPhi^{\circ, \otimes}_Z , \quad i^* \colon \rR \hrP^\otimes_Z \to \rR \hrP^{\circ, \otimes}_Z . \]

The following theorem is the derived analogue of theorems \ref{prop:one_step_flag_decomposition_coh} and \ref{thm:one_step_flag_decomposition_perf}.

\begin{thm}\label{prop:one_step_flag_decomposition_coh_derived}
	Let $X$ be a derived Artin stack locally almost of finite type over $k$ and let $Z \hookrightarrow X$ be a closed substack.
	The natural transformations $\beta^* \colon \rR \fCoh^\otimes_V \to \rR \hPhi^{\circ, \otimes}_Z$ and $i^* \colon \rR \hPhi^\otimes_Z \to \rR \hPhi^{\circ, \otimes}_Z$ induce an equivalences in $\Sh_{\Catstmon}(X\Et, \tauet)$:
	\[ \rR \fCoh^\otimes_X \simeq \rR \fCoh^\otimes_V \times^{\mathrm{d}}_{\rR \hPhi^{\circ, \otimes}_Z} \rR \hPhi_Z \quad \text{and} \quad \rR \fPerf^\otimes_X \simeq \rR \fPerf^\otimes_V \times^{\mathrm{d}}_{\rR \hrP^{\circ, \otimes}_Z} \rR \hrP^\otimes_Z . \]
\end{thm}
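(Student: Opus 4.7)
The plan is to mimic the strategy of the underived Theorems \ref{prop:one_step_flag_decomposition_coh} and \ref{thm:one_step_flag_decomposition_perf}, using the derived descent theorem \cref{thm:etale_descent_derived} to localize on $X$ and then the flatness results of \cref{lem:dagflatcompletion} to reduce to the heart of a $t$-structure, where we can invoke the underived statement. First, since the forgetful functor $\Catstmon = \CAlg(\Cat_\infty^{\mathrm{Ex},\times}) \to \Catst$ is conservative and preserves all limits, it suffices to prove the corresponding equivalences for the downgraded functors valued in $\Catst$. Next, note that $\rR\fCoh_V$ and $\rR\hPhi_Z$ are hypercomplete sheaves on $X\Et$ by standard étale hyperdescent for almost perfect modules on derived stacks, while $\rR\hPhi^\circ_Z$ is one by \cref{thm:etale_descent_derived}. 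Since fiber products in $\Sh_{\Catst}(X\Et,\tauet)$ are computed sectionwise, the claimed equivalence can be checked at each $U \in X\Et$, which allows us to assume from the start that $X = U = \Spec(A)$ is derived affine, with $A$ almost of finite presentation over $k$ and $Z$ cut out by a finitely generated ideal $I \subset \pi_0(A)$.

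Under this affine reduction, the comparison functor to be proved an equivalence becomes
\[ \Coh^-(A) \longrightarrow \Coh^-(\rR\fW_V(U)) \times_{\Coh^-(\rR\hPsi^\circ_Z(U))} \Coh^-(\rR\hPsi_Z(U)) . \]
By \cref{lem:dagflatcompletion}, the map $A \to \varprojlim_p A_p$ is flat, and consequently so are the maps $A \to \Gamma(\rR\hPsi_Z^{\mathrm{aff}}(U))$ and $A \to \Gamma(\rR\hPsi^\circ_Z(U))$. Using \cref{lem:flat_cosimplicial_t_structure}, the target category carries a canonical $t$-structure inherited from the three corners of the pullback, and the comparison functor is both left and right $t$-exact. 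Both source and target are left complete and right bounded (since $A$ is almost finitely presented, so $\Coh^-(A)$ is, and the pullback of such along $t$-exact functors preserves these properties). A standard reduction-to-the-heart argument, as already used in the proof of \cref{thm:etale_descent_derived}, then reduces the assertion to checking the induced functor on hearts is an equivalence.

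On hearts the situation simplifies: for any derived scheme $\cY$ one has $\Coh^\heartsuit(\cY) \simeq \Coh^\heartsuit(\trunc\cY)$, and by \cref{thm:dag_completion_truncation} combined with \cref{lem:dagflatcompletion} the truncations of $\rR\hPsi_Z(U)$, $\rR\hPsi^\circ_Z(U)$, $\rR\fW_V(U)$ coincide with the corresponding underived constructions for $\trunc(X) \supset \trunc(Z)$. The equivalence of hearts is therefore exactly the one already established in \cref{prop:one_step_flag_decomposition_coh}, finishing the proof for $\rR\fCoh^\otimes_X$. The statement for $\rR\fPerf^\otimes_X$ then follows by the same argument as in \cref{prop:restricted_etale_descent_Perf}: perfectness is local for faithfully flat covers, and the canonical map from the pullback of affines $\rR\hPsi^\circ_Z(U) \sqcup \rR\hPsi_Z(U) \to U$ detects perfectness on $U\smallsetminus Z$ and on $\widehat{Z_U}$ respectively. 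The main obstacle is the careful $t$-structure bookkeeping, in particular the verification that the derived formal completion interacts with the natural $t$-structures well enough for the reduction to the heart to apply; once this is in place, the underived Theorems \ref{prop:one_step_flag_decomposition_coh} and \ref{thm:one_step_flag_decomposition_perf} do the rest.
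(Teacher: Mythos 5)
Your outer reductions agree with the paper's proof: pass to the downgraded functors via the conservative, limit-preserving forgetful functor $\Catstmon\to\Catst$, observe that all three functors are (hypercomplete) sheaves, and use the fact that limits of sheaves are computed objectwise to localize to the derived affine case. The divergence is in the affine step: the paper at that point simply invokes \cite[Theorem 7.4.0.1]{Lurie_SAG}, together with the remark that $\rR\fW_V(U)\to U$ and $\Spec(\varprojlim_p A_p)\to U$ form an fpqc cover, whereas you try to bootstrap the derived statement from \cref{prop:one_step_flag_decomposition_coh} by a reduction to the heart. Already at the level of bookkeeping there is an inaccuracy: \cref{lem:flat_cosimplicial_t_structure} does not equip the corner $\Coh^-(\rR\hPsi_Z(U))\simeq\varprojlim_p\Coh^-(A_p)$ with a $t$-structure, because the transition maps $A_{p+1}\to A_p$ are derived quotients, not flat, so the transition functors are not $t$-exact; one must first invoke a derived analogue of \cref{lem:basic_formal_GAGA} (e.g.\ \cite[Theorem 5.3.2]{DAG-XII}) to identify this limit with $\Coh^-(\varprojlim_p A_p)$, after which \cref{lem:dagflatcompletion} does make every functor in the cospan $t$-exact.

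The essential problem, however, is that the heart cannot detect the derived structure, so the reduction you propose can at best recover the underived theorem, not the derived one. Indeed $(\Coh^-(A))^\heartsuit\simeq\Cohh(\pi_0 A)$ and, after the identifications above, the heart of the target is the fiber product of the underived hearts; the induced functor on hearts is literally the heart of the comparison of \cref{prop:one_step_flag_decomposition_coh}, and verifying that it is an equivalence says nothing about the derived enhancement, which the paper emphasizes at the start of \cref{sec:DAG} is strictly stronger even when $X$ is a variety. Nor can the general principle ``$t$-exact $+$ left complete $+$ right bounded $+$ equivalence on hearts $\Rightarrow$ equivalence'' close this gap: restriction of scalars $\Coh^-(\pi_0 A)\to\Coh^-(A)$ along $A\to\pi_0 A$ is $t$-exact between left complete, right bounded categories and is the identity on hearts, yet it is not an equivalence for non-discrete Noetherian $A$, because the mapping spectra differ. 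That is exactly the point here: fully faithfulness of your comparison functor is the assertion that, for $M,N\in\Coh^-(A)$, the square of mapping spectra formed from $V$, $\Spec(\varprojlim_p A_p)$ and the punctured formal neighbourhood is a pullback, i.e.\ a statement about $\Ext$-groups over $A$ rather than over $\pi_0 A$, and it does not follow from the underived statement applied to hearts. This is precisely the input the paper purchases by citing \cite[Theorem 7.4.0.1]{Lurie_SAG}; to avoid that citation you would need a genuinely derived argument, e.g.\ a direct comparison of mapping spectra exploiting flatness and strongness of $A\to\varprojlim_p A_p$ (say by applying the underived theorem to the coherent homotopy modules of $\RHom_A(M,N)$) together with a separate Postnikov-tower argument for essential surjectivity, which is substantially more than the heart reduction you sketch.
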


\begin{proof}
	As usual, it is enough to prove the analogous statement for the downgraded functors taking values in $\Catst$ instead of in $\Catstmon$.
	It is clear that $\rR \fCoh_V$, $\rR \hPhi_Z$, $\rR \fPerf_V$ and $\rR \hrP_Z$ define sheaves on $X\Et$.
	Furthermore, \cref{thm:etale_descent} shows that also $\rR \hPhi^\circ_Z$ and $\rR \hrP^\circ_Z$ are sheaves on $X\Et$.
	Since limits in $\Sh_{\Catst}(X\Et, \tauet)$ can be computed in $\PSh_{\Catst}(X\Et)$, we see that the question is local on $X$.
	We can therefore suppose $X$ to be affine.
	In this case, the result is a consequence of \cite[Theorem 7.4.0.1]{Lurie_SAG} and of the fact that for $U \in X\Et$, the maps $\rR \fW_V(U) \to \rR \fW_X(U)$ and $\rR \hPsi_Z(U) \to \rR \fW_X(U)$ form an fpqc cover of $\rR \fW_X(U) = U$.
\end{proof}

As a consequence of Theorem \ref{prop:one_step_flag_decomposition_coh_derived}, we get the following derived analog of Corollary \ref{cor:one_step_flag_decomposition_cohperf}.

\begin{cor}\label{cor:one_step_flag_decomposition_cohperf_derived}
	Let $X$ be a derived Artin stack locally of finite type over $k$, and let $Z \hookrightarrow X$ be a closed substack.
	Let $V \coloneqq X \smallsetminus Z$ be the open complement.
	Then there are canonical decompositions of derived stacks
	\[ \rR \bfCoh^{\otimes,-}_X \simeq \rR \bfCoh^{\otimes,-}_V \times_{\rR \bfCoh^{\otimes,-}_{\hZ \smallsetminus Z}} \rR \bfCoh^{\otimes,-}_{\hZ} \]
	and
	\[ \rR \bfPerf^\otimes_X \simeq \rR \bfPerf^\otimes_V \times_{\rR \bfPerf^\otimes_{\hZ \smallsetminus Z}} \rR \bfPerf^\otimes_{\hZ} . \]
	In particular, by taking $k$-points, we get equivalences of symmetric monoidal stable $\infty$-categories
	\[ \rR \Coh^-(X) \simeq \rR \Coh^-(V) \times_{\rR \Coh^-(\hZ \smallsetminus Z)} \rR \Coh^-(\hZ) \]
	and
	\[ \rR \Perf(X) \simeq \rR \Perf(V) \times_{\rR \Perf(\hZ \smallsetminus Z)} \rR \Perf(\hZ) . \]
\end{cor}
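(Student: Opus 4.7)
The plan is to deduce this corollary from \cref{prop:one_step_flag_decomposition_coh_derived} in exactly the same way that \cref{cor:one_step_flag_decomposition_cohperf} was deduced from Theorems \ref{prop:one_step_flag_decomposition_coh} and \ref{thm:one_step_flag_decomposition_perf}. First, since the forgetful functor $\Catstmon \to \Catst$ is conservative and preserves arbitrary limits, it suffices to prove the result for the downgraded functors with values in $\Catst$. Next, we set up the comparison with the global stacks: recall the morphism of sites $u \colon (\mathrm{dAff}_k,\tauet) \to (X\Et^{\mathrm{Geom}},\tauet)$ induced by the structural map $X \to \Spec(k)$; the results of \cite[Lemmas~2.13 and 2.14]{Porta_Yu_Higher_analytic_stacks_2014} apply verbatim in the derived setting and show that the functor $u^s \colon \Sh(X\Et^{\mathrm{Geom}},\tauet) \to \dSt_k$ of precomposition with $u$ is a right adjoint, hence preserves fiber products.

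The main point is to identify the four stacks appearing on the right-hand side of Theorem~\ref{prop:one_step_flag_decomposition_coh_derived}, after restriction along $u$, with the four derived stacks appearing in the statement of the corollary. The identifications $\rR\bfCoh^-_X \simeq \rR\fCoh_X\circ u$ and $\rR\bfCoh^-_V \simeq \rR\fCoh_V\circ u$ (and similarly for $\rR\bfPerf$) are tautological from the definitions. The only nontrivial identification is $\rR\bfCoh^-_{\hZ} \simeq \rR\hPhi_Z\circ u$. For this we use the description of the derived formal completion as $\hZ \simeq X\times^{\mathrm{d}}_{X_{\mathrm{dR}}} Z_{\mathrm{dR}}$: since the de Rham functor preserves fiber products and the product with an affine base $Y$ is universal, \cref{thm:dag_completion_truncation}(\ref{formalcompletion:basechange}) gives
\[
Y\times^{\mathrm{d}}\hZ \simeq \widehat{(Y\times^{\mathrm{d}} X)}_{Y\times^{\mathrm{d}} Z} \simeq \rR\hPsi_{Z/X}(Y\times^{\mathrm{d}} X) = \rR\hPsi_{Z/X}(u(Y)).
\]
Postcomposing with $\rR\bfCoh^{\otimes,-}$ yields the desired identification $\rR\bfCoh^-_{\hZ}(Y) \simeq \rR\hPhi_Z(u(Y))$, and an entirely analogous argument works for $\rR\bfPerf^\otimes_{\hZ}$.

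Combining all of the above, applying the right-adjoint $u^s$ to the equivalences of Theorem~\ref{prop:one_step_flag_decomposition_coh_derived}, and using that $u^s$ preserves fiber products, we obtain the two stack-level equivalences in the statement. The $k$-point version follows at once by evaluating the stack-level equivalences at $\Spec(k)\in\mathrm{dAff}_k$, since this is just an object of the site and hence evaluation commutes with limits of stacks.

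The step requiring the most care is the identification $\rR\bfCoh^-_{\hZ}\simeq\rR\hPhi_Z\circ u$: in the underived setting it rested on the naive presentation $\hZ \simeq \colim Z^{(n)}$ together with universality of colimits in $\St_k$, whereas in the derived setting one cannot naively replace the $Z^{(n)}$ by their underived analogues and must rely instead on the de Rham description of $\hZ$ together with the compatibility of derived formal completion with flat base change provided by \cref{thm:dag_completion_truncation}. All other steps are direct transcriptions of the proof of \cref{cor:one_step_flag_decomposition_cohperf}.
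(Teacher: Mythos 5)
Your proposal is correct and matches the paper's intended argument: the corollary is deduced from \cref{prop:one_step_flag_decomposition_coh_derived} exactly as \cref{cor:one_step_flag_decomposition_cohperf} was deduced from Theorems \ref{prop:one_step_flag_decomposition_coh} and \ref{thm:one_step_flag_decomposition_perf} (reduction to $\Catst$, the right adjoint $u^s$ preserving fiber products, evaluation at $\Spec(k)$), the only genuinely new ingredient being the identification $\rR \bfCoh^{\otimes,-}_{\hZ} \simeq \rR \hPhi_Z^{\otimes}\circ u$, which you correctly handle via the de Rham description of the derived formal completion in place of the naive colimit $\colim Z^{(n)}$ used in the underived proof. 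One minor remark: \cref{thm:dag_completion_truncation}(\ref{formalcompletion:basechange}) is stated for derived schemes, but since its proof only uses that $(-)_{\mathrm{dR}}$ preserves fiber products (a fact you also invoke directly), the appeal is harmless when $X$ is a derived Artin stack.
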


\bigskip

We now turn to the formal decomposition along a general flag.

\begin{defin} \label{def:derived_flag}
	Let $X$ be a derived Artin stack locally almost of finite type over $k$.
	A flag $\cZ = (Z_0, Z_1, \ldots, Z_n)$ on $X$ is a flag on $\trunc(X)$ in the sense of \cref{definitionofflag}.
\end{defin}

\begin{rem} Note that if $Y_n \hookrightarrow Y_{n-1} \hookrightarrow \cdots \hookrightarrow Y_1 \hookrightarrow Y_0=X$ is a chain of closed immersions of derived stacks, and we put  $Z_i:= \trunc(Y_i)$, then $(Z_0, Z_1, \ldots, Z_n)$ is a flag on $X$.
\end{rem}

Similarly to the underived case, we define derived stacks
\[ \rR \hPhi^\otimes_i,\,\rR \hPhi^{\circ,\otimes}_i,\, \rR \hrP^\otimes_i,\,\rR \hrP^{\circ,\otimes}_i \colon X\Et\op \to \Catst \]
as follows.
For $i \ge 1$, denote by $\cJ_i$ the category of derived closed substacks $Z_i^{(\alpha)}$ of $X$ such that $(Z_i^{(\alpha)})_{\mathrm{red}} \simeq (Z_i)_{\mathrm{red}}$ (as closed substacks).
Then we set
\begin{align*}
&i=1 & \rR\hPhi^{\circ, \otimes}_1 &\coloneqq \rR\hPhi^{\circ, \otimes}_{Z_1/X}, & \rR\hrP^{\circ,\otimes}_1 &\coloneqq \rR\hrP^{\circ,\otimes}_{Z_1/X} \\
&i\geq 2 & \rR\hPhi^{\circ,\otimes}_{i} &\coloneqq \varprojlim_{\alpha \in \cJ_i} \hPhi^{\circ,\otimes}_{Z_{i}/Z_{i-1}^{(\alpha)}}, & \rR\hrP^{\circ,\otimes}_{i} &\coloneqq \varprojlim_{\alpha \in \cJ_i} \rR\hrP^{\circ,\otimes}_{Z_{i}/Z_{i-1}^{(\alpha)}} \\
&i=0 & \rR\hPhi^\otimes_0 &\coloneqq \rR\fCoh^\otimes_X, & \rR\hrP^\otimes_0 &\coloneqq \rR\fPerf^\otimes_X \\
&i=1 & \rR\hPhi^\otimes_1 &\coloneqq \rR\hPhi^\otimes_{Z_1/X}, & \rR\hrP^\otimes_1 &\coloneqq \rR\hrP^\otimes_{Z_1/X} \\
&i\geq 2 & \rR\hPhi^\otimes_{i} &\coloneqq \varprojlim_{\alpha \in \cJ_i} \rR\hPhi^\otimes_{Z_{i}/Z_{i-1}^{(\alpha)}}, & \rR\hrP^\otimes_{i} &\coloneqq \varprojlim_{\alpha \in \cJ_i} \rR\hrP^\otimes_{Z_{i}/Z_{i-1}^{(\alpha)}}.
\end{align*}
Note that we have
\begin{align*}
\rR \hPhi^\otimes_{i+1} &\simeq \varprojlim_{\alpha \in \cJ_i, \beta \in \cJ_{i+1}} \rR \hPhi^\otimes_{Z_{i+1}^{(\beta)} / Z_i^{(\alpha)}} \simeq \rR \hPhi^\otimes_{Z_{i+1} / X} \\
\rR \hrP_{i+1}^\otimes &\simeq \varprojlim_{\alpha \in \cJ_i, \beta \in \cJ_{i+1}} \rR \hrP^\otimes_{Z_{i+1}^{(\beta)} / Z_i^{(\alpha)}} \simeq \rR \hrP^\otimes_{Z_{i+1} / X}
\end{align*}
Finally, we let $V_{i+1}^{(\alpha)}$ be the open complement of $Z_{i+1}$ inside $Z_i^{(\alpha)}$, and we set
\[ \rR \fCoh^\otimes_{V,i+1} \coloneqq \varprojlim_{\alpha \in \cJ_i} \rR \fCoh^\otimes_{V_{i+1}^{(\alpha)}} \hspace{1cm} \text{and} \hspace{1cm} \rR \fPerf^\otimes_{V,i+1} \coloneqq \varprojlim_{\alpha \in \cJ_i} \rR \fPerf^\otimes_{V_{i+1}^{(\alpha)}} . \]
Note that the sequence $\{V_{i+1}^{(\alpha)}\}$ defines a formal open subscheme $\fV_{i+1}$ of
\[ \widehat{Z_i} = \colim_{\alpha \in \cJ_i} Z_i^{(\alpha)} . \]
With these notations, we have $\rR \fCoh^\otimes_{V,i+1}(X) = \rR\Coh^-(\fV_{i+1})$ as well as $\rR \fPerf^\otimes_{V,i+1}(X) = \rR \Perf^-(\fV_{i+1})$.

We have the following derived analog of Proposition \ref{prop:flag_decomposition_induction_step} :

\begin{prop} \label{prop:flag_decomposition_induction_step_derived}
	Let $X$ be a derived Artin stack locally almost of finite presentation over $k$ and let $\cZ = (Z_0, Z_1, \ldots, Z_n)$ be a flag on $X$.
	Then for every $0 \le i \le n-1$ there are natural equivalences in $\Sh_{\Catstmon}(X\Et, \tauet)$:
	\[ \rR \hPhi^\otimes_i \simeq \rR \fCoh^\otimes_{V,i+1} \times^{\mathrm{d}}_{\rR \hPhi^{\circ,\otimes}_{i+1}} \rR \hPhi^\otimes_{i+1} \]
	and
	\[ \rR \hrP^\otimes_i \simeq \rR \hrP^\otimes_{V, i+1} \times^{\mathrm{d}}_{\rR \hrP^{\circ,\otimes}_{i+1}} \rR \hrP^\otimes_{i+1} . \]
\end{prop}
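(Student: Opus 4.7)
The plan is to mirror the proof of the underived counterpart \cref{prop:flag_decomposition_induction_step} verbatim, replacing the invocation of Theorems \ref{prop:one_step_flag_decomposition_coh} and \ref{thm:one_step_flag_decomposition_perf} with their derived analog \cref{prop:one_step_flag_decomposition_coh_derived}. The result is a formal consequence of the one-step formal gluing, combined with commutation of limits with (finite) derived fiber products.

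Concretely, I would first unfold the definitions to rewrite $\rR \hPhi^\otimes_i$ as $\varprojlim_{\alpha \in \cJ_i} \rR \fCoh^\otimes_{Z_i^{(\alpha)}}$ (treating $i = 0$ as the degenerate case $\cJ_0 = \{X\}$), so that the indexing is over derived infinitesimal thickenings of $Z_i$ inside $X$. For each such $\alpha$, the closed immersion $Z_{i+1} \hookrightarrow Z_i^{(\alpha)}$ is of the type covered by \cref{prop:one_step_flag_decomposition_coh_derived}, which produces a canonical equivalence
\[
\rR \fCoh^\otimes_{Z_i^{(\alpha)}} \simeq \rR \fCoh^\otimes_{V_{i+1}^{(\alpha)}} \times^{\mathrm{d}}_{\rR \hPhi^{\circ,\otimes}_{Z_{i+1}/Z_i^{(\alpha)}}} \rR \hPhi^\otimes_{Z_{i+1}/Z_i^{(\alpha)}}
\]
in $\Sh_{\Catstmon}(X\Et, \tauet)$, induced by the natural transformations $\beta^*$ and $i^*$ of the previous subsection.

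The second step is to observe that these equivalences are functorial in $\alpha \in \cJ_i$: a morphism $Z_i^{(\alpha)} \to Z_i^{(\beta)}$ in $\cJ_i$ induces compatible pullbacks on all four corners, and compatibility of the identifications is a direct consequence of the fact that the transformations $\beta^*$ and $i^*$ underlying the one-step decomposition are themselves defined functorially in the pair $(Z, X)$. Taking the limit $\varprojlim_{\alpha \in \cJ_i}$ of the displayed equivalence and invoking the fact that limits commute with limits (so in particular with the derived fiber product, which is a finite limit in $\Sh_{\Catstmon}(X\Et, \tauet)$) together with the definitions of $\rR \fCoh^\otimes_{V,i+1}$, $\rR \hPhi^{\circ,\otimes}_{i+1}$ and $\rR \hPhi^\otimes_{i+1}$ yields the asserted decomposition for $\rR \hPhi^\otimes_i$. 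The argument for $\rR \hrP^\otimes_i$ is completely parallel, using the perfect-module version of the one-step decomposition in \cref{prop:one_step_flag_decomposition_coh_derived}.

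The only genuinely non-formal ingredient is the naturality of the one-step equivalence in the ambient stack, which is already guaranteed because the proof of \cref{prop:one_step_flag_decomposition_coh_derived} reduces to the affine case by sheaf descent (\cref{thm:etale_descent_derived}) and the relevant natural transformations $\beta$ and $i$ are manifestly functorial in $X$. Thus I expect no substantial obstacle beyond bookkeeping; the main point to double-check is the identification of the indexing categories $\cJ_i$ appearing on both sides of the decomposition, so that the three limits being glued share the same diagram and can legitimately be exchanged with the derived fiber product.
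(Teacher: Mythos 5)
Your proposal is correct and follows the paper's own argument: the paper likewise proves this by unravelling the definitions, applying the derived one-step formal glueing (\cref{prop:one_step_flag_decomposition_coh_derived}) to each thickening $Z_i^{(\alpha)}$ (resp.\ iteratively), and using that limits commute with limits to exchange the $\varprojlim_{\alpha \in \cJ_i}$ with the fiber product. Your write-up just makes explicit the functoriality and index-matching bookkeeping that the paper leaves implicit.
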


\begin{proof}
	Unravelling the definitions and using the fact that limits commute with limits, we see that this proposition follows by applying iteratively \cref{prop:one_step_flag_decomposition_coh_derived}.
\end{proof}

As a consequence, we get the derived analog of Corollary \ref{cor:full_flag_decomposition} :

\begin{cor} \label{cor:full_flag_decomposition_derived}
	Let $X$ be a derived Artin stack locally almost of finite presentation over $k$ and let $\cZ \coloneqq (Z_0, Z_1, \ldots, Z_n)$ be a flag on $X$.
	Then there are equivalences in $\Sh_{\Catstmon}(\mathrm{dAff}_k, \tauet)$:
	\[ \rR \bfCoh^{\otimes,-}_X \simeq \rR \bfCoh^{\otimes,-}_{\fV_1} \times^{\mathrm{d}}_{\rR \bfCoh^{\otimes,-}_{\widehat{Z_1} \smallsetminus Z_1}} \left( \rR \bfCoh^{\otimes,-}_{\fV_2} \times^{\mathrm{d}}_{\rR \bfCoh^{\otimes,-}_{\widehat{Z_2} \smallsetminus Z_2}} \left( \cdots \left( \rR \bfCoh^{\otimes,-}_{\fV_n} \times^{\mathrm{d}}_{\rR \bfCoh^{\otimes,-}_{\widehat{Z_n} \smallsetminus Z_n}} \rR \bfCoh^{\otimes,-}_{\widehat{Z_n}} \right) \cdots \right) \right) \]
	and
	\[ \rR \bfPerf^\otimes_X \simeq \rR \bfPerf^\otimes_{\fV_1} \times^{\mathrm{d}}_{\rR \bfPerf^\otimes_{\widehat{Z_1} \smallsetminus Z_1}} \left( \rR \bfPerf^\otimes_{\fV_2} \times^{\mathrm{d}}_{\rR \bfPerf^\otimes_{\widehat{Z_2} \smallsetminus Z_2}} \left( \cdots \left( \rR \bfPerf^\otimes_{\fV_n} \times^{\mathrm{d}}_{\rR \bfPerf^\otimes_{\widehat{Z_n} \smallsetminus Z_n}} \rR \bfPerf^\otimes_{\widehat{Z_n}} \right) \cdots \right) \right) . \]
	Furthermore, by taking $k$-points in the above equivalences, we obtain the following equivalences of symmetric monoidal stable $\infty$-categories:
	\begin{multline*}
		\rR \Coh^-(X) \simeq \rR \Coh^-(\fV_1) \times^{\mathrm{d}}_{\rR \Coh^-(\widehat{Z_1} \smallsetminus Z_1)} \bigg( \rR \Coh^-(\fV_2) \times^{\mathrm{d}}_{\rR \Coh^-(\widehat{Z_2} \smallsetminus Z_2)} \Big( \cdots \\
		\cdots \left( \rR \Coh^-(\fV_n) \times^{\mathrm{d}}_{\rR \Coh^-(\widehat{Z_n} \smallsetminus Z_n)} \rR \Coh^-(\widehat{Z_n}) \right) \cdots \Big) \bigg)
	\end{multline*}
	and
	\begin{multline*}
		\rR \Perf(X) \simeq \rR \Perf(\fV_1) \times^{\mathrm{d}}_{\rR \Perf(\widehat{Z_1} \smallsetminus Z_1)} \bigg( \rR \Perf(\fV_2) \times^{\mathrm{d}}_{\rR \Perf(\widehat{Z_2} \smallsetminus Z_2)} \Big( \cdots \\
		\cdots \left( \rR \Perf(\fV_n) \times^{\mathrm{d}}_{\rR \Perf(\widehat{Z_n} \smallsetminus Z_n)} \rR \Perf(\widehat{Z_n}) \right) \cdots \Big) \bigg)
	\end{multline*}
\end{cor}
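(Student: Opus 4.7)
The plan is to mimic the argument used in the underived setting to deduce \cref{cor:full_flag_decomposition} from \cref{prop:flag_decomposition_induction_step}, adapting it to the derived framework. The starting point is \cref{prop:flag_decomposition_induction_step_derived}, which, for each $0 \le i \le n-1$, furnishes equivalences
\[ \rR \hPhi^\otimes_i \simeq \rR \fCoh^\otimes_{V,i+1} \times^{\mathrm{d}}_{\rR \hPhi^{\circ,\otimes}_{i+1}} \rR \hPhi^\otimes_{i+1} \quad\text{and}\quad \rR \hrP^\otimes_i \simeq \rR \fPerf^\otimes_{V, i+1} \times^{\mathrm{d}}_{\rR \hrP^{\circ,\otimes}_{i+1}} \rR \hrP^\otimes_{i+1} \]
in $\Sh_{\Catstmon}(X\Et, \tauet)$. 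I would then iterate these starting from $i=0$ (where $\rR\hPhi^\otimes_0 = \rR\fCoh^\otimes_X$ and $\rR\hrP^\otimes_0 = \rR\fPerf^\otimes_X$), using that fiber products commute with fiber products, to obtain the full iterated decomposition at the level of sheaves on $X\Et$.

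The second step is to transfer these equivalences to sheaves on the global site $(\mathrm{dAff}_k, \tauet)$. As in the proof of \cref{cor:one_step_flag_decomposition_cohperf}, I would use the equivalence of $\infty$-topoi $\Sh(X\Et, \tauet) \simeq \Sh(X\Et^{\mathrm{Geom}}, \tauet)$ induced by \cite[Proposition 2.22]{Porta_Yu_Higher_analytic_stacks_2014} (applicable in the derived setting via an atlas argument), and then compose with the morphism of sites $u \colon (\mathrm{dAff}_k, \tauet) \to (X\Et^{\mathrm{Geom}}, \tauet)$ coming from $X \to \Spec(k)$. The key point is that the induced functor $u^s \colon F \mapsto F \circ u$ is a right adjoint (cf.\ \cite[Lemmas 2.13 and 2.14]{Porta_Yu_Higher_analytic_stacks_2014}) and hence preserves the fiber products appearing in the iterated decomposition. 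Under $u^s$, the local stacks are identified with the desired global ones: by \cref{def:derived_coherent_PFN} one has $\rR\bfCoh^{\otimes,-}_{\widehat{Z_i} \smallsetminus Z_i} \simeq \rR\hPhi^{\circ,\otimes}_i \circ u$ and similarly for $\rR\bfPerf^\otimes$, while the identifications $\rR\bfCoh^{\otimes,-}_{\fV_i} \simeq \rR\fCoh^\otimes_{V,i} \circ u$ and $\rR\bfCoh^{\otimes,-}_{\widehat{Z_n}} \simeq \rR\hPhi^\otimes_n \circ u$ follow from the fact that colimits are universal in $\dSt_k$, exactly as in the proof of \cref{cor:one_step_flag_decomposition_cohperf} but applied levelwise along the filtered systems defining $\fV_i$ and $\widehat{Z_i}$.

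The third step is to obtain the $\infty$-categorical equivalences by taking $k$-points, i.e.\ evaluating the equivalence of derived stacks at $\Spec(k) \in \mathrm{dAff}_k$; this is immediate since evaluation at a point preserves limits, and in particular the iterated fiber products on the right-hand side. I expect the main obstacle to be purely bookkeeping: one must carefully keep track of which stack lives on which site at each stage, and check that the identifications of the pushed-forward local stacks with the globally defined $\rR\bfCoh^{\otimes,-}_{\fV_i}$, $\rR\bfCoh^{\otimes,-}_{\widehat{Z_i}\smallsetminus Z_i}$, $\rR\bfPerf^\otimes_{\fV_i}$, $\rR\bfPerf^\otimes_{\widehat{Z_i}\smallsetminus Z_i}$ hold coherently along the filtered systems indexing the derived formal completions. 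Aside from this verification — which reduces, via \cref{thm:dag_completion_truncation}\eqref{formalcompletion:basechange}, to the base-change compatibility of derived formal completions with fiber products — no genuinely new ingredient beyond \cref{prop:flag_decomposition_induction_step_derived} is required.
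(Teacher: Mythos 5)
Your proposal is correct and follows essentially the same route as the paper: the corollary is obtained by iterating \cref{prop:flag_decomposition_induction_step_derived} (using that limits commute with limits), transferring to $(\mathrm{dAff}_k, \tauet)$ via the right adjoint $u^s$ exactly as in \cref{cor:one_step_flag_decomposition_cohperf_derived}, and then evaluating at $\Spec(k)$. The bookkeeping you flag — identifying the restricted local stacks with the globally defined $\rR\bfCoh^{\otimes,-}_{\fV_i}$, $\rR\bfCoh^{\otimes,-}_{\widehat{Z_i}\smallsetminus Z_i}$, etc.\ along the filtered systems defining the formal completions — is the same verification already carried out in the underived setting, so no new ingredient is required.
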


\bigskip

We conclude this section by obtaining also a derived version of the flag decomposition result for $G$-bundles (\cref{section:gbundles}).
As in \cref{section:gbundles}, we fix an affine $k$-algebraic group $G$. 
Recall that the derived stack of $G$-bundles can be seen as the functor $\dSt_k\op \to \cS$
\[
\rR \Bun_G \colon Y \mapsto \Map(Y,\mathrm{B}G)
\]
where $\Map$ is the mapping space functor and $\mathrm BG$ is the classifying (underived) stack of $G$-bundles.
We define functors $X\Et \to \dSt_k \to \cS^\mathrm{op}$ as follows:
\begin{gather*}
	\rR \Bunhat^\circ_G = \rR \Bun_G \circ \rR \hPsi^\circ_Z , \qquad \rR \Bunhat_G = \rR \Bun_G \circ \rR \hPsi_Z , \\
	\rR \bfBun_G^\circ = \rR \Bun_G \circ \rR \fW_V , \qquad \rR \bfBun_G^X = \rR \Bun_G \circ \rR \fW_X  .
\end{gather*}
As in the underived setting, Tannaka duality (cf.\ \cite[Theorem 3.4.2]{Lurie_Tannaka_duality}) implies the following proposition, the proof being exactly the one of \cref{lem:bgperflocal}.

\begin{prop}\label{prop:derivedbgperflocal}
Let $X \to Y$ be a map of derived stacks.
If the induced functor $\rR \Perf(Y) \to \rR \Perf(X)$ is an equivalence then so is the induced map of spaces $\rR \Bun_G(Y) \to \rR \Bun_G(X)$.
\end{prop}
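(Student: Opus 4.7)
The proof will proceed verbatim as in the underived case (\cref{lem:bgperflocal}), the only non-formal input being a derived enhancement of Tannaka duality that still applies to $\mathrm{B}G$. The plan is as follows.

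First, I would recall that $\mathrm{B}G$ is a geometric stack with affine diagonal, hence qualifies for the derived Tannaka recognition theorem (a derived analogue of \cite[Theorem 3.4.2]{Lurie_Tannaka_duality}, also available in the form given in \cite{Lurie_SAG} for spectral stacks and adapted to the simplicial setting). For any derived stack $Y$ this yields a fully faithful embedding
\[
\alpha_Y \colon \rR\Bun_G(Y) \simeq \Map(Y,\mathrm{B}G) \longrightarrow \Fun^{\otimes}\!\left(\rR\QCoh(\mathrm{B}G),\rR\QCoh(Y)\right),
\]
together with an explicit characterization of the essential image in terms of preservation of small colimits, the $t$-structure (in a suitably weakened sense), and flatness of the image of the tautological $G$-torsor on $\mathrm{B}G$.

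Next I would observe that $\alpha_Y$ factors through the subcategory of symmetric monoidal functors $\rR\Perf(\mathrm{B}G)\to\rR\Perf(Y)$: indeed, since $G$ is smooth affine, $\rR\Perf(\mathrm{B}G)$ compactly generates $\rR\QCoh(\mathrm{B}G)$, and the tautological $G$-torsor is perfect, so any monoidal functor coming from an $A$-point of $\mathrm{B}G$ sends perfect objects to perfect objects. Conversely, any monoidal functor $\rR\Perf(\mathrm{B}G)\to\rR\Perf(Y)$ extends by left Kan extension to a cocontinuous monoidal functor $\rR\QCoh(\mathrm{B}G)\to\rR\QCoh(Y)$. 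This gives the commutative diagram
\[
\begin{tikzcd}
\rR\Bun_G(Y) \arrow{r} \arrow{d}[swap]{F_{\Bun}} & \Fun^\otimes(\rR\Perf(\mathrm{B}G),\rR\Perf(Y)) \arrow{r} \arrow{d}{F_\Perf} & \Fun^\otimes(\rR\QCoh(\mathrm{B}G),\rR\QCoh(Y)) \arrow{d}{F_\QCoh}\\
\rR\Bun_G(X) \arrow{r} & \Fun^\otimes(\rR\Perf(\mathrm{B}G),\rR\Perf(X)) \arrow{r} & \Fun^\otimes(\rR\QCoh(\mathrm{B}G),\rR\QCoh(X))
\end{tikzcd}
\]
induced by $f\colon X\to Y$, in which the middle vertical arrow $F_\Perf$ is an equivalence by hypothesis. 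Full faithfulness of $\alpha_X$ and $\alpha_Y$ then immediately gives that $F_{\Bun}$ is fully faithful.

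It remains to show $F_{\Bun}$ is essentially surjective. For this I would use the Tannakian characterization of the essential image: a monoidal functor $\beta\colon \rR\Perf(\mathrm{B}G)\to\rR\Perf(X)$ lies in the essential image of $\alpha_X$ if and only if its Kan extension to $\rR\QCoh$ preserves colimits, is left $t$-exact up to a shift, and sends the tautological $G$-torsor to a flat object. Each of these conditions is detected by the underlying monoidal functor on perfect complexes (flatness for a perfect complex can be checked after pullback along $f^*$ since $f^*$ is an equivalence of monoidal $\infty$-categories), so $\beta$ lies in the essential image of $\alpha_X$ if and only if $F_\Perf^{-1}(\beta)$ lies in the essential image of $\alpha_Y$. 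The main (very mild) subtlety will be confirming that this list of Tannakian conditions is really preserved and reflected by the equivalence $F_\Perf$; this amounts to noting that the $t$-structure and flatness on $\rR\QCoh(X)$ and $\rR\QCoh(Y)$ are determined, for perfect objects, by the symmetric monoidal category $\rR\Perf$. Combining this with the already established full faithfulness of $F_{\Bun}$ completes the proof.
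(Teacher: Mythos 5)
Your proposal is correct and follows essentially the same route as the paper, which simply invokes derived Tannaka duality and declares the proof to be identical to that of \cref{lem:bgperflocal}: full faithfulness of $\alpha$, factorization through $\Fun^\otimes(\rR\Perf(\mathrm{B}G),\rR\Perf(-))$, and detection of the essential image through the equivalence $F_\Perf$. The extra detail you give on the Tannakian conditions only makes explicit what the paper leaves implicit.
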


Using this, and \cref{thm:etale_descent_derived}, we get:

\begin{prop}
	The functors $\rR \Bunhat_G$ and $\rR \Bunhat^\circ_G$ are hypercomplete sheaves for the \'etale topology.
\end{prop}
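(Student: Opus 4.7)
The plan is to follow the exact same strategy used in the underived case (\cref{prop:bunGpunctdescent}), using \cref{thm:etale_descent_derived} in place of \cref{thm:etale_descent} and \cref{prop:derivedbgperflocal} in place of \cref{lem:bgperflocal}. Concretely, I fix an étale hypercovering $U^\bullet \to U$ in $X\Et$ and consider the induced augmented simplicial object $\rR \hPsi^\circ_Z(U^\bullet) \to \rR \hPsi^\circ_Z(U)$ in $\dSt_k$. I want to show that applying $\rR \Bun_G$ produces a limit diagram in $\cS$, i.e.\ that the canonical map from $\rR \Bun_G(\rR \hPsi^\circ_Z(U)) = \rR \Bunhat^\circ_G(U)$ to $\varprojlim_n \rR \Bunhat^\circ_G(U^n)$ is an equivalence.

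To do this, I reinterpret the desired equivalence as the statement that the canonical map $\colim \rR \hPsi^\circ_Z(U^\bullet) \to \rR \hPsi^\circ_Z(U)$ (computed in $\dSt_k$) is sent to an equivalence by $\rR \Bun_G$. By \cref{prop:derivedbgperflocal}, it suffices to show that this map is sent to an equivalence by $\rR \Perf$. Since $\rR \Perf$ is a sheaf on $\dSt_k$, the value on the colimit is computed as $\varprojlim_n \rR \Perf(\rR \hPsi^\circ_Z(U^n))$, so the claim becomes precisely that $\rR \hrP^{\circ, \otimes}_{Z/X}$ sends $U^\bullet \to U$ to a limit diagram. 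This is exactly the content of \cref{thm:etale_descent_derived}.

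For $\rR \Bunhat_G$ the argument is entirely analogous, using in place of \cref{thm:etale_descent_derived} the fact that $\rR \hrP^\otimes_Z = \rR \bfPerf^\otimes \circ \rR \hPsi_Z$ is a hypercomplete sheaf, which follows because $\rR \bfPerf^\otimes$ is a hypercomplete sheaf on $\dSt_k$ by construction and because the formation of the derived formal completion commutes with étale base change (\cref{thm:dag_completion_truncation}(\ref{formalcompletion:basechange})), so that $\rR \hPsi_Z(U^\bullet) \to \rR \hPsi_Z(U)$ is again an étale hypercover of (ind-)derived stacks. The only non-routine input in either case is \cref{thm:etale_descent_derived}; everything else is formal from Tannaka duality plus the base-change properties of formal completions.
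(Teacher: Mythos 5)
Your argument is correct and is essentially the proof the paper has in mind: the underived case (\cref{prop:bunGpunctdescent}) is proved in exactly this way, and the derived statement is obtained by substituting \cref{thm:etale_descent_derived} for \cref{prop:restricted_etale_descent_Perf} and \cref{prop:derivedbgperflocal} for \cref{lem:bgperflocal}, just as you do. Your extra justification for the non-punctured case, via the base-change property of derived formal completions from \cref{thm:dag_completion_truncation} and limit-preservation of $\rR \bfPerf^\otimes$, is a correct spelling-out of what the paper leaves implicit.
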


Recall the morphism of sites $u \colon (\mathrm{dAff}_k, \tauet) \to (X\Et^{\mathrm{Geom}}, \tauet)$.
As in the case of (almost) perfect modules, we define the following derived stacks $\mathrm{dAff}_k\op \to \cS$:
\begin{align*}
& \rR \bfBun_G(X) = \rR \bfBun_G^X \circ u \,\, \text{ of }G\text{-bundles on }X \\
& \rR \bfBun_G(V) = \rR \bfBun_G^\circ \circ u \,\, \text{ of }G\text{-bundles on } V = X \smallsetminus Z \\
& \rR \bfBun_G(\hat{Z}) = \rR \Bunhat_G \circ u \,\, \text{ of }G\text{-bundles on the formal neighbourhood }\hat{Z} \\
& \rR \bfBun_G(\hat{Z} \smallsetminus Z) = \rR \Bunhat^\circ_G \circ u \,\, \text{ of }G\text{-bundles on the punctured formal neighbourhood}
\end{align*}
As in the case of (almost) perfect complexes, we extend those definitions to the case where $X$ is a formal derived ind-stack (i.e. a filtered colimit of derived Artin stacks locally of finite presentation whose reduced part is constant).

Next, combining \cref{prop:one_step_flag_decomposition_coh_derived} and \cref{prop:derivedbgperflocal}, we deduce the following derived analogs of Theorem \ref{thm:gbundles}, and Corollary \ref{flagbung}, respectively.

\begin{thm} \label{thm:flag_bung_derived}
	There are natural equivalences
	\[ \rR \bfBun_G^X \simeq \rR \bfBun^\circ_G \times^{\mathrm{d}}_{\rR \Bunhat_G^\circ} \rR \Bunhat_G . \]
	and
	\[ \rR \bfBun_G(X) \simeq \rR \bfBun_G(X \smallsetminus Z) \times^{\mathrm{d}}_{\rR \bfBun_G(\hZ \smallsetminus Z)} \rR \bfBun_G(\hZ) . \]
\end{thm}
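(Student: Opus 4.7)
The strategy is to mirror the proof of \cref{thm:gbundles} in the derived setting, replacing the inputs by their derived counterparts (\cref{prop:one_step_flag_decomposition_coh_derived} in place of \cref{prop:flag_decomposition_induction_step}, and \cref{prop:derivedbgperflocal} in place of \cref{lem:bgperflocal}). The first equivalence will be established at the level of sheaves of spaces on $X\Et$, and the second will be deduced from it by restriction along the morphism of sites $u \colon (\mathrm{dAff}_k, \tauet) \to (X\Et^{\mathrm{Geom}}, \tauet)$.

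For the first equivalence, the plan is as follows. Fix $U \in X\Et$ and consider the canonical map
\[ \varphi_U \colon \rR\fW_V(U) \amalg^{\mathrm{d}}_{\rR\hPsi^\circ_Z(U)} \rR\hPsi_Z(U) \longrightarrow \rR\fW_X(U) = U . \]
The content of \cref{prop:one_step_flag_decomposition_coh_derived} (applied to the closed substack $Z_U \hookrightarrow U$) is precisely that $\rR\Perf$ sends $\varphi_U$ to an equivalence: indeed, since $\rR\bfPerf^\otimes$ turns colimits of derived stacks into limits of stable $\infty$-categories, the decomposition
\[ \rR\Perf(U) \simeq \rR\Perf(\rR\fW_V(U)) \times^{\mathrm{d}}_{\rR\Perf(\rR\hPsi^\circ_Z(U))} \rR\Perf(\rR\hPsi_Z(U)) \]
exactly expresses the fact that $\rR\Perf(\varphi_U)$ is an equivalence. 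Applying \cref{prop:derivedbgperflocal} pointwise then shows that $\rR\Bun_G(\varphi_U)$ is an equivalence, i.e.
\[ \rR\bfBun_G^X(U) \simeq \rR\bfBun_G^\circ(U) \times^{\mathrm{d}}_{\rR\Bunhat_G^\circ(U)} \rR\Bunhat_G(U) . \]
Since $\rR\Bun_G$ sends colimits of derived stacks to limits of spaces, these equivalences are natural in $U$ and assemble into the claimed equivalence of functors $X\Et\op \to \cS$. Hypercompleteness of the right-hand side (needed to view it in $\Sh_\cS(X\Et, \tauet)$) follows from \cref{prop:bunGpunctdescent} together with the corresponding property for $\rR\bfBun_G^\circ$ and $\rR\Bunhat_G$.

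For the second equivalence, recall that the functor $u^s \colon \Sh(X\Et^{\mathrm{Geom}}, \tauet) \to \St_k^\rR$ of precomposition with $u$ is a right adjoint (by \cite[Lemmas 2.13, 2.14]{Porta_Yu_Higher_analytic_stacks_2014}, adapted to the derived setting), hence commutes with fiber products; composing the first equivalence with $u$ therefore yields the second, exactly as in the proof of \cref{cor:one_step_flag_decomposition_cohperf}.

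The main subtle point will be confirming that $\rR\bfPerf^\otimes$ actually turns the pushout $\rR\fW_V \amalg_{\rR\hPsi^\circ_Z} \rR\hPsi_Z$ into the expected fiber product of categories. This amounts to interpreting \cref{prop:one_step_flag_decomposition_coh_derived} as a statement about the pushout of derived stacks, which in turn relies on Tannaka duality already used to prove \cref{prop:derivedbgperflocal}; alternatively, since both sides of $\varphi_U$ are fpqc-covered by the pair $(\rR\fW_V(U), \rR\hPsi_Z(U))$ (the flatness of $\rR\hPsi_Z(U) \to U$ being provided by \cref{lem:dagflatcompletion}), the claim reduces to fpqc descent for $\rR\Bun_G$, which is standard. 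Once this point is settled, the rest of the proof is purely formal.
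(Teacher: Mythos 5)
Your proposal is correct and follows essentially the same route as the paper: the paper likewise deduces \cref{thm:flag_bung_derived} by applying the derived $\Perf$-locality of $\rB G$ (\cref{prop:derivedbgperflocal}) to the natural transformation $\rR\fW_V \amalg_{\rR\hPsi^\circ_Z} \rR\hPsi_Z \to \rR\fW_X$, using \cref{prop:one_step_flag_decomposition_coh_derived} to know that $\rR\Perf$ sends it to an equivalence, and then restricts along $u$ for the statement about $\rR\bfBun_G(X)$. Your extra discussion of descent and of interpreting the decomposition as a pushout statement just makes explicit what the paper leaves implicit (note only that the hypercompleteness citation should be to the derived analogue of \cref{prop:bunGpunctdescent}, not the underived one).
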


\begin{cor}\label{cor:flag_bung_derived}
	Let $X$ be a derived Artin stack locally almost of finite presentation over $k$ and let $\cZ \coloneqq (Z_0, Z_1, \ldots, Z_n)$ be a flag on $X$ (cf.\ \cref{def:derived_flag}). With the notations introduced right before \cref{prop:flag_decomposition_induction_step_derived}, the stack $\rR \bfBun_G(X)$ is equivalent to
	\[
	\rR \bfBun_G(\fV_1) \times^{\mathrm{d}}_{\rR \bfBun_G(\widehat{Z_1} \smallsetminus Z_1)} \left( \rR \bfBun_G(\fV_2) \times^{\mathrm{d}}_{\rR \bfBun_G(\widehat{Z_2} \smallsetminus Z_2)} \left( \cdots \left( \rR \bfBun_G(\fV_n) \times^{\mathrm{d}}_{\rR \bfBun_G(\widehat{Z_n} \smallsetminus Z_n)} \rR \bfBun_G(\widehat{Z_n}) \right) \cdots \right) \right) .
	\]
	In the particular case where $X$ is a surface $S$ and the flag consists of a point $x$ in a curve $C$ in $S$, we get:
	\[
	\rR \bfBun_G(S) \simeq \rR \bfBun_G(S \smallsetminus C) \times^{\mathrm{d}}_{\rR \bfBun_G(\widehat{C} \smallsetminus C)} \left( \rR \bfBun_G(\widehat{C} \smallsetminus x) \times^{\mathrm{d}}_{\rR \bfBun_G(\hat{x} \smallsetminus x)} \rR \bfBun_G(\hat{x}) \right) 
	\].
\end{cor}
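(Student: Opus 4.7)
The plan is to iterate \cref{thm:flag_bung_derived}, in direct analogy with how \cref{cor:full_flag_decomposition_derived} follows by iterating \cref{prop:one_step_flag_decomposition_coh_derived}. First, I apply \cref{thm:flag_bung_derived} to the closed immersion $Z_1 \hookrightarrow X$ (noting $Z_0 = X$). Since by definition $\fV_1 = X \smallsetminus Z_1$, this yields a natural equivalence
\[
\rR \bfBun_G(X) \simeq \rR \bfBun_G(\fV_1) \times^{\mathrm{d}}_{\rR \bfBun_G(\widehat{Z_1} \smallsetminus Z_1)} \rR \bfBun_G(\widehat{Z_1}) .
\]

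Next, I need to decompose the factor $\rR \bfBun_G(\widehat{Z_1})$ along the closed substack $Z_2 \hookrightarrow \widehat{Z_1}$. Since $\widehat{Z_1}$ is a formal ind-stack, I write $\widehat{Z_1} \simeq \colim_{\alpha \in \cJ_1} Z_1^{(\alpha)}$ as in the discussion preceding \cref{prop:flag_decomposition_induction_step_derived}; each $Z_1^{(\alpha)}$ is a derived Artin stack locally almost of finite presentation, so \cref{thm:flag_bung_derived} applies to $Z_2 \hookrightarrow Z_1^{(\alpha)}$ and gives
\[
\rR \bfBun_G(Z_1^{(\alpha)}) \simeq \rR \bfBun_G(V_2^{(\alpha)}) \times^{\mathrm{d}}_{\rR \bfBun_G(\widehat{Z_2} \smallsetminus Z_2)} \rR \bfBun_G(\widehat{Z_2}) ,
\]
where $V_2^{(\alpha)} = Z_1^{(\alpha)} \smallsetminus Z_2$. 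Passing to the limit over $\cJ_1$ and using that $\rR \bfBun_G$ turns the colimit of ind-stacks into the limit of spaces, and that limits commute with fiber products in $\cS$, I obtain
\[
\rR \bfBun_G(\widehat{Z_1}) \simeq \rR \bfBun_G(\fV_2) \times^{\mathrm{d}}_{\rR \bfBun_G(\widehat{Z_2} \smallsetminus Z_2)} \rR \bfBun_G(\widehat{Z_2}) ,
\]
since by definition $\fV_2 = \colim_{\alpha \in \cJ_1} V_2^{(\alpha)}$. Substituting this back into the previous equivalence and iterating the same argument $n$ times yields the desired full flag decomposition. The surface case is simply the instance $n = 2$ with $Z_1 = C$ and $Z_2 = \{x\}$.

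The main technical point to verify carefully is the commutation of the limit over $\cJ_i$ with the fiber product that appears after applying \cref{thm:flag_bung_derived} at each stage; this is straightforward since fiber products and cofiltered limits of spaces commute, but one must check that the structure maps in the limit are induced by the naturality of the decomposition provided by \cref{thm:flag_bung_derived} in the ambient stack. This naturality in turn reduces, via \cref{prop:derivedbgperflocal}, to the corresponding naturality for $\rR \bfPerf^\otimes$, which is built into the construction of the maps $\beta^*$ and $i^*$ used in \cref{prop:one_step_flag_decomposition_coh_derived}. Once these compatibilities are recorded, the induction on $n$ goes through formally, exactly as in the proof of \cref{cor:full_flag_decomposition_derived}.
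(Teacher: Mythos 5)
Your proposal is correct and follows essentially the route the paper intends: one iterates the one-step derived decomposition of \cref{thm:flag_bung_derived} (itself obtained from \cref{prop:one_step_flag_decomposition_coh_derived} via the $\Perf$-locality of $\rB G$, \cref{prop:derivedbgperflocal}) at each infinitesimal neighbourhood $Z_i^{(\alpha)}$ and passes to the limit over $\cJ_i$, just as the underived \cref{flagbung} follows from \cref{thm:gbundles}. The only bookkeeping you suppress is that the punctured formal neighbourhood produced at stage $\alpha$ is the one relative to the ambient $Z_i^{(\alpha)}$, so it depends on $\alpha$, and the stacks $\rR\bfBun_G(\widehat{Z_{i+1}} \smallsetminus Z_{i+1})$, $\rR\bfBun_G(\fV_{i+1})$ in the statement are by definition the limits over $\cJ_i$ of these — which is exactly what your limit-commutation step yields, consistently with the notation fixed before \cref{prop:flag_decomposition_induction_step_derived}.
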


\begin{rem}
The above results improve \cref{cor:full_flag_decomposition} and \cref{flagbung} even in the case where $X$ is a variety. In some sense, the above results are an extension of \cref{cor:full_flag_decomposition} and \cref{flagbung} to the derived structure of those stacks of (almost) perfect complexes or of $G$-bundles.
\end{rem}

\section{Sheaves on the punctured formal neighbourhood: the case of schemes} \label{subsec:case_schemes}

In \cref{{section:sheaves_on_PFN}}, we have constructed a couple of invariants of a closed immersion of Artin stacks (locally of finite presentation over $k$) $Z \hookrightarrow X$: on one side, we defined the stacks $\bfCoh^-_{\hZ \smallsetminus Z}$ and $\bfPerf_{\hZ \smallsetminus Z}$, and on the other side we introduced the categories $\Coh^-(\hZ \smallsetminus Z)$ and $\Perf(\hZ \smallsetminus Z)$.
Furthermore, Propositions \ref{prop:etale_descent_coh} and \ref{prop:etale_descent_Perf} show that if $X = \Spec(A)$ is affine, then
\[ \Coh^-(\hZ \smallsetminus Z) = \Coh^-(\Spec(\widehat{A}) \smallsetminus Z), \quad \Perf(\hZ \smallsetminus Z) = \Perf(\Spec(\widehat{A}) \smallsetminus Z) . \]
In the particular case where $Z$ is defined by a single equation, the scheme $\hPsi^\circ_Z(X) = \Spec(\widehat{A}) \smallsetminus Z$ is affine and therefore the above description yields a particularly satisfactory understanding of these categories.

Nonetheless, in many interesting cases, $X$ will not be affine.
Since the functor $\hPsi^\circ_Z \colon X\Et \to \St_k$ is not continuous for the \'etale topology, we cannot extend it to a functor defined over the whole $X\Et^{\Geom}$ and therefore it is difficult to get a grasp of the categories $\Coh^-(\hZ \smallsetminus Z)$ and $\Perf(\hZ \smallsetminus Z)$.
In fact, the definition itself of these categories passes through a \emph{sheafification} process.\medbreak

The goal of this section is to provide a more direct characterization of the category $\Coh^-(\hZ \smallsetminus Z)$ in the \emph{special case} where $X$ is \emph{a quasi-compact quasi-separated scheme} locally of finite presentation over $k$.
This description (provided by \cref{thm:Zariski_computational_tool} below) has the advantage of not involving any sheafification formula.

In order to state a precise theorem, we need some preliminary observations.
We first notice that the natural transformation
\[ i \colon \hPsi^\circ_Z \to \hPsi_Z \]
induces a natural transformation
\[ i^* \colon \hPhi_Z \to \hPhi_Z^\circ . \]
We let
\[ \cK \coloneqq \ker(i^*) . \]
This is an $\infty$-functor
\[ \cK \colon X\Et \to \Catst . \]

\begin{lem} \label{lem:kernel_sheaf}
	The functor $\cK$ is a hypercomplete sheaf for the \'etale topology.
\end{lem}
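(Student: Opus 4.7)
The plan is to exhibit $\cK$ as a limit of hypercomplete sheaves in the presheaf category $\PSh_{\Catst}(X\Et)$, and then invoke the fact that the full subcategory $\St_{\Catst}(X\Et, \tauet) \subseteq \PSh_{\Catst}(X\Et)$ is closed under arbitrary limits.

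First, I would unwind the definition of the kernel. For every $U \in X\Et$, the stable $\infty$-category $\cK(U)$ is by definition the fiber of the exact functor $i_U^* \colon \hPhi_Z(U) \to \hPhi_Z^\circ(U)$ in $\Catst$, which can be computed as the pullback
\[ \cK(U) \simeq \hPhi_Z(U) \times_{\hPhi_Z^\circ(U)} 0 . \]
Since limits in $\PSh_{\Catst}(X\Et)$ are computed objectwise, this identifies $\cK$ itself with the pullback $\hPhi_Z \times_{\hPhi_Z^\circ} \underline{0}$ in $\PSh_{\Catst}(X\Et)$, where $\underline{0}$ denotes the constant presheaf with value the zero stable $\infty$-category.

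Next, I would check that each of the three presheaves appearing in this pullback is a hypercomplete $\tauet$-sheaf. The presheaf $\underline 0$ is obviously so. For $\hPhi_Z^\circ$, this is precisely the content of \cref{thm:etale_descent}. For $\hPhi_Z$, this follows from the fact that $\hPsi_Z$ takes an \'etale hypercover $U_\bullet \to U$ in $X\Et$ to an \'etale hypercover $\widehat{Z_{U_\bullet}} \to \widehat{Z_U}$ of formal schemes (via \cref{lem:formal_morphism_sites} and the compatibility of formal completion with flat base change), combined with the already observed fact that $\bfCoh^{\otimes,-}$ is a hypercomplete sheaf for the formal \'etale topology on any formal scheme (which in turn reduces via \cref{lem:topological_invariance_etale} to \'etale hyperdescent for $\bfCoh^{\otimes,-}$ on its closed fiber).

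Finally, since the inclusion $\St_{\Catst}(X\Et, \tauet) \hookrightarrow \PSh_{\Catst}(X\Et)$ preserves limits, the presheaf $\cK$, being a finite limit of hypercomplete $\tauet$-sheaves, is itself a hypercomplete $\tauet$-sheaf. No step in this argument looks delicate; the only point that deserves care is the sheaf property of $\hPhi_Z$, which is slightly more than formal because it hinges on the interaction between formal completion and \'etale base change, but this has already been set up in the preliminaries.
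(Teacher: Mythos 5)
Your proof is correct, but it takes a different route from the one the paper actually writes out. You treat $\cK$ as the objectwise pullback $\hPhi_Z \times_{\hPhi_Z^\circ} \underline{0}$ and conclude by closure of hypercomplete sheaves under limits; this is exactly the deduction from \cref{thm:etale_descent} that the paper acknowledges in its opening sentence ("one can deduce this directly from \cref{thm:etale_descent}") but then deliberately bypasses. The paper's own argument is lighter: since $\cK$ is a \emph{full} sub-presheaf of $\hPhi_Z$, and $\hPhi_Z$ is already a hypercomplete sheaf, it suffices to show that membership in $\cK$ --- i.e.\ the vanishing of $i_U^*(\cF)$ --- is an \'etale-local condition. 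This is checked directly: by \cref{lem:etale_sent_to_flat} the map $\widehat{A_U} \to \widehat{A_{U^0}}$ induced by an \'etale cover is faithfully flat, the relevant square is a pullback of schemes, so $\hPsi^\circ_Z(f)$ is faithfully flat and vanishing of $i_U^*(\cF)$ can be detected after pullback along it. The trade-off: your argument is purely formal but leans on the hardest theorem of the section (descent for $\hPhi^\circ_Z$), together with the hyperdescent of $\hPhi_Z$ which you must (and do, correctly if a bit tersely) justify via the compatibility of formal completion with \'etale base change; the paper's argument needs only the elementary flatness lemma and is independent of \cref{thm:etale_descent}. Since \cref{thm:etale_descent} is established well before this lemma, there is no circularity in your version, and both proofs are valid.
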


\begin{proof}
	One can deduce this directly from \cref{thm:etale_descent}.
	It is nevertheless interesting to observe that there is a simpler direct proof.
	Since $\cK$ is a fully faithful sub-presheaf of $\hPhi_Z$ and since the latter is a hypercomplete sheaf, it is enough to prove that the property of belonging to $\cK$ is local for the \'etale topology.
	Let $U \in X\Et$ and choose an \'etale cover $f \colon U^0 \to U$.
	Let $\cF \in \Coh^-(\widehat{Z_U})$ and suppose that $i_{U^0}^* \hf^*(\cF) \simeq 0$.
	We want to conclude that $i_U^*(\cF) \simeq 0$.
	Using \cref{lem:etale_sent_to_flat}, we see that the morphism
	\[ \widehat{A_U} \to \widehat{A_{U^0}} \]
	obtained by passing to global sections of $\hf$, is faithfully flat.
	Since the square
	\[ \begin{tikzcd}
	\hPsi_Z^\circ(U^0) \arrow{r}{\hPsi_Z^\circ(f)} \arrow{d}{i_{U^0}} & \hPsi_Z^\circ(U) \arrow{d}{i_U} \\
	\Spec(\widehat{A_{U^0}}) \arrow{r}{f} & \Spec(\widehat{A_U})
	\end{tikzcd} \]
	is a pullback in the category of schemes, we conclude that the map $\hPsi_Z^\circ(f)$ is faithfully flat as well.
	As consequence, $i_U^*(\cF)$ is zero if and only if
	\[ \hPhi_Z^\circ(f)(i_U^*(\cF)) = ( \hPsi_Z^\circ(f) )^*(i_U^*(\cF)) \]
	is zero.
\end{proof}

As consequence, we can view $\cK$ as a functor
\[ \cK \colon (X\Et^{\Geom})\op \to \Catst . \]
It comes equipped with a natural transformation $\alpha \colon \cK \hookrightarrow \hPhi_Z$.

\begin{prop} \label{prop:algebraic_universal_property}
	The natural transformation $L \colon \hPhi_Z \to \hPhi_Z^\circ$ exhibits $\hPhi_Z^\circ$ as the cofiber of $\alpha \colon \cK \hookrightarrow \hPhi_Z$ in $\Sh_{\Catst}(X\Et^{\Geom},\tauet)$.
\end{prop}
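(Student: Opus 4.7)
The plan is to reduce the statement to a pointwise computation and invoke the localization theorem \cref{thm:localization_for_coh_minus}. First I would observe that all three $\infty$-functors $\cK$, $\hPhi_Z$, and $\hPhi_Z^\circ$ take values in $\Catst$ and are hypercomplete \'etale sheaves on $X\Et$ (respectively by \cref{lem:kernel_sheaf}, the tautological sheafiness of $\hPhi_Z$, and \cref{thm:etale_descent}), and therefore extend uniquely to objects of $\Sh_{\Catst}(X\Et^{\Geom}, \tauet)$ via the equivalence $\Sh_{\Catst}(X\Et^{\Geom}, \tauet) \simeq \Sh_{\Catst}(X\Et, \tauet)$ used throughout \cref{section:sheaves_on_PFN}. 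Since colimits in $\Sh_{\Catst}(X\Et, \tauet)$ are computed by sheafifying the corresponding colimits in $\PSh_{\Catst}(X\Et)$, and presheaf colimits are computed pointwise, it will suffice to exhibit, for each affine $U \in X\Et$, a canonical $L$-compatible equivalence between $\mathrm{cofib}_{\Catst}(\cK(U) \hookrightarrow \hPhi_Z(U))$ and $\hPhi_Z^\circ(U)$. The resulting pointwise cofiber presheaf will then already be a sheaf equivalent to $\hPhi_Z^\circ$, so its sheafification will coincide with $\hPhi_Z^\circ$ itself, which is exactly what the statement demands.

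For the pointwise identification, I would fix $U = \Spec(A_U) \in X\Et$ and use \cref{lem:basic_formal_GAGA} to identify $\hPhi_Z(U)$ with $\Coh^-(\widehat{A_U})$. By construction $\hPhi_Z^\circ(U) = \Coh^-(\Spec(\widehat{A_U}) \smallsetminus Z_U)$, and the transition functor $i_U^*$ is nothing other than pullback along the open immersion $\Spec(\widehat{A_U}) \smallsetminus Z_U \hookrightarrow \Spec(\widehat{A_U})$. Since $A_U$ is of finite type over $k$, the ring $\widehat{A_U}$ is noetherian, so $\Spec(\widehat{A_U})$ is a quasi-compact quasi-separated locally noetherian $k$-scheme and the open immersion is quasi-compact. \cref{thm:localization_for_coh_minus} then produces exactly the desired cofiber sequence
\[
\cK(U) \hookrightarrow \Coh^-(\widehat{A_U}) \to \Coh^-(\Spec(\widehat{A_U}) \smallsetminus Z_U)
\]
in $\Catst$, which under the above identifications yields the required pointwise equivalence.

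The main obstacle is already packaged in \cref{thm:localization_for_coh_minus}: once the localization theorem for almost perfect modules is available, the passage from pointwise Verdier quotients to cofibers in the sheaf category is formal and uses only that $\hPhi_Z^\circ$ satisfies \'etale hyperdescent. The conceptual heart of the argument has therefore been carried out already in \cref{sec:localization_almost_perfect}, and I do not anticipate any further serious obstruction beyond the bookkeeping above.
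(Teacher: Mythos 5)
Your argument is correct and is essentially the paper's own proof, spelled out: the paper likewise reduces to $\Sh_{\Catst}(X\Et,\tauet)$ via the site equivalence and then deduces the statement on affines directly from the definition of $\hPhi^\circ_Z$ together with \cref{thm:localization_for_coh_minus}. The bookkeeping you add (pointwise presheaf cofibers, sheafification being trivial because the pointwise cofiber presheaf is already the sheaf $\hPhi^\circ_Z$ by \cref{thm:etale_descent}) is exactly what the paper leaves implicit.
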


\begin{proof}
	It is enough to show that $\hPhi_Z^\circ$ is the cofiber of $\alpha$ in $\Sh_{\Catst}(X\Et, \tauet)$.
	In this case, the proposition is a direct consequence of the definition of $\hPhi_Z^\circ$ and of the localization theorem for almost perfect modules, \cref{thm:localization_for_coh_minus}.
\end{proof}

With these preparations, we are ready to state the main result of this section:

\begin{thm} \label{thm:Zariski_computational_tool}
	Let $X$ be a quasi-compact and quasi-separated scheme which is of finite type over $k$.
	Let $Z \hookrightarrow X$ be a closed subscheme.
	Then the sequence
	\[ \cK(X) \hookrightarrow \Coh^-(\hZ) \to \Coh^-(\hZ \smallsetminus Z) \]
	is a cofiber sequence in $\Catst$.
\end{thm}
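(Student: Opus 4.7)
The plan is to deduce the theorem from three inputs prepared earlier: the localization theorem for almost perfect modules \cref{thm:localization_for_coh_minus}, the internal description of its kernel in \cref{prop:internal_characterisation_kernel_localisation}, and the gluing of right adjoints furnished by \cref{prop:gluing_right_adjoints} together with the base-change statement \cref{cor:PFN_right_adjointable}.

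First I would treat the affine case $X = \Spec(A)$ with $Z = V(J)$. Here $\Coh^-(\hZ) \simeq \Coh^-(\Spec(\widehat{A}))$ by the formal GAGA of \cref{lem:basic_formal_GAGA}, and $\Coh^-(\hZ \smallsetminus Z)$ coincides by \cref{def:coherentPFN} with $\Coh^-(\Spec(\widehat{A}) \smallsetminus Z)$ since $\hPhi_Z^{\circ,\otimes}$ satisfies \'etale descent. Applying \cref{thm:localization_for_coh_minus} to the quasi-compact open immersion $\Spec(\widehat{A}) \smallsetminus Z \hookrightarrow \Spec(\widehat{A})$ yields a cofiber sequence in $\Catst$ whose kernel, by \cref{prop:internal_characterisation_kernel_localisation} and \cref{cor:internal_characterisation_kernel_localisation}, is the smallest left-complete full stable subcategory of $\Coh^-(\Spec(\widehat{A}))$ containing the image of the pushforward $j_* \colon \Coh^-(Z) \to \Coh^-(\Spec(\widehat{A}))$. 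Unwinding the definition of $\cK$ as the kernel of $i^* \colon \hPhi_Z \to \hPhi_Z^\circ$ then identifies this kernel with $\cK(X)$, settling the affine case.

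Next I would globalize via Zariski descent. Since $X$ is quasi-compact and quasi-separated, it admits a finite affine cover $\{U_i\}$ whose \v{C}ech nerve $U^\bullet$ consists of affine schemes at each level. All three functors $\cK$, $\hPhi_Z$ and $\hPhi_Z^\circ$ satisfy Zariski descent by \cref{lem:kernel_sheaf}, \cref{thm:etale_descent}, and formal GAGA, so evaluating at $X$ presents the three terms of the sequence as totalizations of the corresponding cosimplicial diagrams along $U^\bullet$. By the affine case, at each level one has a cofiber sequence in $\Catst$, and by \cref{prop:gluing_right_adjoints} the category $\cK(X)$ is generated, under left completeness, by the essential image of the globally-defined right adjoint $j_* \colon \Coh^-(Z) \to \Coh^-(\hZ)$.

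The main obstacle is that arbitrary limits in $\Catst$ do not preserve cofiber sequences, so one cannot directly totalize the levelwise statement. To circumvent this I would construct a canonical functor $\Coh^-(\hZ)/\cK(X) \to \Coh^-(\hZ \smallsetminus Z)$ from the universal property of the Verdier quotient in $\Catst$, and verify essential surjectivity and fully faithfulness separately. Essential surjectivity follows from the levelwise case combined with descent for $\hPhi_Z^\circ$, since every object of $\Coh^-(\hZ \smallsetminus Z)$ lifts locally and the local kernels lie in the image of $\cK(X)$. Fully faithfulness is the more delicate point: here the base-change property \cref{cor:PFN_right_adjointable} guarantees that the adjunction units for $j_*$ are compatible with \v{C}ech restrictions, so that mapping spaces in the Verdier quotient, described by the calculus of fractions of \cite{Drew_Verdier_quotients_2015}, commute with totalization. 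Combining the two verifications produces the desired cofiber sequence in $\Catst$.
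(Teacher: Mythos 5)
Your affine case is fine and matches the paper's observation, and you correctly identify the real obstruction: totalizations in $\Catst$ do not preserve Verdier quotients, so the levelwise statement cannot simply be totalized. But the way you propose to get around it contains a genuine gap at the essential surjectivity step. Saying that ``every object of $\Coh^-(\hZ \smallsetminus Z)$ lifts locally and the local kernels lie in the image of $\cK(X)$'' is exactly the point that needs proof: an object of $\Coh^-(\hZ \smallsetminus Z)$ is a descent datum valued in the local quotient categories, and while each local piece lifts to $\Coh^-(\widehat{Z_{U^n}})$, the lifts are non-canonical and the gluing isomorphisms only exist in the quotients. Lifting the descent datum itself to coherent sheaves on the formal completions is the heart of the matter; in the paper this is the effectivity part of \cref{lem:Zariski_devissage}, which requires the base-change statement \cref{cor:PFN_right_adjointable}, the ``almost functorial'' formal models of \cref{cor:almost_functorial_formal_models}, noetherian finiteness to replace a lift by a coherent subobject on which the gluing map is actually defined, and a further modification (passing to the subsheaf of sections whose restriction lands in the image of the gluing map, then checking this does not change the class in the quotient). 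None of this is automatic from descent for $\hPhi^\circ_Z$, and your sketch gives no mechanism for producing the compatible formal models.

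The fully faithfulness step is also underpowered as stated: you work directly with $\infty$-categorical mapping spaces in the Verdier quotient and assert, via the calculus of fractions, that they commute with the \v{C}ech totalization. That commutation is precisely what is in doubt, and \cref{cor:PFN_right_adjointable} alone does not give it. The paper's route is to first equip $\Coh^-(\hZ)/\cK(X)$ and $\Coh^-(\hZ \smallsetminus Z)$ with left complete $t$-structures making the comparison functor $t$-exact (via \cref{cor:quotient_t_structure} and \cref{lem:flat_cosimplicial_t_structure}), reduce by \cref{lem:reduction_to_the_heart_II} to an equivalence of abelian hearts, and only then run the right-adjointability and Ind-category arguments (including \cref{prop:qcompact_kernel}, which you would also need in order to know that $\cK(X)$ really is the globally generated category you describe -- \cref{prop:gluing_right_adjoints} alone only constructs $j_*$, it does not identify $\cK(X)$). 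Some reduction of this kind, or a substitute argument controlling mapping spectra in the quotient under totalization, is missing from your proposal; without it the two verifications you defer to are not established.
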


Let us stress immediately that the above theorem is not an immediate consequence of \cref{prop:algebraic_universal_property}.
Indeed, cofiber sequences of stacks cannot be computed objectwise, at least not a priori.
And indeed, we don't expect the above result to hold in greater generality than this: the fact that $X$ is a (quasi-compact and quasi-separated) \emph{scheme}, rather than an Artin stack, plays a crucial role in the proof.

Notice that \cref{thm:Zariski_computational_tool} can also be seen as a generalization of \cref{thm:localization_for_coh_minus} to formal completions (of closed subschemes inside quasi-compact and quasi-separated schemes of finite type over $k$).
The proof of this result is very long and technical, and it will occupy our attention until the end of this section.
Before starting the proof, we would like to emphasise that this result is completely new and it doesn't rely \emph{at all} on the former results on formal gluing contained in the literature.
More specifically, there are two main difficulties that have to be overcome in order to obtain \cref{thm:Zariski_computational_tool}:
\begin{enumerate}
	\item On one side, the assignment
	\[ X\Et \ni U \mapsto \Coh^-(\widehat{Z_U}) / \cK(U) \in \Catst \]
	is not a sheaf a priori (and, indeed, to prove that it satisfies \'etale descent is equivalent to \cref{thm:Zariski_computational_tool} itself).
	For this reason, the above theorem is a significant improvement of the results obtained in \cite{Ben-Bassat_Temkin_Tubular_2013} because it provides a \emph{global} description of the category of almost perfect modules over the punctured formal neighbourhood.
	\item On the other side, the equations defining $Z$ inside $X$ are not globally defined. For this reason, it is impossible to identify $\Coh^-(\hZ \smallsetminus Z)$ as the category of almost perfect modules on a ``generic fiber'' of $\hZ$, in the spirit of \cite{Bosch_Gortz_Coherent_modules_1998}. Note that this very same issue was already present in our treatment of \'etale descent in \cref{subsec:etale_descent}. Nevertheless, \cref{thm:etale_descent} is local in nature, and this enables us to reduce to the situation where $Z$ is defined by a single global equation in $X$, and then it is a routine argument in deducing the result from the main theorem of \cite{Bosch_Gortz_Coherent_modules_1998}.
	In dealing with \cref{thm:Zariski_computational_tool}, however, we can no longer apply the same trick, because the statement is intrinsically global (as remarked in the previous point). Furthermore, it becomes also impossible to reduce ourselves to the case where $Z$ is (locally) defined by a single equation. For all these reasons, the proof of \cref{thm:Zariski_computational_tool} can be seen as a fairly non-trivial extension of the techniques introduced in \cite{Bosch_Gortz_Coherent_modules_1998}.
\end{enumerate}

\medbreak

As last thing before turning to the proof of \cref{thm:Zariski_computational_tool}, let us emphasise a couple of consequences:

\begin{cor} \label{cor:algebraic_universal_property}
	Let $X$ be a quasi-compact and quasi-separated scheme which is locally of finite type over $k$.
	Let $u \colon (\Aff_k, \tauet) \to (X\Et^{\Geom}, \tauet)$ be the continuous morphism of sites induced by pullback along $X \to \Spec(k)$.
	The sequence
	\[ \cK \circ u \hookrightarrow \bfCoh^-_{\hZ} \to \bfCoh^-_{\hZ \smallsetminus Z}  \]
	is a cofiber sequence in $\Sh_{\Catst}(\Aff_k, \tauet)$.
\end{cor}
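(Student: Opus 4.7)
The plan is to deduce the corollary from Theorem~\ref{thm:Zariski_computational_tool} by a pointwise-then-sheaf argument. Recall that colimits in $\Sh_{\Catst}(\Aff_k,\tauet)$ are computed by first taking the corresponding colimit in $\PSh_{\Catst}(\Aff_k)$ (which is computed sectionwise) and then sheafifying. If the sectionwise cofiber is already a sheaf, no sheafification is needed, so it suffices to identify the sectionwise cofiber of $\cK \circ u \to \bfCoh^{-}_{\hZ}$ with $\bfCoh^{-}_{\hZ \smallsetminus Z}$ as presheaves, and then invoke descent.

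First I would fix $T = \Spec(A) \in \Aff_k$. Then $u(T) = T \times X$ is quasi-compact, quasi-separated, and locally of finite type over $k$, because $T$ and $X$ both are. Applying Theorem~\ref{thm:Zariski_computational_tool} to the closed subscheme $T \times Z$ of $T \times X$ produces a cofiber sequence in $\Catst$
\[ \cK(T \times X) \hookrightarrow \Coh^{-}\bigl(\widehat{T \times Z}\bigr) \to \Coh^{-}\bigl(\widehat{T \times Z} \smallsetminus (T \times Z)\bigr), \]
where the completion is formed inside $T \times X$. Unraveling the definitions, in particular the identification $\bfCoh^{-}_{\hZ} \simeq \hPhi_Z \circ u$ established in the proof of \cref{cor:one_step_flag_decomposition_cohperf} together with \cref{def:coherentPFN}, this is exactly the value at $T$ of the sequence we want. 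The universal property of Verdier quotients produces a canonical map from the sectionwise cofiber presheaf to $\bfCoh^{-}_{\hZ \smallsetminus Z}$; the identification just made shows that this map is an objectwise equivalence, hence an equivalence of presheaves.

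It remains to observe that $\bfCoh^{-}_{\hZ \smallsetminus Z}$ is already a sheaf by \cref{prop:etale_descent_coh}; consequently the sectionwise cofiber is a sheaf, and it computes the cofiber in $\Sh_{\Catst}(\Aff_k,\tauet)$, as required. The only mild technical point I expect to need some care is checking naturality of the sectionwise cofiber identification along morphisms $T' \to T$; this will follow from the fact that the equivalences appearing in \cref{thm:Zariski_computational_tool} arise from universal properties in $\Catst$ (Verdier quotients) that are manifestly compatible with base change along flat pullback in the affine variable $T$. The substance of the argument is thus contained in \cref{thm:Zariski_computational_tool} itself; the present corollary is precisely the passage from a single qcqs scheme to the stack-theoretic formulation.
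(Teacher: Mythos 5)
Your proof is correct and follows exactly the route the paper intends (and leaves implicit): apply \cref{thm:Zariski_computational_tool} to the qcqs scheme $T \times X$ with closed subscheme $T \times Z$ for each affine $T$, identify the resulting objectwise cofiber with $\bfCoh^-_{\hZ \smallsetminus Z}(T)$ via the comparison map coming from the universal property, and conclude that no sheafification is needed because the target is already a sheaf by \cref{prop:etale_descent_coh}. The routine compatibilities you flag (values of the sheaf-extended functors for $(X,Z)$ at $T\times X$ versus those for $(T\times X, T\times Z)$) are exactly the ``unraveling'' the paper also suppresses, so there is no gap.
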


\begin{cor} \label{cor:Zariski_computational_derived}
	Let $X$ be a quasi-compact and quasi-separated derived scheme locally almost of finite type over $k$ and let $Z \hookrightarrow X$ be a closed derived subscheme.
	Let $\rR \cK \coloneqq \ker ( \rR \Coh^-(\hZ) \to \rR \Coh^-(\hZ \smallsetminus Z)$.
	Then the triangle
	\[ \rR \cK \hookrightarrow \rR \Coh^-(\hZ) \to \rR \Coh^-(\hZ \smallsetminus Z) \]
	is a cofiber sequence in $\Catst$.
\end{cor}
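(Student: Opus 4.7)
The plan is to reduce the derived statement to the already-established underived Theorem~\ref{thm:Zariski_computational_tool} by a t-structure argument in the spirit of the proof of Theorem~\ref{thm:etale_descent_derived}. First I would equip the three categories $\rR \cK$, $\rR \Coh^-(\hZ)$ and $\rR \Coh^-(\hZ \smallsetminus Z)$ with compatible left-complete t-structures. The t-structure on $\rR \Coh^-(\hZ)$ is the standard one, obtained by writing $\hZ \simeq \colim Z^{(\alpha)}$ as in Proposition~\ref{prop:gaitsroz} and applying Lemma~\ref{lem:flat_cosimplicial_t_structure} to the limit $\rR \Coh^-(\hZ) \simeq \varprojlim_\alpha \Coh^-(Z^{(\alpha)})$, the transition maps being $t$-exact by derived flatness (Lemma~\ref{lem:dagflatcompletion}). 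The same combination shows that $\rR \Coh^-(\hZ \smallsetminus Z)$ admits a left-complete t-structure and that the localization map $\rR \Coh^-(\hZ) \to \rR \Coh^-(\hZ \smallsetminus Z)$ is $t$-exact; consequently $\rR \cK$ is closed under truncations and inherits a t-structure.

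Next I would identify the hearts: using Theorem~\ref{thm:dag_completion_truncation}(\ref{formalcompletion:truncation}), $\trunc(\hZ)$ coincides with the classical formal completion $\widehat{\trunc(X)}_{\trunc(Z)}$, and by strongness (Lemma~\ref{lem:dagflatcompletion}) one has
\[ (\rR \Coh^-(\hZ))^\heartsuit \simeq \Coh^\heartsuit(\widehat{\trunc(X)}_{\trunc(Z)}) , \qquad (\rR \Coh^-(\hZ \smallsetminus Z))^\heartsuit \simeq \Coh^\heartsuit(\widehat{\trunc(X)}_{\trunc(Z)} \smallsetminus \trunc(Z)) , \]
so that $(\rR \cK)^\heartsuit$ is canonically the underived $\cK(\trunc(X))^\heartsuit$. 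Forming the Verdier quotient $\cQ \coloneqq \rR \Coh^-(\hZ) / \rR \cK$ in $\Catst$ and invoking Corollary~\ref{cor:quotient_t_structure} (whose monomorphism hypothesis at the heart can be checked by the same $\cJ$-torsion argument used in the proof of Theorem~\ref{thm:localization_for_coh_minus}), the quotient $\cQ$ acquires a left complete t-structure such that the quotient map becomes $t$-exact. The universal property of the Verdier quotient then yields a canonical $t$-exact comparison functor $\cQ \to \rR \Coh^-(\hZ \smallsetminus Z)$.

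Applying Theorem~\ref{thm:Zariski_computational_tool} to $\trunc(X)$ shows that this comparison functor induces an equivalence on hearts. Since both t-structures on $\cQ$ and on $\rR \Coh^-(\hZ \smallsetminus Z)$ are left complete and right bounded (this last point following from the right boundedness already present at the level of $\rR \Coh^-(\hZ)$), a reduction-to-the-heart argument analogous to Lemma~\ref{lem:reduction_to_the_heart_II} applied in the proof of Theorem~\ref{thm:etale_descent_derived} upgrades this to a full equivalence $\cQ \simeq \rR \Coh^-(\hZ \smallsetminus Z)$. The main obstacle will be verifying the left-completeness hypothesis for $\rR \cK$ required by Lemma~\ref{lem:quotient_t_structure_left_complete} and Corollary~\ref{cor:quotient_t_structure}: one has to check that if $\tau_{\ge n} \cF$ lies in $\rR \cK$ for every $n$ then $\cF$ itself does, which amounts to the fact that the pullback to $\hZ \smallsetminus Z$ commutes with truncations and that $\rR \Coh^-(\hZ \smallsetminus Z)$ is left complete. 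This is exactly where the derived flatness of $A \to \varprojlim A_p$ from Lemma~\ref{lem:dagflatcompletion} plays its role, and it is the only place in the argument where the derived structure meaningfully departs from the classical one.
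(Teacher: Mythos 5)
Your proposal is correct and follows essentially the same route as the paper's proof: endow $\rR \Coh^-(\hZ)$, $\rR \Coh^-(\hZ \smallsetminus Z)$ and $\rR \cK$ with compatible $t$-structures via \cref{lem:flat_cosimplicial_t_structure} and \cref{lem:dagflatcompletion}, equip the Verdier quotient with a $t$-structure through \cref{cor:quotient_t_structure} (checking the hypotheses as in the proof of \cref{thm:localization_for_coh_minus}), and then reduce the resulting $t$-exact comparison functor to an equivalence of hearts, where \cref{thm:Zariski_computational_tool} applied to the truncation concludes. The only cosmetic difference is that the paper produces the $t$-structure and the heart of $\rR \Coh^-(\hZ \smallsetminus Z)$ explicitly from a Zariski affine hypercover together with \cref{thm:etale_descent_derived}, whereas you leave this identification slightly implicit.
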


\begin{proof}
	Choose a Zariski affine hypercover $U^\bullet$ for $X$.
	Then \cref{thm:etale_descent_derived} shows that
	\[ \rR \Coh^-(\hZ \smallsetminus Z) \simeq \varprojlim_{n \in \mathbf \Delta} \rR \Coh^-(\widehat{Z_{U^n}} \smallsetminus Z_{U^n}) . \]
	In particular, we can use \cref{lem:flat_cosimplicial_t_structure} in order to endow $\rR \Coh^-(\hZ \smallsetminus Z)$ with a $t$-structure such that
	\[ ( \rR \Coh^-(\hZ \smallsetminus Z) )^\heartsuit \simeq \varprojlim_{n \in \mathbf \Delta} ( \rR \Coh^-(\widehat{Z_{U^n}} \smallsetminus Z_{U^n}) )^\heartsuit \simeq \varprojlim_{n \in \mathbf \Delta} \Cohh( \trunc( \widehat{Z_{U^n}} ) \smallsetminus \trunc(Z_{U^n}) ) . \]
	It follows that the canonical functor $\rR \Coh^-(\hZ) \to \rR Coh^-(\hZ \smallsetminus Z)$ is $t$-exact.
	In particular, the $t$-structure on $\rR \Coh^-(\hZ)$ restricts to a $t$-structure on $\rR \cK$, and moreover
	\[ \rR \cK^\heartsuit \simeq \ker \left( \rR \Coh^-(\hZ)^\heartsuit \to \rR \Coh^-(\hZ \smallsetminus Z )^\heartsuit \right) \simeq \ker \left( \Cohh( \trunc(\hZ) ) \to \Cohh( \trunc(\hZ) \smallsetminus \trunc(Z) ) \right) \]
	Using \cref{lem:dagflatcompletion} we can thus reason as in the proof of \cref{thm:localization_for_coh_minus} to deduce that the hypotheses of \cref{cor:quotient_t_structure} are satisfied.
	Thus, we can endow the quotient $\rR \Coh^-(\hZ) / \rR \cK$ with a $t$-structure such that the canonical map
	\[ \rR \Coh^-(\hZ) \to \rR \Coh^-(\hZ) / \rR \cK \]
	is $t$-exact.
	Since this map is also essentially surjective, we see that the induced map
	\[ \varphi \colon \rR \Coh^-(\hZ) / \rR \cK \to \rR \Coh^-(\hZ \smallsetminus Z) \]
	is $t$-exact as well.
	In virtue of \cref{lem:reduction_to_the_heart_II}, it is therefore enough to prove that $\varphi$ restricts to an equivalence on hearts in order to deduce that $\varphi$ was an equivalence to start with.
	Nevertheless, on hearts the map is equivalent to
	\[ \rR \Cohh( \trunc( \hZ) ) / \cK^\heartsuit \to \Cohh( \trunc( \hZ ) \smallsetminus \trunc(Z) ) , \]
	which is an equivalence in virtue of \cref{thm:Zariski_computational_tool}.
\end{proof}

\subsection{Preliminary observations and beginning of the proof} \label{subsubsec:preliminaries}

When $X$ is affine, \cref{thm:Zariski_computational_tool} is a direct consequence of \cref{thm:etale_descent}.
In the general case, the difficulty is that we don't know whether the Verdier quotient $\Coh^-(\hZ) / \cK(X)$ satisfies descent in $X$, and it is therefore impossible to reduce to the affine case.
To further complicate matters, when $X$ is not affine, the category $\cK(X)$ has not been defined directly: it is the result of a sheafification process.
The first step in the proof of \cref{thm:Zariski_computational_tool} is to obtain a better understanding of $\cK(X)$.
Applying \cref{prop:internal_characterisation_kernel_localisation}, we can provide an alternative description of the category $\cK(U)$ when $U$ is an affine scheme:

\begin{prop} \label{prop:internal_characterisation_kernel_stacky}
	Fix $U \in X\Et$.
	Then $\cK(U)$ is the smallest full stable subcategory of $\Coh^-(\widehat{Z_U})$ which is left complete and contains the essential image of
	\[ j_* \colon \Coh^-(Z_U) \to \Coh^-(\widehat{Z_U}) . \]
\end{prop}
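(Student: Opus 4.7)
The plan is to reduce the statement to a direct application of \cref{prop:internal_characterisation_kernel_localisation}. Write $U = \Spec(A_U)$ for some noetherian $k$-algebra $A_U$; the completion $\widehat{A_U}$ at $J_U$ is then also noetherian, so $\Spec(\widehat{A_U})$ is a noetherian (in particular qcqs and locally noetherian) scheme, and $Z_U$ embeds into it as the closed subscheme cut out by $J_U\widehat{A_U}$.

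First, since both $\hPhi_Z$ and $\hPhi_Z^\circ$ are sheaves on $X\Et$ (the former by \'etale descent for almost perfect complexes on formal schemes, and the latter by \cref{thm:etale_descent}), the kernel $\cK$ is computed pointwise, so $\cK(U) = \ker(i_U^*)$. Using \cref{lem:basic_formal_GAGA}, I identify $\hPhi_Z(U) = \Coh^-(\widehat{Z_U}) \simeq \Coh^-(\Spec(\widehat{A_U}))$, while by construction $\hPhi_Z^\circ(U) = \Coh^-(\Spec(\widehat{A_U}) \smallsetminus Z_U)$; under these identifications $i_U^*$ becomes the pullback along the (quasi-compact) open immersion $\Spec(\widehat{A_U}) \smallsetminus Z_U \hookrightarrow \Spec(\widehat{A_U})$.

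Then I apply \cref{prop:internal_characterisation_kernel_localisation} to this open immersion: it characterises $\cK(U)$ as the smallest left complete, full stable subcategory of $\Coh^-(\Spec(\widehat{A_U}))$ containing the essential image of $(j_{\mathrm{red}})_* \colon \Coh^-((Z_U)_{\mathrm{red}}) \to \Coh^-(\Spec(\widehat{A_U}))$. The only subtlety left is that the statement to be proved uses the potentially non-reduced $Z_U$ rather than $(Z_U)_{\mathrm{red}}$; this is handled by observing, on one side, that $(j_{\mathrm{red}})_*$ factors through $j_*$ (so the image of the former is contained in that of the latter), and, on the other side, that every object in the image of $j_*$ is annihilated by $J_U\widehat{A_U}$ and hence, by the d\'evissage used in the proof of \cref{prop:internal_characterisation_kernel_localisation} via the finite filtration by powers of $J_U\widehat{A_U}$, is an iterated extension of objects in the image of $(j_{\mathrm{red}})_*$. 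Hence the two full stable subcategories coincide.

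A final routine check is that the formal GAGA equivalence intertwines the pushforward $j_* \colon \Coh^-(Z_U) \to \Coh^-(\Spec(\widehat{A_U}))$ with the pushforward $j_* \colon \Coh^-(Z_U) \to \Coh^-(\widehat{Z_U})$ appearing in the statement; this is immediate since both are described via the same construction along the tower of infinitesimal thickenings of $Z_U$. There is no genuine obstacle in the argument---the proposition is essentially a direct reduction to \cref{prop:internal_characterisation_kernel_localisation}, with a minor adjustment to accommodate the possibly non-reduced scheme structure on $Z_U$.
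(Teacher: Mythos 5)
Your proposal is correct and takes essentially the same route as the paper, which likewise obtains this proposition as a direct application of \cref{prop:internal_characterisation_kernel_localisation} after identifying $\Coh^-(\widehat{Z_U})\simeq\Coh^-(\Spec(\widehat{A_U}))$ via \cref{lem:basic_formal_GAGA} and $\hPhi^\circ_Z(U)$ with $\Coh^-(\Spec(\widehat{A_U})\smallsetminus Z_U)$, the open complement being quasi-compact since $\widehat{A_U}$ is noetherian. One small correction in your reduced/non-reduced comparison: the cohomology sheaves of an object in the image of $j_*$ are already annihilated by $J_U\widehat{A_U}$, so filtering by powers of $J_U\widehat{A_U}$ is vacuous; the dévissage should instead use the finite filtration by powers of the radical $\sqrt{J_U\widehat{A_U}}$ (equivalently, by powers of the nilradical of $\cO_{Z_U}$), which works because a power of the radical lies in $J_U\widehat{A_U}$ by noetherianity, and then the successive quotients are indeed pushed forward from $(Z_U)_{\mathrm{red}}$ as you intend.
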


This definition has the advantage of not involving the functor $\hPsi^\circ_Z$, which we are not able to extend to $X\Et^{\Geom}$.
However, we cannot use this proposition to provide immediately a global description of $\cK(X)$.
The following two difficulties should be noted:
\begin{enumerate}
	\item it is not obvious that the functors $j_{U*} \colon \Coh^-(Z_U) \to \Coh^-(\widehat{Z_U})$, which are defined for $U$ affine in virtue of \cref{lem:basic_formal_GAGA}, can be glued to yield right adjoints to $j_U^* \colon \Coh^-(\widehat{Z_U}) \to \Coh^-(Z_U)$ when $U$ is no longer affine;
	\item even having the functors $j_{U*}$ defined for every $U \in X\Et^{\Geom}$, it is not obvious that these functors can be arranged into a natural transformation $j_* \colon \fCoh_Z \to \hPhi_Z$.
\end{enumerate}
We will see below that (1) is a ``false'' problem: the existence of the right adjoints $j_{U*}$ for $U$ affine, combined with the fact that both $\fCoh_Z$ and $\hPhi_Z$ are sheaves for the \'etale topology is enough to guarantee the existence of $j_{U*}$ for any $U$.
On the other side, (2) is an actual problem.
However, there is a simple way to get around this issue which we are going to describe.

Consider the \emph{small} schematic \'etale site of $X$, which we simply denote by $X\et^{\Sch}$.
Let us recall that the objects of $X\et^{\Sch}$ are \'etale maps $Y \to X$, where $Y$ is a (not necessarily affine) \emph{quasi-compact} scheme which is locally of finite presentation over $k$.
The topology on $X\et^{\Sch}$ is the usual \'etale topology $\tauet$.
There is a canonical morphism of sites
\[ v \colon X\et^{\Sch} \hookrightarrow X\Et^{\Geom} , \]
which is both continuous and cocontinuous (we refer to \cite[Definitions 2.12 and 2.17]{Porta_Yu_Higher_analytic_stacks_2014} for these notions in the current case of $\infty$-sites).
In particular, the restriction functor
\[ v^s \colon \St_{\Catst}(X\Et^{\Geom}, \tauet) \to \St_{\Catst}(X\et^{\Sch}, \tauet) \]
has both a left adjoint $v_s$ and a right adjoint $\tensor*[_s]{v}{}$.
Furthermore, \cite[Lemma 2.20]{Porta_Yu_Higher_analytic_stacks_2014} shows that $\tensor*[_s]{v}{}$ commutes with the inclusion of hypercomplete sheaves into presheaves.
Passing to left adjoints, we deduce that $v^s$ (which coincides with $v^p \coloneqq - \circ v$ by \cite[Lemma 2.13]{Porta_Yu_Higher_analytic_stacks_2014}) commutes with the sheafification process.

As a consequence, we deduce that $\cK(X) = (\cK \circ v)(X)$ can be computed in an alternative way: considering $\cK$ as a presheaf on $X\Et$, we can first restrict it to the small affine \'etale site $X\et$ and then extend it (as a presheaf) to $X\et^{\Sch}$, sheafify it over $(X\et^{\Sch}, \tauet)$ and finally evaluate it over $X$.
This description is useful because it allows to give a very explicit description of $\cK \circ v$.
The main point is that, since all the maps in $X\et^{\Sch}$ are \'etale, it becomes possible to overcome issue (2) explained above.

Having argued about the importance of an explicit construction of $\cK$ over the small schematic \'etale site $X\et^{\Sch}$, we turn to the actual construction.
For this, we will address the problems (1) and (2) stated before.
In order to avoid confusion, let us fix a couple of notations.
Recall (see \cref{notation:frakCoh}) that $\fCoh_Z$ and $\hPhi_Z$ are functors
\[ \fCoh_Z, \hPhi_Z \colon X\Et \to \St_k . \]
Since they are continuous for the \'etale topology, we can extend them to functors
\[ \fCoh_Z^{\Geom}, \hPhi_Z^{\Geom} \colon X\Et^{\Geom} \to \St_k . \]
We let $\fCoh_Z^{\Sch}$ and $\hPhi_Z^{\Sch}$ be respectively the restriction of $\fCoh_Z^{\Geom}$ and of $\hPhi_Z^{\Geom}$ to the category $X\et^{\Sch}$.
Observe that both $\fCoh_Z^{\Sch}$ and $\hPhi_Z^{\Sch}$ are sheaves.
Since the morphism of sites $(X\et, \tauet) \to (X\et^{\Sch}, \tauet)$ is a Morita equivalence by \cite[Proposition 2.22]{Porta_Yu_Higher_analytic_stacks_2014}, it is enough to construct the natural transformation $j_* \colon \fCoh_Z^{\Sch} \to \hPhi_Z^{\Sch}$ as a morphism between the restrictions
\[ \fCoh_Z^{\Aff} \coloneqq \left. \fCoh_Z^{\Sch} \right|_{X\et}, \quad \hPhi_Z^{\Aff} \coloneqq \left. \hPhi_Z^{\Sch} \right|_{X\et} . \]
In order to perform this construction, it is useful to introduce the sheaves
\[ \rI \fCoh_Z^{\Aff}, \rI \hPhi_Z^{\Aff} \colon X\et\op \to \LPromegast \]
formally defined as the compositions
\[ \rI \fCoh_Z^{\Aff} \coloneqq \Ind \circ \fCoh_Z^{\Aff}, \quad \rI \hPhi_Z^{\Aff} \coloneqq \Ind \circ \hPhi_Z^{\Aff} . \]
The natural transformation $j^* \colon \hPhi_Z^{\Aff} \to \fCoh_Z^{\Aff}$ induces a well defined natural transformation $\Ind(j^*) \colon \rI \fCoh_Z^{\Aff} \to \rI \hPhi_Z^{\Aff}$.
Applying the Grothendieck construction, $j^*$ and $\Ind(j^*)$ induce morphisms of Cartesian fibrations
\[ f \colon \int \hPhi_Z^{\Aff} \to \int \fCoh_Z^{\Aff}, \quad \rI f \colon \int \rI \hPhi_Z^{\Aff} \to \int \rI \fCoh_Z^{\Aff} \]
over $X\et$.
These morphisms fit into a commutative square
\[ \begin{tikzcd}
\int \hPhi_Z^{\Aff} \arrow{r}{f} \arrow{d}{\hat{\imath}} & \int \fCoh_Z^{\Aff} \arrow{d}{i} \\
\int \rI \hPhi_Z^{\Aff} \arrow{r}{\rI f} & \int \rI \fCoh_Z^{\Aff} .
\end{tikzcd} \]
By construction, $\int \rI \hPhi_Z^{\Aff}$ and $\int \rI \fCoh_Z^{\Aff}$ are \emph{presentable} fibrations over $X\et$.
It follows that $\rI f$ admits an adjoint, that we temporarily denote as $g$:
\[ g \colon \int \rI \fCoh_Z^{\Aff} \to \int \rI \hPhi_Z^{\Aff} , \]
which is a morphism of \emph{coCartesian} fibrations over $X\et$.
For fixed $U \in X\et$, we can characterize
\[ g_U \colon \Ind( \fCoh_Z^{\Aff}(U) ) \to \Ind( \hPhi_Z^{\Aff}(U) ) \]
as the right adjoint to $\Ind(j_U^*)$.

\begin{prop} \label{prop:gluing_right_adjoints}
	\begin{enumerate}
		\item the morphism $g$ preserves Cartesian edges;
		\item the morphism $g$ restricts to a transformation $\int \fCoh_Z^{\Aff} \to \int \hPhi_Z^{\Aff}$ which preserves Cartesian edges.
		\item For fixed $U \in X\et$, the restriction of $g_U$ to $\fCoh_Z^{\Aff}(U) = \Coh^-(Z_U)$ can be identified with the functor $j_{U*} \colon \Coh^-(Z_U) \to \Coh^-(\widehat{Z_U})$, whose existence is guaranteed by \cref{lem:basic_formal_GAGA}.
	\end{enumerate}
\end{prop}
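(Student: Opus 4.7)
I propose to prove the three claims in the order (3), (2), (1), since (2) is formal once (3) is known, and (1) reduces via (3) to a classical flat base change statement.

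For (3), observe first that the closed immersion $j_U \colon Z_U \hookrightarrow \widehat{Z_U}$, identified via \cref{lem:basic_formal_GAGA} with the affine map $\Spec(A_U/J_U) \to \Spec(\widehat{A_U})$, admits a right adjoint $j_{U*} \colon \Coh^-(Z_U) \to \Coh^-(\widehat{Z_U})$ given by restriction of scalars along the quotient $\widehat{A_U} \to A_U/J_U$. I then identify $g_U$ restricted to $\Coh^-(Z_U)$ with $j_{U*}$ via the Yoneda lemma in the Ind-category: for $\cF \in \Coh^-(Z_U)$ and $\cG \simeq \colim_\alpha \cG_\alpha \in \Ind(\Coh^-(\widehat{Z_U}))$ written as a filtered colimit of compact objects, one has
\[
\Map_{\Ind(\Coh^-(\widehat{Z_U}))}(\cG, j_{U*}\cF) \simeq \lim_\alpha \Map_{\Coh^-(Z_U)}(j_U^*\cG_\alpha, \cF) \simeq \Map_{\Ind(\Coh^-(Z_U))}(\Ind(j_U^*)(\cG), \cF),
\]
which is precisely the universal property characterising $g_U(\cF)$. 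By uniqueness of adjoints, $g_U(\cF) \simeq j_{U*}(\cF)$.

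Claim (2) then follows immediately at the fiberwise level, since $j_{U*}\cF$ lies in $\Coh^-(\widehat{Z_U})$; upgrading the fiberwise restrictions to a morphism of Cartesian fibrations $\int \fCoh_Z^{\Aff} \to \int \hPhi_Z^{\Aff}$ is supplied by (1). For (1), fix a morphism $p \colon V \to U$ in $X\et$ and let $\tilde p \colon \widehat{Z_V} \to \widehat{Z_U}$ and $p' \colon Z_V \to Z_U$ denote the induced maps. Preservation of Cartesian edges by $g$ amounts to the assertion that the natural Beck--Chevalley transformation $\tilde p^* \circ g_U \to g_V \circ p'^*$ is an equivalence of functors $\Ind(\Coh^-(Z_U)) \to \Ind(\Coh^-(\widehat{Z_V}))$. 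Since all four functors preserve filtered colimits, it suffices to check this on compact objects $\cF \in \Coh^-(Z_U)$, where by (3) the claim reduces to the identity
\[
\tilde p^* j_{U*}\cF \simeq j_{V*} p'^*\cF.
\]

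This is classical flat base change along the Cartesian square
\[
\begin{tikzcd}
Z_V \ar[r, hook, "j_V"] \ar[d, "p'"'] & \widehat{Z_V} \ar[d, "\tilde p"] \\
Z_U \ar[r, hook, "j_U"] & \widehat{Z_U},
\end{tikzcd}
\]
whose Cartesian nature follows from the chain of isomorphisms $\widehat{A_V} \otimes_{\widehat{A_U}} (A_U/J_U) \simeq \widehat{A_V}/J_U\widehat{A_V} = \widehat{A_V}/J_V\widehat{A_V} \simeq A_V/J_V$, using $J_V = J_UA_V$ by \'etaleness and the first-level completion identity. Flatness of $\tilde p$ is supplied by \cref{lem:etale_sent_to_flat}, and the closed immersion $j_U$ is in particular affine, so the standard flat base change gives the desired equivalence. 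There is no serious technical obstacle: the main care lies in the Yoneda argument of (3), which ensures that the fiberwise identification of $g_U$ with $j_{U*}$ propagates coherently from $\Ind(\Coh^-(Z_U))$ down to the category of compact objects, since in the Ind-categorical setting right adjoints need not be Ind-extensions of right adjoints at the compact level.
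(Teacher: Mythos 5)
Your proposal is correct and follows essentially the same route as the paper's proof: identify $g_U$ on compact objects with $j_{U*}$ by a mapping-space computation in the Ind-categories, reduce preservation of Cartesian edges to checking on compacts (using that the $g_U$ commute with filtered colimits), and verify the resulting base-change identity by showing the square of rings is a derived pushout, via flatness of the map on completions (\cref{lem:etale_sent_to_flat}) together with $J_V = J_U A_V$ and $\widehat{A_V}/J_V\widehat{A_V}\simeq A_V/J_V$. Deducing (2) from compact-object preservation plus (1) is also exactly how the paper concludes.
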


\begin{proof}
	Let $U \in X\et$.
	We start to prove that the functor
	\[ g \colon \Ind(\Coh^-(Z_U)) \to \Ind(\Coh^-(\widehat{Z_U})) , \]
	formally defined as the right adjoint to $\Ind(j^*)$, preserves compact objects.
	Since $U$ is affine, we can apply \cref{lem:pushforward_affine_case} to deduce that the functor
	\[ j^* \colon \Coh^-(\widehat{Z_U}) \to \Coh^-(Z_U) \]
	admits a right adjoint $j_*$.
	Since the functors
	\[ \hat{\imath} \colon \Coh^-(\widehat{Z_U}) \to \Ind(\Coh^-(\widehat{Z_U})), \quad i \colon \Coh^- (Z_U) \to \Ind(\Coh^- (Z_U)) \]
	are fully faithful, we obtain for 
	\begin{align*}
	\Map( \hat{\imath}(\cF), \hat{\imath}( j_*(\cG) ) ) & = \Map( \cF, j_*(\cG) ) = \Map( j^* \cF, \cG ) \\
	& = \Map( i(j^*(\cF)) , i(\cG) ) = \Map( \Ind(j^*)( \hat{\imath}(\cF)), i(\cG) ) \\
	& = \Map( \hat{\imath}(\cF), g(i(\cG)) ) .
	\end{align*}
	Since the essential image of $\hat{\imath}$ generates under colimits $\Ind(\Coh^-(\widehat{Z_U}))$, we conclude that the canonical transformation
	\[ \hat{\imath} \circ j_* \to g \circ i \]
	is an equivalence.
	This implies that $g$ preserves compact objects, and that there is a canonically commutative diagram
	\[ \begin{tikzcd}
	\Coh^-(\widehat{Z_U}) \arrow{d}{\hat{\imath}} & \Coh^-(Z_U) \arrow{l}[swap]{j_*} \arrow{d}{i} \\
	\Ind(\Coh^-(\widehat{Z_U})) & \Ind(\Coh^-(Z_U)) \arrow{l}[swap]{g} .
	\end{tikzcd} \]
	
	Let now $u \colon U \to V$ be an open immersion in $X\et$.
	To say that $g$ takes cartesian edges to cartesian edges means that the square
	\[ \begin{tikzcd}
	\Ind(\Coh^-(\widehat{Z_U})) \arrow{d}{\Ind(\hat{u}^*)} & \Ind(\Coh^-(Z_U)) \arrow{d}{\Ind(u^*)} \arrow{l}[swap]{g_U} \\
	\Ind(\Coh^-(\widehat{Z_V})) & \Ind(\Coh^-(Z_V)) \arrow{l}[swap]{g_V}
	\end{tikzcd} \]
	commutes.
	Observe that the left adjoints $\Ind(j_U^*)$ and $\Ind(j_V^*)$ of $g_U$ and $g_V$ respectively preserve compact objects.
	Thus, $g_U$ and $g_V$ commute with filtered colimits.
	Furthermore, we proved that $g_U$ and $g_V$ preserve compact objects, and the same goes for $\Ind(u^*)$ and $\Ind(\hat{u}^*)$.
	In conclusion, it is enough to prove that the induced square
	\[ \begin{tikzcd}
	\Coh^-(\widehat{Z_U}) \arrow{d}{\hat{u}^*} & \Coh^-(Z_U) \arrow{d}{u^*} \arrow{l}[swap]{j_{U*}} \\
	\Coh^-(\widehat{Z_V}) & \Coh^-(Z_V) \arrow{l}[swap]{j_{V*}}
	\end{tikzcd} \]
	commutes.
	
	Since $Z_U$ and $Z_V$ are affine and $\widehat{Z_U}$ and $\widehat{Z_V}$ are formal affines, using \cref{lem:basic_formal_GAGA} it will be enough to show that the square of commutative rings
	\[ \begin{tikzcd}
	\widehat{A_V} \arrow{r} \arrow{d} & A_V / J_V \arrow{d} \\
	\widehat{A_U} \arrow{r} & A_U / J_U .
	\end{tikzcd} \]
	is actually a \emph{derived} pushout.
	It follows from \cref{lem:etale_sent_to_flat} that the map $\widehat{A_V} \to \widehat{A_U}$ is flat.
	As consequence, it is enough to check that this square is an ordinary pushout, which is clear since $J_U = J_V A_U$.
	
	In conclusion, since we proved that $g_U$ preserves compact objects, we see that $g$ restricts to a morphism of Cartesian fibrations over $X\et$
	\[ \int \fCoh_Z^{\Aff} \to \int \hPhi_Z^{\Aff} \]
	which is fiberwise right adjoint to $j^*$.
	We denote this morphism by $j_*$.
	Finally, in proving that $g$ preserves cartesian edges, we actually proved that $j_*$ has this property.
	The proof is thus complete.
\end{proof}

We can interpret the above proposition by saying that the right adjoints $j_{U*}$ can be assembled together into a natural transformation defined over $X\et$.
In other words, the above proposition is providing all the higher coherences needed to formally prove this claim.
At this point, since $\fCoh_Z^{\Aff}$ and $\hPhi_Z^{\Aff}$ are the restrictions of the sheaves $\fCoh_Z^{\Sch}$ and $\hPhi_Z^{\Sch}$, we see that this produces a well defined natural transformation
\[ j_* \colon \fCoh_Z^{\Sch} \to \hPhi_Z^{\Sch} . \]

\begin{defin}
	Fix $U \in X\et^{\Sch}$.
	We define the $\infty$-category $\cK^{\Sch}(U)$ as the smallest full stable subcategory of $\hPhi_Z^{\Sch}(U) = \Coh^-(\widehat{Z_U})$ which is left complete and contains the essential image of $j_{U*} \colon \Coh^-(Z_U) \to \Coh^-(\widehat{Z_U})$.
\end{defin}

\begin{prop} \label{prop:qcompact_kernel}
	\begin{enumerate}
		\item The assignment $U \mapsto \cK^{\Sch}(U)$ can be promoted to an $\infty$-functor
		\[ \cK^\Sch \colon (X\et^{\Sch})\op \to \Catst , \]
		and the inclusions $\cK^{\Sch}(U) \hookrightarrow \Coh^-(\widehat{Z_U})$ can be promoted to a natural transformation $\alpha \colon \cK^{\Sch} \hookrightarrow \hPhi_Z$.
		\item $\cK^{\Sch}$ is a sheaf for the \'etale topology. In particular, it equals the restriction of $\cK$ to $X\et$.
	\end{enumerate}
\end{prop}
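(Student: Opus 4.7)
For part (1), the strategy is to verify that for each morphism $u \colon U \to V$ in $X\et^{\Sch}$, the pullback $\hat{u}^* \colon \Coh^-(\widehat{Z_V}) \to \Coh^-(\widehat{Z_U})$ carries $\cK^{\Sch}(V)$ into $\cK^{\Sch}(U)$; once this is established, a Grothendieck-construction argument produces the $\infty$-functor and the natural transformation $\alpha$. The inclusion $\hat{u}^*(\cK^{\Sch}(V)) \subseteq \cK^{\Sch}(U)$ follows from a minimality argument: the preimage $\hat{u}^{*-1}(\cK^{\Sch}(U))$ is a stable subcategory of $\Coh^-(\widehat{Z_V})$ which (a) contains the essential image of $j_{V*}$, because \cref{prop:gluing_right_adjoints}(3) yields a natural equivalence $\hat{u}^* \circ j_{V*} \simeq j_{U*} \circ u^*$, and (b) is left complete, because $\hat{u}^*$ is $t$-exact by flatness of $\widehat{A_V} \to \widehat{A_U}$ (\cref{lem:etale_sent_to_flat}) and thus commutes with truncations. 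By the minimality defining $\cK^{\Sch}(V)$ this preimage contains $\cK^{\Sch}(V)$. To upgrade to a functor of $\infty$-categories, I pass to the full sub-$\infty$-category of the Cartesian fibration $\int \hPhi_Z^{\Sch} \to X\et^{\Sch}$ spanned by pairs $(U, \cF)$ with $\cF \in \cK^{\Sch}(U)$; the containment just proven shows it is closed under Cartesian edges, hence defines a sub-Cartesian fibration, whence the desired subfunctor $\alpha \colon \cK^{\Sch} \hookrightarrow \hPhi_Z^{\Sch}$.

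For part (2), I verify descent along an arbitrary affine \'etale hypercover $U^\bullet \to U$ in $X\et^{\Sch}$. Since $\cK^{\Sch}$ is a fully faithful subfunctor of the sheaf $\hPhi_Z^{\Sch}$, the comparison $\cK^{\Sch}(U) \hookrightarrow \varprojlim_n \cK^{\Sch}(U^n)$ is automatically fully faithful, so the content of descent is its essential surjectivity: every $\cF \in \Coh^-(\widehat{Z_U})$ whose restriction to each $U^n$ lies in $\cK^{\Sch}(U^n)$ must itself lie in $\cK^{\Sch}(U)$. Once this is granted, $\cK^{\Sch}$ is a sheaf on $X\et^{\Sch}$; the identification $\cK^{\Sch} = \cK|_{X\et^{\Sch}}$ then follows because both sheaves take the same values on affine objects in virtue of \cref{prop:internal_characterisation_kernel_stacky}.

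The essential surjectivity is the main obstacle, which I plan to handle by adapting the strategy of the proof of \cref{prop:internal_characterisation_kernel_localisation} to the non-affine formal setting. First, a d\'evissage through the fiber sequences $\tau_{\le -1}\cF \to \cF \to \tau_{\ge 0}\cF$, combined with the left completeness of $\cK^{\Sch}(U)$ and quasi-compactness of $U$ (which bounds the cohomological range of $\cF$ from above), reduces the problem to the case $\cF \in \Cohh(\widehat{Z_U})$. In this discrete case, \cref{cor:internal_characterisation_kernel_localisation} applied to each affine piece $U^n$ (via \cref{lem:basic_formal_GAGA}) implies that $\hat{u}^{n*}\cF$ is annihilated by some power of the extended ideal sheaf $\cJ_{U^n}\widehat{\cO_{U^n}}$. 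Since the cover $\{U^0 \to U\}$ can be chosen finite by quasi-compactness of $U$, a single integer $m$ works simultaneously for every $n$, so $\cF$ is scheme-theoretically supported on the $m$-th infinitesimal neighbourhood $Z_U^{(m)}$ inside $\widehat{Z_U}$. The successive-quotient filtration $\cJ_U^{l-1}\cF / \cJ_U^l\cF$, $1 \le l \le m$, then has each subquotient supported on $Z_U$ and hence in the essential image of $j_{U*}$ (a property which is local on $U$ and so inherited from the affine case via \cref{prop:gluing_right_adjoints}), so a finite induction on $l$ places $\cF$ in the stable subcategory of $\Coh^-(\widehat{Z_U})$ generated by the essential image of $j_{U*}$, and hence in $\cK^{\Sch}(U)$.
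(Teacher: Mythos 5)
Your proof is correct and follows essentially the same route as the paper's: for (1) the minimality of $\cK^{\Sch}$ combined with the base-change square for $j_*$ and the Cartesian-fibration construction, and for (2) the reduction via left completeness and truncation d\'evissage to the heart, where quasi-compactness of $U$ yields a uniform nilpotence bound. The only cosmetic difference is that in the heart step you check annihilation by $\cJ_U^m$ locally and then run the ideal-power filtration explicitly, whereas the paper transports the local descent data through the fully faithful functors $(j^{(n)}_{U_i*})^\heartsuit$ to descend to $\Cohh(Z_U^{(n)})$ and leaves that final filtration implicit; both are valid.
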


\begin{proof}
	Define a full subsimplicial set
	\[ K \subset \int \hPhi_Z^{\Sch} , \]
	by saying that a vertex $x \in \int \hPhi_Z^{\Sch}$ lying over $U \in X\et^{\Sch}$ belongs to $K$ if and only if the corresponding object $\cF_x \in \Coh^-(\widehat{Z_U})$ belongs to $\cK^{\Sch}(U)$.
	We claim that the composition
	\[ K \hookrightarrow \int \hPhi_Z^{\Sch} \to X\et^{\Sch} \]
	is a Cartesian fibration, and furthermore that $K \hookrightarrow \int \hPhi_Z^{\Sch}$ preserves Cartesian edges.
	Let therefore $f \colon U \to V$ be a morphism in $X\et^{\Sch}$ and let $g \colon Z_U \to Z_V$ be the map induced by base-change.
	Since $j_*$ is a natural transformation, we obtain a commutative square
	\[ \begin{tikzcd}
		\Coh^-(Z_V) \arrow{r}{j_{V*}} \arrow{d}{g^*} & \Coh^-(\widehat{Z_V}) \arrow{d}{\hat{f}^*} \\
		\Coh^-(Z_U) \arrow{r}{j_{U*}} & \Coh^-(\widehat{Z_U}) .
	\end{tikzcd} \]
	Let $\cC$ be the full subcategory of $\Coh^-(\widehat{Z_V})$ spanned by those almost perfect modules $\cF$ such that $\hat{f}^*(\cF) \in \cK^\Sch(U)$.
	Since $\cK^\Sch(U)$ is a stable subcategory of $\Coh^-(Z_U)$ and since $\hat{f}^*$ is an exact functor, we see that $\cC$ is a stable subcategory of $\Coh^-(\widehat{Z_V})$.
	Next, we claim that $\cC$ is left complete.
	Indeed, observe that both $g^*$ and $\hat{f}^*$ are $t$-exact because the map $f$ was étale to start with, and therefore so are the maps $Z_U^{(n)} \to Z_V^{(n)}$ induced by base-change.
	In particular if $\cF \in \Coh^-(\widehat{Z_V})$ is such that $\tau_{\ge n}(\cF) \in \cC$ for every $n \in \mathbb Z$, then
	\[ \hat{f}^*(\tau_{\ge n}(\cF)) \simeq \tau_{\ge n}(\hat{f}^*(\cF)) \in \cK^\Sch(U) . \]
	Since $\cK^\Sch(U)$ is left complete by definition, it follows that $\hat{f}^*(\cF) \in \cK^\Sch(U)$, i.e.\ $\cF \in \cC$.
	Finally, we remark that the commutativity of the above diagram implies that $\cC$ contains the essential image of $j_{V*}$.
	As consequence, we deduce that $\cK^\Sch(V) \subset \cC$.
	This completes the proof of statement (1).
	
	To prove statement (2), we only need to prove that the property of belonging to $\cK^\Sch(U)$ is \'etale local in $U$.
	So fix $\cF \in \Coh^-(\widehat{Z_U})$ and suppose that there exists an affine \'etale cover $\{u_i \colon U_i \to U\}$ such that $\widehat{u_i}^*(\cF) \in \cK^\Sch(U_i)$.
	We let $U^0 \coloneqq \coprod U_i$, $u \colon U^0 \to U$ be the induced total morphism, and $U^\bullet \coloneqq \Cech(u)$ be the \v{C}ech nerve of $u$.
	
	To prove that $\cF \in \cK^\Sch(U)$, it is enough to prove that $\tau_{\ge n}(\cF) \in \cK^{\Sch}(U)$ for every $n \in \mathbb Z$.
	Since the functors $\widehat{u_i}^*$ are $t$-exact, we have $\widehat{u_i}^*(\tau_{\ge n}(\cF)) \simeq \tau_{\ge n}( \widehat{u_i}^*(\cF) )$.
	We can therefore replace $\cF$ with $\tau_{\ge n}(\cF)$, or, equivalently, suppose that $\cF$ is in finite cohomological amplitude from the very beginning.
	Proceeding by induction on the number of non-vanishing cohomology sheaves, we are easily reduced to the case where $\cF \in \Coh^\heartsuit(\widehat{Z_U})$.
	We now remark that the induced functors
	\[ (j_{U_i *}^{(n)})^\heartsuit \colon \Coh^\heartsuit(Z_{U_i}^{(n)}) \to \Coh^\heartsuit(\widehat{Z_{U_i}}) \]
	are fully faithful.
	It follows from \cref{cor:internal_characterisation_kernel_localisation} that $(\cK^\Sch(U_i))^\heartsuit$ coincides precisely with the union of all the essential images of the functors $(j_{U_i*}^{(n)})^\heartsuit$.
	Since $U$ is quasi-compact, we can find a positive integer $n \in \mathbb N$ such that $\widehat{u_i}^*(\cF) \in \mathrm{Im}((j_{U_i*}^{(n)})^\heartsuit)$ for every index $i$.
	Since the functors $(j_{U_i *}^{(n)})^\heartsuit$ are fully faithful, we conclude that $\cF$ determines an element in
	\[ \Coh^\heartsuit(Z_U^{(n)}) \simeq \varprojlim \Coh^\heartsuit(Z_{U^\bullet}^{(n)}) . \]
	It follows that $\cF$ belongs to the essential image of $(j_{U*^{(n)}})^\heartsuit$.
	This proves that $\cF \in \cK^\Sch(U)$.
	The proof is therefore complete.
\end{proof}

\begin{cor}
	There is a canonical equivalence $\cK^\Sch(X) \simeq \cK(X)$.
\end{cor}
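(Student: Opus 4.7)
The plan is to combine the explicit characterization of $\cK(U)$ for affine $U$ with the sheaf properties of $\cK^{\Sch}$ and the Morita-type comparison between the small affine and the small schematic \'etale sites.

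First, recall from the discussion at the beginning of \S\ref{subsubsec:preliminaries} that, because $v \colon X\et^{\Sch} \hookrightarrow X\Et^{\Geom}$ is both continuous and cocontinuous, the restriction functor $v^s$ commutes with sheafification. In particular, the sheaf $\cK$ on $(X\Et^{\Geom}, \tauet)$ (obtained by sheafifying the presheaf $\cK$ on $X\Et$) satisfies $\cK(X) = (\cK \circ v)(X)$, and $\cK \circ v$ is itself the sheafification over $(X\et^{\Sch}, \tauet)$ of the restriction $\cK|_{X\et}$. So it suffices to show that $\cK^{\Sch}$ on $(X\et^{\Sch}, \tauet)$ coincides with this sheafification.

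Next, I invoke two facts that are already available. On the one hand, \cref{prop:internal_characterisation_kernel_stacky} identifies, for every affine $U \in X\et$, the category $\cK(U)$ with the smallest full, stable, left complete subcategory of $\Coh^-(\widehat{Z_U})$ containing the image of $j_{U*}$; but this is precisely $\cK^{\Sch}(U)$, so $\cK^{\Sch}|_{X\et} \simeq \cK|_{X\et}$ as subpresheaves of $\hPhi_Z^{\Aff}$. On the other hand, \cref{prop:qcompact_kernel}(2) shows that $\cK^{\Sch}$ is already a sheaf on $(X\et^{\Sch}, \tauet)$, and by construction it is a subsheaf of $\hPhi_Z^{\Sch}$.

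Finally, the restriction functor $\Sh_{\Catst}(X\et^{\Sch}, \tauet) \to \Sh_{\Catst}(X\et, \tauet)$ is an equivalence, because every quasi-compact scheme \'etale over $X$ admits an affine \'etale cover, so the inclusion of sites satisfies the hypotheses of \cite[Proposition 2.22]{Porta_Yu_Higher_analytic_stacks_2014}. Combining this Morita equivalence with the two preceding observations, both $\cK^{\Sch}$ and $\cK \circ v$ are sheaves on $(X\et^{\Sch}, \tauet)$ whose restrictions to $X\et$ agree, so they agree on all of $X\et^{\Sch}$. Evaluating at $X$ yields the desired equivalence $\cK^{\Sch}(X) \simeq \cK(X)$. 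No serious obstacle is expected here; the content has already been absorbed into \cref{prop:internal_characterisation_kernel_stacky} and \cref{prop:qcompact_kernel}.
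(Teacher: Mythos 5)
Your argument is correct and is essentially the paper's own: the corollary is meant to follow directly from \cref{prop:internal_characterisation_kernel_stacky} (agreement of $\cK^\Sch$ and $\cK$ on affines), \cref{prop:qcompact_kernel}(2) (sheaf property of $\cK^\Sch$), and the Morita equivalence between $(X\et,\tauet)$ and $(X\et^{\Sch},\tauet)$ together with the fact that restriction along the continuous and cocontinuous morphism $v$ commutes with sheafification, exactly as in the discussion opening \cref{subsubsec:preliminaries}. Evaluating the resulting identification of sheaves at $X$ gives $\cK^\Sch(X)\simeq\cK(X)$, as you say.
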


Using the natural transformation $\alpha \colon \cK^{\Sch} \hookrightarrow \hPhi_Z$, we can form the cofiber
\[ \oPhi^\circ_Z \coloneqq \cofib(\alpha) \in \PSh_{\Catst}(X\et) . \]
Since $\cK^{\Sch}$ equals the restriction of $\cK$ to $X\et$, we can use \cref{cor:algebraic_universal_property} and the discussion at the beginning of this subsection to deduce that the sheafification of $\oPhi^\circ_Z$ coincides with the restriction of $\hPhi^\circ_Z$ to $X\et$.
In particular, there is a canonical map $\varphi \colon \oPhi^\circ_Z \to \hPhi^\circ_Z$.
With these preparations, \cref{thm:Zariski_computational_tool} can be stated in the following more precise form:

\begin{prop} \label{prop:Zariski_computational_tool}
	The $\infty$-functor $\varphi = \varphi_X \colon \oPhi^\circ_Z(X) \to \hPhi^\circ_Z(X)$ is an equivalence of $\infty$-categories.
\end{prop}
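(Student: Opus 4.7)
The plan is to reduce \cref{prop:Zariski_computational_tool} to an equivalence on the hearts of canonical $t$-structures via the machinery of \cref{subsec:Verdier_quotient_t_structure}, where the statement becomes a classical Zariski-descent assertion for the abelian category of coherent sheaves modulo ideal torsion.

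First, since $X$ is quasi-compact and quasi-separated, I choose a finite Zariski affine cover $\{U_i\}_{i=1}^r$, set $U^0 \coloneqq \coprod U_i$, and let $U^\bullet$ denote the \v Cech nerve of $u \colon U^0 \to X$; every $U^m$ is then again a finite disjoint union of affines. Because $\hPhi_Z$ and $\hPhi^\circ_Z$ are sheaves (the latter by \cref{thm:etale_descent}) and because $\cK^\Sch$ is a sheaf by \cref{prop:qcompact_kernel}, we obtain the descent identifications $\hPhi_Z(X) \simeq \varprojlim_m \hPhi_Z(U^m)$, $\cK(X) \simeq \varprojlim_m \cK(U^m)$ and $\hPhi^\circ_Z(X) \simeq \varprojlim_m \hPhi^\circ_Z(U^m)$. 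On each affine level, combining \cref{prop:internal_characterisation_kernel_localisation} with \cref{thm:localization_for_coh_minus} yields a canonical equivalence $\oPhi^\circ_Z(U^m) = \hPhi_Z(U^m)/\cK(U^m) \xrightarrow{\sim} \hPhi^\circ_Z(U^m)$. Thus $\varphi_X$ factors as
\[ \oPhi^\circ_Z(X) \;\xrightarrow{\psi}\; \varprojlim_m \oPhi^\circ_Z(U^m) \;\xrightarrow{\sim}\; \varprojlim_m \hPhi^\circ_Z(U^m) \simeq \hPhi^\circ_Z(X), \]
and it is enough to show that the comparison $\psi$ is an equivalence.

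Next, I would equip both sides of $\psi$ with compatible left complete $t$-structures. On the source, \cref{cor:quotient_t_structure} applies to the inclusion $\cK(X) \hookrightarrow \Coh^-(\hZ)$: the $t$-structure restricts to $\cK(X)$ (since at each affine level it is the kernel of a $t$-exact functor, and the restriction is a level-wise condition in $\Coh^-(\hZ) \simeq \varprojlim \Coh^-(\widehat{Z_{U^\bullet}})$), and on hearts the inclusion admits a right adjoint---the $\cJ$-torsion functor---whose counit is a monomorphism (cf.\ \cref{cor:internal_characterisation_kernel_localisation}). Together with \cref{lem:quotient_t_structure_left_complete} this endows $\oPhi^\circ_Z(X)$ with a canonical left complete $t$-structure making the localization $t$-exact. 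On the target, \cref{lem:etale_sent_to_flat} combined with \cref{lem:flat_cosimplicial_t_structure} provides a $t$-structure on $\varprojlim_m \oPhi^\circ_Z(U^m)$, which one verifies to be left complete by inspecting descent data component-wise, and a direct check shows that $\psi$ is $t$-exact.

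With $\psi$ a $t$-exact functor between stable $\infty$-categories carrying left complete, right bounded $t$-structures, a d\'evissage on the number of non-vanishing cohomology sheaves (as in the proof of \cref{thm:localization_for_coh_minus}) reduces the claim to the assertion that $\psi^\heartsuit$ is an equivalence of abelian categories. Unravelling, this reads
\[ \Cohh(\hZ)/\cK(X)^\heartsuit \;\longrightarrow\; \varprojlim_m \Cohh(\widehat{Z_{U^m}})/\cK(U^m)^\heartsuit, \]
where, by \cref{cor:internal_characterisation_kernel_localisation}, each $\cK(U)^\heartsuit$ is the Serre subcategory of $\Cohh(\widehat{Z_U})$ of modules annihilated by some power of the ideal of $Z_U$. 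This abelian statement is a classical Zariski descent assertion: it follows by applying \cite{Bhatt_algebraization_2014} (or, alternatively, \cite{Bosch_Gortz_Coherent_modules_1998}) at each affine level, together with Zariski descent for the abelian category of coherent sheaves on a formal scheme, to exhibit the displayed map as the Serre quotient of the descent equivalence $\Cohh(\hZ) \simeq \varprojlim_m \Cohh(\widehat{Z_{U^m}})$ by the compatible $\cJ$-torsion subcategories. The main obstacle is precisely the non-exactness of Verdier quotients with respect to cosimplicial limits, which is what the $t$-structure framework of \cref{subsec:Verdier_quotient_t_structure} is designed to circumvent; the delicate verification is that the induced $t$-structure on $\varprojlim_m \oPhi^\circ_Z(U^m)$ coincides with the one transported across $\psi$ from the quotient $t$-structure on the source, which is in turn forced by $t$-exactness and the explicit identification of the hearts.
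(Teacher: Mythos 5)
Your reduction to the heart is sound and matches the paper's own strategy (this is exactly \cref{lem:reduction_to_the_heart_I} and \cref{lem:reduction_to_the_heart_II}), but the proof collapses precisely at the step you dismiss as ``a classical Zariski descent assertion''. The claim that
\[ \Cohh(\hZ)/\cK(X)^\heartsuit \longrightarrow \varprojlim_m \Cohh(\widehat{Z_{U^m}})/\cK(U^m)^\heartsuit \]
is an equivalence is \emph{not} obtained by ``exhibiting the displayed map as the Serre quotient of the descent equivalence by the compatible $\cJ$-torsion subcategories'': a Serre (or Verdier) quotient is a colimit-type construction and does not commute with the cosimplicial limit over the \v{C}ech nerve, and this failure of commutation is the whole content of the statement. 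Citing \cite{Bhatt_algebraization_2014} or \cite{Bosch_Gortz_Coherent_modules_1998} ``at each affine level'' only re-proves what is already known from \cref{thm:etale_descent} together with \cref{thm:localization_for_coh_minus}, namely $\oPhi^\circ_Z(U^m)\simeq\hPhi^\circ_Z(U^m)$ for $U^m$ affine; it says nothing about whether the presheaf of quotients $U\mapsto\Cohh(\widehat{Z_U})/\cK^\heartsuit(U)$ is itself a sheaf, which the paper stresses is equivalent to the theorem. Moreover, since the ideal of $Z$ is not globally generated, one cannot realize the global quotient as coherent sheaves on a generic fiber and invoke loc.\ cit.\ directly. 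Your closing remark also misidentifies the delicate point: it is not the comparison of $t$-structures on the limit, but fully faithfulness and, above all, effectivity of descent for the abelian presheaf of quotients.

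Concretely, what is missing is the paper's \cref{lem:Zariski_devissage} and the machinery it relies on. Fully faithfulness of the comparison requires the right-adjointability statement \cref{lem:right_adjointable_key_lemma} (and its quotient version), and effectivity requires showing that a descent datum $\{\cF_1,\cF_2,\alpha\}$ in the quotient categories lifts to a descent datum of \emph{coherent} sheaves on the formal completions: one chooses formal models as coherent subobjects of the ind-object $j_{*}(\cF)$ (\cref{cor:almost_functorial_formal_models}), uses the base-change/right-adjointability result \cref{cor:PFN_right_adjointable} (which in turn rests on \cref{prop:heart_kernel_right_adjointable}) to compare these models on overlaps, and then corrects the model (the subsheaf $\cG_i'$ in the paper's Step~2) so that the gluing map $\beta$, which a priori is only a monomorphism, becomes an isomorphism after passing to the quotient; a separate argument handles the quasi-separated (non-separated) case. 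None of this is automatic, and none of it is supplied by your proposal, so as written the argument is circular at its key step.
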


The proof of the previous proposition consists of several steps, that we emphasise in the exposition below.

\subsection{Reduction to the heart} \label{subsubsec:reduction_to_the_heart}

In the proof of \cref{prop:Zariski_computational_tool} a number of explicit constructions involving a certain number of choices is needed.
Since $\infty$-categories behave poorly with respect to this kind of procedures, the first step toward the proof of \cref{prop:Zariski_computational_tool} is to reduce ourselves to a $1$-categorical statement.

\begin{lem} \label{lem:reduction_to_the_heart_I}
	The $\infty$-categories $\oPhi^\circ_Z(X)$ and $\hPhi^\circ_Z(X)$ admit left complete $t$-structures such that the functor $\varphi \colon \oPhi^\circ_Z(X) \to \hPhi^\circ_Z(X)$ is $t$-exact.
\end{lem}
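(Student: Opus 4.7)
The plan is as follows. First I would construct a $t$-structure on $\hPhi^\circ_Z(X)$ by choosing a Zariski affine hypercover $U^\bullet \to X$ and writing, using \cref{thm:etale_descent} together with the Morita equivalence between the small affine and small schematic \'etale sites,
\[ \hPhi^\circ_Z(X) \simeq \varprojlim_{n \in \mathbf{\Delta}} \Coh^-\bigl(\hPsi^\circ_Z(U^n)\bigr) . \]
Each $\hPsi^\circ_Z(U^n)$ is an open subscheme of $\Spec(\widehat{A_{U^n}})$ and hence a qcqs noetherian scheme, so the corresponding category of almost perfect complexes carries the standard left complete $t$-structure. By \cref{lem:etale_sent_to_flat} the cosimplicial transition maps are pulled back from flat morphisms of affines, so they are $t$-exact. \cref{lem:flat_cosimplicial_t_structure} then endows the limit with a unique $t$-structure for which every projection is $t$-exact, and left completeness of this $t$-structure is inherited from the factors since connectivity in the limit can be tested componentwise.

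For the Verdier quotient $\oPhi^\circ_Z(X) = \Coh^-(\hZ) / \cK(X)$, I would apply \cref{cor:quotient_t_structure} with $\cC = \Coh^-(\hZ)$ and $\cK = \cK(X)$. The main preparatory step is a global analogue of \cref{cor:internal_characterisation_kernel_localisation}: namely that $\cK(X)^\heartsuit$ consists precisely of the coherent sheaves on $\hZ$ annihilated by some power of the ideal of definition $\cJ$ of $Z$. Locally this follows from \cref{prop:internal_characterisation_kernel_stacky}, and quasi-compactness of $X$ is what lets one extract a single uniform exponent. Granted this description, the inclusion $\cK(X)^\heartsuit \hookrightarrow \Coh^\heartsuit(\hZ)$ admits the $\cJ$-power torsion as a right adjoint, whose counit is tautologically a monomorphism. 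Closure of $\cK(X)$ under truncations, required for the first hypothesis of \cref{cor:quotient_t_structure}, follows because the cohomology sheaves of any $\cF \in \cK(X)$ lie in $\cK(X)^\heartsuit$, combined with left completeness of $\cK(X)$ and an induction on cohomological amplitude for the bounded part. \cref{cor:quotient_t_structure} then produces a $t$-structure on $\oPhi^\circ_Z(X)$ for which $L \colon \Coh^-(\hZ) \to \oPhi^\circ_Z(X)$ is $t$-exact, and \cref{lem:quotient_t_structure_left_complete} upgrades it to a left complete one, since $\cK(X)$ was defined to be left complete and is the kernel of $L$.

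For the $t$-exactness of $\varphi$, I would argue via the universal property of the cofiber: the composition $\varphi \circ L$ coincides with the natural map $\Coh^-(\hZ) \to \hPhi^\circ_Z(X)$ assembled from the localizations $\Coh^-(\widehat{Z_{U^n}}) \to \Coh^-(\widehat{Z_{U^n}} \smallsetminus Z_{U^n})$. Each of these localizations is $t$-exact because it is pullback along a flat open immersion of qcqs schemes, so by construction of the $t$-structure on $\hPhi^\circ_Z(X)$ the composition $\varphi \circ L$ is $t$-exact. Since $L$ is $t$-exact and essentially surjective (as in \cref{lem:localization_essential_surjective}), this forces $\varphi$ to be $t$-exact as well.

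The main obstacle, as I see it, is the global identification of $\cK(X)^\heartsuit$ as the sheaves annihilated by some power of $\cJ$. The affine-local characterization only yields a pointwise annihilation exponent, and upgrading this to a single uniform exponent over $X$ requires quasi-compactness in an essential way; this is in fact the structural reason why the entire theorem is restricted to the qcqs case.
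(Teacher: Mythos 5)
Your proposal is correct and follows essentially the same route as the paper: the limit $t$-structure on $\hPhi^\circ_Z(X)$ via an affine hypercover together with \cref{lem:etale_sent_to_flat} and \cref{lem:flat_cosimplicial_t_structure}, the quotient $t$-structure on $\oPhi^\circ_Z(X)$ by verifying the hypotheses of \cref{cor:quotient_t_structure} exactly as in \cref{thm:localization_for_coh_minus} (torsion right adjoint on hearts, uniform annihilating power of $\cJ$ from quasi-compactness as in \cref{prop:qcompact_kernel}), and $t$-exactness of $\varphi$ deduced componentwise from the essential surjectivity and $t$-exactness of $L$. The details you flag as the main obstacle (the global description of $\cK(X)^\heartsuit$ and the identification $\cK(X)=\ker(L)$) are handled in the paper by the same arguments you indicate, so there is no genuine divergence.
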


\begin{proof}
	Let us first construct the $t$-structure on $\hPhi^\circ_Z(X)$.
	Choose an affine hypercover $U^\bullet$ of $X$.
	Since $\hPhi^\circ_Z(X)$ is a hypercomplete $\tauet$-sheaf, we have an equivalence
	\[ \hPhi^\circ_Z(X) \simeq \varprojlim \hPhi^\circ_Z(U^\bullet) . \]
	Combining Theorems \ref{thm:etale_descent} and \ref{thm:localization_for_coh_minus} and \cref{prop:qcompact_kernel} we obtain, for every $n \in \mathbf \Delta$, an equivalence
	\[ \hPhi^\circ_Z(U^n) \simeq \Coh^-(\hPsi^\circ_Z(U^n)) \simeq \Coh^-(\widehat{Z_{U^n}}) / \cK^\Sch(U^n) .  \]
	Using the left-hand-side equivalence, we easily see that all the face maps in the cosimplicial object
	\[ \hPhi^\circ_Z(U^\bullet) \simeq \Coh^-(\hPsi^\circ_Z(U^\bullet)) \]
	are $t$-exact.
	In particular, we can use \cref{lem:flat_cosimplicial_t_structure} to induce a pointwise $t$-structure on the totalization of this cosimplicial object.
	That is, we obtain a $t$-structure on
	\[ \hPhi^\circ_Z(X) \simeq \varprojlim \Coh^-(\hPsi^\circ_Z(U^\bullet)) . \]
	
	Let us now turn to the $t$-structure on $\oPhi^\circ_Z(X)$.
	Reasoning as in the proof of \cref{thm:localization_for_coh_minus}, we see that the hypotheses of \cref{cor:quotient_t_structure} are satisfied in our situation.
	It follows that the quotient
	\[ \oPhi^\circ_Z(X) \coloneqq \Coh^-(\hZ) / \cK^\Sch(X) \]
	inherits a $t$-structure from the stable $\infty$-category $\Coh^-(\hZ)$, and furthermore that the quotient map
	\[ L \colon \Coh^-(\hZ) \to \oPhi^\circ_Z(X) \]
	is $t$-exact.
	To prove that
	\[ \oPhi^\circ_Z(X) \to \hPhi^\circ_Z(X) \]
	is $t$-exact, we only need to check that for every $n \in \mathbf \Delta$ the induced map
	\[ \oPhi^\circ_Z(X) \to \hPhi^\circ_Z(U^n) \]
	is $t$-exact.
	Since $L$ is essentially surjective and $t$-exact, it is furthermore enough to prove that the induced map
	\[ \Coh^-(\hZ) \to \hPhi^\circ_Z(U^n) \]
	is $t$-exact.
	Recall once more that \cref{thm:etale_descent} guarantees that $\oPhi^\circ_Z(U^n) \simeq \hPhi^\circ_Z(U^n)$.
	As consequence, we see that the above map fits in the following commutative square
	\[ \begin{tikzcd}
		\Coh^-(\hZ) \arrow{r} \arrow{d} & \Coh^-(\widehat{Z_{U^n}}) \arrow{d} \\
		\oPhi^\circ_Z(X) \arrow{r} & \oPhi^\circ_Z(U^n) .
	\end{tikzcd} \]
	The conclusion now follows from the fact that $\Coh^-(\hZ) \to \Coh^-(Z_{U^n})$ are $t$-exact (being pullbacks along formally \'etale morphisms) and from the fact that the map \[ \Coh^-(\widehat{Z_{U^n}}) \to \oPhi^\circ_Z(U^n) \]
	is $t$-exact (either by \cref{cor:quotient_t_structure} or by observing that under the identification $\oPhi^\circ_Z(U^n) \simeq \Coh^-(\hPsi^\circ_Z(U^n))$ the previous map becomes the pullback along an open immersion).
\end{proof}

\begin{lem} \label{lem:reduction_to_the_heart_II}
	Let $f \colon \cC \to \cD$ be an exact functor between stable $\infty$-categories.
	Suppose that $\cC$ and $\cD$ are equipped with $t$-structures which are left complete and right bounded and that, with respect to them, $f$ is $t$-exact.
	Then the following statements are equivalent:
	\begin{enumerate}
		\item the functor $f$ is an equivalence;
		\item the induced functor $f^\heartsuit \colon \cC^\heartsuit \to \cD^\heartsuit$ is an equivalence of abelian categories.
	\end{enumerate}
\end{lem}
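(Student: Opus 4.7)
The implication $(1) \Rightarrow (2)$ is immediate: an equivalence which is $t$-exact restricts to an equivalence of the hearts. The work is in $(2) \Rightarrow (1)$, and my plan is to bootstrap from the heart to bounded objects and then, using left completeness together with right boundedness, to all objects.

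First I would prove by induction on $b-a$ that the restriction $f \colon \cC^{[a,b]} \to \cD^{[a,b]}$ is an equivalence. The base case $a = b$ follows from the equivalence of hearts and the shift functor. For the induction step, I would use the fiber sequence $\tau_{\ge a+1}(X) \to X \to \rH^a(X)[-a]$, combined with the stable five lemma (applied to the long exact sequence of mapping spectra): if in a fiber sequence $X' \to X \to X''$ the functor $f$ is fully faithful on two of the three entries, then it is fully faithful on the third. The same fiber sequence, combined with fully faithfulness on the bounded part and the universal property of fibers, gives essential surjectivity on $\cD^{[a,b]}$ by lifting the connecting map in $\cD$ back to $\cC$. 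Taking the union over all $a \le b$ gives $f \colon \cC^b \to \cD^b$ is an equivalence.

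Next, fix $X \in \cC$ and $Y \in \cC$, and use right boundedness to choose $N$ with $Y \in \cC^{\le N}$ and, for each $n$, note $\tau_{\ge -n}(Y) \in \cC^{[-n, N]}$. By left completeness of both targets and $t$-exactness of $f$:
\[
\Map_\cC(X,Y) \simeq \lim_n \Map_\cC(X, \tau_{\ge -n} Y), \qquad \Map_\cD(f(X),f(Y)) \simeq \lim_n \Map_\cD(f(X), f(\tau_{\ge -n} Y)).
\]
For fixed $n$, using adjunction $\tau_{\ge -n} \dashv \text{inclusion}$:
\[
\Map_\cC(X, \tau_{\ge -n} Y) \simeq \Map_\cC(\tau_{\ge -n} X, \tau_{\ge -n} Y),
\]
and the analogous identity in $\cD$, together with $f(\tau_{\ge -n} X) \simeq \tau_{\ge -n} f(X)$, reduces fully faithfulness to the bounded case already established.

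Finally, for essential surjectivity of $f \colon \cC \to \cD$, given $W \in \cD$, I would lift the Postnikov tower $\{\tau_{\ge -n} W\}$ to a tower $\{V_n\}$ in $\cC^b$ using the equivalence on bounded objects (which provides both the objects $V_n$ with $f(V_n) \simeq \tau_{\ge -n} W$ and the transition maps, by fully faithfulness). Set $V \coloneqq \lim V_n \in \cC$. Using that $\tau_{\ge -m}$ is a right adjoint and hence commutes with the limit, I would compute $\tau_{\ge -m} V \simeq V_m$, whence $\tau_{\ge -m} f(V) \simeq f(V_m) \simeq \tau_{\ge -m} W$; left completeness of $\cD$ then yields $f(V) \simeq W$. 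The main technical point requiring care is this last step: $f$ need not commute with arbitrary limits, so one must exploit the specific compatibility of the Postnikov tower with $t$-exactness, using that $\tau_{\ge -m}$ eventually stabilises along the tower.
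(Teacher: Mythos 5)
Your overall strategy (dévissage from the heart using right boundedness, then left completeness to pass to unbounded objects, and essential surjectivity by lifting the Postnikov tower) is the same as the paper's, and your essential-surjectivity step is handled exactly as in the paper: one does not commute $f$ with the limit, but identifies $\tau_{\ge n}$ of the limit with the lifted tower and invokes left completeness of $\cD$. (Minor slip there: $\tau_{\ge -m}$ is a \emph{left} adjoint to the inclusion $\cC^{\ge -m}\subset\cC$, so it does not commute with limits for formal reasons; the eventual stabilisation of the truncated tower, which you do mention, is the actual argument.)

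However, the full-faithfulness induction as you have structured it does not close. To run the five lemma on the fiber sequence of mapping spectra attached to $\rH^a(X)[-a]\to X\to\tau_{\ge a+1}X$ (and the analogous dévissage in $Y$), you ultimately need that $f$ induces equivalences of \emph{mapping spectra} between shifted heart objects, i.e.\ isomorphisms $\Ext^j_{\cC}(H,H')\to\Ext^j_{\cD}(f H,f H')$ for all $j$. Your base case $a=b$ only gives the $1$-categorical statement that $f$ restricts to an equivalence $\cC^{[a,a]}\to\cD^{[a,a]}$, which controls $\Ext^0$ (and, with a small extra argument, $\Ext^1$); and the induction hypothesis at amplitude $j-1$ only controls $\Ext^{\le j-1}$, since $\Ext^j(H,H')=\pi_0\Map_\cC(H,H'[j])$ involves objects spread over an amplitude-$j$ window. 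So the step from amplitude $j-1$ to $j$ requires exactly the $\Ext^j$ comparison that neither the base case nor the hypothesis supplies. The paper's proof passes through the same point, but its dévissage takes as input the spectrum-level statement for pairs of heart objects (the assertion that $\cC^\heartsuit\subseteq\cC_Y$ for $Y\in\cC^\heartsuit$), rather than trying to bootstrap it by induction on amplitude; if you want to follow your route you must add that input explicitly, since it is where the real content lies and it is not a formal consequence of the induction you set up.
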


\begin{proof}
	Suppose first that $f$ is an equivalence and let $g$ be a quasi-inverse for $f$.
	Since $f$ is $t$-exact, so must be $g$.
	Thus, $f$ and $g$ restrict to an adjunction at the level of the hearts $f^\heartsuit \colon \cC^\heartsuit \leftrightarrows \cD^\heartsuit \colon g^\heartsuit$.
	It is straightforward to check that the unit and the counit of this adjunction are equivalences, and thus that $f^\heartsuit$ is an equivalence itself.
	
	Suppose now that $f^\heartsuit$ is an equivalence.
	Let us first prove that $f$ is fully faithful.
	Let $X, Y \in \cC$ and consider the map
	\[ \psi_{X,Y} \colon \stMap_{\cC}(X,Y) \to \stMap_{\cD}(f(X), f(Y)) . \]
	It is enough to prove that $\psi_{X,Y}$ is an equivalence for every $X$ and $Y$.
	Fix $X \in \cC$ and define $\cC_X$ as the full subcategory of $\cC$ spanned by those $Y \in \cC$ such that $\psi_{X,Y}$ is an equivalence.
	Since the functors $\stMap_{\cC}(X,-)$ and $\stMap_{\cD}(f(X),-)$ are exact functors of stable $\infty$-categories, we see that $\cC_X$ is a stable subcategory of $\cC$.
	We claim furthermore that $\cC_X$ is left complete.
	Indeed, suppose that $Y \in \cC$ is an object such that $\tau_{\ge n}(Y) \in \cC_X$ for every $n \in \mathbb Z$.
	Then, since the $t$-structure on $\cC$ is left complete, we have
	\[ Y \simeq \varprojlim_n \tau_{\ge n}(Y) , \]
	while the fact that $f$ is $t$-exact and the left completeness of the $t$-structure on $\cD$ imply that
	\[ f(Y) \simeq \varprojlim_n \tau_{\ge n}(f(Y)) \simeq \varprojlim f( \tau_{\ge n}(Y) ) . \]
	In particular,
	\[ \psi_{X,Y} \simeq \varprojlim_n \psi_{X, \tau_{\ge n}(Y)} \]
	and therefore it is an equivalence.
	Since the $t$-structure on $\cC$ is right bounded, we see that to prove that $\cC_X = \cC$ it is in fact enough to prove that $\cC^\heartsuit \subseteq \cC_X$.
	
	Fix therefore $Y \in \cC^\heartsuit$ and define $\cC_Y$ as the full subcategory of $\cC$ spanned by those $Z \in \cC$ such that $\psi_{Z,Y}$ is an equivalence.
	We have to prove that $X \in \cC_Y$.
	We will prove that $\cC_Y = \cC$.
	Reasoning as above, we see that $\cC_Y$ is a full stable subcategory of $\cC$.
	Moreover, since $f^\heartsuit$ is an equivalence, we know that $\cC^\heartsuit \subseteq \cC_Y$.
	From these two facts, we conclude by induction on the number of non-vanishing cohomology groups, every left bounded object of $\cC$ belongs to $\cC_Y$.
	Fix now an arbitrary $Z \in \cC$.
	In order to prove that $\psi_{Z,Y}$ is an equivalence, it is enough to prove that $\rH^i(\psi_{Z,Y})$ is an equivalence for every $i \in \mathbb Z$.
	Since $Z$ is arbitrary, it is moreover enough to consider the case $i = 0$.
	We now observe that, since $Y \in \cC^\heartsuit$,
	\[ \rH^0 \stMap_\cC(Z, Y) \simeq \pi_0 \left( \tau_{\le 0} \stMap_{\cC}(Z,Y) \right) \simeq \pi_0 \Map_{\cC}(Z,Y) \simeq \pi_0 \Map_{\cC}(\tau_{\ge 0}(Z), Y) . \]
	Since $f$ is $t$-exact, the same argument proves that
	\[ \rH^0 \stMap_{\cD}(f(Z), f(Y)) \simeq \pi_0 \Map_{\cC}(\tau_{\ge 0}(f(Z)), f(Y)) . \]
	In this way, we are reduced to the case where $Z$ is left bounded, which had already been dealt with.
	It follows that $\psi_{Z,Y}$ is an equivalence and thus that $Z \in \cC_Y$.
	Thus $\cC_Y = \cC$, hence $X \in \cC_Y$.
	Equivalently, $Y \in \cC_X$, so that $\cC_X = \cC$.
	We conclude in this way that $f$ is fully faithful.
	
	Let us now prove that $f$ is essentially surjective.
	Let $\cD'$ be the full subcategory of $\cD$ spanned by the essential image of $f$.
	Since $f$ is fully faithful, we see that $\cC \simeq \cD'$ and that $\cD'$ is a stable subcategory of $\cD$.
	Since $f^\heartsuit$ is an equivalence, we know that $\cD^\heartsuit \subset \cD'$.
	Moreover, using the fact that $\cD'$ is a full stable subcategory of $\cD$, we deduce that every bounded object of $\cD$ belongs to $\cD'$.
	Let now $Y \in \cD$ be an arbitrary object.
	Since the $t$-structure on $\cD$ is left complete and right bounded, we can write
	\[ Y \simeq \varprojlim_n \tau_{\ge n}(Y) , \]
	and every object $\tau_{\ge n}(Y)$ is bounded.
	Thus, it belongs to $\cD'$.
	Since $f$ is fully faithful and $t$-exact, we can find a diagram $F \colon \mathbb Z \to \cC$ with the following properties:
	\begin{enumerate}
		\item the composition $f \circ F$ equals the diagram $n \mapsto \tau_{\ge n}(Y) \in \cD$;
		\item every object $F(n)$ is right bounded and belongs to $\cC^{\ge n}$;
		\item for every $n \in \mathbb Z$, the $F(n) \to F(n+1)$ induces a canonical equivalence $\tau_{\ge n+1}(F(n)) \simeq F(n+1)$.
	\end{enumerate}
	Since the $t$-structure on $\cC$ is left complete, we conclude that the inverse limit
	\[ X \coloneqq \varprojlim_n F(n) \]
	exists in $\cC$ and that $\tau_{\ge n}(X) \simeq F(n)$.
	Using once more the fact that $f$ is $t$-exact and that the $t$-structure on $\cD$ is left complete, we deduce that
	\[ f(X) \simeq \varprojlim f(F(n)) \simeq \varprojlim \tau_{\ge n}(Y) \simeq Y. \]
	Thus, $f$ is essentially surjective.
\end{proof}

Using Lemmas \ref{lem:reduction_to_the_heart_I} and \ref{lem:reduction_to_the_heart_II}, we can now reduce the proof of \cref{prop:Zariski_computational_tool} to the proof that the induced functor
\[ \varphi^\heartsuit \colon (\oPhi^\circ_Z(X))^\heartsuit \to (\hPhi^\circ_Z(X))^\heartsuit \]
is an equivalence of abelian categories.
This will be the content of the following section.

\subsection{Base change and formal models}

Before approaching the proof of \cref{thm:Zariski_computational_tool}, we need to carry out a more careful analysis of our functor $\oPhi^\circ_Z$.
More precisely, we prove that $\Ind \circ \oPhi^\circ_Z$ has base change.
We use this property to produce canonical lifts of elements in $\oPhi^\circ_Z$.
We start with two elementary statements concerning (stable) $\infty$-categories.

\begin{lem} \label{lem:categorical_quotient_right_adjointable}
	Let $\cC$ and $\cD$ be two stable $\infty$-categories and let $F \colon \cC \to \cD$ be an exact functor between them.
	Let $i_\cC \colon \cK_\cC \subset \cC$ and $i_\cD \colon \cK_\cD \subset \cD$ be the inclusions of full stable subcategories and suppose that $F$ restricts to a functor $F_\cK \colon \cK_\cC \to \cK_\cD$.
	We thus obtain a ladder of commutative squares in $\Catst$
	\[ \begin{tikzcd}
		\cK_\cC \arrow{r}{i_\cC} \arrow{d}{F_\cK} & \cC \arrow{r}{L_\cC} \arrow{d}{F} & \cC / \cK_\cC \arrow{d}{\oF} \\
		\cK_\cD \arrow{r}{i_\cD} & \cD \arrow{r}{L_\cD} & \cK / \cK_\cD ,
	\end{tikzcd} \]
	where $L_\cC$ and $L_\cD$ are the quotient maps and $\oF$ is the functor induced by universal property.
	Suppose that:
	\begin{enumerate}
		\item the inclusions $i_\cC$ and $i_\cD$ admit right adjoints $R_\cC \colon \cC \to \cK_\cC$ and $R_\cD \colon \cD \to \cK_\cD$, respectively;
		\item the quotient maps $L_\cC$ and $L_\cD$ admit fully faithful right adjoints $j_\cC \colon \cC/\cK_\cC \hookrightarrow \cC$ and $j_\cD \colon \cD / \cK_\cD \hookrightarrow \cD$, respectively;
		\item $\cK_\cC$ is the kernel of $L_\cC$ and $\cK_\cD$ is the kernel of $L_\cD$.
	\end{enumerate}
	Then, if the upper square is right adjointable then so is the bottom one.
\end{lem}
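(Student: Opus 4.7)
The plan is to verify that for every $Y \in \cC/\cK_\cC$, the push-pull transformation $F(j_\cC(Y)) \to j_\cD(\oF(Y))$ induced by the given commutative square is an equivalence. The right adjoints needed for right adjointability already exist by hypothesis (2), so only this comparison requires argument. The main tool is \cref{lem:Verdier_fiber_sequence}, which, under hypotheses (1)--(3), produces for every object $X$ of $\cC$ or $\cD$ a canonical fiber sequence $iR(X) \to X \to jL(X)$ whose maps are the counit of $(i,R)$ and the unit of $(L,j)$ respectively.

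First I would apply this fiber sequence in $\cD$ to the object $X \coloneqq F(j_\cC(Y))$, obtaining
\[ i_\cD R_\cD(F(j_\cC(Y))) \to F(j_\cC(Y)) \to j_\cD L_\cD(F(j_\cC(Y))). \]
Using commutativity of the right square, one has $L_\cD \circ F \simeq \oF \circ L_\cC$, and since $j_\cC$ is fully faithful (hypothesis (2)) one has $L_\cC \circ j_\cC \simeq \id$. Combining these, the third term is canonically equivalent to $j_\cD(\oF(Y))$, and a standard mate computation identifies the resulting arrow $F(j_\cC(Y)) \to j_\cD(\oF(Y))$ with the push-pull transformation we need to invert. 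It therefore suffices to prove that $i_\cD R_\cD(F(j_\cC(Y))) \simeq 0$.

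For this, I would combine the right adjointability of the left square with its own commutativity. The former yields $R_\cD \circ F \simeq F_\cK \circ R_\cC$, while the latter yields $i_\cD \circ F_\cK \simeq F \circ i_\cC$, so altogether
\[ i_\cD R_\cD(F(j_\cC(Y))) \simeq F(i_\cC R_\cC(j_\cC(Y))) . \]
It remains to observe that $R_\cC(j_\cC(Y)) \simeq 0$: indeed, for every $K \in \cK_\cC$ one has $L_\cC(K) \simeq 0$ by hypothesis (3), so $\Map_\cC(i_\cC(K), j_\cC(Y)) \simeq \Map_{\cC/\cK_\cC}(L_\cC(K), Y) \simeq 0$, meaning $j_\cC(Y)$ lies in the right orthogonal of $\cK_\cC$ and therefore its $\cK_\cC$-coreflection vanishes. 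Applying $F$ gives the desired vanishing, and the proof is complete. I do not expect any genuine obstacle here; the only mildly delicate bookkeeping is the identification of the map supplied by the fiber sequence with the push-pull transformation, which reduces to unravelling mates of the two squares against the adjunction $(L_\cD, j_\cD)$.
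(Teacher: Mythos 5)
Your proof is correct and rests on exactly the same ingredients as the paper's: the fiber sequence of \cref{lem:Verdier_fiber_sequence} together with right adjointability of the upper square (giving $R_\cD\circ F\simeq F_\cK\circ R_\cC$). The only difference is cosmetic: the paper evaluates at an arbitrary $X\in\cC$, compares the image under $F$ of the fiber sequence in $\cC$ with the fiber sequence of $F(X)$ in $\cD$ and then invokes essential surjectivity of $L_\cC$, whereas you specialize at once to $X=j_\cC(Y)$, where the fiber term vanishes outright since $R_\cC\circ j_\cC\simeq 0$ — a slight streamlining of the same argument.
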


\begin{proof}
	The assumptions of \cref{lem:Verdier_fiber_sequence} are satisfied.
	In particular, for every object $X \in \cC$ we have a fiber sequence:
	\[ i_\cC R_\cC(X) \to X \to j_\cC L_\cC(X) . \]
	Applying $F$ we obtain a commutative diagram whose lines are fiber sequences in $\cD$:
	\[ \begin{tikzcd}
		F i_\cC R_\cC(X) \arrow{r} \arrow{d} & F(X) \arrow{r} \arrow[-, double equal sign distance]{dd} & F j_\cC L_\cC(X) \arrow{dd} \\
		i_\cD F_\cK R_\cC(X) \arrow{d} \\
		i_\cD R_\cD F(X) \arrow{r} & F(X) \arrow{r} & j_\cD L_\cD F(X) .
	\end{tikzcd} \]
	The morphism
	\[ F i_\cC R_\cC(X) \to i_\cD F_\cK R_\cC(X) \to i_\cD R_\cD F(X) \]
	is an equivalence because the top square is commutative and because it is right adjointable.
	It follows that the morphism $F j_\cC L_\cC(X) \to j_\cD L_\cD F(X)$ is an equivalence as well.
	On the other side, the commutativity of the bottom square implies that
	\[ j_\cD L_\cD F(X) \simeq j_\cD \oF L_\cC(X) . \]
	Since $L_\cC$ is essentially surjective, it follows that the natural transformation $F \circ j_\cC \to j_\cD \circ L_\cD$ is an equivalence, thus completing the proof.
\end{proof}

\begin{lem} \label{lem:Ind_Beck_Chevalley}
	Let
	\[ \begin{tikzcd}
		\cK_\cC \arrow{d}{F_\cK} \arrow{r}{i_\cC} & \cC \arrow{d}{F} \\
		\cK_\cD \arrow{r}{i_\cD} & \cD
	\end{tikzcd} \]
	be a commutative diagram of $\infty$-categories and exact functors between them.
	Suppose that $i_\cC$ and $i_\cD$ are the inclusion of full stable subcategories.
	If the above square is right adjointable, then so is the square obtained by applying the functor $\Ind \colon \Cat_\infty \to \LPromega$.
\end{lem}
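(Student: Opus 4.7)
The plan is to reduce the right adjointability of the Ind-square to the right adjointability of the original square by exploiting that compact objects in $\Ind(\cC)$ (resp.\ $\Ind(\cD)$) are precisely the objects coming from $\cC$ (resp.\ $\cD$). Let me denote by $R_\cC \colon \cC \to \cK_\cC$ and $R_\cD \colon \cD \to \cK_\cD$ the right adjoints provided by the hypothesis that the original square is right adjointable.

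First, I would establish that $\Ind(i_\cC) \colon \Ind(\cK_\cC) \to \Ind(\cC)$ and $\Ind(i_\cD) \colon \Ind(\cK_\cD) \to \Ind(\cD)$ admit right adjoints, call them $R_\cC^{\Ind}$ and $R_\cD^{\Ind}$. Indeed, as Ind-extensions of exact functors between small stable $\infty$-categories, $\Ind(i_\cC)$ and $\Ind(i_\cD)$ preserve all colimits (finite colimits by exactness of $i_\cC$ and $i_\cD$, filtered ones by construction of the Ind-extension) and carry compact objects to compact objects. Existence of the right adjoints follows from the adjoint functor theorem, and compact-preservation of the left adjoints implies that $R_\cC^{\Ind}$ and $R_\cD^{\Ind}$ preserve filtered colimits.

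Next, I would identify the restriction of $R_\cC^{\Ind}$ to $\cC \subset \Ind(\cC)$ with $R_\cC$ (followed by the Yoneda embedding), and analogously for $R_\cD^{\Ind}$. This is a direct Yoneda computation: for $X \in \cC$ and any $Y \in \cK_\cC$, fully faithfulness of the Yoneda embedding gives
\[ \Map_{\Ind(\cK_\cC)}(Y, R_\cC^{\Ind}(X)) \simeq \Map_{\Ind(\cC)}(\Ind(i_\cC)(Y), X) \simeq \Map_\cC(i_\cC(Y), X) \simeq \Map_{\cK_\cC}(Y, R_\cC(X)), \]
so $R_\cC^{\Ind}(X) \simeq R_\cC(X)$ canonically. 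Since objects of $\cK_\cC$ generate $\Ind(\cK_\cC)$ under filtered colimits, this identification determines $R_\cC^{\Ind}|_\cC$ up to coherent homotopy, and an analogous argument handles $R_\cD^{\Ind}|_\cD$.

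Finally, I would check that the Beck--Chevalley transformation $\Ind(F_\cK) \circ R_\cC^{\Ind} \to R_\cD^{\Ind} \circ \Ind(F)$ is an equivalence by reducing to compact objects. Both $\Ind(F_\cK)$ and $\Ind(F)$ commute with filtered colimits (as Ind-extensions), and so do $R_\cC^{\Ind}$ and $R_\cD^{\Ind}$ by the first step, so both sides of the Beck--Chevalley transformation preserve filtered colimits as endofunctors-with-source $\Ind(\cC)$. It therefore suffices to check the equivalence on compact objects $X \in \cC \subset \Ind(\cC)$. Using the identification from the previous step together with the fact that $\Ind(F_\cK)$ and $\Ind(F)$ restricted to the subcategories of compact objects agree with $F_\cK$ and $F$, the Beck--Chevalley map at $X$ unwinds into the Beck--Chevalley map $F_\cK(R_\cC(X)) \to R_\cD(F(X))$ of the original square, which is an equivalence by hypothesis.

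The main bookkeeping difficulty will be the third step: verifying at the level of higher coherences that the Beck--Chevalley map for the Ind-square restricts on compact objects to the Beck--Chevalley map of the original square. This is essentially a naturality statement for the unit/counit compositions defining the Beck--Chevalley transformation, and should follow cleanly from the identification $R_\cC^{\Ind}|_\cC \simeq R_\cC$ (and analogously for $\cD$), provided one keeps careful track of the fully faithful embeddings $\cK_\cC \hookrightarrow \Ind(\cK_\cC)$ and $\cK_\cD \hookrightarrow \Ind(\cK_\cD)$.
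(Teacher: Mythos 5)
Your proof is correct, but it takes a more hands-on route than the paper. The paper's argument is essentially a one-liner exploiting the functoriality of $\Ind$: since $\Ind$ preserves adjunctions, the right adjoints of $\Ind(i_\cC)$ and $\Ind(i_\cD)$ are simply $\Ind(R_\cC)$ and $\Ind(R_\cD)$, and the Beck--Chevalley transformation of the $\Ind$-square is then literally $\Ind(\eta)$, where $\eta$ is the Beck--Chevalley transformation of the original square; as $\Ind$ preserves equivalences, one is done with no further checks. You instead manufacture the right adjoints abstractly via the adjoint functor theorem, observe that they preserve filtered colimits because the left adjoints preserve compact objects, identify their restrictions to $\cC$ (resp.\ $\cD$) with $R_\cC$ (resp.\ $R_\cD$) by a Yoneda computation, and then reduce the equivalence of the Beck--Chevalley map to objects of $\cC$ using filtered-colimit preservation of both sides. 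This works, and the reduction-to-generators technique is a useful general tool, but it forces exactly the coherence bookkeeping you flag at the end (that the $\Ind$-level Beck--Chevalley map restricts on $\cC$ to the original one), which the paper's approach sidesteps entirely because the entire adjunction data, units, counits and all, is obtained by applying $\Ind$. Two small points of hygiene: compact objects of $\Ind(\cC)$ are in general only \emph{retracts} of objects of $\cC$, not exactly the objects of $\cC$; this is harmless here since objects of $\cC$ already generate $\Ind(\cC)$ under filtered colimits, which is all your reduction uses, and your appeal to the adjoint functor theorem tacitly uses that the categories involved are (essentially) small so that the $\Ind$-categories are presentable, which is the standing setting of the paper.
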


\begin{proof}
	Let $R_\cC \colon \cC \to \cK_\cC$ and $R_\cD \colon \cD \to \cK_\cD$ be right adjoints for $i_\cC$ and $i_\cD$, respectively.
	First of all, we observe that $\Ind(R_\cC)$ and $\Ind(R_\cD)$ are right adjoints to $\Ind(i_\cC)$ and $\Ind(i_\cD)$, respectively.
	At this point it is straightforward to check that if $\eta \colon F_\cK \circ R_\cC \to R_\cD \circ F$ denotes the Beck-Chevalley transformation, then $\Ind(\eta)$ is the Beck-Chevalley transformation associated to the commutative square
	\[ \begin{tikzcd}
		\Ind(\cK_\cC) \arrow{d}{\Ind(F_\cK)} \arrow{r}{\Ind(i_\cC)} & \Ind(\cK_\cD) \arrow{d}{\Ind(F)} \\
		\Ind(\cK_\cD) \arrow{r}{\Ind(\cD)} & \Ind(\cD) .
	\end{tikzcd} \]
	In particular, since $\eta$ was an equivalence, we conclude that $\Ind(\eta)$ is an equivalence as well, and thus that the above square is right adjointable.
\end{proof}

We now specialize to our setting.
We have the following key fact:

\begin{prop} \label{prop:heart_kernel_right_adjointable}
	Let $f \colon U \to X$ be an open immersion of noetherian, quasi-compact and quasi-separated schemes.
	Let $Z \hookrightarrow X$ be a closed subscheme and let $Z_U \coloneqq U \times_X Z$ be its base change.
	Then the commutative diagram
	\[ \begin{tikzcd}
		\cK^\Sch(X)^\heartsuit \arrow{d}{\hf^*} \arrow{r}{i_X} & \Cohh(\hZ) \arrow{d}{\hf^*} \\
		\cK^\Sch(U)^\heartsuit \arrow{r}{i_U} & \Cohh(\widehat{Z_U})
	\end{tikzcd} \]
	is right adjointable.
\end{prop}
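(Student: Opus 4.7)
The plan is to first describe the right adjoints $R_X$ and $R_U$ explicitly. By \cref{cor:internal_characterisation_kernel_localisation} (and its formal-completion analogue implicit in \cref{prop:qcompact_kernel}), an object of $\cK^\Sch(X)^\heartsuit$ is a coherent $\cO_{\hZ}$-module annihilated by some power of the ideal sheaf $\cJ$ of $Z$, so the unit $\cF\to i_X R_X(\cF)$ exhibits $R_X(\cF)$ as the maximal $\cJ$-torsion subsheaf $\Gamma_\cJ(\cF)\subseteq \cF$; similarly $R_U(\cG)=\Gamma_{\cJ_U}(\cG)$. The Beck-Chevalley map then sits as a morphism of subsheaves of $\hf^*\cF$:
\[
	\hf^*\Gamma_\cJ(\cF)\;\hookrightarrow\;\Gamma_{\cJ_U}(\hf^*\cF)\;\hookrightarrow\;\hf^*\cF,
\]
and the goal is to show the first inclusion is an equality.

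Since equality of subsheaves of $\hf^*\cF$ is a local property on $Z_U$, the next step is to reduce to the affine case by covering $X$ by affine opens $V=\Spec(A_V)$ and taking $f$ to be the inclusion of a basic open $V'=\Spec(A_V[g^{-1}])\hookrightarrow V$ (the general open immersion is then obtained by gluing). Write $R=\widehat{A_V}$, $S=\widehat{A_{V'}}$, $J\subseteq R$ for the ideal of $Z_V$, and $J_U=JS$. By \cref{lem:etale_sent_to_flat} the map $R\to S$ is flat, and $R$, $S$ are both noetherian by the argument used in \cref{subsubsec:reduction_torsionfree}. Translated to modules, the question becomes: for a finitely generated $R$-module $M$, do we have
\[
	\Gamma_J(M)\otimes_R S\;=\;\Gamma_{J_U}(M\otimes_R S)
\]
as submodules of $M\otimes_R S$?

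The key computation proceeds in two moves. First, because $R$ is noetherian and $M$ is finitely generated, the ascending chain $\{m\in M\mid J^n m=0\}_{n\ge 0}$ stabilises, so there exists $k$ such that $\Gamma_J(M)=\Hom_R(R/J^k,M)$. Since $R/J^k$ is finitely presented and $S$ is flat over $R$, the canonical map
\[
	\Hom_R(R/J^k,M)\otimes_R S\;\longrightarrow\;\Hom_S(S/J_U^k,M\otimes_R S)
\]
is an isomorphism; this identifies $\Gamma_J(M)\otimes_R S$ with the $J_U^k$-torsion of $N:=M\otimes_R S$. Second, apply the same identity to $M':=M/\Gamma_J(M)$, which is $J$-torsion-free by construction: for any $\ell\ge 1$, $\Hom_S(S/J_U^\ell,M'\otimes_R S)\simeq \Hom_R(R/J^\ell,M')\otimes_R S=0$, so $M'\otimes_R S$ is $J_U$-torsion-free. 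Flatness of $S/R$ applied to the short exact sequence $0\to\Gamma_J(M)\to M\to M'\to 0$ then identifies $\Gamma_J(M)\otimes_R S$ with the full $J_U$-torsion $\Gamma_{J_U}(N)$, proving the equality.

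The part of the argument requiring care is the reduction step: one must verify that the $J$-torsion functor genuinely sheafifies (i.e.\ that formation of $\Gamma_\cJ(\cF)$ commutes with restriction to open affines of $Z$), and that this local computation is compatible with the definition of $R_X$ given by the abstract adjoint functor theorem applied to the subcategory $\cK^\Sch(X)^\heartsuit\subseteq\Cohh(\hZ)$. Both reduce to the observation that $\hZ$ is locally noetherian, so subsheaves of coherent sheaves are coherent and the maximal $\cJ$-torsion subsheaf is coherent; the universal property of $R_X$ then forces it to agree with the assignment $\cF\mapsto\Gamma_\cJ(\cF)$.
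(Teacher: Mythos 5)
Your proposal is correct, and its core computation is genuinely different from the paper's. The paper first reduces (via the sheaf property of $U \mapsto \Cohh(\widehat{Z_U})$ and of $\cK^\Sch$, arguing as in \cref{prop:gluing_right_adjoints}) to the case of a \emph{principal} affine open $U \subset X$, identifies the right adjoints with $\Gamma_J$, $\Gamma_{J_U}$, and then checks the Beck--Chevalley map \emph{levelwise} under $\Cohh(\hA) \simeq \varprojlim_n \Cohh(A/J^n)$: surjectivity of each $\eta_n$ is proved by an explicit element-chasing argument with sections $m/g^l$ and powers of $g$, which is exactly why the principal-open hypothesis is needed there. You instead work directly with the completed noetherian rings $R = \widehat{A_V} \to S = \widehat{A_U}$: noetherianness gives $\Gamma_J(M) = \Hom_R(R/J^k,M)$, flat base change for $\Hom$ out of the finitely presented module $R/J^k$ identifies $\Gamma_J(M)\otimes_R S$ with the $J_U^k$-torsion of $N$, and applying the same identity to the $J$-torsion-free quotient $M/\Gamma_J(M)$ shows $N/(\Gamma_J(M)\otimes_R S)$ is $J_U$-torsion-free, whence equality with $\Gamma_{J_U}(N)$. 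This only uses flatness of $R \to S$ (\cref{lem:etale_sent_to_flat}), so it applies to arbitrary open immersions (indeed arbitrary flat maps) of affines and dispenses with both the principal-open reduction and the levelwise dévissage — arguably a cleaner and more robust argument; the paper's version buys a completely explicit, finite-level verification. One presentational caution: your opening description of $R_X$ as the maximal $\cJ$-torsion subsheaf $\Gamma_\cJ(\cF)$ already presupposes that the locally defined torsion submodules glue and land in $\cK^\Sch(X)^\heartsuit$, which is (essentially) the affine case of the statement you are proving, or else requires an abstract existence argument (e.g.\ maximal subobject of $\cF$ lying in $\cK^\Sch(X)^\heartsuit$, using that $\Cohh(\hZ)$ is noetherian and $X$ quasi-compact). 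You flag this in your last paragraph, but the logical order should be: affine base-change computation first, then gluing of the right adjoints — which is how the paper arranges its reduction; with that reordering your argument is complete.
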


\begin{proof}
	The first task we have to tackle is to construct right adjoints to the inclusions $i_X$ and $i_U$.
	We remark that the functor $X\et \ni U \mapsto \Cohh( \widehat{Z_U} ) \in \Catst$ is a sheaf for the \'etale topology.
	On the other side, we proved in \cref{prop:qcompact_kernel} that the same is true for the functor $X\et \ni U \mapsto \cK^\Sch(U) \in \Catst$.
	Reasoning as in the proof of \cref{prop:gluing_right_adjoints}, we see that it is enough to prove that the above square is right adjointable in the case where both $X=\Spec(A)$ and $U= \Spec(A_U)$ are affine and $U$ is a \emph{principal} open subset of $X$.
	In this case, we can use \cref{lem:basic_formal_GAGA} to get canonical identifications
	\[ \Cohh(\hZ) \simeq \Cohh(\hA), \qquad \Cohh(\widehat{Z_U}) \simeq \Cohh(\widehat{A_U}) . \]
	In particular, since both $\hA$ and $\widehat{A_U}$ are noetherian rings, we can apply \cref{cor:internal_characterisation_kernel_localisation} to identify $\cK^\Sch(X)^\heartsuit$ (resp.\ $\cK^\Sch(U)^\heartsuit$) with the full subcategory of $\Cohh(\hA)$ (resp.\ of $\Cohh(\widehat{A_U})$) spanned by those modules $M$ that are annihilated by some power of the ideal $J$ defining $Z$ inside $\hA$ (resp.\ of the ideal $J_U$ defining $Z_U$ inside $\widehat{A_U}$).
	
	In this case it is clear that the right adjoints for $i_X$ and $i_U$ exist and are given by $\Gamma_J$ and by $\Gamma_{J_U}$, the functor of sections with support in $Z$ and in $Z_U$, respectively.
	Let us also remark that under the equivalence
	\[ \Cohh(\hA) \simeq \varprojlim_n \Cohh(A / J^n) \]
	we can identify $\Gamma_J(M)$ with the sequence of modules $\{\Gamma_J(M)_n\}$ defined by
	\[ \Gamma_J(M)_n \coloneqq \{m \in M / J^n \mid J^i m = 0 \text{ for some } 1 \le i \le n - 1\} . \]
	
	To complete the proof we have to show that for every $M \in \Cohh(\hA)$, the natural map
	\[ \eta \colon \Gamma_J(M) \otimes_{\hA} \widehat{A_U} \to \Gamma_{J_U}( M \otimes_{\hA} \widehat{A_U} ) \]
	is an equivalence.
	By exploiting the equivalence
	\[ \Cohh(\widehat{A_U}) \simeq \varprojlim_n \Cohh(A_U / J_U^n) , \]
	it is enough to check that the map $\eta$ becomes an isomorphism after modding out by $J_U^n$, for every $n$.
	In other words, we have to check that
	\[ \eta_n \colon \Gamma_J(M)_n \otimes_{A / J^n} (A_U / J_U^n) \to \Gamma_{J_U}( M_n \otimes_{A / J^n} (A_U / J_U^n)) \]
	is an equivalence.
	Observe that, since $A_U / J_U^n \simeq A_U \otimes_A (A / J^n)$, the map $A / J^n \to A_U / J_U^n$ is an open Zariski immersion.
	In particular, it is flat, and therefore both $\Gamma_J(M)_n \otimes_{A / J^n} (A_U / J_U^n)$ and $\Gamma_{J_U}( M_n \otimes_{A / J^n} (A_U / J_U^n))$ are submodules of $M_n \otimes_{A / J^n} (A_U / J_U^n)$, and $\eta_n$ is compatible with the natural inclusions.
	As a consequence, we see that $\eta_n$ is a monomorphism.
	Let us prove that it is an epimorphism as well.
	Since $U$ is a principal open subset of $X$, we can write $A_U/J_U^n \simeq A/J^{n}[g\inv]$ for some $g \in A$.
	Thus, we can write any element in $\Gamma_{J_U}(M_n \otimes_{A / J^n} (A_U / J_U^n))$ in the form $\frac{m}{g^l}$, where $m \in M/J^n$.
	Furthermore, we know that for some $1 \le k \le n - 1$ and every $a \in J_U^k$ we have
	\[ a \cdot \frac{m}{g^l} = 0 \Longleftrightarrow g^{l+h} a m = 0 . \]
	Since the ideal $J_U^k$ is finitely generated, we can choose $h \ge 0$ in such a way that the above relation holds true for every $a \in J_U^k$.
	In particular, we conclude that $g^{l+h}m \in \Gamma_J(M)_n$.
	Since
	\[ \frac{m}{g^l} = g^{l+h}m \cdot \frac{1}{g^{2l+h}} \in \Gamma_J(M)_n \otimes_{A/J^n} (A_U / J_U^n) \subset M_n[g\inv] , \]
	we conclude that $\eta_n$ is also surjective.
	The proof is thus complete.
\end{proof}

In particular, by combining \cref{prop:heart_kernel_right_adjointable} and \cref{lem:Ind_Beck_Chevalley} we obtain that for any open immersion $U \to X$, the diagram
\[ \begin{tikzcd}
	\Ind(\cK^\Sch(X)^\heartsuit) \arrow{d}{\Ind(\hf^*)} \arrow{r}{\Ind(i_X)} & \Ind(\Cohh(\hZ)) \arrow{d}{\Ind(\hf^*)} \\
	\Ind(\cK^\Sch(U)^\heartsuit) \arrow{r}{\Ind(i_U)} & \Ind(\Cohh(\widehat{Z_U}))
\end{tikzcd} \]
is also right adjointable.
Observe now that all the categories appearing in the above diagram are Grothendieck abelian categories.
In particular, we can pass to the associated left bounded derived $\infty$-categories $\rD^+$.
Since $\Ind(\hf^*)$ is both left and right exact, we can derive it on both sides, and thus we see that the resulting diagram
\[ \begin{tikzcd}
	\rD^+ ( \Ind(\cK^\Sch(X)^\heartsuit) ) \arrow{r} \arrow{d} & \rD^+ ( \Ind(\Cohh(X)) ) \arrow{d} \\
	\rD^+ ( \Ind(\cK^\Sch(U)^\heartsuit) ) \arrow{r} & \rD^+ ( \Ind(\Cohh(U)) )
\end{tikzcd} \]
is right adjointable as well.
Observe furthermore that since the natural $t$-structures on these categories is right complete, since $\Ind(i_X)$ is $t$-exact and since it is fully faithful on the elements of the heart, the same proof of \cref{lem:reduction_to_the_heart_II} shows that the functors
\[ \rD^+( \Ind(\cK^\Sch(X)^\heartsuit) ) \to \rD^+( \Ind(\Cohh(\hZ)) ) , \quad \rD^+( \Ind(\cK^\Sch(U)^\heartsuit) ) \to \rD^+( \Ind(\Cohh(\widehat{Z_U})) ) \]
are fully faithful.
We let $\cD_X$ and $\cD_U$ be the Verdier quotients in $\Catst$
\[ \cD_X \coloneqq \rD^+( \Ind(\Cohh(\hZ)) ) / \rD^+(\Ind(\cK^\Sch(X)^\heartsuit)) , \quad \cD_U \coloneqq \rD^+(\Ind(\Cohh(\widehat{Z_U}))) / \rD^+( \Ind(\cK^\Sch(U)^\heartsuit) ) . \]
Using \cref{cor:quotient_t_structure}, we can equip $\cD_X$ and $\cD_U$ with $t$-structures such that:
\begin{enumerate}
	\item the functors $\rD^+( \Ind(\Cohh(\hZ))) \to \cD_X$ and $\rD(\Ind(\Cohh(\widehat{Z_U}))) \to \cD_U$ are $t$-exact;
	\item there are canonical equivalences
	\begin{gather*}
	\cD_X^\heartsuit \simeq (\rD^+(\Ind(\Cohh(\hZ))))^\heartsuit / (\rD^+( \Ind(\cK^\Sch(X)^\heartsuit) )  )^\heartsuit \simeq \Ind(\Cohh(\hZ)) / \Ind(\cK^\Sch(X)^\heartsuit) \simeq \oPhi^\circ_Z(X)^\heartsuit , \\
	\cD_U^\heartsuit \simeq (\rD^+(\Ind(\Cohh(\widehat{Z_U}))))^\heartsuit / (\rD^+( \Ind(\cK^\Sch(U)^\heartsuit) )  )^\heartsuit \simeq \Ind(\Cohh(\widehat{Z_U})) / \Ind(\cK^\Sch(U)^\heartsuit) \simeq \oPhi^\circ_Z(U)^\heartsuit .
	\end{gather*}
\end{enumerate}

We have the following important technical lemma:

\begin{lem} \label{lem:checking_hypotheses_for_D+}
	The Verdier quotient $\cD_X$ can be identified with the full subcategory of $\rD^+(\Ind(\Cohh(\hZ)))$ spanned by the objects that are right orthogonal to $\rD^+(\Ind(\cK^\Sch(X)^\heartsuit))$.
	Furthermore, $\rD^+(\Ind(\cK^\Sch(X)^\heartsuit))$ coincides with the kernel of the quotient map
	\[ L^+ \colon \rD^+(\Ind(\Cohh(\hZ)) \to \cD_X . \]
\end{lem}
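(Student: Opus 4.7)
The plan is to reduce both claims to the existence of a right adjoint to the inclusion $\iota \colon \rD^+(\Ind(\cK^\Sch(X)^\heartsuit)) \hookrightarrow \rD^+(\Ind(\Cohh(\hZ)))$, from which both parts of the lemma will follow formally.

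First, I would construct this right adjoint at the level of $\rD^+$. On the hearts, the inclusion $\Ind(\cK^\Sch(X)^\heartsuit) \hookrightarrow \Ind(\Cohh(\hZ))$ admits a right adjoint given by the $J$-torsion functor $\Gamma_J$, where $J$ is an ideal of definition for $Z$; this was verified in the proof of \cref{prop:heart_kernel_right_adjointable} using the identification of $\cK^\Sch(X)^\heartsuit$ with $J$-torsion coherent sheaves provided by \cref{cor:internal_characterisation_kernel_localisation}. Since $J$ is finitely generated in a Noetherian ring, $\Gamma_J$ has finite cohomological dimension (computable via a stable Koszul complex), so its right-derived functor $R \Gamma_J$ is well-defined on left-bounded complexes and provides the desired right adjoint $R \colon \rD^+(\Ind(\Cohh(\hZ))) \to \rD^+(\Ind(\cK^\Sch(X)^\heartsuit))$ to $\iota$.

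With $R$ in hand, both claims follow by essentially formal arguments. Given $Y \in \rD^+(\Ind(\Cohh(\hZ)))$, the counit of the adjunction yields a cofiber sequence $\iota R(Y) \to Y \to C(Y)$. A direct computation using the $(\iota, R)$-adjunction together with the fact that $\iota$ is fully faithful (so that $R \circ \iota \simeq \mathrm{id}$) shows that $\Map(\iota(X), C(Y)) \simeq 0$ for every $X \in \rD^+(\Ind(\cK^\Sch(X)^\heartsuit))$, so $C(Y)$ lies in the right-orthogonal $\cE \coloneqq \iota(\rD^+(\Ind(\cK^\Sch(X)^\heartsuit)))^\perp$. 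The assignment $Y \mapsto C(Y)$ is then readily verified to exhibit $\cE$ as the Verdier quotient of $\rD^+(\Ind(\Cohh(\hZ)))$ by $\rD^+(\Ind(\cK^\Sch(X)^\heartsuit))$ in $\Catst$ via its universal property, yielding the first claim $\cD_X \simeq \cE$. The second claim follows immediately: $Y$ lies in $\ker(L^+)$ precisely when $C(Y) \simeq 0$, which is to say $\iota R(Y) \to Y$ is an equivalence, so $Y$ is in the essential image of $\iota$; the reverse inclusion is immediate from the construction of $L^+$.

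The main technical obstacle is to establish the existence of the derived right adjoint at the level of $\rD^+$ as an honest $\infty$-categorical adjunction (rather than merely at the level of triangulated homotopy categories). This requires promoting the heart-level adjunction to the level of stable $\infty$-categories via the enhancement of derived functors available for Grothendieck abelian categories (e.g.\ following \cite[\S 1.3]{Lurie_Higher_algebra}), and then checking that $R\Gamma_J$ preserves left-boundedness so that the adjunction restricts from $\rD$ to $\rD^+$. Both points are standard under our Noetherian hypothesis, but they require some care to set up formally.
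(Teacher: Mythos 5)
Your argument for the first claim is essentially the paper's: both construct the reflector onto the right orthogonal as $C(\cF)=\cofib\bigl(\iota\,\rR\Gamma_{\cJ}(\cF)\to\cF\bigr)$, check via the adjunction and full faithfulness of $\iota$ that $C(\cF)$ is right orthogonal to $\rD^+(\Ind(\cK^{\Sch}(X)^\heartsuit))$, and then identify the reflective localization with the Verdier quotient in $\Catst$ through its universal property (the paper cites \cite[5.2.7.12]{HTT} at exactly the point where you say ``readily verified''). Where you genuinely diverge is the second claim. You deduce $\ker(L^+)=\rD^+(\Ind(\cK^{\Sch}(X)^\heartsuit))$ purely formally: once $\cD_X$ is identified with the right orthogonal \emph{compatibly with the quotient functors} (which holds because the comparison functor is produced by the universal property, so that $L^+$ corresponds to $C$), an object is killed by $L^+$ iff the counit $\iota\rR\Gamma_{\cJ}(\cF)\to\cF$ is an equivalence, i.e.\ iff it lies in the essential image of $\iota$; in particular the subcategory is automatically closed under retracts. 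The paper instead proves the kernel statement by hand: it uses the $t$-structures to reduce, by dévissage, to showing $\ker(L)\cap\Cohh(\hZ)\subset\cK^{\Sch}(X)^\heartsuit$, and then invokes genuinely geometric input — the commutation of $L$ with restriction to affine opens, the affine characterization of the kernel (\cref{prop:internal_characterisation_kernel_stacky}) and the étale sheaf property of $U\mapsto\cK^{\Sch}(U)^\heartsuit$ (\cref{prop:qcompact_kernel}). Your route is shorter and makes clear that the kernel identification needs no new geometric content beyond the existence of the derived right adjoint; the paper's route buys a concrete heart-level description (a coherent sheaf killed by $L$ is $\cJ$-power torsion) and does not rely on spelling out the compatibility of the identification with $L^+$, but is logically heavier. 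Both arguments suffice for the intended application, namely verifying the hypotheses of \cref{lem:categorical_quotient_right_adjointable}.

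Two minor points. The appeal to finite cohomological dimension of $\Gamma_{\cJ}$ is unnecessary: for the right derived functor on $\rD^+$ of a left exact functor between Grothendieck abelian categories, existence and preservation of left-boundedness are automatic (this is how the paper sets it up in the discussion preceding the lemma). And your argument does use, as the paper does, that $\iota\colon\rD^+(\Ind(\cK^{\Sch}(X)^\heartsuit))\to\rD^+(\Ind(\Cohh(\hZ)))$ is fully faithful; this is established in the ambient discussion before the lemma and should be cited rather than taken for granted.
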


\begin{proof}
	Let $\cD_X'$ be the full subcategory of $\rD^+(\Ind(\Cohh(\hZ)))$ spanned by the objects that are right orthogonal to $\rD^+(\Ind(\cK^\Sch(X)^\heartsuit))$.
	We start by proving that the fully faithful inclusion $\cD_X' \hookrightarrow \rD^+(\Ind(\Cohh(\hZ)))$ has a left adjoint.
	
	Let $\cF \in \rD^+(\Ind(\Cohh(\hZ)))$ and let
	\[ \cG \coloneqq \cofib( i \rR \Gamma_\cJ(\cF) \to \cF ) . \]
	Applying $\rR \Gamma_\cJ$ we obtain the following sequence in $\rD^+(\Ind(\cK^\Sch(X)^\heartsuit))$:
	\[ \rR \Gamma_\cJ i \rR \Gamma_\cJ(\cF) \to \rR \Gamma_\cJ(\cF) \to \rR \Gamma_\cJ(\cG) . \]
	Since $i$ is fully faithful, we see that the first morphism is an equivalence.
	In particular, $\rR \Gamma_\cJ(\cG) \simeq 0$.
	Now, if $\cH \in \rD^+ (\Ind(\cK^\Sch(X)^\heartsuit))$ is any element, we have
	\[ \Map_{\rD^+(\Ind(\Cohh(X)))}(i(\cH), \cG) \simeq \Map_{\rD^+( \Ind(\cK^\Sch(X)^\heartsuit))}(\cH, \rR \Gamma_\cJ \cG) \simeq 0 . \]
	This shows that $\cG$ is a localization of $\cF$ with respect to $\cD_X'$.
	In other words, $\cD_X' \hookrightarrow \rD^+(\Ind(\Cohh(\hZ)))$ has a left adjoint.
	It follows from \cite[5.2.7.12]{HTT} that $\cD_X'$ is the quotient of $\rD^+(\Ind(\Cohh(\hZ)))$ by $\rD^+(\Ind(\cK^\Sch(X)^\heartsuit))$.
	
	We are left to prove that $\rD^+(\Ind(\cK^\Sch(X)^\heartsuit))$ coincides with the kernel of $L^+$.
	It is clear that
	\[ \rD^+(\Ind(\cK^\Sch(X)^\heartsuit)) \subset \ker(L^+) . \]
	Let now $\cF \in \ker(L^+)$.
	On both $\rD^+(\Ind(\Cohh(\hZ)))$ and $\rD^+(\Ind(\cK^\Sch(X)^\heartsuit))$ there are canonical $t$-structures which are right complete and left bounded.
	Moreover, the inclusion $i$ is $t$-exact.
	Since $L^+$ is a left adjoint, it is therefore enough to prove that
	\[ \ker(L^+) \cap \rD^b(\Ind(\Cohh(\hZ))) \subset \rD^+(\Ind(\cK^\Sch(X)^\heartsuit)) . \]
	Proceeding by induction on the number of non-vanishing cohomologies, we are therefore reduced to prove that
	\[ \ker(L^+) \cap \Ind(\Cohh(\hZ)) \subset \rD^+(\Ind(\cK^\Sch(X)))^\heartsuit . \]
	Using once again the fact that $L^+$ is a left adjoint, we can further limit ourselves to check that
	\[ \ker(L^+) \cap \Cohh(\hZ) \subset \rD^+(\Ind(\cK^\Sch(X)^\heartsuit)) . \]
	In turn, since $L^+$ is $t$-exact, it is enough to prove that
	\[ \ker(L) \cap \Cohh(\hZ) \subset \cK^\Sch(X)^\heartsuit . \]
	Let $\cF \in \ker(L) \cap \Cohh(\hZ)$ and let $f \colon U \subseteq X$ be an affine open subscheme.
	Recall that the diagram
	\[ \begin{tikzcd}
		\Cohh(\hZ) \arrow{d}{L_X} \arrow{r}{\hf^*} & \Cohh(\widehat{Z_U}) \arrow{d}{L_U} \\
		\oPhi^\circ_Z(X) \arrow{r}{\overline{f}^*} & \oPhi^\circ_Z(U)
	\end{tikzcd} \]
	commutes.
	In particular, $L_U( \hf^*(\cF) ) \simeq 0$.
	Since $U$ is affine, it follows from the very definition of $\cK^\Sch(X)^\heartsuit$ (cf.\ \cref{prop:internal_characterisation_kernel_stacky}) that $\hf^*(\cF) \in \cK^\Sch(U)^\heartsuit$.
	Since \cref{prop:qcompact_kernel} shows that the assignment $U \mapsto \cK^\Sch(U)^\heartsuit$ is a sheaf, we conclude that $\cF \in \cK^\Sch(X)^\heartsuit$.
	This completes the proof.
\end{proof}

Using \cref{lem:checking_hypotheses_for_D+}, we see that the hypotheses of \cref{lem:categorical_quotient_right_adjointable} are satisfied, and we obtain that the diagram
\[ \begin{tikzcd}
	\rD^+( \Ind(\Cohh(\hZ)) ) \arrow{r} \arrow{d} & \cD_X \arrow{d} \\
	\rD^+(\Ind(\Cohh(\widehat{Z_U}))) \arrow{r} & \cD_U 
\end{tikzcd} \]
is right adjointable.
We finally get the first main result of this section:

\begin{cor} \label{cor:PFN_right_adjointable}
	Let $f \colon U \to X$ be an open immersion of noetherian, quasi-compact and quasi-separated schemes.
	Let $Z \hookrightarrow X$ be a closed subscheme and let $Z_U \coloneqq U \times_X Z$ be its base-change.
	Then the commutative diagram
	\[ \begin{tikzcd}
		\Ind(\Cohh(\hZ)) \arrow{d}{\Ind(\hf^*)} \arrow{r}{L_X} & \Ind(\oPhi^\circ_Z(X)^\heartsuit) \arrow{d}{\Ind(\overline{f}^*)} \\
		\Ind(\Cohh(\widehat{Z_U})) \arrow{r}{L_U} & \Ind(\oPhi^\circ_Z(U)^\heartsuit)
	\end{tikzcd} \]
	is right adjointable.
\end{cor}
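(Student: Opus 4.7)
The strategy is to pass to the hearts of the right adjointable square at the level of $\rD^+$, established in the paragraph preceding the statement of the corollary, namely
\[
\begin{tikzcd}
\rD^+(\Ind(\Cohh(\hZ))) \arrow{r}{L^+_X} \arrow{d}{\Ind(\hf^*)} & \cD_X \arrow{d}{\Ind(\overline{f}^*)} \\
\rD^+(\Ind(\Cohh(\widehat{Z_U}))) \arrow{r}{L^+_U} & \cD_U .
\end{tikzcd}
\]
Under the canonical identifications $\rD^+(\Ind(\Cohh(\hZ)))^\heartsuit \simeq \Ind(\Cohh(\hZ))$ and $\cD_X^\heartsuit \simeq \Ind(\oPhi^\circ_Z(X)^\heartsuit)$ (and analogously for $U$) recorded in the discussion preceding the statement, the heart of this $\rD^+$-level square is exactly the square displayed in the corollary; what remains is to check that right adjointability transfers from the $\rD^+$-level to the heart level.

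First I would verify $t$-exactness of all four functors in the $\rD^+$-square: the quotient maps $L^+_X, L^+_U$ are $t$-exact by construction of the $t$-structures on $\cD_X, \cD_U$ via \cref{cor:quotient_t_structure}; the functor $\Ind(\hf^*)$ is $t$-exact because $\hf^*$ is $t$-exact (the map $f$ is flat and hence so is $\hf$, by \cref{lem:etale_sent_to_flat}); and $\Ind(\overline{f}^*)$ inherits $t$-exactness from the identity $L^+_U \circ \Ind(\hf^*) \simeq \Ind(\overline{f}^*) \circ L^+_X$ together with the essential surjectivity and $t$-exactness of $L^+_X$. By \cref{lem:checking_hypotheses_for_D+}, the right adjoints $j_X \colon \cD_X \hookrightarrow \rD^+(\Ind(\Cohh(\hZ)))$ and $j_U$ are fully faithful inclusions, and adjunction with the $t$-exact $L^+_X, L^+_U$ forces them to be left $t$-exact. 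Consequently the heart-level right adjoints are given by $\rH^0 \circ j_X$ and $\rH^0 \circ j_U$.

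It remains to compute the heart-level Beck--Chevalley transformation. For $\cG \in \cD_X^\heartsuit$ there is a chain of equivalences
\[
\Ind(\hf^*)^\heartsuit \bigl(\rH^0 j_X \cG\bigr) \simeq \rH^0 \bigl(\Ind(\hf^*) j_X \cG\bigr) \xrightarrow{\sim} \rH^0 \bigl(j_U \Ind(\overline{f}^*) \cG\bigr) \simeq \rH^0 j_U \bigl(\Ind(\overline{f}^*)^\heartsuit \cG\bigr),
\]
in which the first and third equivalences use $t$-exactness of $\Ind(\hf^*)$ and $\Ind(\overline{f}^*)$ respectively, and the middle arrow is $\rH^0$ applied to the $\rD^+$-level Beck--Chevalley equivalence. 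This is precisely the heart-level Beck--Chevalley transformation, and it is an equivalence.

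The main obstacle here is really only bookkeeping: all the substantive work has already been absorbed into \cref{prop:heart_kernel_right_adjointable}, \cref{lem:Ind_Beck_Chevalley}, \cref{lem:checking_hypotheses_for_D+}, and \cref{lem:categorical_quotient_right_adjointable}. The present corollary is a formal consequence of passing from $\rD^+$-level to heart-level right adjointability, using the $t$-exactness of all four functors in the square together with the left $t$-exactness of the right adjoints $j_X, j_U$.
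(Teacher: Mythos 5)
Your argument is correct and follows the same route as the paper: it takes the $\rD^+$-level right adjointable square obtained just before the statement, notes that the quotient maps and the (derived) pullbacks are $t$-exact so that $\rH^0$ commutes with them, identifies $\rH^0 \circ j_X$ and $\rH^0 \circ j_U$ as the heart-level right adjoints, and applies $\rH^0$ to the $\rD^+$-level Beck--Chevalley equivalence. The extra bookkeeping you supply (left $t$-exactness of $j_X, j_U$, $t$-exactness of $\Ind(\overline{f}^*)$ via essential surjectivity of $L^+_X$) is exactly what the paper leaves implicit, so no gap remains.
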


\begin{proof}
	Let $j_X \colon \cD_X \hookrightarrow \rD^+(\Ind(\Cohh(\hZ)))$ and $j_U \colon \cD_U \hookrightarrow \rD^+(\Ind(\Cohh(\widehat{Z_U})))$ be the right adjoint to the natural quotient maps.
	Then, we argued above that the natural transformation
	\[ \rD^+(\hf^*) \circ j_X \to j_U \circ \rD^+(\overline{f}^*) \]
	is an equivalence.
	Since both $\rD^+(\hf^*)$ and $\rD^+(\overline{f}^*)$ are $t$-exact, it follows that
	\[ \rH^0 \circ \rD^+(\hf^*) \circ j_X \simeq \rD^+(\hf^*) \circ \rH^0 \circ j_X . \]
	Since $\rH^0 \circ j_X$ and $\rH^0 \circ j_U$ are the right adjoints to $L_X$ and $L_U$, the proof is complete.
\end{proof}

The above result plays an essential role in the proof of \cref{thm:Zariski_computational_tool}.
For example, it allows us to choose formal models for objects in $\oPhi_Z^\circ(X)$ in an ``almost functorial'' way, in the following precise sense:

\begin{cor} \label{cor:almost_functorial_formal_models}
	Let $X$ be a noetherian quasi-compact and quasi-separated scheme and let $Z \hookrightarrow X$ be a closed subscheme.
	For any open subscheme $U \subset X$ we denote by
	\[ i_U \colon (\oPhi_Z^\circ(U))^\heartsuit \hookrightarrow \Ind((\oPhi_Z^\circ(U))^\heartsuit) \]
	the canonical inclusion and by
	\[ j_{U*} \colon \Ind((\oPhi_Z^\circ(U))^\heartsuit) \hookrightarrow \Ind(\Cohh(\widehat{Z_U})) \]
	the right adjoint to the quotient map
	\[ L \colon \Ind(\Cohh(\widehat{Z_U})) \to \Ind((\oPhi_Z^\circ(U))^\heartsuit) \simeq \Ind(\Cohh(\widehat{Z_U})) / \Ind((\cK^\Sch(U))^\heartsuit) . \]
	
	Suppose that there exists a finite affine Zariski cover $\{U_i\}$ such that the canonical map
	\[ \overline{u}^* \colon (\oPhi_Z^\circ(X))^\heartsuit \to (\oPhi_Z^\circ(U))^\heartsuit \]
	is conservative, where $U \coloneqq \coprod U_i$ and $u \colon U \to X$ is the induced total morphism.
	Then for any $\cF \in (\oPhi_Z^\circ(X))^\heartsuit$ there exists a $\cG \in \Cohh(\hZ)$ equipped with a monomorphism
	\[ \cG \hookrightarrow j_{X*}(\cF) \in \Ind(\Cohh(\hZ)) \]
	and such that the induced composition
	\[ L(\cG) \to L(j_{X*}(\cF)) \simeq \cF \]
	is an equivalence.
\end{cor}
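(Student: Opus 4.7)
The strategy is to build $\cG$ as a coherent subobject of an explicit filtered colimit presentation of $j_{X*}(\cF)$ and then verify its defining property locally on the given cover. Since $\hZ$ is a noetherian formal scheme, $\Ind(\Cohh(\hZ))$ is a Grothendieck abelian category compactly generated by $\Cohh(\hZ)$, so I may write
\[ j_{X*}(\cF) \simeq \colim_{\alpha \in A} \cG_\alpha \]
as a filtered directed union of coherent subobjects $\cG_\alpha \hookrightarrow j_{X*}(\cF)$ with $\cG_\alpha \in \Cohh(\hZ)$. Applying the $t$-exact quotient $L_X$ and using $L_X \circ j_{X*} \simeq \id$ (from fully faithfulness of $j_{X*}$), I obtain $\cF \simeq \colim_\alpha L_X(\cG_\alpha)$ in $\Ind((\oPhi_Z^\circ(X))^\heartsuit)$, with each structural map $L_X(\cG_\alpha) \hookrightarrow \cF$ a monomorphism since $L_X$ is exact on the heart.

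Next, I would restrict along each $u_i \colon U_i \hookrightarrow X$ in the finite affine cover. By \cref{cor:PFN_right_adjointable}, the localization square is right adjointable, so the Beck--Chevalley transformation $\hat u_i^* \circ j_{X*} \to j_{U_i *} \circ \bar u_i^*$ is an equivalence. Combined with the fact that $\hat u_i^*$ preserves filtered colimits and $L_{U_i} \circ j_{U_i *} \simeq \id$, this yields
\[ \bar u_i^*(\cF) \simeq \colim_\alpha L_{U_i}(\hat u_i^*(\cG_\alpha)) \simeq \colim_\alpha \bar u_i^* L_X(\cG_\alpha) \]
in $\Ind((\oPhi_Z^\circ(U_i))^\heartsuit)$, the second equivalence coming from commutativity of the base-change square. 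Since $U_i$ is affine, \cref{lem:basic_formal_GAGA} identifies $(\oPhi_Z^\circ(U_i))^\heartsuit$ with the heart of coherent sheaves on the honest scheme $\Spec(\widehat{A_{U_i}}) \smallsetminus Z_{U_i}$, so $\bar u_i^*(\cF)$ is coherent, hence compact in the ambient Ind-category. Compactness produces a factorization of the identity through some stage $\bar u_i^* L_X(\cG_{\alpha_i})$; the colimit map $\bar u_i^* L_X(\cG_{\alpha_i}) \hookrightarrow \bar u_i^*(\cF)$ is then a monomorphism admitting a retraction, hence an isomorphism.

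Finiteness of the cover lets me pick a single index $\alpha \in A$ dominating all the $\alpha_i$, so that $\bar u^* L_X(\cG_\alpha) \to \bar u^*(\cF)$ is an equivalence. The cokernel $\cC$ of the monomorphism $L_X(\cG_\alpha) \hookrightarrow \cF$ then satisfies $\bar u^*(\cC) = 0$ (because $\bar u^*$ is exact on hearts), and the conservativity hypothesis on $\bar u^*$ forces $\cC = 0$. Thus $\cG \coloneqq \cG_\alpha$ is a coherent subobject of $j_{X*}(\cF)$ with $L_X(\cG) \simeq \cF$, as required. I expect the genuinely delicate point to be the second step: the colimit description of $\bar u_i^*(\cF)$ depends crucially on the right adjointability asserted in \cref{cor:PFN_right_adjointable}, which is the nontrivial base-change statement for the punctured formal neighbourhood; once that is available, the remainder is a routine combination of compactness in Grothendieck abelian categories with the conservativity assumption.
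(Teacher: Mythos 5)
Your argument is correct, and its scaffolding matches the paper's: write $j_{X*}(\cF)$ as a filtered union of coherent subobjects, restrict along the $u_i$ using the base-change equivalence of \cref{cor:PFN_right_adjointable}, and conclude over $X$ by conservativity. The decisive step, however, is genuinely different. The paper settles the affine case by hand: it covers $\Spec(\widehat{A_{U_i}}) \smallsetminus Z_{U_i}$ by the principal opens attached to generators of the ideal, lifts finitely many generating sections after multiplying by powers of the $f_j$, and then chooses a stage $\cG_\alpha$ containing those sections, using Zariski descent for $\Cohh$. You replace this with the abstract observation that $\overline{u}_i^*(\cF)$ is a compact object of $\Ind((\oPhi_Z^\circ(U_i))^\heartsuit)$, so the identity factors through a finite stage of the filtered colimit, and a map that is both a monomorphism and a split epimorphism in an abelian category is an isomorphism. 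This is legitimate: any object of the heart is compact in its $\Ind$-category, and the identification $\Ind((\oPhi_Z^\circ(U))^\heartsuit) \simeq \Ind(\Cohh(\widehat{Z_U}))/\Ind((\cK^\Sch(U))^\heartsuit)$ is already part of the setup of the statement. In effect your compactness step is the conceptual content of the paper's generator-chasing (which is essentially the standard proof that coherent sheaves are compact in $\QCohh$ of a noetherian scheme); note that, granting the same identification over $X$ itself, $\cF$ is equally compact in $\Ind((\oPhi_Z^\circ(X))^\heartsuit)$, so your argument applied directly over $X$ would even bypass the affine cover and the conservativity hypothesis, whereas the paper's route stays with explicit modules and sections on affines at the price of that hypothesis. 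Two small repairs: the identification $(\oPhi_Z^\circ(U_i))^\heartsuit \simeq \Cohh(\Spec(\widehat{A_{U_i}}) \smallsetminus Z_{U_i})$ follows from \cref{thm:localization_for_coh_minus} together with \cref{prop:internal_characterisation_kernel_stacky}, not from \cref{lem:basic_formal_GAGA} (harmless here, since compactness only needs $\overline{u}_i^*(\cF)$ to lie in the heart); and what compactness produces is a section of the structure map $\overline{u}_i^* L_X(\cG_{\alpha_i}) \to \overline{u}_i^*(\cF)$, so that map is a split epimorphism rather than a monomorphism with a retraction --- mono plus epi then gives the isomorphism. Finally, the passage from a filtered colimit of coherent objects to a filtered union of coherent subobjects uses the epi--mono factorization and noetherianity of $\Cohh(\hZ)$, which the paper spells out and you assume silently.
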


\begin{proof}
	Pick $\cF \in (\oPhi_Z^\circ(X))^\heartsuit$.
	The object $j_{X*}(\cF) \in \Ind(\Cohh(\hZ))$ can be written as filtered colimit
	\[ j_{X*}(\cF) \simeq \colim_{\alpha \in I} \cG_\alpha , \]
	where $\cG_\alpha \in \Cohh(\hZ)$.
	Since $\Ind(\Cohh(\hZ))$ is an abelian category, we can consider for any morphism $\cG_\alpha \to j_{X*}(\cF)$ its epi-mono factorization:
	\[ \cG_\alpha \twoheadrightarrow \cG_\alpha' \hookrightarrow j_{X*}(\cF) . \]
	Observe that since $\hA$ is noetherian, the submodule
	\[ \ker(\cG_\alpha \to j_{X*}(\cF)) \]
	of $\cG_\alpha$ must be coherent. In particular, it follows that $\cG_\alpha' \in \Cohh(\hZ)$ for every $\alpha \in I$.
	Moreover, the universal property of the image shows that the assignment
	\[ I \ni \alpha \mapsto \cG_\alpha' \in \Cohh(\hZ) \]
	can be promoted to a functor $I \to \Cohh(\hZ)$.
	Passing to colimits in $\Ind(\Cohh(\hZ))$, we thus obtain maps
	\[ \colim_{\alpha \in I} \cG_\alpha \xrightarrow{\pi} \colim \cG_\alpha' \xrightarrow{\varphi} j_{X*}(\cF) . \]
	Since colimits commute with colimits, it follows that they also preserve epimorphisms.
	In particular, the map $\pi$ is an epimorphism.
	Furthermore, the composition $\varphi \circ \pi$ is an isomorphism.
	It follows that $\pi$ must be a monomorphism as well.
	Since $\Ind(\Cohh(\hZ))$ is an abelian category, it is balanced: as a consequence $\pi$ is an isomorphism.
	At this point, the two out of three property for isomorphisms implies that $\varphi$ must be an isomorphism as well.
	In conclusion, we proved that $j_{X*}(\cF)$ is a filtered colimits of subobjects.
	In other words, we can assume from the very start that all the maps $\cG_\alpha \to j_{X*}(\cF)$ are monomorphisms.
	We are thus left to prove that we can choose an index $\alpha \in I$ in such a way that the induced map
	\[ L(\cG_\alpha) \to L(j_{X*}(\cF)) \simeq \cF \]
	is an isomorphism.
	
	Suppose first that $X$ is affine, say $X = \Spec(A)$.
	Then $\hZ = \Spf(\hA)$, and therefore
	\[ \Ind(\Cohh(\hZ)) = \hA \Modh . \]
	Furthermore,
	\[ (\oPhi_Z^\circ(X))^\heartsuit = \Cohh( \hPsi_Z^\circ ) = \Cohh( \Spec(\hA) \smallsetminus Z ) \]
	and
	\[ \Ind((\oPhi_Z^\circ(X))^\heartsuit) = \QCohh(\Spec(\hA) \smallsetminus Z) . \]
	Under these identifications, the map $L$ simply becomes the restriction to the open subscheme $\Spec(\hA) \smallsetminus Z$.
	Choose generators for the ideal $J$ defining $Z$, say $J = (f_1, \ldots, f_n)$.
	Let $Z_i = V(f_i)$ and $V_i \coloneqq \Spec(\hA) \smallsetminus Z_i$.
	Then the usual Zariski descent for $\Cohh$ shows that the canonical map
	\[ (\oPhi_Z^\circ(X))^\heartsuit \simeq \varprojlim_{n \in \mathbf \Delta} \Cohh( V^\bullet ) , \]
	where $V^\bullet$ is the \v{C}ech nerve of the total morphism of the maps $V_i \to \Spec(\hA)$.
	It will therefore be enough to check that we can choose $\alpha \in I$ in such a way that for every $i = 1, \ldots, n$ the restriction
	\[ \cG_\alpha |_{V_i} \to j_{X*}(\cF) |_{V_i} = \cF |_{V_i} \]
	is an isomorphism. It is  obviously a monomorphism, since all the maps $\cG_\alpha \to j_{X*}(\cF)$ have been chosen to be monomorphisms.
	Since $V_i = \Spec(\hA[f_i\inv])$ is affine, we can identify $\cF|_{V_i}$ with a coherent $\hA[f_i\inv]$-module $M_i$.
	Choose generators $s_{i1}, \ldots, s_{in_i}$ for $M_i$.
	Up to multiplying by some power of $f_i$ we can suppose that all these sections are the restriction of sections $\tilde{s}_{i1}, \ldots, \tilde{s}_{in_i} \in j_{X*}(\cF)(X)$.
	Since the $V_i$ are in finite number, we can choose $\alpha$ in such a way that the sections $\tilde{s}_{ij}$ for $1 \le i \le n$ and $1 \le j \le n_i$ belong to $\cG_\alpha(X) \subseteq j_{X*}(\cF)(X)$.
	It follows that the induced maps $\cG_\alpha |_{V_i} \to \cF |_{V_i}$ are surjective.
	Since they were monomorphisms to start with, it follows that they are isomorphisms as well.
	Thus, the proof is complete in this case.
	
	Now, suppose that $X$ is a noetherian quasi-compact and quasi-separated scheme, and suppose given the finite affine Zariski cover $\{U_i\}$ satisfying the hypotheses of the statement of this lemma.
	Since  both $L(\cG_\alpha)$ and $L( j_{X*}(\cF) )$ are in $ (\oPhi_Z^\circ(X))^\heartsuit$, and since
	\[ \overline{u}^* \colon (\oPhi_Z^\circ(X))^\heartsuit \to (\oPhi_Z^\circ(U))^\heartsuit \]
	is conservative, it is therefore enough to find $\alpha$ such that
	\[ \overline{u}^*(L(\cG_\alpha)) \to \overline{u}^*(L( j_{X*}(\cF) ))  \]
	is an isomorphism. Now observe that the natural diagram
	\[ \begin{tikzcd}
		\Ind(\Cohh(\hZ)) \arrow{d}{u^*} \arrow{r}{L} & \Ind((\oPhi_Z^\circ(X))^\heartsuit) \arrow{d}{\overline{u}^*} \\
		\Ind(\Cohh( \widehat{Z_{U}} )) \arrow{r}{L} & \Ind((\oPhi_Z^\circ(U))^\heartsuit )
	\end{tikzcd} \]
	commutes. Therefore we are left to prove that there exists $\alpha$ such that
	\[ L(u^*(\cG_\alpha)) \to L(u^*( j_{X*}(\cF) ))  \]
	is an isomorphism. 
	Using \cref{cor:PFN_right_adjointable}, we see in addition that
	\[ u^*( j_{X*}(\cF) ) \simeq j_{U *}( \overline{u}^* (\cF) ) . \]
	Finally, $u^*$ commutes with filtered colimits and, since $u$ is flat, it preserves monomorphisms.
	We can therefore replace $X$ by $U$, $\cF$ by $u^*(\cF)$ and, since $U$ is affine, apply the previous step to the presentation
	\[ u^*(\cF) \simeq \colim_{\alpha \in I} u^*(\cG_\alpha) \]
	 to find the index $\alpha$ we were looking for.
\end{proof}

\subsection{Completion of the proof}

The conclusion of the proof of \cref{thm:Zariski_computational_tool} relies on the following important technical lemma:

\begin{lem} \label{lem:Zariski_devissage}
	Let $X$ be a quasi-compact quasi-separated scheme locally of finite type over $k$.
	Let $Z \hookrightarrow X$ be a closed subscheme.
	Then the functor
	\[ (\oPhi^\circ_Z)^\heartsuit \colon (X\et^{\Sch})\op \to \Catst \]
	informally described by the assignment
	\[ U \mapsto \Cohh(\widehat{Z_U}) / (\cK^\Sch(U))^\heartsuit \]
	is a sheaf for the \emph{Zariski} topology.
\end{lem}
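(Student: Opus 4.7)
My plan is to verify descent directly, by showing that for a Zariski cover of a qcqs $U \in X\et^\Sch$, the canonical comparison functor into the cosimplicial limit of hearts is an equivalence of abelian categories. Since $U$ is qcqs and the topology is Zariski, I would reduce to a finite affine Zariski cover $\{U_i\}_{i=1}^N$; set $U^0 = \coprod_i U_i$, $u \colon U^0 \to U$, and let $U^\bullet$ be its Čech nerve (each $U^n$ remaining qcqs). It then suffices to show
\[ \overline{u}^* \colon (\oPhi^\circ_Z(U))^\heartsuit \to \lim_{n \in \mathbf \Delta} (\oPhi^\circ_Z(U^n))^\heartsuit \]
is fully faithful and essentially surjective.

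For fully faithfulness, I would fix $\cF, \cG \in (\oPhi^\circ_Z(U))^\heartsuit$ and choose a coherent formal model $\tilde\cF \in \Cohh(\widehat{Z_U})$ of $\cF$, which exists since $L_U$ is essentially surjective (\cref{cor:quotient_t_structure}). Using the adjunction $(L_U, j_{U*})$ together with fully faithfulness of $j_{U*}$, the Hom on the left rewrites as
\[ \Hom_{\Ind(\Cohh(\widehat{Z_U}))}(\tilde\cF, j_{U*}\cG). \]
Writing $j_{U*}\cG$ as the filtered colimit of its coherent subobjects (following the argument of \cref{cor:almost_functorial_formal_models}) and using compactness of $\tilde\cF$, the computation reduces to a filtered colimit of Hom groups in $\Cohh$. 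Descent for $\Cohh$, which in this abelian setting is an equalizer, commutes with the filtered colimit; combined with \cref{cor:PFN_right_adjointable}'s identification $\hat{u}_n^*(j_{U*}\cG) \simeq j_{U^n*}(\overline{u}_n^*\cG)$, the result matches $\lim_n \Hom_{(\oPhi^\circ_Z(U^n))^\heartsuit}(\overline{u}_n^*\cF, \overline{u}_n^*\cG)$.

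For essential surjectivity, I would start from a descent datum $(\cG_n) \in \lim_n(\oPhi^\circ_Z(U^n))^\heartsuit$ and apply $j_{U^n*}$ to get a compatible family $(j_{U^n*}\cG_n) \in \lim_n \Ind(\Cohh(\widehat{Z_{U^n}}))$, compatibility being \cref{cor:PFN_right_adjointable}. Zariski descent at the ind-coherent level on the qcqs noetherian formal scheme $\widehat{Z_U}$ then produces a global $\cH \in \Ind(\Cohh(\widehat{Z_U}))$ restricting to this family; setting $\cG := L_U(\cH)$ and using base change yields $\overline{u}_n^*\cG \simeq \cG_n$. What remains is to verify $\cG \in (\oPhi^\circ_Z(U))^\heartsuit$, i.e.\ compactness: I would exhibit a coherent subobject $\tilde\cG \hookrightarrow \cH$ with $L_U(\tilde\cG) \to \cG$ an isomorphism, detecting this isomorphism locally on each $U_i$ using conservativity of $\overline{u}^*$ (a consequence of faithful flatness of $\hat{u}$ combined with \cref{cor:PFN_right_adjointable}) and then running the affine argument of \cref{cor:almost_functorial_formal_models}.

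The technical heart of the proof is essential surjectivity: one must simultaneously handle Zariski descent at the ind-level for coherent modules on the formal scheme $\widehat{Z_U}$ (where the noetherian identification $\Ind(\Cohh) \simeq \QCoh^\heartsuit$ is tacitly used) and argue compactness of the descended object. The base-change identity in \cref{cor:PFN_right_adjointable} is what converts each of these into a local, checkable statement; without it, neither the Hom computation nor the gluing would be tractable.
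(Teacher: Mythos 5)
Your fully faithfulness argument is essentially sound and runs parallel to the paper's own first step (formal models, the adjunction with $j_{U*}$, writing $j_{U*}\cG$ as a filtered colimit of coherent subobjects, compactness, and the base-change identity of \cref{cor:PFN_right_adjointable}). The problem is essential surjectivity, where your plan rests on ``Zariski descent at the ind-coherent level on the qcqs noetherian formal scheme $\widehat{Z_U}$''. This is not a standard fact and it does not follow from anything you cite. Descent for $\Cohh(\widehat{Z_{(-)}})$ does not imply it, because $\Ind$ does not commute with the limits defining descent. The identification $\Ind(\Cohh)\simeq\QCohh$ that you invoke tacitly is a statement about noetherian \emph{schemes}; for the formal scheme $\widehat{Z_U}$ the category of quasi-coherent sheaves is itself delicate, and the statement you actually need --- that a sheaf of $\cO_{\widehat{Z_U}}$-modules which is ind-coherent on each piece of a Zariski cover is globally a filtered colimit of coherent subsheaves --- is precisely the kind of extension-of-coherent-models problem that the lemma is about. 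Nor can you fall back on faithfully flat descent of modules over the completed rings: the cosimplicial ring $[n]\mapsto \widehat{A_{U^n}}$ is \emph{not} the \v{C}ech nerve of $\hA\to\prod_i\widehat{A_{U_i}}$, since completion does not commute with localization or tensor product (e.g.\ $\widehat{A_{U_i}}\otimes_{\hA}\widehat{A_{U_j}}\to\widehat{A_{U_{ij}}}$ is not an isomorphism). So the step ``glue at the ind level, then find a coherent subobject'' hides the entire difficulty in an unproved gluing statement that is at least as hard as the lemma itself. A secondary point: your appeal to \cref{cor:almost_functorial_formal_models} to extract the coherent model of $\cG=L_U(\cH)$ is circular as stated, since that corollary takes as input an object already known to lie in $(\oPhi^\circ_Z(U))^\heartsuit$ together with $j_{U*}$ of it, which is exactly what you are trying to produce (its internal argument can be adapted, but only once the ind-level gluing is available).

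For contrast, the paper avoids the ind-level effectivity question altogether and works directly at the coherent level, by induction on the number of affines in a finite cover: it splits the cover as $V_1=U_1\cup\dots\cup U_{n-1}$, $V_2=U_n$, uses the induction hypothesis to satisfy the conservativity hypothesis of \cref{cor:almost_functorial_formal_models} on $V_1$ and thus to choose a coherent model $\cG_1$ of $\cF_1$, presents $j_{2*}(\cF_2)$ as a filtered colimit of coherent subobjects, and then --- this is the part your proposal has no counterpart for --- \emph{corrects} the chosen lift $\cG_i$ on $V_2$ to the subsheaf $\cG_i'$ of sections whose restriction to $\widehat{Z_{12}}$ lands in the image of the comparison map, and checks by an explicit argument with powers of the local equations $f_j$ that $\cG_i'\hookrightarrow\cG_i$ becomes an isomorphism in $(\oPhi^\circ_Z(V_2))^\heartsuit$. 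This correction step is what turns a descent datum in the quotient hearts into a genuine descent datum in $\Cohh$ of the formal completions, after which coherent descent on formal schemes finishes the job (with a further argument in Step 3 for the merely quasi-separated case, where the intersections are not affine). If you want to salvage your route, you would have to prove the ind-level Zariski effectivity for $\Ind(\Cohh(\widehat{Z_{(-)}}))$ directly, and that proof would end up reproducing the same matching-and-correction argument.
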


The proof of this result will occupy the remaining part of this section.
Before dealing with it, let us explain how to use this lemma in order to complete the proof of \cref{thm:Zariski_computational_tool}.

\begin{proof}[Proof of \cref{thm:Zariski_computational_tool}]
	As remarked at the end of \cref{subsubsec:preliminaries}, we will actually prove \cref{prop:Zariski_computational_tool}.
	Accordingly to the reduction carried over in \cref{subsubsec:reduction_to_the_heart} it is enough to prove that the induced functor $\varphi^\heartsuit \colon (\oPhi^\circ_Z(X))^\heartsuit \to (\hPhi^\circ_Z(X))^\heartsuit$ is an equivalence.
	
	Combining \cref{thm:etale_descent} and \cref{prop:algebraic_universal_property} we deduce that $(\hPhi^\circ_Z)^\heartsuit$ is the sheafification of the sheaf $(\oPhi^\circ_Z)^\heartsuit$.
	Therefore, it is enough to prove that $(\oPhi^\circ_Z)^\heartsuit$ is a sheaf with respect to $\tauet$.
	Choose an affine Zariski hypercover $U^\bullet$ of $X$.
	Using \cref{lem:Zariski_devissage}, we see that
	\[ (\oPhi^\circ_Z(X))^\heartsuit \simeq \varprojlim (\oPhi^\circ_Z(U^\bullet))^\heartsuit . \]
	Since limits commute with limits, we are immediately reduced to prove that the presheaves defined by
	\[ (X\et^\Sch)\op \ni V \mapsto (\oPhi^\circ_Z(V \times_X U^n))^\heartsuit \in \AbCat \]
	are sheaves.
	It is therefore enough to prove that the presheaves $(\oPhi^\circ_{Z_{U^n}})^\heartsuit \colon ((U^n)\et^{\Sch})\op \to \AbCat$ are sheaves.
	In other words, we can assume $X = U^n$, and we are thus reduced to the case where $X$ is affine.
	In this case, we remark that it is enough to test the sheaf condition on affine covers.
	As consequence, the conclusion follows directly from \cref{thm:etale_descent}.
\end{proof}

\begin{rem}
	It is also clear that \cref{thm:Zariski_computational_tool} implies \cref{lem:Zariski_devissage}, so the two statements are actually equivalent.
	However, it is easier to provide a direct proof of \cref{lem:Zariski_devissage}: this is because we only have to deal with Zariski covers on a quasi-compact scheme, and not with an arbitrary \'etale cover.
\end{rem}

Thus, all what is left to do is to prove \cref{lem:Zariski_devissage}.

\begin{proof}[Proof of \cref{lem:Zariski_devissage}]
	The proof being quite long, we split it into several steps.
	
	\emph{Step 1: Fully faithfulness.}
	Fix a Zariski hypercover $u^\bullet \colon U^\bullet \to X$.
	To ease notations, we write $\cK^\heartsuit(V)$ instead of $(\cK^\Sch(V))^\heartsuit$.
	Consider the induced commutative diagram:
	\[ \begin{tikzcd}[row sep = small, column sep = small]
		{} & \Cohh(\hZ) \arrow{rr}{\hat{u}^{\bullet *}} \arrow{dd} \arrow{dl}[swap]{j} & & \varprojlim \Cohh(\widehat{Z_{U^\bullet}}) \arrow{dl}[swap]{j^\bullet} \arrow{dd} \\
		\Ind(\Cohh(\hZ)) \arrow[crossing over]{rr} \arrow{dd} & & \varprojlim \Ind(\Cohh(\widehat{Z_{U^\bullet}})) \\
		{} & \Cohh(\widehat{Z}) / \cK^\heartsuit(X) \arrow{dl}[swap]{i} \arrow{rr}[near start]{v^{\bullet *}} & & \varprojlim \Cohh(\widehat{Z_{U^\bullet}}) / \cK^\heartsuit(U^\bullet) \arrow{dl}[swap]{i^\bullet} \\
		\Ind(\Cohh(\hZ)) / \cK^\heartsuit(X)) \arrow{rr} & & \varprojlim \Ind( \Cohh(\widehat{Z_{U^\bullet}}) ) / \cK^\heartsuit(U^\bullet) ) \arrow[leftarrow, crossing over]{uu} .
	\end{tikzcd} \]
	The adjoint functor theorem guarantees that both the functors
	\[ \Ind(\Cohh(\hZ)) \to \varprojlim \Ind(\Cohh(\widehat{Z_{U^\bullet}})) \quad \text{and} \quad \Ind(\Cohh(\hZ) / \cK^\heartsuit(X)) \to \varprojlim \Ind( \Cohh(\widehat{Z_U}) / \Ind(\cK^\heartsuit(U^\bullet) )) \]
	have right adjoints, which we denote respectively $\hat{u}^\bullet_*$ and $v^\bullet_*$.
	It follows from the discussion following \cite[Corollary 8.6]{Porta_Yu_Higher_analytic_stacks_2014} that these two right adjoints can be respectively identified with the functors informally given by the assignments:
	\[ \{\cF^n\} \mapsto \varprojlim_{n \in \mathbf \Delta} \hat{u}^n_* \cF^n , \qquad \{\cG^n\} \mapsto \varprojlim_{n \in \mathbf \Delta} v^n_* \cG^n . \]
	Note that both $\Ind(\Cohh(\hZ))$ and $\Ind(\Cohh(\hZ) / \cK^\heartsuit(X))$ are $1$-categories.
	Since the inclusion $\mathbf \Delta^{\le 1} \to \mathbf \Delta$ is $1$-cofinal, we conclude that we can identify those limits with the equalizers:
	\[ \varprojlim_{n \in \mathbf \Delta} \hat{u}^n_* \cF^n \simeq \mathrm{eq}( \hat{u}^0_* \cF^0 \rightrightarrows \hat{u}^1_* \cF^1 ), \qquad \varprojlim_{n \in \mathbf \Delta} v^n_* \cG^n \simeq \mathrm{eq}( v^0_* \cG^0 \rightrightarrows v^1_* \cG^1 ) . \]
	Furthermore, just as in \cref{lem:reduction_to_the_heart_I} we can use \cref{cor:quotient_t_structure} in order to endow
	\[ \Ind(\Coh^-(Z) / \cK^\Sch(X)) \simeq \Ind(\Coh^-(Z)) / \Ind(\cK^\Sch(X)) \]
	with a $t$-structure having the property that the quotient map
	\[ \Ind(\Coh^-(\hZ)) \to \Ind(\Coh^-(Z)) / \Ind(\cK^\Sch(X)) \]
	is $t$-exact.
	Passing to the heart, we obtain that the induced functor
	\[ \Ind(\Cohh(\hZ)) \to \Ind(\Cohh(\hZ) / \cK^\heartsuit(X)) \]
	is an exact functor.
	As consequence, we can deduce that the commutative square
	\[ \begin{tikzcd}
		\Ind(\Cohh(\hZ)) \arrow{r} \arrow{d}{\Ind(L)} & \varprojlim \Ind(\Cohh(\widehat{Z_{U^\bullet}})) \arrow{d}{\Ind(L^\bullet)} \\
		\Ind(\Cohh(\hZ) / \cK^\heartsuit(X)) \arrow{r} & \varprojlim \Ind( \Cohh(\widehat{Z_{U^\bullet}}) / \cK^\heartsuit(U^\bullet) )
	\end{tikzcd} \]
	is right adjointable.
	On the other hand, \cref{lem:right_adjointable_key_lemma} shows that the square
	\[ \begin{tikzcd}
		\Cohh(\hZ) \arrow{r} \arrow{d}{j} & \varprojlim \Cohh(\widehat{Z_{U^\bullet}}) \arrow{d}{j^\bullet} \\
		\Ind(\Cohh(\hZ)) \arrow{r} & \varprojlim \Ind(\Cohh(\widehat{Z_{U^\bullet}}))
	\end{tikzcd} \]
	is right adjointable as well.
	Since $\hat{u}^{\bullet *}$ is an equivalence, we can use these informations to conclude that if $\cF \in \Cohh(\hZ)$, then the unit
	\[ \Ind(L)(j(\cF)) \to v^\bullet_*( v^{\bullet *}(\Ind(L)(j(\cF)))) \]
	is an equivalence.
	Fix now $\cF, \cG \in \Cohh(\hZ) / \cK^\heartsuit(X)$.
	Since the quotient map $L \colon \Cohh(\hZ) \to \Cohh(\hZ) / \cK^\heartsuit(X)$ is essentially surjective, we can find $\widetilde{\cF}, \widetilde{\cG}$ such that
	\[ L(\widetilde{\cF}) \simeq \cF, \qquad L(\widetilde{\cG}) \simeq \cG . \]
	In particular, it follows that the unit of the adjunction $v^{\bullet *} \dashv v^\bullet_*$ evaluated on
	\[ i(\cF) \simeq i( L(\widetilde{\cF}) ) \simeq \Ind(L)(j(\cF)) \quad \text{and on} \quad i(\cG) \simeq i( L(\widetilde{\cG}) ) \simeq \Ind(L)(j(\cG)) \]
	is an equivalence.
	As consequence we obtain a commutative diagram
	\[ \begin{tikzcd}
		\Map(\cF, \cG) \arrow{r} \arrow{d} & \Map(v^{\bullet *} \cF, v^{\bullet *} \cG) \arrow{d} \\
		\Map(i(\cF), i(\cG)) \arrow{r} & \Map(\Ind(v^{\bullet *})( i(\cF) ), \Ind(v^{\bullet*})( i(\cG) )) ,
	\end{tikzcd} \]
	where the vertical arrows as well as the bottom horizontal one are equivalences.
	We can finally conclude that the top horizontal map is an equivalence as well.
	In other words, we proved that $v^{\bullet *}$ is fully faithful.
	
	\emph{Step 2: effectivity in the separated case.}
	Assume that $X$ is a separated scheme.
	Since $X$ is quasi-compact, we can limit ourselves to deal with finite affine Zariski covers $\cU \coloneqq \{U_1, \ldots, U_n\}$ of $X$.
	We let $\cU^\bullet$ be the \v{C}ech nerve of $\cU$ and we let $i^p \colon U^p \hookrightarrow X$ be the induced open Zariski immersion.
	Observe that since $X$ is separated, each intersection $U^p$ is affine.
	Let
	\[ i^{\bullet *}_\cU \colon \Cohh(\hZ) \to \varprojlim_{p \in \mathbf \Delta} \Cohh(\widehat{Z_{U^p}}) , \quad \overline{\imath}^{\bullet *}_\cU \colon (\oPhi^\circ_Z(X))^\heartsuit \to \varprojlim_{p \in \mathbf \Delta} (\oPhi^\circ_Z(U^p))^\heartsuit \]
	be the induced maps.
	We know that $i^{\bullet *}_\cU$ is an equivalence, and we want to prove that $\overline{\imath}^{\bullet *}_\cU$ is an equivalence as well.
	We proceed by induction on the size $n$ of the affine Zariski cover $\cU$.
	If $n = 1$, the statement is obvious.
	
	We now let $n > 1$. Set
	\[ V_1 \coloneqq \bigcup_{i = 1}^{n-1} U_i, \quad V_2 \coloneqq U_n , \quad V_{12} \coloneqq V_1 \cap V_2 . \]
	As usual, we let $\cJ$ be the ideal sheaf defining $Z$ inside $X$ and we denote by $\cJ_1$, $\cJ_2$, $\cJ_{12}$ the restrictions to $V_1$, $V_2$ and $V_{12}$, respectively.
	To ease notations, we will also write $Z_1$, $Z_2$ and $Z_{12}$ instead of $Z_{V_1}$, $Z_{V_2}$ and $Z_{V_{12}}$, respectively.
	We apply a similar convention to the formal completions $\widehat{Z_1}$, $\widehat{Z_2}$ and $\widehat{Z_{12}}$.
	
	Fix a descent datum $\{\cF_1, \cF_2, \alpha\}$ for $(\oPhi^\circ_Z)^\heartsuit$ relative to the cover $\{V_1, V_2\}$ of $X$.
	By this, we mean that
	\[ \cF_1 \in (\oPhi^\circ_Z(V_1))^\heartsuit, \quad \cF_2 \in (\oPhi^\circ_Z(V_2))^\heartsuit , \quad \alpha \coloneqq \cF_{12} \simeq \cF_{21} , \]
	where we set
	\[ \cF_{12} \coloneqq \cF_1 |_{V_{12}}, \quad \cF_{21} \coloneqq \cF_2 |_{V_{12}} . \]
	\textbf{Claim}. There exists a descent datum
	\[ \{\cG_1, \cG_2, \beta\} \in \varprojlim \left( \Cohh(\widehat{Z_1}) \times \Cohh(\widehat{Z_2}) \rightrightarrows \Cohh(\widehat{Z_{12}}) \right) \]
	which induces $\{\cF_1, \cF_2, \alpha\}$ via the canonical functors
	\[ L_1 \colon \Cohh(\widehat{Z_1}) \to (\oPhi_Z^\circ(V_1))^\heartsuit, \quad L_2 \colon \Cohh(\widehat{Z_2}) \to ( \oPhi^\circ_Z(V_2) )^\heartsuit, \quad L_{12} \colon \Cohh(\widehat{Z_{12}}) \to ( \oPhi^\circ_Z(V_{12}) )^\heartsuit . \]
	Assuming the Claim, it is easy to conclude that $\overline{\imath}^{\bullet *}_{\cU}$ is an equivalence the separated case: since
	\[ \Cohh(\hZ) \simeq \varprojlim \left( \Cohh(\widehat{Z_1}) \times \Cohh(\widehat{Z_2}) \rightrightarrows \Cohh(\widehat{Z_{12}}) \right) , \]
	we can find $\cG \in \Cohh(\hZ)$ such that $\cG |_{V_i} \simeq \cG_i$.
	Let $\cF \coloneqq L(\cG) \in (\oPhi^\circ_Z(X))^\heartsuit$.
	Then $\cF$ induces by restriction the descent datum $(\cF_1, \cF_2, \alpha)$.
	It follows that $\overline{\imath}^{\bullet *}_{\cU}$ is essentially surjective, and therefore that it is also an equivalence.
	
	\textbf{Proof of Claim.}
	We start by applying \cref{cor:almost_functorial_formal_models} to choose a lift $\cG_1 \in \Cohh(\widehat{Z_{V_1}})$ of $\cF_1$ equipped with a monomorphism
	\[ \cG_1 \hookrightarrow j_{1*}(\cF_1) \in \Ind(\Cohh(\widehat{Z_{V_1}})) . \]
	This is possible because the induction hypothesis shows that $\oPhi_Z^\circ$ has descent with respect to the affine Zariski open cover $\{U_1, \ldots, U_{n-1}\}$ of $V_1$.
	On the other side, we can write
	\[ j_{2*}(\cF_2) \simeq \colim_{i \in I} \cG_i , \]
	where $\cG_i \in \Cohh(\widehat{Z_{V_2}})$ and the category $I$ is filtered.
	Furthermore, we can assume just as in the proof of \cref{cor:almost_functorial_formal_models} that all the maps $\cG_i \to j_{2*}(\cF_2)$ are monomorphism.
	We now apply \cref{cor:PFN_right_adjointable} to deduce that the canonical morphisms in $\Ind( \Cohh( \widehat{Z_{12}} ) )$
	\[ j_{1*}(\cF_1) |_{\widehat{Z_{12}}} \to j_{12*}(\cF_{12}), \quad j_{2*}(\cF_2)|_{\widehat{Z_{12}}} \to j_{12*}(\cF_{21}) \]
	are isomorphisms.
	In particular, the descent datum $\alpha \colon \cF_{12} \simeq \cF_{21}$ induces an equivalence
	\[ j_{12*}(\alpha) \colon j_{12*}(\cF_{12}) \simeq j_{12*}(\cF_{21}) . \]
	Let $i_{21} \colon \widehat{Z_{12}} \to \widehat{Z_2}$ be the induced formal open Zariski immersion.
	It follows that $i_{21}^*$ is a left adjoint and preserves monomorphisms.
	In particular, we obtain
	\[ j_{12*}(\cF_{21}) \simeq \colim_{i \in I} i_{21}^*( \cG_i ) = \colim_{i \in I} \cG_i |_{\widehat{Z_{12}}} , \]
	and every morphism $\cG_i |_{\widehat{Z_{12}}} \to j_{12*}(\cF_{21})$ is a monomorphism.
	For the same reason, we see that also $\cG_1 |_{\widehat{Z_{12}}} \to j_{12*}(\cF_{21})$ is a monomorphism.
	We claim that we can choose $i$ so that the monomorphism
	\[ \cG_1 |_{\widehat{Z_{12}}} \hookrightarrow j_{12*}(\cF_{12}) \stackrel{j_{12*}(\alpha)}{\simeq} j_{12*}( \cF_{21} ) \]
	factors through the monomorphism $\cG_i |_{\widehat{Z_{12}}} \hookrightarrow j_{12*}(\cF_{21})$.
	Remark that, since both maps are monomorphism, this is a property of $\cG_i |_{\widehat{Z_{12}}}$.
	In particular, since $U \mapsto \Cohh( \widehat{Z_U} )$ satisfies descent, we can test this property locally on $V_{12}$.
	Using \cref{cor:PFN_right_adjointable} and the fact that $V_{12}$ admits a \emph{finite} affine cover, we can thus \emph{replace} the scheme $V_{12}$ with an open affine subscheme.
	In other words, we can assume that $V_{12}$ is affine from the very beginning.
	In this case, we can identify $\cG_1 |_{\widehat{Z_{12}}}$ with a finitely generated submodule of
	\[ j_{12*}(\cF_{21}) = \colim_{i \in I} \cG_i |_{\widehat{Z_{12}}} . \]
	In particular, there must exist an index $i \in I$ such that $\cG_i |_{\widehat{Z_{12}}}$ contains all the generators of $\cG_1 |_{\widehat{Z_{12}}}$.
	In conclusion, we can choose an index $i \in I$ so that the descent datum $\alpha$ induces a well defined map
	\[ \beta \colon \cG_1 |_{\widehat{Z_{12}}} \to \cG_i |_{\widehat{Z_{12}}} . \]
	Note that we are free to replace $i$ with any index $i'$ satisfying $i' \ge i$ inside $I$.
	Therefore, using \cref{cor:almost_functorial_formal_models} we can also assume that $\cG_i$ is a lift for $\cF_2$.
	Furthermore, we observe that $\beta$ fits into the commutative diagram
	\[ \begin{tikzcd}
		\cG_1 |_{\widehat{Z_{12}}} \arrow{r}{\beta} \arrow[hook]{d} & \cG_i |_{\widehat{Z_{12}}} \arrow[hook]{d} \\
		j_{12*}(\cF_{21}) \arrow{r}{j_{12*}(\alpha)} & j_{12*}(\cF_{21})
	\end{tikzcd} \]
	in $\Ind(\Cohh(\widehat{Z_{12}}))$.	
	Since $j_{12*}(\alpha)$ is an isomorphism, we conclude that $\beta$ must be a monomorphism.
	However, it needs not to be an epimorphism.
	
	To bypass this issue, we let $\cG_i'$ be the subsheaf of $\cG_i$ spanned by those sections whose restriction to $\widehat{Z_{12}}$ belong to the image of $\beta$.
	Since $\cG_i$ is coherent and we are working in a noetherian environment, $\cG_i'$ is again coherent.
	By construction, $\beta$ factors through $\left. \cG_i' \right|_{\widehat{Z_{12}}}$ and induces an isomorphism with $\cG_1 |_{\widehat{Z_{12}}}$.
	We claim that the inclusion $\cG_i' \hookrightarrow \cG_i$ becomes an equivalence in $(\oPhi^\circ_Z(V_2))^\heartsuit$.
	Recall that $V_2$ is affine, and write $V_2 = \Spec(A_2)$.
	Choose generators $f_1, \ldots, f_r$ for the ideal $\cJ_2$ defining $Z_2$ inside $V_2$.
	Furthermore, let $W_j$ be the open complement of the zero locus of the function $f_j$ on $\Spec(\widehat{A_2})$.
	Then each $W_j$ is affine, and by definition $\{W_j\}_{j = 1, \ldots, r}$ is an open Zariski cover of $\hPsi^\circ_Z(V_2)$.
	To complete the proof of the claim, It is therefore enough to prove that the induced map
	\[ \cG'_i |_{W_j} \to \cG_i |_{W_j} \simeq \cF_2 |_{W_j} \]
	is surjective.
	Since $W_j$ is affine, we can test surjectivity after passing to the global sections.
	Fix any section $s \in \cF_2(W_j)$.
	Since $W_j$ is an open Zariski subset of $\Spec(\widehat{A_2})$, we can review $s$ as a section of $j_{2*}(\cF_2)$ over $W_j$.
	We claim that there exists an integer $l \ge 0$ such that $f_j^l s$ belongs to $\cG'_i(W_j) \subset j_{2*}(\cF_2)(W_j)$.
	Since $f_j$ is invertible over $W_j$, this claim implies immediately the surjectivity of $\cG'_i(W_j) \to \cG_i(W_j)$.
	
	We are therefore left to prove the existence of such an integer $l$.
	We start by remarking that we can certainly find an integer $l'$ so that $f_j^{l'} s$ is the restriction of a section $\tilde{s} \in \cG_i(V_2)$.
	It is thus enough to prove that there exists $l$ so that $f_j^l \tilde{s}$ belongs to $\cG'_i(V_2)$.
	The latter statement is equivalent to say that the section $f_j^l \tilde{s} |_{\widehat{Z_{12}}}$ of $\cG_i |_{\widehat{Z_{12}}}$ belongs to the image of $\beta$.
	Using \cref{cor:PFN_right_adjointable} once more, we see that this statement can be checked locally on $V_{12}$.
	As a consequence, we can assume again $V_{12}$ to be affine (since $V_{12}$ admits a finite affine cover).
	Set $W_j^{12} \coloneqq W_j \times_{\widehat{Z_2}} \widehat{Z_{12}}$.
	We now observe that $(\tilde{s} |_{\widehat{Z_{12}}})|_{W_j^{12}}$ defines an element of
	\[ (\cG_i |_{\widehat{Z_{12}}})(W_j^{12}) = (\cG_1 |_{\widehat{Z_{12}}})(W_j^{12}) , \]
	and that $W_j^{12}$ is the principal Zariski open inside $\Spec(\widehat{A_{12}})$ defined by the image of the element $f_j$.
	Since $\cG_1 |_{\widehat{Z_{12}}} \subset \cG_i |_{\widehat{Z_{12}}}$, it follows that there exists an integer $l$ such that $f_j^l \tilde{s} |_{\widehat{Z_{12}}}$ belongs to $\cG_1 |_{\widehat{Z_{12}}}$.
	Thus, the proof of the claim is finally achieved.

	\emph{Step 3: effectivity in the general case.}
	We now consider the general case of $X$ being quasi-separated.
	In fact, the same argument used above goes through.
	The only place where one has to apply some care is the following: in order to construct the sections $t_i$, we used applied the induction hypothesis to the \emph{affine} cover $\cV_{12} \coloneqq \{U_1 \cap U_n, \ldots, U_{n-1} \cap U_n\}$ of $V_{12}$.
	This is always a cover, but we can say that it is an affine cover only if $X$ is separated.
	Nevertheless, $V_{12}$ is an open Zariski subset of $V_2$, which is affine.
	In particular, $V_{12}$ is separated.
	Therefore, the separated case shows that $\oPhi^\circ_Z$ satisfies descent for the cover $\cV_{12}$, even though it is not affine.
	Since $X$ is quasi-compact, we can now refine this cover by an affine hypercover. The previous step, shows that $\oPhi^\circ_Z$ has descent also for this refinement.
	At this point, we are reduced to a situation where we can apply again \cref{thm:localization_for_coh_minus}, and the proof proceeds exactly as above.
\end{proof}

\section{Vistas}\label{vistas}

As alluded to in the Introduction, the results of this paper are part of a more general program. 
In this final section we sketch some of the further stages in this program, as we understand them today, that will be fully addressed in a forthcoming paper.
The exact details of the constructions, and of the expected results are not fully established at the moment, so we hope the reader will forgive us for the lack of precise statements.
On the other hand, we believe that our presentation will be able to convey the main ideas. \medbreak

In order to ease notations, we will stick to the case of a projective smooth complex algebraic surface $S$.
It will be perfectly clear how to generalise our constructions to complex projective smooth varieties of arbitrary dimension, and even to algebraic stacks.\\

\paragraph{\textbf{The flag decomposition as a stack over the flag Hilbert scheme.}}

Let us consider the following flag Hilbert scheme $\cH^{\mathrm{Fl}}(S/\mathbb{C})$ of $S/\mathbb{C}$.
Let $ \mathrm{FH_{(P_0, P_1)}^{S}}$ be the flag Hilbert scheme of $S/\mathbb{C}$ parametrizing flags of closed subschemes $Z_0 \subset Z_1 \subset S $ where $Z_i$ has Hilbert polynomial $P_i$, $i=0,1$ (see \cite[\S 4.5]{Sernesi_Deformation_2006}).
Then, we define $\cH^{\mathrm{Fl}}(S/\mathbb{C})$ as the subscheme
\[ \cH^{\mathrm{Fl}}(S/\mathbb{C}) \subset \coprod_{\deg P_0 = 0, \,\deg P_1=1} \mathrm{FH_{(P_0, P_1)}^{S}} , \]
consisting of flags of relative codimension $1$ subschemes.
For $T$ a $\mathbb{C}$-scheme, the set $\cH^{\mathrm{Fl}}(S/\mathbb{C})(T)$ of $T$-points of $\cH^{\mathrm{Fl}}(S / \mathbb{C})$ consists of flags of closed immersions
\[ Z_0 \hookrightarrow Z_1 \hookrightarrow S \times T \]
such that $Z_0 \hookrightarrow Z_1$ (resp.\ $Z_1 \hookrightarrow S \times T$) is a relative codimension $1$ closed subscheme, flat over $T$.

Let $\mathbf{D}$ denote either of the symbols $\bfCoh^-$, $\bfPerf$, $\bfBun_G$, $\rR \bfCoh^-$, $\rR \bfPerf$ or $\rR \bfBun_G$.
Let $T= \Spec(R)$ be a noetherian affine $\mathbb{C}$-scheme, equipped with a map
\[ u \colon T \to \cH^{\mathrm{Fl}}(S / \mathbb C) \]
of $\mathbb{C}$-schemes.
As recalled above, the map $u$ corresponds to a flag  
\[ \underline{Z}_u \coloneqq(Z_0 \hookrightarrow Z_1 \hookrightarrow S \times T). \]
If we denote by $U_1$ the open complement of $Z_1 \hookrightarrow S \times T$, and $U_{01}$ the open complement of $Z_0 \hookrightarrow \widehat{(Z_1)}_{S \times T}$, we can consider the functor
\[ \rD_S^{\mathrm{Fl}} \colon \left(\Aff_{\mathbb C} / \cH^{\mathrm{Fl}}(S / \mathbb C) \right)\op \longrightarrow \Catst, \quad
(u \colon T = \Spec(R) \to \cH^{\mathrm{Fl}}(S / \mathbb C) ) \longmapsto \mathbf{D}_{\underline{Z}_u}^{\mathrm{Fl}} , \]
where (cf. the statement of\ \cref{cor:full_flag_decomposition_surface})
\[ \mathbf{D}_{\underline{Z}_u}^{\mathrm{Fl}} \coloneqq \mathbf{D}(U_1) \times_{\mathbf{D}(\widehat{(Z_1)}_{S \times T} \smallsetminus Z_1)} (\mathbf{D}(U_{01}) \times_{\mathbf{D}(\widehat{(Z_0)}_{\widehat{(Z_1)}_{S \times T}} \smallsetminus Z_0)} \mathbf{D}(\widehat{(Z_0)}_{\widehat{(Z_1)}_{S \times T}})). \]
The functor $\rD_{S}^{\mathrm{Fl}}$ is a sheaf for the \'etale topology.

Now, \cref{cor:full_flag_decomposition_surface} give us decomposition equivalences
\[ \mathbf{D}(S \times T) \simeq \mathbf{D}(U_1) \times_{\mathbf{D}(\widehat{(Z_1)}_{S \times T} \smallsetminus Z_1)} \mathbf{D}(\widehat{(Z_1)}_{S \times T}) \simeq \mathbf{D}_{\underline{Z}_u}^{\mathrm{Fl}}, \]
so that we may think of  $\rD_{S}^{\mathrm{Fl}}$ as a locally constant sheaf of stable $\infty$-categories on the \'etale site of $\cH^{\mathrm{Fl}}(S / \mathbb{C})$.
Therefore we expect an action of the \'etale fundamental group $\pi_1^{\mathrm{\acute{e}t}}(\cH^{\mathrm{Fl}}(S/\mathbb{C}))$ on $\rD_{S}^{\mathrm{Fl}}$, and more generally an action of the \emph{full} \'etale homotopy type of $\cH^{\mathrm{Fl}}(S/\mathbb{C})$ on $\rD_{S}^{\mathrm{Fl}}$.
In particular, one might be able to obtain this way interesting auto-equivalences of the $\infty$-category $\mathbf{D}(S)$, if the action is non-trivial, and we don't see reasons why it should be trivial in general. \medbreak

However, this action seems to see only a tiny bit of the actual geometry of the flag Hilbert scheme $\cH^{\mathrm{Fl}}(S / \mathbb{C})$ of $S$.
In particular, one can envisage interesting ``operations'' among the flags (like e.g.\ taking their disjoint union or their intersections, contracting $(-1)$-rational divisors when possible, etc.) that one would like to see ``acting'' in some way on the stack $\rD_{S}^{\mathrm{Fl}}$, or, at least, on the $\infty$-category $\mathbf{D}(S)$.
At the moment, we see two possible flavors of formalization for this vague notion of operations and action.
One possibility is that the flag Hilbert scheme of $S$ organizes itself into a kind of \emph{operadic-like} structure encoding the various geometrical operations on flags, and this operadic-like structure will act on $\rD_{S}^{\mathrm{Fl}}$ and/or on $\infty$-category $\mathbf{D}(S)$.
The analogy here is with the well-known example of the modular operad $\overline{\cM} = \{ \overline{\cM}_{g,n} \}$ of moduli spaces of curves of genus $g \geq 0$, with $n \geq 0$ marked points, that precisely encodes various geometrical operations on such families of curves (see e.g.\ \cite[\S II.5.6]{Markl_Shnider_Stasheff_Operads_2002}).
The other possibility is that the stack $\rD_{S}^{\mathrm{Fl}}$ has the structure of some higher version  of \emph{factorizable stack} over the flag Hilbert scheme of $S$, in a similar way as one can talk about factorizable sheaves on a curve in the geometric Langlands program (e.g. \cite{Beilinson_Drinfeld_Chiral_2004}).
In order to start exploring this second possibility, we first need to make sense of a \emph{Ran version} of the flag Hilbert scheme of $S$. \medbreak

\paragraph{\textbf{Ran versions of the flag Hilbert scheme.}}

We give here a few proposals for the \emph{Ran version} of the flag Hilbert scheme $\cH^{\mathrm{Fl}}(S / \mathbb{C})$ of $S$. Other versions are  currently under investigations, but the ones presented below should carry enough features for our general discussion here. \medbreak

Let $\mathrm{FinSets}$ denote the category of finite non-empty sets, with surjective maps as morphisms.
We write $\mathrm{FinSets}'$ for the category of pairs $(I, J \to I)$ where $I \in \mathrm{FinSets}$, and $J \to I$ is a morphism in $\mathrm{FinSets}$.
Morphisms $(I, J \to I) \to (I', J' \to I')$ in $\mathrm{FinSets}'$ are defined in the obvious way as pairs of maps $I \to I'$, and $J \to J'$ in $\mathrm{FinSets}$ such that the obvious diagram commutes. \medbreak

Let $\cH_{i}(S)$ be the Hilbert scheme of closed $i$-dimensional subschemes of $S$, $i=0, 1$.
If $\mathrm{Hilb}^{P}(S/\mathbb{C})$ denotes the Hilbert scheme of closed subschemes of $S$ having Hilbert polynomial $P$ (with respect a polarization fixed once and for all), we have
\[ \cH_i(S/ \mathbb{C}) = \coprod_{\deg P = i} \mathrm{Hilb}^P(S / \mathbb C) . \]
For $(I, \varphi \colon J \to I) \in \mathrm{FinSets}'$, let us consider the incidence subscheme $\cF\ell^{\mathrm{inc}}_{(I, J \to I)} \subset \cH_1(S/ \mathbb{C})^{I} \times \cH_0 (S)^J$, whose $T$-points, for $T$ a $\mathbb{C}$-scheme, are defined as
\[ \cF\ell^{\mathrm{inc}}_{(I, J \to I)}(T) \coloneqq \{ ((Z_i)_{i \in I}, (z_j)_{j\in J}) \mid z_j \text{ is a $T$-relative codimension $1$ closed subscheme of } Z_i, \forall j \in \varphi^{-1}(i) \} , \]
where $(Z_i \hookrightarrow S \times T)_{i \in I} \in \cH_1(S/ \mathbb{C})(T)^{I}$, and $(z_j \hookrightarrow S \times T)_{j \in J} \in \cH_0(S)(T)^J$.
Note that since $\varphi \colon J \to I$ is surjective, all the ``flags'' in  $\cF\ell^{\mathrm{inc}}_{(I, J \to I)}(T)$ are complete, and moreover the apparently sloppy condition of ``being a relative codimension $1$ closed subscheme '' in the definition of $\cF\ell_{(I, J \to I)}(T)$ is in fact precise: if there exists a closed $S \times T$-immersion, it is then unique, since the $Z_i$'s and the $z_j$'s are given as closed subschemes of $S\times T$, and closed immersions are monomorphisms of schemes. \medbreak

If $f=(f_1 \colon I \to I', f_0 \colon J\to J')$ is a morphism $(i, J \to I) \to (I', J' \to I')$ in $\mathrm{FinSets}'$, there is an induced scheme map
\[ f^* \colon \cF\ell^{\mathrm{inc}}_{(I', J' \to I')} \longrightarrow \cF\ell^{\mathrm{inc}}_{(I, J \to I)} \]
such that
\[ f^*(T) \colon \cF\ell^{\mathrm{inc}}_{(I', J' \to I')}(T) \longrightarrow \cF\ell^{\mathrm{inc}}_{(I, J \to I)}(T) \]
sends $((Z'_{i'})_{i' \in I'}, (z'_{j'})_{j' \in J'})$ to $((Z_i\coloneqq Z'_{f_{1} (i)})_{i \in I}, (z_j\coloneqq z'_{f_{0} (j)})_{j\in J})$. \medbreak

It is easy to check that the assignment $(I, J\to I) \mapsto \cF\ell^{\mathrm{inc}}_{(I, J \to I)}$ defines a functor
\[ \cF\ell^{\mathrm{inc}} \colon \mathrm{FinSets}'^{op} \longrightarrow \PSh_{\mathbb{C}} , \]
where $\PSh_{\mathbb{C}}$ denotes the category of functors $\Sch\op_{\mathbb{C}} \to \rSet$.

\begin{defin}
	For a complex smooth projective surface $S$, we define
	\begin{itemize}
		\item the \emph{incidence Ran flag Hilbert space} of $S$, as
		\[ \mathrm{Ran}\cF\ell^{\mathrm{inc}} \coloneqq \colim \cF\ell^{\mathrm{inc}}  \in \PSh_{\mathbb{C}} ; \]
		\item the \emph{Ran flag Hilbert space} of $S$, as the Ran space
		\[ \mathrm{Ran}(\cH^{\mathrm{Fl}}(S/\mathbb{C}) ) \in \PSh_{\mathbb{C}} \]
		of the flag Hilbert scheme  $\cH^{\mathrm{Fl}}(S/\mathbb{C})$ (see \cite[1.2.3]{Gaitsgory_Contractibility_2011} for the definition of $\mathrm{Ran}$);
		\item the \emph{identification Ran flag Hilbert space} of $S$, as the topological space of complex points $\cH^{\mathrm{Fl}}(S/\mathbb{C})(\mathbb{C})$ (endowed with the analytic topology) modulo the equivalence relation identifying flags having the same topological support in $S(\mathbb{C})$\footnote{A similar construction was also suggested by N. Rozenblyum.}.
	\end{itemize}
\end{defin}

In a forthcoming paper we plan to investigate monoidal structures on sheaves/stacks on the above Ran versions of the flag Hilbert scheme of $S$, together with generalized factorization structures on $\rD_{S}^{\mathrm{Fl}}$ viewed as a stack on the Ran versions of the flag Hilbert scheme of $S$. \medbreak

\begin{rem}\label{Cliff}
Let us remark that an interesting Ran version of the Hilbert scheme of points for higher dimensional varieties $X$ has been recently considered in \cite{Cliff_Thesis_2015}. In this work, Cliff does not consider the flag Hilbert scheme but rather the Hilbert scheme of points, and then a factorization space $\mathcal{H}\mathrm{ilb}_{\mathrm{Ran}\, X}$ over the Hilbert scheme of points of $X$, out of which she is indeed able to construct, by linearization, a usual factorization algebra (\cite[Chapter 2]{Cliff_Thesis_2015}). Form our point of view, this is a very interesting first step; what is left is to make sense of a richer, higher factorization structure taking into account full flags in $X$.
\end{rem}

\paragraph{\textbf{The case of $G$-bundles.}}

The stack $(\rR \bfBun_G)_{S}^{\mathrm{Fl}}$ (i.e. the stack $\rD_{S}^{\mathrm{Fl}}$ for $\rD= \rR \bfBun_G$) is of special interest in our program of understanding the geometric structures underlying Geometric Langlands for surfaces. One reason is that it should enable a construction of a Hecke-like action, related to the higher factorization structure over a Ran version of the flag Hilbert scheme of $S$. We are currently investigating this intriguing direction. Note that it is crucial to work with the derived version of the stack of $G$-bundles (and using Corollary \ref{cor:flag_bung_derived}) as soon as the ambient scheme ($S$ here) has dimension strictly bigger than one. \medbreak

Let us add that, in \cite{Faonte_Hennion_Kapranov}, Giovanni Faonte, Benjamin Hennion and Mikhail Kapranov are investigating the \emph{degenerate} case of a fixed point $x$ in $S$ (i.e. the ``flag'' $\{x\} = Z_0 \subset S$). In this case, the stack $\rR\bfBun_G(\hat{x} \smallsetminus \{x\})$ can be thought as some loop space in the stack of $G$-bundles (in the sense of \cite{kapranovvasserot:fl1} and \cite{hennion:floops}). As such, it should carry a factorisation structure (as in \cite{Cliff_Thesis_2015}).
In \cite{Faonte_Hennion_Kapranov}, the authors identify some higher dimensional and derived version of Laurent current and Kac-Moody algebras and give an infinitesimal action on the stack of $G$-bundles of $S$ trivialized at the neighborhood of the point $x$.

\appendix

\section{Comparison with the work of Ben-Bassat and Temkin} \label{appendix:comparison}

The problem of constructing some incarnation of the punctured formal neighbourhood has already been faced in the literature, and solutions different from ours have been proposed before.
Perhaps, one of the most successful ones is the construction of the punctured formal neighbourhood as a Berkovich analytic space carried over in \cite{Ben-Bassat_Temkin_Tubular_2013}.
In this section, we briefly review this construction, and we compare it with ours whenever they both apply, showing that indeed we obtain the same categories of coherent sheaves.

In \cite{Ben-Bassat_Temkin_Tubular_2013} the authors work over a base field $k$, which is seen as a non-archimedean field equipped with the trivial valuation.
They consider a $k$-variety $X$ and a closed subvariety $Z \hookrightarrow X$.
Out of this data, they introduce the formal completion $\fX \coloneqq \widehat{X}_Z$, seen as a special formal scheme over $k$.
Following \cite{Berkovich_Vanishing_II_1996}, they next consider the generic fiber $\fX_\eta$, which is a Berkovich $k$-analytic space.\footnote{The reader might find it useful to be reminded that the generic fiber of $\Spf(k\llb T_1, \ldots, T_n \rrb)$ is the open analytic disc of radius $1$, independently of which valuation we consider $k$ equipped with. In the case of trivial valuation, it is furthermore true that the global sections of $\Spf(k\llb T_1, \ldots, T_n\rrb)_\eta$ are isomorphic to $k\llb T_1, \ldots, T_n \rrb$ itself.}
Reviewing $Z$ as a discrete formal $k$-scheme topologically of finite presentation, it is possible to consider also the generic fiber $Z_\eta$.
In \cite{Ben-Bassat_Temkin_Tubular_2013}, the punctured formal neighbourhood is defined to be a Berkovich $k$-analytic space, formally defined as
\[ W = W_X(Z) \coloneqq \fX_\eta \smallsetminus Z_\eta . \]
It is argued in loc.\ cit.\ that this space is non-empty.
It is furthermore showed there that the construction of $W_X(Z)$ satisfies Zariski descent in $X$, in the sense that a Zariski open cover of $X$ is sent to a $G$-cover of $W$.

In a former version of this paper, we were using the non-archimedean framework in a much more essential way. The idea was to define our key functor $\hPhi^\circ_Z$ via the universal property stated in \cref{prop:algebraic_universal_property}, and then use \cref{thm:localization_for_coh_minus} in order to identify it with the sheaf informally described by
\[ U \mapsto \Coh^-(W_U(Z_U)) . \]
In fact, the non-archimedean point of view allows also to give a more ``geometrical'' interpretation of the stack $\bfCoh^-_{\hZ \smallsetminus Z}$ itself: it would have to be interpreted as the restriction of the analytic \emph{mapping stack} $\bfMap(W(X,Z), (\bfCoh^-)\an)$ to the algebraic category.
Nevertheless, this point of view, as nice as it might seem, presents one serious drawback: for our purposes, Zariski descent is insufficient, and it has to be replaced by \'etale descent.
Now, while fpqc descent for coherent sheaves has been worked out in the framework of Berkovich $k$-analytic geometry whenever $k$ has a non-trivial valuation (cf.\ \cite{Bosch_Gortz_Coherent_modules_1998,Conrad_Descent_for_coherent_2003}), we could not locate in the literature an analogous result in the trivial valuation case.
T.\ Y.\ Yu later confirmed us the lack of such a result in the literature.
Even more, it is rather clear that the methods used in \cite{Bosch_Gortz_Coherent_modules_1998,Conrad_Descent_for_coherent_2003} do not generalize to the trivial valuation case: both need to rely heavily on the existence of formal models, and Raynaud's theory needs a non-trivial valuation.

Another difficulty arising from this approach is that in \cite{Ben-Bassat_Temkin_Tubular_2013} $X$ and $Z$ are explicitly assumed to be \emph{varieties}.
While we could not understand whether this was essential to their approach, it is an important feature of our work that this hypothesis can be dropped.
This is extremely useful for us: the proof of the iterated flag decomposition (cf.\ Corollaries \ref{cor:full_flag_decomposition} and \ref{flagbung}) heavily relies on the fact that our \'etale descent theorem (cf.\ \cref{thm:etale_descent}) and our flag decomposition result (cf.\ \cref{prop:one_step_flag_decomposition_coh}) are valid for possibly singular and non-reduced $X$.

This being said, there are many interesting situations where both our construction and Ben-Bassat and Temkin's one make sense.
It is therefore useful to compare them.
The following lemma is the key to the comparison:

\begin{lem} \label{lem:higher_codimension_induction_step}
	Let $k$ be a field equipped with the trivial valuation.
	Let $A$ be a special $k$-algebra topologically of finite presentation (cf.\ \cite[\S 1]{Berkovich_Vanishing_II_1996}) and fix $f \in A$.
	Then $(f)$ is a closed ideal in $A$ and $A / (f)$ is a special $k$-algebra.
	Define $W_A(f) \coloneqq \Spf(A)_\eta \smallsetminus \Spf(A / (f))_\eta$.
	Then there is a canonical functor
	\[ \Coh^-(A[f\inv]) \to \Coh^-(W_A(f)) \]
	which is an equivalence of stable $\infty$-categories.
\end{lem}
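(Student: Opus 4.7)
The structural assertions are routine: since $A$ is noetherian and $(f)$ is finitely generated, the Artin--Rees lemma ensures that $(f)$ is closed in the $I$-adic topology on $A$ for any ideal of definition $I$, and the quotient $A/(f)$ inherits a noetherian adic ring structure with ideal of definition the image of $I$, hence is again special. For the functor itself, I would observe that $W_A(f)$ carries a natural morphism of locally ringed spaces to $\Spec(A[f^{-1}])$: every $a \in A$ defines an analytic function on $\Spf(A)_\eta$, and $f$ is by construction invertible on $W_A(f)$. Pullback of coherent complexes along this map yields the desired functor $\Phi \colon \Coh^-(A[f^{-1}]) \to \Coh^-(W_A(f))$.

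The plan for proving that $\Phi$ is an equivalence is to apply \cref{lem:reduction_to_the_heart_II}. First I would equip both sides with their standard left complete, right bounded $t$-structures and verify that $\Phi$ is $t$-exact; on the target side this relies on the flatness of restriction to the open analytic subdomain $W_A(f) \subset \Spf(A)_\eta$. This reduces the problem to showing that the induced functor on hearts $\Phi^\heartsuit \colon \Coh^\heartsuit(A[f^{-1}]) \to \Coh^\heartsuit(W_A(f))$ is an equivalence of abelian categories. For the heart-level statement, I would argue that $\Coh^\heartsuit(W_A(f))$ is naturally a double Serre quotient of $\Coh^\heartsuit(A)$: a formal GAGA-type comparison for special formal schemes identifies $\Coh^\heartsuit(\Spf(A)_\eta)$ with the Serre quotient of $\Coh^\heartsuit(A)$ by modules topologically annihilated by some ideal of definition, and restricting further to $W_A(f)$ amounts to the Serre quotient by $f$-power torsion. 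Composing, the right-hand side is identified with $\Coh^\heartsuit(A[f^{-1}])$, which is exactly $\Phi^\heartsuit$ being an equivalence.

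The hardest step will be the formal-to-analytic comparison at the level of hearts in the trivially valued setting. In the rigid-analytic or Berkovich context over a non-trivially valued field this is provided by \cite{Bosch_Gortz_Coherent_modules_1998}, but in the trivially valued case the analogue is less standard and a direct verification may be needed. An alternative, and probably cleaner, route is to cover $W_A(f)$ by the nested family of Weierstrass-type affinoid subdomains $U_n = \{x \in \Spf(A)_\eta : |f(x)| \geq r^n\}$ for a fixed $0 < r < 1$, compute an explicit coordinate algebra $B_n$ on each $U_n$, verify flatness of $A[f^{-1}] \to B_n$ and the required coherence compatibilities in $n$, and then use descent along the cover to identify $\varprojlim_n \Coh^-(B_n)$ with $\Coh^-(A[f^{-1}])$. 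In either approach, some care is needed because the algebras $B_n$ are not finitely generated over $A$ and the inverse system has to be handled by hand; this is where I expect the main technical difficulty to lie.
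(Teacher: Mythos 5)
Your main route breaks at the heart-level identification. You assert that a formal-GAGA comparison identifies $\Coh^\heartsuit(\Spf(A)_\eta)$ with the Serre quotient of $\Coh^\heartsuit(A)$ by modules annihilated by a power of an ideal of definition. That is the picture for Raynaud/Berthelot-type generic fibers over a \emph{non-trivially} valued field, and it fails here: over the trivially valued field one has $k\{r\inv S\} = k\llb S \rrb$ for $0<r<1$, so the affinoid pieces $\{|S_i|\le r\}$ of $\Spf(A)_\eta$ all have coordinate ring $A$ itself, and in fact $\Coh^-(\Spf(A)_\eta)\simeq \Coh^-(A)$ with no quotient at all. Concretely, for $A=k\llb S\rrb$ the point $S=0$ lies in the Berkovich open unit disc $\Spf(A)_\eta$, and the skyscraper there corresponds to the $S$-torsion module $A/(S)$, which your first Serre quotient would kill. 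Consequently the proposed ``double quotient'' computes the wrong category in general: the lemma allows arbitrary $f\in A$, and for, say, $A=k[X]\llb S\rrb$ with $f=X$ the module $A/(S)$ is annihilated by the ideal of definition but is not $f$-power torsion and localizes to the nonzero module $k[X,X\inv]$ over $A[f\inv]$, so the composite quotient is strictly smaller than $\Coh^\heartsuit(A[f\inv])$. Even in the paper's application, where $f$ lies in the ideal of definition and the two quotients happen to coincide, your argument does not establish that coincidence, and the first identification it rests on is false in this setting. (The $t$-exactness/reduction-to-the-heart scaffolding via \cref{lem:reduction_to_the_heart_II} is fine in principle, but it is not needed once the correct local computation is in place.)

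Your alternative route is essentially the paper's proof, but you misplace the difficulty. The paper covers $\Spf(A)_\eta$ by the affinoids $Y_r=\{|S_i|\le r\}$, whose coordinate rings are all $A$, and covers $W_A(f)$ by the Laurent domains $Y_{r,s}(f)=\{x\in Y_r : |f(x)|\ge s\}$, whose coordinate rings it identifies with $A[f\inv]$ itself, precisely because the valuation is trivial (the relevant relative Tate algebras collapse to polynomial rings, so no completion intervenes). Hence there are no non-finitely-generated algebras $B_n$, no flatness to check, and no inverse system to handle: every term is literally $A[f\inv]$, and Tate acyclicity on each piece does the work. What does require an input peculiar to the trivially valued case is descent of $\Coh^-$ for the $G$-topology, which the paper takes from \cite[Lemma 2.1.6]{Ben-Bassat_Temkin_Tubular_2013}; if you flag that and carry out the explicit coordinate-ring computation on the Laurent domains, your second route becomes the paper's argument.
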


\begin{proof}
	Since $A$ is a special $k$-algebra, every ideal is closed and the quotients of $A$ are again special $k$-algebras by \cite[Lemma 1.1]{Berkovich_Vanishing_II_1996}.
	Now, since $A$ is topologically of finite presentation and the valuation on $k$ is trivial, we can find an ideal $I$ in $k[T_1, \ldots, T_m] \llb S_1, \ldots, S_n \rrb$ and a topological $k$-isomorphism
	\[ k[T_1, \ldots, T_m] \llb S_1, \ldots, S_n \rrb / I \simeq A . \]
	By definition, $\Spf(A)_\eta$ is the closed analytic subspace of $E(0,1)^m \times D(0,1)^n$ defined by the same ideal $I$.
	Recall that the product $E(0,1)^m \times D(0,1)^n$ has an admissible affinoid cover of the form
	\[ \{E(0,1)^m \times E(0,r)^n\}_{0 < r < 1} , \]
	where, by definition
	\[ E(0,r) \coloneqq \SpB(k\{r\inv S\}) . \]
	Since $0 < r < 1$ and the valuation on $k$ is trivial, we see that
	\[ k\{r \inv S\} = \left\{f(S) = \sum_{i = 0}^\infty a_i S^i \in k \llb S \rrb \mid \lim_{i \to +\infty} |a_i| r^i = 0\right\} = k \llb S \rrb . \]
	Pulling back this cover along the closed immersion $\Spf(A)_\eta \hookrightarrow E(0,1)^m \times D(0,1)^m$ we obtain an admissible affinoid cover of $\Spf(A)_\eta$ itself.
	Observe that
	\[ Y_r \coloneqq \Spf(A)_\eta \times_{E(0,1)^m \times D(0,1)^n} (E(0,1)^m \times E(0,r)^n) \]
	is the closed analytic subspace of $E(0,1)^m \times E(0,r)^n = \SpB(k\{T_1, \ldots, T_m, r\inv S_1, \ldots, r\inv S_n\})$ defined again by the ideal $I$.
	Under the identification of the global sections of $E(0,1)^m \times E(0,r)^n$ with $k[T_1, \ldots, T_m] \llb S_1, \ldots, S_n\rrb$, we can thus identify the global sections of $Y_r$ with the algebra $A$ itself.
	In particular, invoking Tate's acyclicity theorem, we deduce that
	\[ \Coh^-(Y_r) \simeq \Coh^-(A) . \]
	Using the descent of $\Coh^-$ for the $G$-topology,\footnote{In the case of trivial valuation, this is proved in \cite[Lemma 2.1.6]{Ben-Bassat_Temkin_Tubular_2013}.} we obtain
	\[ \Coh^-(\Spf(A)_\eta) \simeq \varprojlim_{0 < r < 1} \Coh^-(Y_r) \simeq \Coh^-(A) . \]
	We now observe that $W_A(f)$ is an admissible open of $\Spf(A)_\eta$.
	Indeed, it is defined by the condition $|f| > 0$:
	\[ W_A(f) = \{x \in \Spf(A)_\eta \mid |f(x)| > 0\} . \]
	We can therefore produce an admissible affinoid cover of $W_A(f)$ as follows.
	For every $0 < r, s < 1$, define
	\[ Y_{r,s}(f) \coloneqq \{x \in Y_r \mid |f(x)| \ge s\} . \]
	This is an affinoid domain. For $r$ fixed, it is easy to see that $\{Y_{r,s}(f)\}$ is an admissible affinoid cover of $Y_r \times_{\Spf(A)_\eta} W_A(f)$.
	Therefore, $\{Y_{r,s}(f)\}$ is an admissible affinoid cover of $W_A(f)$.
	We now remark that
	\[ Y_{r,s}(f) = \SpB((k\{T_1, \ldots, T_m, r\inv S_1, \ldots, r\inv S_n\} / I)\{s T\} / (fT - 1) ) = \SpB(A\{sT\} / (fT - 1)) . \]
	Since $s < 1$ and the valuation on $k$ is trivial, we have an isomorphism of algebras:
	\[ A\{s T\} = A[T] . \]
	It follows that there is an isomorphism of algebras
	\[ A\{sT\} / (fT - 1) \simeq A[f\inv] . \]
	Invoking again Tate's acyclicity theorem and the descent of $\Coh^-$ for the $G$-topology, we finally conclude that there is a canonical equivalence of stable $\infty$-categories
	\[ \Coh^-(W_A(f)) \simeq \Coh^-(A[f\inv]) . \]
	The proof is thus complete.
\end{proof}

\begin{cor} \label{cor:non-archimedean_help}
	Fix $U = \Spec(A_U) \in X\et$.
	Let $\Upsilon \coloneqq \Spec(\widehat{A_U}) \smallsetminus Z_U$.
	Then there is a canonical equivalence of stable $\infty$-categories
	\[ \Coh^-(W_U(Z)) \simeq \Coh^-(\Upsilon) . \]
\end{cor}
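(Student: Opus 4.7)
The plan is to reduce \cref{cor:non-archimedean_help} to \cref{lem:higher_codimension_induction_step} by a descent argument along a cover by principal opens, applied simultaneously on the algebraic and the non-archimedean sides.

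First, since $X$ is locally of finite type over $k$ and $U = \Spec(A_U) \in X\et$, the algebra $A_U$ is noetherian and the ideal $J_U \subset A_U$ defining $Z_U$ is finitely generated. Choose generators $f_1, \ldots, f_n \in J_U$. The ideal $J_U \widehat{A_U}$ still cuts out $Z_U$ inside $\Spec(\widehat{A_U})$, so the principal opens
\[
V_i \coloneqq \Spec(\widehat{A_U}[f_i\inv]), \qquad i = 1, \ldots, n
\]
form a Zariski open cover of $\Upsilon = \Spec(\widehat{A_U}) \smallsetminus Z_U$, with $k$-fold intersections $V_{i_1 \cdots i_k} = \Spec(\widehat{A_U}[(f_{i_1}\cdots f_{i_k})\inv])$. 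On the non-archimedean side, the analytic closed immersions $\Spf(\widehat{A_U} / (f_i))_\eta \hookrightarrow \Spf(\widehat{A_U})_\eta$ cover $Z_\eta$ (because $\bigcap_i V(f_i) = Z_U$), so their complements $W_i \coloneqq W_{\widehat{A_U}}(f_i)$ form an admissible $G$-cover of $W_U(Z)$, with intersections $W_{i_1 \cdots i_k} = W_{\widehat{A_U}}(f_{i_1} \cdots f_{i_k})$.

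Next, I would apply \cref{lem:higher_codimension_induction_step} to each of the functions $f_{i_1} \cdots f_{i_k}$ to obtain canonical equivalences of stable $\infty$-categories
\[
\Coh^-(\widehat{A_U}[(f_{i_1} \cdots f_{i_k})\inv]) \simeq \Coh^-(W_{i_1 \cdots i_k}).
\]
These equivalences are induced by the standard comparison between the global sections of the affinoid pieces in the proof of \cref{lem:higher_codimension_induction_step} and the corresponding algebraic localizations; since the construction is natural in the algebra $A_U[g\inv]$ (for $g$ any product of the $f_i$'s), the equivalences are compatible with the restriction maps coming from the inclusions $V_{i_1\cdots i_k j} \subset V_{i_1 \cdots i_k}$ on one side and $W_{i_1 \cdots i_k j} \subset W_{i_1 \cdots i_k}$ on the other. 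In particular they assemble into an equivalence of the associated \v{C}ech cosimplicial diagrams.

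Finally, I would pass to the limit: by Zariski descent for $\Coh^-$ on noetherian affine schemes we have
\[
\Coh^-(\Upsilon) \simeq \varprojlim_{\mathbf \Delta} \Coh^-(V_{\bullet}),
\]
while by the $G$-topology descent result invoked in the proof of \cref{lem:higher_codimension_induction_step} (namely \cite[Lemma 2.1.6]{Ben-Bassat_Temkin_Tubular_2013}) we have
\[
\Coh^-(W_U(Z)) \simeq \varprojlim_{\mathbf \Delta} \Coh^-(W_\bullet).
\]
Combining these with the cosimplicial equivalence above yields the desired canonical equivalence $\Coh^-(W_U(Z)) \simeq \Coh^-(\Upsilon)$.

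The main obstacle I anticipate is purely bookkeeping: verifying that the equivalences produced by \cref{lem:higher_codimension_induction_step} are sufficiently natural (as $\infty$-functors, not merely at the level of homotopy categories) to assemble into a map of cosimplicial diagrams. This can be handled by observing that the equivalence in \cref{lem:higher_codimension_induction_step} is built out of two manifestly functorial steps --- Tate acyclicity identifying global sections, and $G$-topology descent for $\Coh^-$ --- both of which are natural in the input special $k$-algebra and compatible with localization maps.
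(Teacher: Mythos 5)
Your argument is correct, and it reaches the conclusion by a slightly different decomposition than the paper's. The paper proves the corollary by induction on the number of generators $f_1,\dots,f_n$ of $J_U$: at each stage it uses only a two-piece cover, with $Z_1 = V(f_1,\dots,f_n)$, $Z_2 = V(f_{n+1})$ and overlap cut out by the auxiliary ideal $(f_1f_{n+1},\dots,f_nf_{n+1})$, so that the overlap term $W_{12}$ (resp.\ $V_{12}$) is handled by the \emph{induction hypothesis} rather than by \cref{lem:higher_codimension_induction_step} directly, and the conclusion is a comparison of two-term fiber products. You instead take the full \v{C}ech cover by the principal loci $\{|f_i|>0\}$ and $D(f_i)$, observe (correctly, by multiplicativity of the seminorms) that all higher intersections are again of the form $W_{\widehat{A_U}}(g)$ and $\Spec(\widehat{A_U}[g\inv])$ for $g$ a product of the $f_i$'s, apply the lemma to every such $g$, and compare the two \v{C}ech limits. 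What your route buys is that every piece is treated directly by the one-element lemma, with no auxiliary ideal trick and no induction; what it costs is exactly the bookkeeping you flag at the end, namely that the equivalences of \cref{lem:higher_codimension_induction_step} must be shown to be natural enough (as $\infty$-functors, compatibly with all restriction maps) to define an equivalence of cosimplicial diagrams, whereas the paper's iterated two-term fiber products only ever require compatibility over a single overlap at each stage. Your sketch of how to dispose of this (Tate acyclicity and $G$-descent are both natural in the special $k$-algebra and compatible with the localization maps) is the right one, and is at the same level of rigor as the compatibilities the paper itself leaves implicit; you should also record, as the paper does for its two-piece covers, why $\{W_i\}$ is an admissible ($G$-)cover of $W_U(Z)$, which follows from the explicit affinoid exhaustion used in the proof of \cref{lem:higher_codimension_induction_step}.
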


\begin{rem}
	When $Z$ is defined by a single equation in $A_U$, this is precisely the content of \cite[Lemma 4.2.1(ii)]{Ben-Bassat_Temkin_Tubular_2013}.
\end{rem}

\begin{proof}
	Let $J_U$ be the ideal defining $Z_U$ inside $U$.
	Since $A_U$ is noetherian, we can choose generators $J_U = (f_1, \ldots, f_n)$.
	We proceed by induction on the number of generators $n$.
	When $n = 1$, this is follows directly from \cref{lem:higher_codimension_induction_step} taking $A = \widehat{A_U}$ and $f = f_1$.
	
	Suppose now that the statement has already been proven for all the ideals of length $n$, and let us prove it for ideals of length $n+1$.
	Write $J = (f_1, \ldots, f_{n+1})$.
	Define
	\begin{gather*}
		Z_1 \coloneqq \Spec(A / (f_1, \ldots, f_n)), \quad Z_2 \coloneqq \Spec(A / (f_{n+1})) , \quad Z_{12} \coloneqq \Spec(A / (f_1f_{n+1}, \ldots, f_n f_{n+1})) , \\
		W_1 \coloneqq \Spf(\widehat{A_U})_\eta \smallsetminus (Z_1)_\eta, \quad W_2 \coloneqq \Spf(\widehat{A_U})_\eta \smallsetminus (Z_2)_\eta, \quad W_{12} \coloneqq \Spf(\widehat{A_U})_\eta \smallsetminus Z_{12} .
	\end{gather*}
	Observe that
	\[ W_{12} \simeq W_1 \times_{\Spf(\widehat{A_U})_\eta} W_2 , \]
	and that $W_1$ and $W_2$ form an admissible cover of $W_U(Z)$.
	In particular,
	\[ \Coh^-(W_U(Z)) \simeq \Coh^-(W_1) \times_{\Coh^-(W_{12})} \Coh^-(W_2) . \]
	On the other side, we observe that the open Zariski subsets of $\Spec(\widehat{A_U})$
	\[ V_1 \coloneqq \Spec(\widehat{A_U}) \smallsetminus Z_1, \quad V_2 \coloneqq \Spec(\widehat{A_U}) \smallsetminus Z_2 \]
	form an open Zariski cover of $\Upsilon$, whose intersection is
	\[ V_{12} = \Spec(\widehat{A_U}) \smallsetminus Z_{12} . \]
	In particular,
	\[ \Coh^-(\Upsilon) \simeq \Coh^-(V_1) \times_{\Coh^-(V_{12})} \Coh^-(V_2) . \]
	We now remark that the induction hypothesis guarantees that
	\[ \Coh^-(V_1) \simeq \Coh^-(W_1), \quad \Coh^-(V_{12}) \simeq \Coh^-(W_{12}) , \]
	and that \cref{lem:higher_codimension_induction_step} implies that
	\[ \Coh^-(V_2) \simeq \Coh^-(W_2) . \]
	The proof is thus complete.
\end{proof}

\begin{cor} \label{cor:comparison}
	Let $X$ be a $k$-variety and let $Z \hookrightarrow X$ be a closed subvariety.
	Then there is a canonical equivalence
	\[ \Coh^-(\hZ \smallsetminus Z) \simeq \Coh^-(W_X(Z)) . \]
	In particular, $\Coh^-(W_X(Z))$ is equivalent to the Verdier quotient $\Coh^-(\hZ) / \cK(X)$, where $\cK(X)$ is the category introduced in \cref{lem:kernel_sheaf}.
\end{cor}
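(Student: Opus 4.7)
The strategy is to bootstrap the affine comparison of \cref{cor:non-archimedean_help} to the global setting by invoking descent on both sides. First, I would promote the equivalence $\Coh^-(W_U(Z_U)) \simeq \Coh^-(\hPsi^\circ_Z(U)) = \hPhi^\circ_Z(U)$ produced in \cref{cor:non-archimedean_help} to a natural equivalence of functors on the small affine Zariski site $X^{\mathrm{aff,Zar}}$ of $X$. Unwinding the proof of \cref{lem:higher_codimension_induction_step}, this equivalence is built out of the explicit identifications $\SpB(A\{sT\}/(fT-1)) \simeq \Spec(A[f\inv])$ for admissible affinoid pieces of $W_U(Z_U)$, which are manifestly compatible with restriction along an inclusion of principal Zariski opens $V \hookrightarrow U$; induction on the number of generators of the ideal of $Z_U$ (exactly as in the proof of \cref{cor:non-archimedean_help}) then upgrades this compatibility to all pairs of affine Zariski opens over $X$.

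Next, I would check that both assignments $U \mapsto \Coh^-(W_U(Z_U))$ and $U \mapsto \hPhi^\circ_Z(U)$ extend to sheaves on the full small Zariski site of $X$. For the right-hand side this is immediate from \cref{thm:etale_descent}, which gives hypercomplete \'etale descent and therefore in particular Zariski descent. For the left-hand side one invokes the Zariski descent result of \cite{Ben-Bassat_Temkin_Tubular_2013} (Zariski opens in $X$ pull back to $G$-covers of $W_X(Z)$), combined with the descent of $\Coh^-$ along $G$-covers for Berkovich analytic spaces over a trivially valued field (which is the same input used in the proof of \cref{lem:higher_codimension_induction_step}). Since $X^{\mathrm{aff,Zar}}$ is a basis for the Zariski topology and the two sheaves agree on this basis by the previous paragraph, they agree globally; evaluating at $X$ yields the first claimed equivalence
\[ \Coh^-(\hZ \smallsetminus Z) = \hPhi^\circ_Z(X) \simeq \Coh^-(W_X(Z)) . \]

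For the ``in particular'' clause, I would simply combine this equivalence with \cref{thm:Zariski_computational_tool}, which furnishes the cofiber sequence $\cK(X) \hookrightarrow \Coh^-(\hZ) \to \Coh^-(\hZ \smallsetminus Z)$ in $\Catst$, and therefore identifies $\Coh^-(\hZ \smallsetminus Z)$ with the Verdier quotient $\Coh^-(\hZ)/\cK(X)$. Transporting through the first equivalence gives $\Coh^-(W_X(Z)) \simeq \Coh^-(\hZ)/\cK(X)$ as required.

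The only genuinely delicate point is the naturality step in the first paragraph: one has to verify enough coherences to promote the pointwise equivalence of \cref{cor:non-archimedean_help} to a natural transformation of $\infty$-functors on $X^{\mathrm{aff,Zar}}$. Since the identifications involved are of the form $A\{sT\}/(fT-1) = A[f\inv]$ (valid because the valuation on $k$ is trivial) and commute strictly with the relevant localizations, I expect this to be routine but somewhat tedious; once it is in place, descent does all the remaining work.
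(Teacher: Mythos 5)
Your proposal is correct and follows essentially the same route as the paper: both sides are Zariski sheaves in $X$ (\cref{thm:etale_descent} for $\hPhi^\circ_Z$, the cited lemmas of Ben-Bassat--Temkin for $\Coh^-(W_X(Z))$), they agree on affines by \cref{cor:non-archimedean_help}, and the Verdier-quotient statement follows from \cref{thm:Zariski_computational_tool} since a variety is quasi-compact and quasi-separated. Your extra attention to promoting the affine equivalence to a natural transformation is a point the paper leaves implicit, but it does not change the argument.
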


\begin{proof}
	It is sufficient to observe that both $\Coh^-(\hZ \smallsetminus Z) = \hPhi^\circ_Z(X)$ and $\Coh^-(W_X(Z))$ satisfy Zariski descent in $X$: for the first category, this is a consequence of \cref{thm:etale_descent} and for the latter this has been proved in \cite[Lemmas 4.3.1 and 4.4.1]{Ben-Bassat_Temkin_Tubular_2013}.
	Finally, the second statement is a consequence of \cref{thm:Zariski_computational_tool}, because a variety is quasi-compact and quasi-separated.
\end{proof}

\begin{rem}
	Remark that the above corollary can be also interpreted as an example where the localization theorem holds in the non-archimedean setting.
	This is possible exactly because the situation considered here is not ``truly analytic'', in the sense that only the trivial valuation is considered.
	It is in fact well known that the statement of localization theorem is generally false in the analytic setting.
	An easy counterexample in the \canal setting is the following: let $X = \mathbf A^1_\mathbb C$ be the \canal affine line and let $Z = \{0\}$ be the origin inside $X$.
	Then $\Cohh(X) \to \Cohh(X \smallsetminus Z)$ cannot be essentially surjective (and therefore it cannot be a quotient in $\AbCat$): indeed, we can endow the subset $\{\frac{1}{n}\}_{n \ge 1, n \in \mathbb Z}$ with the structure of an analytic subset of $X \smallsetminus Z$. It is therefore defined by a coherent sheaf $\cF$, and $\cF$ cannot be extended to a coherent sheaf $\cG \in \Cohh(X)$: otherwise, $\mathrm{supp}(\cG)$ would have to contain $\mathrm{supp}(\cF)$ and therefore it would have an accumulation point, which is impossible in virtue of the analytic continuation property for holomorphic functions.
\end{rem}

\section{Verdier quotients} \label{sec:Verdier quotients}

The goal of this note is to show that cofibers exist in $\Catst$.

\begin{lem} \label{lem:compact_generators_localisation}
	Let $F \colon \cC \rightleftarrows \cD \colon G$ be an adjunction between cocomplete $\infty$-categories, where $F$ is the left adjiont.
	Suppose that:
	\begin{enumerate}
		\item $\cC$ is generated under filtered colimits by a collection of objects $\{X_\alpha\}_{\alpha \in A}$;
		\item $G$ is fully faithful.
	\end{enumerate}
	Then $\cD$ is generated under filtered colimits by the collection $\{F(X_\alpha)\}$.
\end{lem}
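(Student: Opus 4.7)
My plan is to argue via a ``preimage'' trick that is standard for this kind of generation statement. Let $\cE \subseteq \cD$ denote the smallest full subcategory containing the collection $\{F(X_\alpha)\}_{\alpha \in A}$ and closed under filtered colimits in $\cD$. The goal is to show $\cE = \cD$, which is exactly the statement that $\{F(X_\alpha)\}$ generates $\cD$ under filtered colimits.

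I would first introduce the full subcategory
\[ \cC' \coloneqq \{X \in \cC \mid F(X) \in \cE \} \subseteq \cC . \]
Because $F$ is a left adjoint between cocomplete $\infty$-categories, it preserves all colimits, and in particular it preserves filtered colimits. Combined with the fact that $\cE$ is closed under filtered colimits in $\cD$, this shows that $\cC'$ is closed under filtered colimits in $\cC$. Moreover, $\cC'$ tautologically contains every $X_\alpha$. Hypothesis (1) then forces $\cC' = \cC$, i.e.\ $F(X) \in \cE$ for every $X \in \cC$.

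To conclude, I would apply this to objects of the form $G(Y)$ for $Y \in \cD$: we obtain $FG(Y) \in \cE$. Finally, hypothesis (2) that $G$ is fully faithful is equivalent to the counit $\varepsilon \colon F \circ G \to \id_{\cD}$ being an equivalence, so the equivalence $\varepsilon_Y \colon FG(Y) \simeq Y$ places $Y$ in $\cE$. Hence $\cE = \cD$, which completes the proof.

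There is really no hard step here; the only thing to be careful about is matching the convention for ``generated under filtered colimits''. Under the natural reading (smallest full subcategory closed under filtered colimits and containing the given collection), the preimage argument above works with no further hypothesis. The role of the two assumptions is neatly separated: hypothesis (1) is used to propagate membership of $\cE$ from the generators to all of $\cC$ via $F$, while hypothesis (2) is what lets us recognize an arbitrary $Y \in \cD$ as $FG(Y)$ and hence conclude $Y \in \cE$.
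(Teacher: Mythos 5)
Your argument is correct, and it runs on the same two ingredients as the paper's proof: $F$ preserves filtered colimits because it is a left adjoint, and full faithfulness of $G$ makes the counit $FG(Y)\to Y$ an equivalence. The only difference is packaging: the paper picks an explicit filtered diagram $H\colon \cJ\to\cC$ with values in the $X_\alpha$'s presenting $G(Y)$ and applies $F$ to it, which directly exhibits each $Y\in\cD$ as a filtered colimit of objects $F(X_\alpha)$ (the ``one-step'' form of generation), whereas your preimage/closure argument shows that the smallest full subcategory $\cE\subseteq\cD$ containing the $F(X_\alpha)$ and closed under filtered colimits is all of $\cD$. As you note, this is just a matter of which convention for ``generated under filtered colimits'' one adopts; in the paper's application (\cref{thm:general_Verdier_quotients}), where the generators form a subcategory closed under finite colimits and one then runs a compactness/retract argument, the two readings agree, so either formulation of the lemma suffices. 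No gap.
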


\begin{proof}
	Let $Y \in \cD$.
	Choose a filtered category $\cJ$ and a functor $H \colon \cJ \to \cC$ such that:
	\begin{enumerate}
		\item $H$ factors through the full subcategory spanned by $\{X_\alpha\}_{\alpha \in A}$.
		\item There is an equivalence
		\[ \colim_\cJ H \simeq G(Y) . \]
	\end{enumerate}
	Since $F$ commutes with arbitrary colimits, we see that
	\[ \colim_\cJ F \circ H \simeq F(G(Y)) . \]
	Since $G$ is fully faithful, the counit transformation $\epsilon_Y \colon F(G(Y)) \to Y$ is an equivalence.
	This completes the proof of the lemma.
\end{proof}

\begin{thm} \label{thm:general_Verdier_quotients}
	Let $\Catst$ be the $\infty$-category of stable $\infty$-categories with exact functors between them.
	Let $\cC$ be a stable $\infty$-category and let $j \colon \cA \hookrightarrow \cC$ be a stable full subcategory of $\cC$.
	Then in $\Catst$ there exists a pushout diagram:
	\[ \begin{tikzcd}
	\cA \arrow[hook]{r}{j} \arrow{d} & \cC \arrow{d} \\
	0 \arrow{r} & \cD .
	\end{tikzcd} \]
\end{thm}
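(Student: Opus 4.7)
My plan is to construct the Verdier quotient by embedding $\cC$ into the presentable stable $\infty$-category $\Ind(\cC)$, forming the Bousfield localization there (where such quotients are well-behaved because $\Ind(\cA) \hookrightarrow \Ind(\cC)$ is a fully faithful colimit-preserving inclusion), and then extracting from it a small stable subcategory that serves as the pushout in $\Catst$. The essential subtlety is that we work in $\Catst$ rather than $\Catstidem$, so I must \emph{not} pass to compact objects (which would idempotent-complete); instead I will take the smallest full stable subcategory of the localization containing the image of $\cC$.

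Concretely, I would apply $\Ind \colon \Catst \to \LPromegast$ to the inclusion $j$, obtaining a fully faithful, colimit-preserving exact functor $\Ind(j) \colon \Ind(\cA) \hookrightarrow \Ind(\cC)$. By the adjoint functor theorem it admits a right adjoint $R$. Define $\widetilde{\cD} \subset \Ind(\cC)$ to be the full subcategory right-orthogonal to $\Ind(\cA)$, equivalently the kernel of $R$; this is a presentable stable $\infty$-category and the inclusion has an exact left adjoint $L \colon \Ind(\cC) \to \widetilde{\cD}$ (exactness holds since its kernel $\Ind(\cA)$ is stable). Let $j_\cC \colon \cC \to \Ind(\cC)$ be the Yoneda embedding and set $q \coloneqq L \circ j_\cC$. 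Finally define $\cD$ to be the smallest full stable subcategory of $\widetilde{\cD}$ containing the essential image of $q$, viewed through the induced exact functor $q \colon \cC \to \cD$.

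For the universal property, let $\cE \in \Catst$. Given an exact $F \colon \cC \to \cE$ equipped with a null-homotopy of $F \circ j$, the composite $j_\cE \circ F \colon \cC \to \Ind(\cE)$ extends along $j_\cC$, by the universal property of $\Ind$, to an essentially unique colimit-preserving exact $\Ind(F) \colon \Ind(\cC) \to \Ind(\cE)$. The chosen null-homotopy forces $\Ind(F)$ to annihilate $\cA$, and since $\Ind(F)$ preserves filtered colimits it annihilates all of $\Ind(\cA)$; the Bousfield localization property of $L$ then yields an essentially unique factorization $\widetilde{F} \colon \widetilde{\cD} \to \Ind(\cE)$ with $\widetilde{F} \circ L \simeq \Ind(F)$. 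On the image of $q$ the values of $\widetilde{F}$ lie in the essential image of $j_\cE$, i.e.\ in $\cE \subset \Ind(\cE)$, so since $\cE$ is a full stable subcategory of $\Ind(\cE)$ and $\cD$ is generated under finite limits and colimits by the image of $q$, $\widetilde{F}|_\cD$ factors through $\cE$, producing the desired $\overline{F} \colon \cD \to \cE$. The main obstacle, and the step requiring genuine care, will be upgrading this argument from a bijection on isomorphism classes to an equivalence of mapping spaces
\[ \Map_{\Catst}(\cD, \cE) \xrightarrow{\ \simeq\ } \Map_{\Catst}(\cC, \cE) \times_{\Map_{\Catst}(\cA, \cE)} \{0\} ; \]
this requires assembling the four steps (Yoneda, $\Ind$-extension, Bousfield localization, and restriction to the stable subcategory generated by the image of $q$) into a chain of equivalences of mapping spaces, using in particular that Bousfield localization in $\LPromegast$ exhibits $\Map_{\LPromegast}(\widetilde{\cD}, \Ind(\cE))$ as the full subspace of $\Map_{\LPromegast}(\Ind(\cC), \Ind(\cE))$ spanned by functors killing $\Ind(\cA)$.
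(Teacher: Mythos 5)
Your construction---pass to $\Ind$, form the presentable Verdier quotient (your right-orthogonal $\widetilde{\cD}=\ker(R)$ is exactly the cofiber of $\Ind(\cA)\to\Ind(\cC)$ in $\LPromegast$), and then take the smallest full stable subcategory containing the image of $\cC$---is precisely the paper's proof, and your universal-property strategy via mapping spaces matches as well. The one step you defer, upgrading the factorization to an equivalence of mapping spaces, is carried out in the paper by first showing $\Ind(\cD)\simeq\widetilde{\cD}$ (the compact objects of the quotient form the idempotent completion of $\cD$) and then using two $(-1)$-truncatedness observations together with the universal property of the cofiber in $\LPromegast$, which is exactly the assembly you sketch.
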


\begin{proof}
	Applying $\Ind$ to $j \colon \cA \hookrightarrow \cC$, we obtain a functor $\Ind(j) \colon \Ind(\cA) \to \Ind(\cC)$ which is fully faithful and commutes with filtered colimits and with finite limits.
	Since $\Ind(\cA)$ and $\Ind(\cC)$ are stable by \cite[1.1.3.6]{Lurie_Higher_algebra}, it follows that $\Ind(j)$ commutes with arbitrary colimits.
	Furthermore, since compact objects in $\Ind(\cA)$ (resp.\ in $\Ind(\cC)$) are precisely the retracts of elements in $\cA$ (resp.\ in $\cC$), we see that this functor preserves compact objects.
	In other words, it is a morphism in $\LPromega_{\mathrm{st}}$.
	It follows (for example from \cite{Blumberg_Gepner_Universal_2013}) that the cofiber
	\[ \begin{tikzcd}
	\Ind(\cA) \arrow{r}{\Ind(j)} \arrow{d} & \Ind(\cC) \arrow{d}{L} \\
	0 \arrow{r} & \cD'
	\end{tikzcd} \]
	exists in $\LPromega_{\mathrm{st}}$.
	Let $\cD$ be the smallest full stable subcategory of $\cD$ containing the essential image of the composition
	\[ \begin{tikzcd}
	\cC \arrow{r}{y_\cC} & \Ind(\cC) \arrow{r}{L} & \cD' ,
	\end{tikzcd} \]
	where $y_\cC$ is the Yoneda embedding.
	We claim that the canonical map $\overline{L} \colon \cC \to \cD$ exhibits $\cD$ as cofiber of $\cA \to \cC$.
	Observe that there is a clear nullhomotopy $\overline{L} \circ j \simeq 0$.
	
	Before starting the proof, we claim that $\Ind(\cD) \simeq \cD'$.
	Let $i \colon \cD' \to \Ind(\cC)$ be a right adjoint for $L$. Then $i$ is fully faithful and it commutes with filtered colimits.
	Since $\Ind(\cC)$ is generated under filtered colimits by $\cC$, it follows from \cref{lem:compact_generators_localisation} that $\cD'$ is generated under filtered colimits by $L(\cC)$, hence by $\cD$.
	A standard argument shows that every object in $(\cD')^\omega$ is a retract of one in $\cD$.
	In other words, the natural inclusion $\cD \subseteq (\cD')^\omega$ exhibits $(\cD')^\omega$ as the idempotent completion of $\cD$ (cf.\ \cite[5.1.4.1]{HTT}).
	Therefore
	\[ \Ind(\cD) \simeq \Ind((\cD')^\omega) \simeq \cD' . \]
	
	Now, let $\cE$ be any stable $\infty$-category.
	Let
	\[ \Map_{\Catst}^0(\cC, \cE) \]
	be the full subsimplicial set of $\Map_{\Catst}(\cC, \cE)$ spanned by those $f \colon \cC \to \cE$ such that $f(c)$ is a zero object for $\cE$ for every $c \in \cE$.
	Clearly, we have
	\[ \Map_{\Catst}^0(\cC, \cE) \simeq \Map_{\Catst}(\cC, \cE) \times_{\Map_{\Catst}(\cA, \cE)} \Map_{\Catst}(0, \cE) , \]
	the fiber product being computed in $\cS$ (i.e.\ it is a homotopy fiber product).
	Now, let $g \colon \cD \to \cE$ be any exact $\infty$-functor.
	Then the nullhomotopy $\overline{L} \circ j \simeq 0$ induces a nullhomotopy $g \circ \overline{L} \circ j \simeq 0$, and therefore $g \circ \overline{L}$ belongs to $\Map_{\Catst}^0(\cC, \cE)$.
	In other words, precomposition with $\overline{L}$ provides us with a morphism of simplicial sets
	\[ \eta \colon \Map_{\Catst}(\cD, \cE) \to \Map_{\Catst}^0(\cC, \cE) . \] 
	To complete the proof, it is enough to prove that this is an equivalence in $\cS$.
	
	Pick any $f \in \Map_{\Catst}^0(\cC, \cE)$.
	It will be enough to prove that the fiber of $\eta$ at $f$ is weakly contractible.
	Since
	\[ \Map_{\Catst}^0(\cC, \cE) \to \Map_{\Catst}(\cC, \cE) \]
	is $(-1)$-truncated by definition, we see that it is enough to prove that the fiber product
	\begin{equation} \label{eq:fiber_space}
	\Map_{\Catst}^{\cC /}(\cD, \cE) \simeq \{f\} \times_{\Map_{\Catst}(\cC, \cE)} \Map_{\Catst}(\cD, \cE)
	\end{equation}
	is contractible.
	Observe that, since $\cE \to \Ind(\cE)^\omega$ is fully faithful, the map
	\[ \Map_{\Catst}^{\cC /}(\cD, \cE) \to \Map_{\Catst}^{\cC /}(\cD, \Ind(\cE)^\omega) \]
	is $(-1)$-truncated.
	Moreover, using the universal property of the idempotent completion, we have:
	\[ \Map_{\Catst}^{\cC /}(\cD, \Ind(\cE)^\omega) \simeq \Map_{\Catstidem}^{\Ind(\cC)^\omega /}((\cD')^\omega, \Ind(\cE)^\omega) \simeq \Map_{\LPromega_{\mathrm{st}}}^{\Ind(\cC)/}(\cD', \Ind(\cE)) . \]
	The universal property of the cofiber shows now that this last space is contractible.
	
	In conclusion, it will be enough to prove that the space \eqref{eq:fiber_space} is non empty.
	In order to see this, let us observe that the solid diagram
	\[ \begin{tikzcd}
	\cC \arrow{r}{y_\cC} \arrow{d}{\overline{L}} \arrow[bend right = 50pt]{dd}[swap]{f} & \Ind(\cC)^\omega \arrow{d}{L^\omega} \arrow{r} & \Ind(\cC) \arrow{d}{L} \arrow[bend left = 50pt]{dd}{\Ind(f)} \\
	\cD \arrow{r}{y_\cD} \arrow[dashed]{d}{\overline{g}} & (\cD')^\omega \arrow{r} \arrow{d}{g^\omega} & \Ind(\cD) \arrow{d}{g} \\
	\cE \arrow{r}{y_\cE} & \Ind(\cE)^\omega \arrow{r} & \Ind(\cE)
	\end{tikzcd} \]
	Since $y_\cE$ is fully faithful, in order to show that the composition $g^{\omega} \circ y_\cD$ factors through $y_{\cE}$ via the dotted arrow $\overline{g}$, it is enough to prove that for every $X \in \cD$, $g^{\omega}(y_\cD(X)) \in \cE$.
	Since all the functors are exact and since $\cD$ is generated under fiber sequences by the essential image of $\overline{L}$, it follows that it is enough to prove this claim when $X$ is of the form $\overline{L}(Y)$.
	But then
	\[ g^\omega(y_\cD(\overline{L}((Y)))) = g^\omega( L^\omega( y_\cC(Y))) = y_{\cE}(f(Y)) . \]
	This completes the proof.
\end{proof}

\bibliographystyle{plain}
\bibliography{dahema}

\end{document}